\documentclass[10pt]{amsart}
\usepackage{amssymb,amsmath,amsthm,amsfonts}
\usepackage{mathrsfs}
\usepackage[all]{xy}
\usepackage{graphicx}
\usepackage{pstricks,pst-plot,epsfig}
\usepackage{enumerate}

\overfullrule=0pt
\makeatletter
\def\@fnsymbol#1{\ensuremath{\ifcase#1\or 1\or 2\fi}}
\makeatother

\def\sha{\mathcal{A}}

\def\shd{\mathcal{D}}

\def\shh{\mathcal{H}}
\def\shm{\mathcal{M}}
\def\sho{\mathcal{O}}
\def\shn{\mathcal{N}}

\def\shf{\mathcal{F}}

\newcommand{\isoto}[1][]{\xrightarrow[#1]%
{{\raisebox{-.6ex}[0ex][-.6ex]{$\mspace{1mu}\sim\mspace{2mu}$}}}}
\newcommand{\isofrom}[1][]{\xleftarrow[#1]%
{{\raisebox{-.6ex}[0ex][-.6ex]{$\mspace{1mu}\sim\mspace{2mu}$}}}}

\newcommand{\C}{\mathbb{C}}
\newcommand{\N}{\mathbb{N}}
\newcommand{\R}{\mathbb{R}}

\newcommand{\Z}{\mathbb{Z}}
\newtheorem{theorem}{Theorem}[section]
\newtheorem{proposition}[theorem]{Proposition}
\newtheorem{lemma}[theorem]{Lemma}
\newtheorem{corollary}[theorem]{Corollary}

\theoremstyle{definition}
\newtheorem{definition}[theorem]{Definition}

\newtheorem{example}[theorem]{Example}

\newtheorem{remark}[theorem]{Remark}

\newtheorem{assumption}[theorem]{Assumption}
\newtheorem{convention}[theorem]{Convention}
\newtheorem{condition}[theorem]{Condition}

\linespread{1.1}

\begin{document}
\author{Ana Rita Martins, Teresa Monteiro Fernandes, David Raimundo}
\title{Extension of functors for algebras of formal deformation}
\date{7/6/2012}

\thanks{The research of T.Monteiro Fernandes was supported by
Funda\c c{\~a}o para a Ci{\^e}ncia e Tecnologia, PEst OE/MAT/UI0209/2011.}

\thanks{The research of D. Raimundo was supported by
Funda\c c{\~a}o para a Ci{\^e}ncia e Tecnologia and Programa
Ci{\^e}ncia, Tecnologia e Inova\c c{\~a}o do Quadro
Comunit{\'a}rio de Apoio and by Funda\c c{\~a}o Calouste Gulbenkian (Programa Est\'imulo \`a Investiga\c c{\~a}o).}

\address{Ana Rita Martins\\ Faculdade de Engenharia da Universidade Cat\'{o}lica Portuguesa\\ Estrada Oct\'{a}vio Pato, 2635-631 Rio-de-Mouro
 Portugal\\ \texttt{ritamartins@fe.lisboa.ucp.pt}}

\address{Teresa Monteiro Fernandes\\ Centro de Matem\'atica e Aplica\c{c}\~{o}es Fundamentais e Departamento de Matem\' atica da FCUL, Complexo 2\\ 2 Avenida Prof. Gama Pinto, 1649-003, Lisboa
 Portugal\\ \texttt{tmf@ptmat.fc.ul.pt}}

\address{David Raimundo\\ Centro de Matem\'atica e Aplica\c{c}\~{o}es Fundamentais e Departamento de Matem\' atica da FCUL, Complexo 2\\ 2 Avenida Prof. Gama Pinto, 1649-003, Lisboa
 Portugal\\ \texttt{dsraimundo@fc.ul.pt}}

\subjclass[2010]{ Primary: 32C38, 46L65; Secondary: 18E30, 46A13}

\begin{abstract}
Suppose we are given  complex manifolds $X$ and $Y$ together with substacks  $\mathcal{S}$ and $\mathcal{S}'$ of  modules over   algebras of formal deformation $\mathcal{A}$ on $X$ and $\mathcal{A}'$ on $Y$, respectively. 
Suppose also we are given a functor $\Phi$ from the category of open subsets of $X$ to the category of open subsets of $Y$ together with a  functor $F$ of prestacks from $\mathcal{S}$ to $\mathcal{S}'\circ\Phi$.  Then we give
conditions for the existence of a canonical functor, extension of $F$ to the category of
coherent
${\sha}$-modules such that the cohomology associated to the action of the formal parameter
$\hbar$ takes values in $\mathcal{S}$.  We give an explicit construction and prove that when the initial functor $F$ is exact on each open subset, so is its extension. Our construction permits to extend the functors of inverse image, Fourier transform, specialization and   microlocalization, nearby and vanishing cycles in the framework of $\shd[[\hbar]]$-modules. We also obtain a Cauchy-Kowalewskaia-Kashiwara theorem in the non-characteristic case as well as comparison theorems for regular holonomic $\shd[[\hbar]]$-modules and a coherency criterion for  proper direct images of good $\shd[[\hbar]]$-modules.
\end{abstract}
\maketitle

\tableofcontents

\section*{Introduction.}

 On a complex manifold $X$ we consider the sheaf $\shd_X$ of differential operators and  the sheaf $\shd_X[[\hbar]]$ (noted $\shd_X^{\hbar}$ for short) of formal differential operators on a parameter $\hbar$. For the main results on modules over  $\shd_X$  we refer to \cite{Ka2} and for those on modules over $\shd_X^{\hbar}$ we refer to \cite{KS2} and   to \cite{DGS}. The notion of algebras of formal deformation  and the main results we need here were obtained in \cite{KS2}.

Our first motivation  was to understand the behavior of a coherent $\shd_X^{\hbar}$-module near a submanifold $Y$.
The natural tool is to define conveniently  a functor of inverse image generalizing the $\shd$-module case.
Alternatively, one can also look for a generalization of the functor of specialization. Recall that inverse image  on the category of $\shd$-modules is not exact, unless we assume in addition that the objects are non-characteristic. On the other hand, specialization is an exact  functor on the Serre subcategory of specializable $\shd$-modules.

To treat inverse image turned out to be not too hard because one finds a natural candidate to play the role of transfer module as we shall see later. On the other hand,    $\shd_X^{\hbar}$ is not provided with a  natural equivalent to Kashiwara-Malgrange $V$-filtration and specialization is far from being  a mere copy of the $\shd$-module case so that the study of its properties takes an important place in this work.

For a given sheaf $\sha$ of coherent rings  one denotes by $\text{Mod}_{coh}(\sha)$ the abelian category of coherent left $\sha$-modules.  Let $\mathbb{K}$ be a unital commutative Noetherian ring with finite global dimension.

The general problem   then became the following:

Given two complex manifolds $X$ and $Y$, together   with two $\mathbb{K}$-algebras of formal deformation $\mathcal{A}$ on $X$, and $\mathcal{A}'$ on $Y$,
given a right exact (respectively exact) functor $F$ from a given full Serre subcategory $\mathcal{S}$ of $\text{Mod}_{coh}(\sha)$ to a given full subcategory $\mathcal{S}^\prime$ of
$\text{Mod}(\mathcal{A}')$, find   the natural  subcategory  containing $\mathcal{S}$ to which
 $F$ extends canonically   as a right exact (respectively exact) functor, let us say, $F^{\hbar}$.

For each $n\in \N_0$  and for each left $\sha$-module, consider the quotient $\shm_n=\shm/\hbar^{n+1}\shm$ and, for $n\leq k$, denote by $\rho_{k,n}$  the projection $\shm_n\to\shm_k$.
If one assumes that, for each $n$, $\shm_n\in\mathcal{S}$,  then the natural candidate $F^{\hbar}(\shm)$ will be the projective limit
\begin{equation}\label{E00}
\varprojlim_n F(\shm_n)
\end{equation}
of the associated projective system  $(F(\shm_n),F(\rho_{k,n}))_n$.
This construction will be the heart of our study.

To be rigorous, we will resort to the framework of stacks and the reason is that we will be interested in Serre subcategories whose objects are defined by local properties.
Recall that   stacks  provide the framework  where the notion of sheaves of categories takes a sense.  However, throughout this work, we only deal with the easiest example of stacks consisting precisely of sheaves of categories, since they are substacks of modules over a sheaf of $\mathbb{K}$-algebras and the restriction morphisms are nothing more than the usual restriction of sheaves to open subsets. In particular all these stacks  are $\mathbb{K}[[\hbar]]$-linear, where $\hbar$ denotes the  central formal parameter in each of the algebras.

Recall that one denotes by  $Op(X)$ the category of open subsets of $X$ where the morphisms are defined  by the inclusions.
Let $\mathscr{M}od(\sha)$ denote the stack $U\mapsto\text{Mod}(\sha|_U)$, $U\in Op(X)$. Given an abelian  substack $\mathcal{C}$ of $\mathscr{M}od(\sha)$, a full substack $\mathcal{C}'$ of $\mathcal{C}$ is said to be a full Serre substack if, for each $U\in Op(X)$, $\mathcal{C}'(U)$ is a full Serre subcategory of $\mathcal{C}(U)$.

Accordingly, in the sequel, $\mathcal{S}$ will denote a  full Serre substack  of the stack $\mathscr{M}od_{coh}(\sha): U\mapsto \text{Mod}_{coh}(\sha|_U)$.
For the sake of simplicity, and whenever there is no ambiguity, we shall often say that a coherent $\mathcal{A}|_U$-module defined on $U\in Op(X)$ belongs to $\mathcal{S}$ if it belongs to $\mathcal{S}(U)$.

Let us now outline the main result of this work:


Assume that we are given a  full Serre substack $\mathcal{S}$  of  $\mathscr{M}od_{coh}(\sha)$ and a  full Serre substack $\mathcal{S}'$ of  $\mathscr{M}od(\mathcal{A}')$.
Consider   the category  $\text{Mod}_{S}(\mathcal{A})$ of $\text{Mod}_{coh}(\sha)$ characterized  by the property that, for each $n$, the kernel and the cokernel of the action of $\hbar^{n+1}$ belong to  $\mathcal{S}(X)$.  Assume we are given a functor $\Phi$ from $Op(X)$ to $Op(Y)$ such that $\Phi(X)=Y$ and that $\Phi$ transforms any open covering of any $\Omega\in Op(X)$ on an open covering of $\Phi(\Omega)$.
Denote by $\Phi^\ast \mathcal{S}'$ the prestack $U\mapsto \Phi^\ast\mathcal{S}'=\mathcal{S}'(\Phi(U))$
and assume that we are  given a $\mathbb{K}[[\hbar]]$-linear functor of prestacks $F: \mathcal{S}\to \Phi^\ast\mathcal{S}'$.
This means, in particular, that for each pair  $V,U\in Op(X)$ with $V\subset U$ we have the following commutative diagram of functors
of categories whose vertical arrows are the restriction functors:
$$\begin{matrix}
\mathcal{S}(U) & \xrightarrow{F(U)} & \mathcal{S}'(\Phi(U)) \\
\downarrow &   & \downarrow \\
\mathcal{S}(V) & \xrightarrow{F(V)} & \mathcal{S}'(\Phi(V)).
\end{matrix}$$

Here we prove the following (Theorem \ref{T:1} below):
 If, for each  $U\in Op(X)$, $F(U):\mathcal{S}(U)\to \mathcal{S}'(\Phi(U))$ is right exact (respectively exact), then, under a condition on the vanishing of the cohomology for $\mathcal{S}'(V)$, with $V$ running on the objects of $Op(Y)$ (Condition \ref{A}), automatically fulfilled by coherent modules,  by (\ref{E00}) we obtain a  canonical functor $F^{\hbar}:\text{Mod}_\mathcal{S}(\mathcal{A})\to\text{Mod}(\mathcal{A}^\prime)$.    Moreover  $F^{\hbar}$  is right exact (respectively exact).

Namely, when $\mathcal{S}^\prime$ is a substack of  coherent $\mathcal{A}^\prime$-modules, then $F^\hbar$ takes values in $\text{Mod}_{\mathcal{S}^\prime}(\mathcal{A}^\prime)$. Moreover, if each $F(U)$ is exact, this extension is, in a certain sense, unique  up to isomorphism.

The term "canonical" means that our construction is indeed functorial in $\mathcal{S}, \mathcal{S}', \Phi$ and $F$  (cf. Remark \ref{P:nova}).

 After the preliminary results in Sections $1$ and $2$,  in Section $3$ we prove
 Theorem \ref{T:1} using the following key facts:
\begin{itemize}
\item{a right exact functor combined with the action of $\hbar^{n+1}$ transforms, for each $n$,
exact sequences of $\mathcal{A}$-modules into right exact
sequences of $\mathcal{A}'_n$-modules;}
\item{the exactness of $\Gamma(K, \cdot)$ for $K$ belonging to adequate basis of the topologies on the manifolds,  for the categories we consider;}
\item{the exactness  of projective limit on the category of projective systems
satisfying  Mittag-Lefler's condition.}
\end{itemize}

In Section $4$ we use Theorem \ref{T:1} to treat the case of
 $\mathcal{A}=\shd_X^{\hbar}$ (respectively $\mathcal{A}'=\shd_{Y}^{\hbar})$. The situation   is then  simpler because the modules $\shd_X^{\hbar}/\hbar^{n+1}\shd_X^{\hbar}$ are free over $\mathcal{A}_0\simeq \shd_X$, so technically we are bound to extend  a right exact functor $F$ defined on a Serre substack $\mathcal{S}$ of  coherent $\shd_X$-modules.

In this way we obtain a natural setting for the extensions of the functors of inverse image, direct image by a closed embedding, specialization,
nearby and vanishing cycles, Fourier transform and microlocalization for $\shd_X^{\hbar}$-modules, which are performed in Section \ref{S2}.
 Namely, in the case of the extended  inverse image functor for a morphism $f$, when we restrict to the Serre substack of non-characteristic modules, we prove a formal version of the Cauchy-Kowalewskaia-Kashiwara theorem (Theorem~\ref{CKKT}).
We also generalize the functor \textit{extraordinary} inverse image using the concept of duality introduced in \cite{DGS} and we prove in Proposition \ref{P:hol} and in  Corollary~\ref{C:hol} that the property of  holonomicity (as well as that of  regular holonomicity)
 is stable under  inverse image (respectively extraordinary inverse image).

Moreover, for the extension of the specialization, microlocalization, vanishing and nearby cycles functors, when we restrict to the category of regular holonomic $\shd_X^{\hbar}$-modules in the sense of \cite {DGS}, we obtain comparison theorems which are the formal version of the results proved by Kashiwara in \cite{Ka1}(Theorems~\ref{T:22} and~\ref{T60}, and Corollaries~\ref{comparison cycles} and~\ref{C61}).

Remaining natural questions are the (left) derivability of $F^{\hbar}$ as well as the extension of left exact functors.

For the first, a difficulty in constructing
an $F^\hbar$-projective subcategory  comes certainly from the  behavior of the (left exact) functor $\varprojlim$  for which we don't  have in general enough injectives. This functor also lacks good properties with respect to the usual operations in sheaf theory. Therefore, even if there  exists  an $F$-projective subcategory $\mathcal{P}$,  to our knowledge there is no canonical way of constructing an $F^\hbar$-projective subcategory.

In what concerns direct images, which are defined as the composition of derived functors, one being left exact, the other being right exact, our method no longer applies except in particular cases, such as closed embeddings.  But there is another way, since we show that
in the case of inverse  image the extended  functor  can be given using a convenient transfer module as in $\shd$-module theory.
Once having  available a good notion of transfer module, we  can also obtain a natural extension of the functor of direct image.
In this setting, we prove a formal version of the theorem of coherency of proper direct image for good $\shd^{\hbar}$-modules (Theorem~\ref{T:405}).

So, as a by product of our general construction together with the results of \cite{DGS},  the so called $Grothendieck's$ six operations are generalized to the formal case.

\thanks{\textbf{Aknowledgment} We thank Stephane Guillermou not only for pointing out inaccuracies but also for his useful suggestions.}

\vspace{2mm}

\textbf{Convention 1.} The results in the first  three sections, with few exceptions, hold in the more general context of Hausdorff locally compact topological spaces.
For simplicity, in view of our motivations,  we stay in the complex analytic setting.

\section{Review on modules over formal deformations.}

In this section we recall the basic material we need from ~\cite{KS2}.

Let $X$ be a complex manifold of finite dimension $d_X$.

Let $\mathbb{K}$ be a unital commutative Noetherian ring with finite global dimension. 
Recall that, for brevity, one denotes $\C[[\hbar]]$ by $\C^{\hbar}$.

Given a sheaf $\mathcal{R}$ of $\mathbb{K}$-algebras on $X$, we denote by $\text{Mod}(\mathcal{R})$ the category of left $\mathcal{R}$-modules, by $D(\mathcal{R})$ the derived category of $\text{Mod}(\mathcal{R})$ and by $D^\ast(\mathcal{R})$ ($\ast=+,-,b)$ the full triangulated subcategory of $D(\mathcal{R})$ consisting of objects with bounded from below (resp. bounded from above, resp. bounded) cohomology.

Recall that a full subcategory  $\mathcal{S}$ of an abelian category $\mathcal{C}$  is thick if for any exact sequence $$Y\to Y'\to X\to Z\to Z'$$ in $\mathcal{C}$ with $Y, Y', Z, Z'$ in $\mathcal{S}$, $X$ belongs to $\mathcal{S}$.

Equivalently, $\mathcal{S}$ is a full abelian subcategory such that,  given a short exact sequence $0\to X'\to X\to X''\to 0$ in $\mathcal{C}$, when two of the objects $X', X$ or $X''$ are in $\mathcal{S}$ then the third also belongs to $\mathcal{S}$.  If, moreover, $\mathcal{S}$ contains all subobjects and quotient objects of its objects,
then $\mathcal{S}$ is called a Serre subcategory.

 Let $\mathcal{S}$ be a full thick subcategory of $\mathcal{C}$ and let $D^{b}(\mathcal{C})$ be the bounded derived category of $\mathcal{C}$. One denotes by $D^b_{\mathcal{S}}(\mathcal{C})$ the full triangulated subcategory of $D^b(\mathcal{C})$ consisting of objects with cohomology in $\mathcal{S}$. In the cases listed below we recall classical abbreviations.

\begin{example}\label{example1}

\noindent
\begin{itemize}
\item{The subcategory $\text{Mod}_{coh}(\mathcal{R})$ of coherent modules over a coherent ring $\mathcal{R}$ is thick, and  the associated category  is denoted  by $D^b_{coh}(\mathcal{R})$.}

\item{The subcategory $\text{Mod}_{\R-c}(\mathbb{K}_X)$ of  $\R$-constructible sheaves of $\mathbb{K}$-modules is a thick subcategory of $\text{Mod}(\mathbb{K}_X)$ and the associated category is denoted by $D^b_{\R-c}(\mathbb{K}_X).$}
\end{itemize}
For a complex manifold $X$, the following are Serre subcategories of  $\text{Mod}_{coh}(\mathcal{\shd}_X)$:
\begin{itemize}
\item{The subcategory $\text{Mod}_{good}(\shd_X)$ of good $\shd_X$-modules  and  the associated category  is denoted  by $D^b_{good}(\shd_X)$.}
\item{The subcategory  $\text{Mod}_{hol}(\shd_X)$ of holonomic $\shd_X$-modules and the subcategory  $\text{Mod}_{rh}(\shd_X)$ of regular holonomic $\shd_X$-modules and the associated categories are respectively denoted by $D^b_{hol}(\shd_X)$ and  $D^b_{rh}(\shd_X).$}
\item{The subcategory $NC(f)$ of non-characteristic $\shd_X$-modules with respect to a given holomorphic function $f:Y \to X$, where $Y$ is another complex manifold and the associated category is denoted by $D^b_{NC(f)}(\shd_X)$.}
\item{The subcategory $\text{Mod}_{sp}(\shd_X)$ of $\shd_X$-modules specializable along a given submanifold $Y$ and the associated category  is denoted  $D^b_{sp}(\shd_X)$.}

\end{itemize}
\end{example}

Given a sheaf $\shm$ of $\mathbb{Z}_X[\hbar]$-modules, set $\shm_n= \shm/\hbar^{n+1}\shm$ and for $n\geq k$ let  $\rho_{k,n}:\shm_n\to \shm_k$ denote the canonical epimorphisms.
One says that $\shm$ is $\hbar$-torsion free if $\hbar: \shm\to \shm$ is injective and one says that $\shm$ is $\hbar$-complete if
the canonical morphism $\shm\to\underset{{n\geq 0}}\varprojlim \shm_n$ is an isomorphism.

A family $\mathcal{B}$ of compact subsets of $X$ is said to be a basis of compact subsets of $X$ if for any $x\in X$ and any open neighborhood $U$ of $x$, there exists $K\in\mathcal{B}$ such that $x\in Int(K)\subset U$.

In the following we shall consider a $\mathbb{K}$-algebra $\mathcal{A}$ on $X$ and a section $\hbar$ of $\mathcal{A}$ contained in the center of $\mathcal{A}$. Set $\mathcal{A}_0=\mathcal{A}/\hbar\mathcal{A}$.

Consider the following conditions:
\begin{enumerate}[{\rm (i)}]
\item $\mathcal{A}$ is $\hbar$-torsion free and is $\hbar$-complete,
\item $\mathcal{A}_0$ is a left Noetherian ring,
\item
there exists a basis $\mathcal{B}$ of open subsets of $X$ such that for any $U\in\mathcal{B}$
and any coherent $(\mathcal{A}_0\vert_U)$-module $\shf$ we have $H^n(U;\shf)=0$ for any $n>0$,
\item
there exists a basis $\mathcal{B}$ of compact subsets of $X$ and a prestack
$U\mapsto\text{Mod}_{good}(\mathcal{A}_0\vert_U)$ ($U$ open in $X$) such that
\begin{enumerate}[{\rm (a)}]
\item for any $K\in \mathcal{B}$ and an open subset $U$ such that $K\subset U$,
there exists $K'\in\mathcal{B}$ such that $K\subset Int(K')\subset K'\subset U$,
\item
$U\mapsto \text{Mod}_{good}(\mathcal{A}_0\vert_U)$ is a full subprestack of
$U\mapsto \text{Mod}_{coh}(\mathcal{A}_0\vert_U)$,
\item for an open subset $U$ and $\shm\in\text{Mod}_{coh}(\mathcal{A}_0\vert_U)$,
if $\shm\vert_V$ belongs to $\text{Mod}_{good}(\mathcal{A}_0\vert_V)$ for any
relatively compact open subset $V$ of $U$,
then $\shm$ belongs to $\text{Mod}_{good}(\mathcal{A}_0\vert_U)$,\label{cond:exh}
\item
for any $U$ open in $X$,
$\text{Mod}_{good}(\mathcal{A}_0\vert_U)$ is stable by subobjects (and hence, by quotients)
in $\text{Mod}_{coh}(\mathcal{A}_0\vert_U)$,
\item
for any $K\in\mathcal{B}$, any open set $U$ containing $K$, any $\shm\in\text{Mod}_{good}(\mathcal{A}_0\vert_U)$
and any $j>0$, one has $H^j(K;\shm)=0$,
\item for any $\shm\in\text{Mod}_{coh}(\mathcal{A}_0\vert_U)$, there exists an open covering
$U=\bigcup_iU_i$ such that
$\shm\vert_{U_i}\in\text{Mod}_{good}(\mathcal{A}_0\vert_{U_i})$,
\label{goodlocal}
\item $\mathcal{A}_0\in\text{Mod}_{good}(\mathcal{A}_0)$.\label{cond:good}
\end{enumerate}
\end{enumerate}

We shall say that $\mathcal{A}$ is an algebra of formal deformation if $\mathcal{A}$ and $\mathcal{A}_0$ satisfy either Assumption \ref{AssumptionA} or Assumption \ref{AssumptionB} below:\begin{assumption}\label{AssumptionA}
 $\mathcal{A}$ and $\mathcal{A}_0$ satisfy  conditions ${\rm (i)}$, ${\rm (ii)}$ and ${\rm (iii)}$ above.
\end{assumption}
\begin{assumption}\label{AssumptionB}
$\mathcal{A}$ and $\mathcal{A}_0$ satisfy  conditions
${\rm (i)}$, ${\rm (ii)}$ and ${\rm (iv)}$ above.
\end{assumption}

In particular, with Assumption~\ref{AssumptionA} or Assumption~\ref{AssumptionB}, $\mathcal{A}$ and $\mathcal{A}_n$ are left Noetherian rings, for every $n\geq 0$ (see Lemma 1.2.3 and Theorems 1.2.5 and 1.3.6 of ~\cite{KS2}).

One defines a right exact functor assigning
the object $\shm/\hbar^{n+1} \shm\in\text{Mod}(\mathcal{A}_n)$ to $\shm\in\text{Mod}(\mathcal{A})$.
Its left derived functor is given by:
\begin{eqnarray*}
gr_\hbar^n :D^b(\mathcal{A})&\to&D^b(\mathcal{A}_n),\\
\shm&\mapsto& \shm\stackrel{{L}}{\otimes}_{\mathcal{A}} \mathcal{A}_n.
\end{eqnarray*}

Recall that the functor $gr_\hbar^0$ was defined and studied in~\cite{KS2} and noted by $gr_\hbar$.

For $\shm\in\text{Mod}(\mathcal{A})$ one sets:
$$_{n}\shm= \ker(\shm\stackrel{\hbar^{n+1}}{\to}\shm).$$

Recall that a coherent $\mathcal{A}$-module   is a locally finitely generated $\mathcal{A}$-module $\shm$ such that, for any open subset $U\subset X$ and for each  locally finitely generated submodule $\shm'$ of $\shm|_U$,  locally  $\shm'$ admits a   finite free presentation.

If $\shm$ is a coherent $\mathcal{A}$-module then $_n\shm$ and $\shm_n$
are coherent $\mathcal{A}_n$-modules.

Recall that, for each $n\geq 0$, the category Mod$(\mathcal{A}_n)$ and the full subcategory of Mod$(\mathcal{A})$ whose objects are those $\shm$ such that $\hbar^{n+1}\shm\simeq 0$ are equivalent. Moreover:

\begin{lemma}[\cite{KS2}, Lemma 1.2.3]
Let $n\geq 0$. An $\mathcal{A}_n$-module $\shm$ is coherent as an $\mathcal{A}_n$-module if and only if it is so as an $\mathcal{A}$-module.
\end{lemma}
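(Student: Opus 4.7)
The plan is to prove the two implications separately, relying on the Noetherianity of $\mathcal{A}$ and of $\mathcal{A}_n$ (which has been recalled just before the lemma), and on the fact that over a left Noetherian sheaf of rings, being coherent is equivalent to being locally finitely generated. The restriction of scalars along the canonical surjection $\mathcal{A}\to\mathcal{A}_n$ gives a fully faithful functor $\text{Mod}(\mathcal{A}_n)\hookrightarrow\text{Mod}(\mathcal{A})$ whose essential image consists of those $\shm$ with $\hbar^{n+1}\shm=0$; this identification is what one must compare for the two notions of coherence.

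For the implication $(\Rightarrow)$, suppose $\shm$ is $\mathcal{A}_n$-coherent. Then locally there exists an epimorphism $\mathcal{A}_n^{\oplus q}\twoheadrightarrow\shm$, and composing with the surjection $\mathcal{A}^{\oplus q}\twoheadrightarrow \mathcal{A}_n^{\oplus q}$ shows that $\shm$ is locally finitely generated as an $\mathcal{A}$-module. Since $\mathcal{A}$ is left Noetherian by the conditions preceding the lemma, every locally finitely generated $\mathcal{A}$-module is $\mathcal{A}$-coherent, so we conclude.

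For the implication $(\Leftarrow)$, suppose $\shm$ is $\mathcal{A}$-coherent. Then $\shm$ is locally finitely generated as an $\mathcal{A}$-module: there is a local epimorphism $\mathcal{A}^{\oplus q}\twoheadrightarrow\shm$. Because $\hbar^{n+1}$ acts as zero on $\shm$, this epimorphism factors through $\mathcal{A}^{\oplus q}/\hbar^{n+1}\mathcal{A}^{\oplus q}=\mathcal{A}_n^{\oplus q}$, yielding a local epimorphism $\mathcal{A}_n^{\oplus q}\twoheadrightarrow\shm$. Thus $\shm$ is locally finitely generated over $\mathcal{A}_n$, and since $\mathcal{A}_n$ is itself left Noetherian, it follows that $\shm$ is $\mathcal{A}_n$-coherent.

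The statement is essentially a bookkeeping argument, so I do not anticipate a serious obstacle; the only subtlety is making precise that coherence over a Noetherian sheaf of rings coincides with local finite generation, which is the standard fact that allows one to bypass any delicate verification about local finite presentability of subsheaves.
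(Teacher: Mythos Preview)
The paper does not supply its own proof of this lemma: it is quoted verbatim from \cite{KS2}, Lemma~1.2.3, and no argument is given in the text. So there is nothing to compare your proposal against here.

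That said, your argument is correct and is essentially the standard one. You use exactly the two inputs the paper has already recorded just above the lemma: that both $\mathcal{A}$ and $\mathcal{A}_n$ are left Noetherian sheaves of rings, so that ``coherent'' coincides with ``locally finitely generated''. Once that identification is in hand, the restriction/extension of scalars along the surjection $\mathcal{A}\twoheadrightarrow\mathcal{A}_n$ makes the equivalence immediate, as you wrote. The only point worth flagging is a mild circularity relative to the original source: in \cite{KS2} the Noetherianity of $\mathcal{A}_n$ is part of the same Lemma~1.2.3, so in that reference the two statements are proved together rather than one being deduced from the other. Within the present paper, however, Noetherianity has already been accepted by citation, so your deduction is clean.
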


Suppose that the property (iv) in Assumption~\ref{AssumptionB} holds.
Denote by $D^b_{good}(\mathcal{A}_0)$ the full triangulated subcategory of $D^b(\mathcal{A}_0)$ consisting of objects with cohomology in   $\text{Mod}_{good}(\mathcal{A}_0)$. One says that $\shm\in D^b_{coh}(\mathcal{A})$ is good if $gr_\hbar(\shm) \in D^b_{good}(\mathcal{A}_0)$.
One denotes by $D^b_{good}(\mathcal{A})$ the full subcategory of $D^b_{coh}(\mathcal{A})$ consisting of good objects.

\begin{theorem}[\cite{KS2}, Theorem 1.3.6]\label{T2}
For any good $\mathcal{A}$-module $\shm$ and any $K\in\mathcal{B}$, we have $H^j(K; \shm)=0,$ for any $j>0$.
\end{theorem}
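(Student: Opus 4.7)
The plan is to reduce the statement to a statement about modules over $\mathcal{A}_0$, for which condition (iv)(e) applies, by using the $\hbar$-adic filtration on $\shm$ together with $\hbar$-completeness of $\shm$.

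First I would observe that since $\mathcal{A}$ itself is $\hbar$-torsion free and $\hbar$-complete, any coherent $\mathcal{A}$-module $\shm$ is $\hbar$-complete, so the canonical map $\shm \to \varprojlim_n \shm_n$ is an isomorphism. I would therefore aim at computing the cohomology on $K\in\mathcal{B}$ of each $\shm_n$ and then passing to the limit. To deal with the individual $\shm_n$, consider the $\hbar$-adic filtration
\[
0 \subset \hbar^n\shm/\hbar^{n+1}\shm \subset \hbar^{n-1}\shm/\hbar^{n+1}\shm \subset \cdots \subset \shm/\hbar^{n+1}\shm=\shm_n
\]
whose successive subquotients are the graded pieces $\hbar^k\shm/\hbar^{k+1}\shm$ for $0\le k\le n$. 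Each such graded piece is annihilated by $\hbar$, hence is naturally an $\mathcal{A}_0$-module, and multiplication by $\hbar^k$ induces a surjection $\shm_0=\shm/\hbar\shm \twoheadrightarrow \hbar^k\shm/\hbar^{k+1}\shm$. Since $\shm$ is good, $\shm_0=gr_\hbar^0(\shm)$ is a good $\mathcal{A}_0$-module, and by (iv)(d) the category of good $\mathcal{A}_0$-modules is closed under quotients, so each graded piece is a good $\mathcal{A}_0$-module.

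I would then apply condition (iv)(e) to each graded piece to get $H^j(K;\hbar^k\shm/\hbar^{k+1}\shm)=0$ for $j>0$ and all $k$. A short induction on $n$ using the long exact cohomology sequence attached to
\[
0\to \hbar^{n+1}\shm/\hbar^{n+2}\shm\to \shm_{n+1}\to \shm_n\to 0
\]
yields $H^j(K;\shm_n)=0$ for every $j>0$ and every $n$. Moreover, the surjectivity of $\shm_{n+1}\to\shm_n$ together with the vanishing of $H^1(K;\hbar^{n+1}\shm/\hbar^{n+2}\shm)$ implies that the transition maps $\Gamma(K;\shm_{n+1})\to\Gamma(K;\shm_n)$ are surjective, so the projective system $\{\Gamma(K;\shm_n)\}$ satisfies Mittag-Leffler.

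Finally, using $\shm\simeq\varprojlim_n\shm_n$, I would invoke the standard fact that for a projective system of sheaves with vanishing higher cohomology on $K$ and Mittag-Leffler sections, the higher cohomology of the limit also vanishes; concretely, $H^j(K;\varprojlim_n\shm_n)$ fits into a short exact sequence involving $R^1\varprojlim\Gamma(K;\shm_n)$ and $\varprojlim H^j(K;\shm_n)$, both of which vanish by the previous step. This gives $H^j(K;\shm)=0$ for $j>0$. The main obstacle is the last passage to the limit, which relies both on $\hbar$-completeness of coherent $\mathcal{A}$-modules (to identify $\shm$ with $\varprojlim\shm_n$) and on a careful control of the Mittag-Leffler condition in order to exchange $\varprojlim$ with cohomology on a compact set; everything else is a formal consequence of condition (iv).
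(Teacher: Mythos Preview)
The paper does not give its own proof of this statement: it is recorded as a citation of \cite[Theorem~1.3.6]{KS2} with no argument. So there is nothing in the paper to compare your proof against directly. That said, the paper reproduces essentially your argument later, in Lemma~\ref{L:Dav}, to establish an analogous vanishing for $\varprojlim_n \shm_n$ under Condition~\ref{A}; your steps (filtration by $\hbar^k\shm/\hbar^{k+1}\shm$, quotient of $\shm_0$ hence good, induction on $n$, then pass to the limit) are exactly the skeleton of that lemma.

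One point that deserves more care than you give it is the final passage to the limit when $K$ is compact rather than open. Your invocation of a short exact sequence relating $H^j(K;\varprojlim_n\shm_n)$, $R^1\varprojlim\Gamma(K;\shm_n)$ and $\varprojlim H^j(K;\shm_n)$ presupposes a commutation $R\Gamma(K;-)\circ R\varprojlim \simeq R\varprojlim\circ R\Gamma(K;-)$ that is not entirely formal for sheaves on a non-compact ambient space. The paper handles this in Lemma~\ref{L:Dav}(2) by shrinking to a fundamental system of compact neighbourhoods $\tilde K\in\mathcal{B}$ and invoking \cite[Exercise~II.12(b)]{KS1} on the restricted system $\shm_n|_{\tilde K}$; this is the clean way to justify the step. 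A second, milder caveat: you use that a coherent $\mathcal{A}$-module is $\hbar$-complete, which in the paper's presentation is another part of the same \cite[Theorem~1.3.6]{KS2} (recorded here as Theorem~\ref{T1}). This is not circular in \cite{KS2}, but you should be aware you are importing that fact.
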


\begin{theorem}[\cite{KS2}, Theorem 1.3.6]\label{T1}
An $\mathcal{A}$-module $\shm$ is coherent if and only if it is $\hbar$-complete and $\hbar^n\shm/\hbar^{n+1}\shm$ is a coherent $\mathcal{A}_0$-module for any $n\geq 0$.
\end{theorem}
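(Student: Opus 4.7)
The plan is to prove this biconditional by treating the two directions separately, the reverse direction being the more delicate one.

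For the forward implication, suppose $\shm$ is coherent over $\mathcal{A}$. To see that $\hbar^n\shm/\hbar^{n+1}\shm$ is coherent over $\mathcal{A}_0$, observe that multiplication by $\hbar^n$ defines a surjective $\mathcal{A}$-linear map $\shm_0=\shm/\hbar\shm \twoheadrightarrow \hbar^n\shm/\hbar^{n+1}\shm$, and $\shm_0$ is coherent over $\mathcal{A}_0$ because it is obtained from the coherent $\mathcal{A}$-module $\shm$ by extension of scalars along $\mathcal{A}\to\mathcal{A}_0$; since $\mathcal{A}_0$ is Noetherian, any quotient of $\shm_0$ is again coherent. For $\hbar$-completeness, I would work locally and pick a finite presentation $\mathcal{A}^q\xrightarrow{\psi}\mathcal{A}^p\to\shm\to 0$. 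Reducing modulo $\hbar^{n+1}$ gives a compatible family of presentations $\mathcal{A}_n^q\to\mathcal{A}_n^p\to\shm_n\to 0$, whose transition maps are surjective. Passing to the inverse limit, exactness on Mittag-Leffler systems together with the $\hbar$-completeness of $\mathcal{A}$ (so $\varprojlim \mathcal{A}_n^p\simeq\mathcal{A}^p$) yields $\shm\simeq\varprojlim\shm_n$.

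For the reverse implication, assume $\shm$ is $\hbar$-complete and each $\hbar^n\shm/\hbar^{n+1}\shm$ is $\mathcal{A}_0$-coherent. The task is to prove that $\shm$ is locally finitely generated and that each locally finitely generated submodule admits locally a finite free presentation. I would first work locally and choose generators $\bar s_1,\dots,\bar s_p$ of $\shm_0$, lifting them to $s_1,\dots,s_p\in\shm$. The induced morphism $\varphi:\mathcal{A}^p\to\shm$ is surjective by the following $\hbar$-adic Nakayama argument: given $m\in\shm$, build inductively coefficients $a_n^{(i)}\in\mathcal{A}$ with $m-\sum_i a_n^{(i)} s_i\in\hbar^{n+1}\shm$, using at each step that $\hbar^n\shm/\hbar^{n+1}\shm$ is generated by the classes of $\hbar^n s_i$. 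The difference of successive approximations lies in a deeper power of $\hbar$, so the sequences $(a_n^{(i)})$ converge $\hbar$-adically in $\mathcal{A}$; $\hbar$-completeness of $\shm$ then produces an actual preimage of $m$.

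To upgrade local finite generation to coherence, I would apply the same strategy to $\ker\varphi$. The key is to verify that $\ker\varphi$ satisfies the same two hypotheses, which is where Noetherianity of $\mathcal{A}_0$ and $\mathcal{A}_n$ (cited after Assumption~\ref{AssumptionB}) enters: $\ker\varphi/\hbar\ker\varphi$ embeds into the coherent $\mathcal{A}_0$-module $\mathcal{A}_0^p$ via the snake lemma applied to the presentation data, hence is finitely generated; $\hbar$-completeness of $\ker\varphi$ is inherited from that of $\mathcal{A}^p$ using that $\varphi$ is surjective with $\hbar$-complete target. Iterating the surjection argument for $\ker\varphi$ yields the desired finite free presentation.

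The main obstacle will be the inheritance of the two hypotheses by the kernel $\ker\varphi$; once that is established, the Nakayama-type lifting is the same as in the surjectivity step. Everything hinges on correctly tracking $\hbar$-adic convergence and on Mittag-Leffler exactness for surjective inverse systems, which is the thread common to both directions.
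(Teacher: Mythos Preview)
The paper does not prove this theorem: it is quoted verbatim from \cite{KS2}, Theorem~1.3.6, and no argument is supplied here. There is therefore nothing in the present paper to compare your proposal against.

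That said, a word on the proposal itself. The forward direction is fine. In the reverse direction, your Nakayama-type lifting for surjectivity of $\varphi:\mathcal{A}^p\to\shm$ is the standard one and works. The weak point is the claim that $\ker\varphi$ inherits the two hypotheses. You assert that $(\ker\varphi)/\hbar(\ker\varphi)$ \emph{embeds} into $\mathcal{A}_0^p$; but the snake lemma applied to $0\to\ker\varphi\to\mathcal{A}^p\to\shm\to 0$ and multiplication by $\hbar$ gives
\[
{}_0\shm \to (\ker\varphi)_0 \to \mathcal{A}_0^p \to \shm_0 \to 0,
\]
so the map $(\ker\varphi)_0\to\mathcal{A}_0^p$ need not be injective unless $\shm$ is $\hbar$-torsion free. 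You would need to control ${}_0\shm$ as well, and at this stage you do not yet know it is coherent. Similarly, the inheritance of $\hbar$-completeness by $\ker\varphi$ requires comparing $\hbar^{n+1}\ker\varphi$ with $\ker\varphi\cap\hbar^{n+1}\mathcal{A}^p$, an Artin--Rees type issue you have not addressed. These are the genuine technical points of the argument in \cite{KS2}; your outline identifies the right strategy but does not yet close these gaps.
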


For a  sheaf $\mathcal{R}$ of $\Z[\hbar]$-algebras,
set $\mathcal{R}^{\text{loc}}:=\mathcal{R}\otimes_{\Z_X[\hbar]}\Z_X[\hbar,\hbar^{-1}]$.
Following ~\cite{KS2}, $\shm\in D^b(\mathcal{R})$ is said to be a cohomologically $\hbar$-complete object if $R\shh\text{om}_{\mathcal{R}}(\mathcal{R}^{loc},\shm)\simeq R\shh\text{om}_{\Z_X[\hbar]}(\Z_X[\hbar,\hbar^{-1}],\shm)=0.$ We shall use for short the symbol $c\hbar c$ to distinguish cohomologically $\hbar$-complete objects.

\remark The category of $c \hbar c$ objects is a full triangulated subcategory of $D^b(\mathcal{R})$.  Namely, if in a distinguished triangle two of the terms are $c\hbar c$ the third is also $c\hbar c$.

Recall  that any $\shm\in D^b_{coh}(\mathcal{\sha})$ is $c \hbar c$.

For convenience, we denote by $\mathcal{C}$ the subcategory of $c\hbar c$-modules of $\text{Mod}(\Z_X[\hbar])$.

\begin{lemma}\label{chc}
$\mathcal{C}$ is a full abelian thick  subcategory of $\text{Mod}(\Z_X[\hbar])$.
\end{lemma}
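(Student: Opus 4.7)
The plan is to combine the remark preceding the lemma---which endows $c\hbar c$-objects in $D^b(\Z_X[\hbar])$ with the structure of a triangulated subcategory---with a modest amount of diagram chasing made possible by the low projective dimension of $\Z_X[\hbar,\hbar^{-1}]$ as a $\Z_X[\hbar]$-module. The latter is witnessed by the telescope resolution
\begin{equation*}
0\to \bigoplus_{n\geq 0}\Z_X[\hbar] \xrightarrow{\;1-\hbar\sigma\;} \bigoplus_{n\geq 0}\Z_X[\hbar]\to \Z_X[\hbar,\hbar^{-1}]\to 0,
\end{equation*}
where $\sigma$ denotes the shift, so that $R\shh\text{om}_{\Z_X[\hbar]}(\Z_X[\hbar,\hbar^{-1}],-)$ has cohomological amplitude $[0,1]$. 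Consequently, a module $\shm\in\text{Mod}(\Z_X[\hbar])$ lies in $\mathcal{C}$ if and only if both $\shh\text{om}_{\Z_X[\hbar]}(\Z_X[\hbar,\hbar^{-1}],\shm)$ and $\she\text{xt}^1_{\Z_X[\hbar]}(\Z_X[\hbar,\hbar^{-1}],\shm)$ vanish, and any short exact sequence in $\text{Mod}(\Z_X[\hbar])$ produces a six-term long exact sequence in these sheaves.

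Thickness then follows at once: a short exact sequence with two terms in $\mathcal{C}$ kills four of the six entries, forcing the remaining two to vanish. Equivalently, it follows directly from the remark applied to the distinguished triangle attached to the short exact sequence. Closure of $\mathcal{C}$ under finite direct sums is clear, since $\shh\text{om}$ and $\she\text{xt}^1$ commute with them in the second variable.

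The step requiring actual work is closure under kernels and cokernels, which combined with thickness upgrades $\mathcal{C}$ to a full abelian thick subcategory. Given $f:\shm\to\shn$ in $\mathcal{C}$ with image $\mathcal{I}$, I would apply the six-term sequence to the factored short exact sequences
\begin{equation*}
0\to \ker f\to \shm\to \mathcal{I}\to 0, \qquad 0\to \mathcal{I}\to \shn\to \text{Coker}(f)\to 0.
\end{equation*}
Using $\shm\in\mathcal{C}$ in the first yields $\shh\text{om}(\Z_X[\hbar,\hbar^{-1}],\ker f)=0$, $\she\text{xt}^1(\Z_X[\hbar,\hbar^{-1}],\mathcal{I})=0$, and an isomorphism $\shh\text{om}(\Z_X[\hbar,\hbar^{-1}],\mathcal{I})\simeq\she\text{xt}^1(\Z_X[\hbar,\hbar^{-1}],\ker f)$; using $\shn\in\mathcal{C}$ in the second kills the left-hand side of this isomorphism. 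Hence $\mathcal{I}$ and $\ker f$ belong to $\mathcal{C}$, and $\text{Coker}(f)$ then follows by thickness applied to the second sequence. I do not foresee a real obstacle; the only delicate point is the projective-dimension bound, which is supplied by the telescope resolution above.
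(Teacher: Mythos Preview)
Your proposal is correct and follows essentially the same strategy as the paper: both reduce to the projective dimension $\leq 1$ bound on $\Z_X[\hbar,\hbar^{-1}]$ and exploit the resulting short long exact sequence to prove closure under kernels and cokernels. The only cosmetic difference is that the paper packages the argument via the mapping cone triangle $\ker f[1]\to M(f)\to\operatorname{coker} f\xrightarrow{+1}$ (with $M(f)$ automatically $c\hbar c$), whereas you factor $f$ through its image and chase two six-term sequences; your telescope resolution makes explicit the projective-dimension bound that the paper simply asserts.
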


\begin{proof}
By the remark above it remains to prove that $\mathcal{C}$ is closed under kernels and cokernels.
Given a morphism $f:A\to B$ in $\mathcal{C}$,   the mapping cone $M(f)$ is $c\hbar  c$ in $D^b(\Z_X[\hbar])$ so from the distinguished triangle $$\text{ker}f[1]\to M(f)\to \text{coker}f\overset{+1}{\to}$$
we derive a distinguished triangle:$$R\shh\text{om}_{\Z_X[\hbar]}(\Z_X[\hbar,\hbar^{-1}],\text{ker} f[1])\to
R\shh\text{om}_{\Z_X[\hbar]}(\Z_X[\hbar,\hbar^{-1}], M(f))\to$$ $$\to
R\shh\text{om}_{\Z_X[\hbar]}(\Z_X[\hbar,\hbar^{-1}],\text{coker}
f)\xrightarrow{+1}.$$
Besides, $\Z[\hbar,\hbar^{-1}]$ is a $\Z[\hbar]$-module
with projective dimension $\leq 1$, so
$$R^j\shh\text{om}_{\Z_X[\hbar]}(\Z_X[\hbar,\hbar^{-1}],\text{coker} f)=0, \text{for} \ j\neq 0,1$$
$$R^j\shh\text{om}_{\Z_X[\hbar]}(\Z_X[\hbar,\hbar^{-1}],\text{ker} f)=0, \text{for} \ j\neq 0,1.$$
The result follows from the long exact sequence attached to the preceding triangle.
\end{proof}
\begin{theorem}[\cite{KS2}, Theorem 1.6.4]\label{T123}
Let $\shm\in D^b(\mathcal{A})$ and assume that $\shm$ is $c \hbar c$ and $gr_\hbar(\shm)$ is an object of $D^b_{coh}(\mathcal{A}_0)$. Then, $\shm$ is an object of $D^b_{coh}(\mathcal{A})$ and we have the isomorphisms
\begin{equation}\label{cohom}
H^i(\shm)\simeq \varprojlim_{n\geq 0} H^i(gr_\hbar^n(\shm)).
\end{equation}
\end{theorem}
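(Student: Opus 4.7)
The plan is to establish coherence of each $H^i(\shm)$ by combining (a) an inductive verification that every $gr_\hbar^n(\shm)$ lies in $D^b_{coh}(\sha_n)$, (b) the identification $\shm\simeq R\varprojlim_n gr_\hbar^n(\shm)$ forced by the $c\hbar c$ hypothesis, and (c) Theorem~\ref{T1}.

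For (a), start from the short exact sequence of left $\sha$-modules
$$
0\to \sha_0\xrightarrow{\hbar^n}\sha_n\to\sha_{n-1}\to 0,
$$
where the first arrow identifies $\sha_0$ with $\hbar^n\sha_n$ (using that $\sha$ is $\hbar$-torsion-free). Applying $\shm\stackrel{L}{\otimes}_{\sha}(-)$ produces the distinguished triangle
$$
gr_\hbar(\shm)\to gr_\hbar^n(\shm)\to gr_\hbar^{n-1}(\shm)\xrightarrow{+1}
$$
in $D^b(\sha)$. An induction on $n$, initialized by the hypothesis on $gr_\hbar(\shm)$ and using the equivalence of coherence over $\sha$ and over $\sha_n$ recalled just above the theorem, yields $gr_\hbar^n(\shm)\in D^b_{coh}(\sha_n)$ for every $n\geq 0$.

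For (b), the homotopy-colimit presentation $\sha^{\text{loc}}\simeq\mathrm{hocolim}_n\,\hbar^{-n}\sha$ together with the distinguished triangle $\sha\to \sha^{\text{loc}}\to \sha^{\text{loc}}/\sha\xrightarrow{+1}$ and the identification of $\sha^{\text{loc}}/\sha[-1]$ as the homotopy colimit of the tower $(\sha_n)_n$, allows one to apply $R\shh\text{om}_{\sha}(-,\shm)$: the $c\hbar c$ vanishing $R\shh\text{om}_{\sha}(\sha^{\text{loc}},\shm)=0$ converts this homotopy colimit on the source into a homotopy limit on the target, giving the canonical isomorphism $\shm\isoto R\varprojlim_n gr_\hbar^n(\shm)$ in $D^b(\sha)$. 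The corresponding derived inverse-limit spectral sequence relates $H^i(\shm)$ to $\varprojlim_n H^i(gr_\hbar^n(\shm))$ and $R^1\varprojlim_n H^{i-1}(gr_\hbar^n(\shm))$. Working locally on a basis of open subsets over which Noetherianity of $\sha_0$ ensures stabilization of descending chains of submodules, the coherent modules obtained in (a) satisfy the Mittag-Leffler condition, forcing the $R^1\varprojlim$ term to vanish and yielding the stated formula (\ref{cohom}).

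For (c), the isomorphism in (b) in particular forces $\hbar$-completeness of each $H^i(\shm)$. Moreover, the long exact sequence attached to $0\to\shm\xrightarrow{\hbar}\shm\to gr_\hbar(\shm)\to 0$ provides an injection $H^i(\shm)/\hbar H^i(\shm)\hookrightarrow H^i(gr_\hbar(\shm))$, and multiplication by $\hbar^n$ then gives a surjection onto $\hbar^n H^i(\shm)/\hbar^{n+1}H^i(\shm)$; hence every graded piece is a subquotient of the $\sha_0$-coherent module $H^i(gr_\hbar(\shm))$ and is itself coherent over $\sha_0$. Theorem~\ref{T1} then delivers coherence of $H^i(\shm)$ over $\sha$. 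The main obstacle is the rigorous justification of $\shm\simeq R\varprojlim_n gr_\hbar^n(\shm)$ from the $c\hbar c$ hypothesis, together with the handling of $R^1\varprojlim$ of sheaves in the Mittag-Leffler step; both depend crucially on the Noetherianity of $\sha_0$ and on the local basis furnished by conditions (iii) or (iv).
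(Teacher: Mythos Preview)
The paper does not contain a proof of this statement: Theorem~\ref{T123} is quoted verbatim from \cite[Theorem~1.6.4]{KS2} and used as a black box, so there is no ``paper's own proof'' to compare your proposal against. Your outline is broadly in the spirit of the argument in \cite{KS2}, but since the paper itself offers nothing here, let me comment directly on your sketch.

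There is a genuine gap in step~(b). You write that ``Noetherianity of $\sha_0$ ensures stabilization of descending chains of submodules'', and use this to obtain the Mittag--Leffler condition for the inverse system $\bigl(H^i(gr_\hbar^n(\shm))\bigr)_n$. But Noetherianity is the \emph{ascending} chain condition; it says nothing about descending chains. The images $\mathrm{Im}\bigl(H^i(gr_\hbar^m(\shm))\to H^i(gr_\hbar^n(\shm))\bigr)$ for $m\geq n$ form a \emph{descending} chain of coherent submodules of a coherent $\sha_n$-module, and there is no a~priori reason for this chain to stabilize. Establishing Mittag--Leffler here requires a substantive Artin--Rees-type argument (this is where the real work in \cite{KS2} lies), not a bare appeal to Noetherianity. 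Without it, neither the vanishing of $R^1\varprojlim$ nor the formula~\eqref{cohom} is justified, and the subsequent use of Theorem~\ref{T1} in step~(c) hangs in the air.

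A minor point: in step~(c) you invoke ``the long exact sequence attached to $0\to\shm\xrightarrow{\hbar}\shm\to gr_\hbar(\shm)\to 0$'', but $\shm$ is an object of $D^b(\sha)$, not a module; you mean the distinguished triangle $\shm\xrightarrow{\hbar}\shm\to gr_\hbar(\shm)\xrightarrow{+1}$. The long exact sequence you extract from it is correct, though.
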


\begin{theorem}[\cite{KS2},Theorem 1.6.6]\label{T124} Assume that $\mathcal{A}^{op}/\hbar\mathcal{A}^{op}$ is a Noetherian ring.
Let $\shm$ be a $c \hbar c$ $\mathcal{A}$-module with no $\hbar$-torsion and such that $\shm/\hbar\shm$ is a flat $\mathcal{A}_0$-module.
Then $\shm$ is a flat $\sha$-module.
\end{theorem}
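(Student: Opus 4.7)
The plan is to mirror Bourbaki's local criterion for flatness (\emph{Commutative Algebra}, Ch.~III, §5.2), with the cohomological $\hbar$-completeness of $\shm$ substituting for honest $\hbar$-adic completeness of the module. The argument decomposes naturally into two stages: first, flatness of each truncation $\shm_n=\shm/\hbar^{n+1}\shm$ over $\sha_n=\sha/\hbar^{n+1}\sha$; second, lifting this to flatness of $\shm$ over $\sha$ by means of the $c\hbar c$ hypothesis.

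For the first stage, I would proceed by induction on $n\geq 0$. The base case $n=0$ is the hypothesis. For the inductive step, the no-$\hbar$-torsion hypotheses on $\sha$ (condition~(i)) and on $\shm$ furnish two compatible short exact sequences of $\sha_n$-modules, namely $0\to\sha_0\xrightarrow{\cdot\hbar^n}\sha_n\to\sha_{n-1}\to 0$ and $0\to\shm_0\xrightarrow{\cdot\hbar^n}\shm_n\to\shm_{n-1}\to 0$. A diagram chase combining the inductive hypothesis (flatness of $\shm_{n-1}$ over $\sha_{n-1}$) with flatness of $\shm_0$ over $\sha_0$ and the long exact Tor sequence yields $\mathrm{Tor}_1^{\sha_n}(-,\shm_n)=0$.

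For the second stage, since $\sha$ is right Noetherian (by applying the analogue of Theorem 1.2.5 of \cite{KS2} to $\sha^{op}$, using the hypothesis that $\sha^{op}/\hbar\sha^{op}$ is Noetherian), it suffices to check $\mathrm{Tor}^{\sha}_i(\shn,\shm)=0$ for $i>0$ and for coherent right $\sha$-modules $\shn$. Take a resolution $P_\bullet\to\shn$ by finite free right $\sha$-modules, and set $C=P_\bullet\otimes_\sha\shm\simeq \shn\otimes_\sha^{L}\shm$ in $D^b(\sha)$. Each $P_i\otimes_\sha\shm$ is a finite direct sum of copies of $\shm$, hence $c\hbar c$; by Lemma~\ref{chc} and the remark that $c\hbar c$ objects form a triangulated subcategory, $C$ itself is $c\hbar c$. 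The no-$\hbar$-torsion of $\shm$ together with Stage 1 yields $\mathrm{gr}_\hbar^n(C)\simeq(\shn\otimes_\sha^{L}\sha_n)\otimes_{\sha_n}\shm_n$, whose cohomology in degree $-1$ equals $({}_n\shn)\otimes_{\sha_n}\shm_n$ and whose transition maps in the projective system $({}_n\shn)_n$ are multiplication by $\hbar$. Since $\shn$ is coherent and hence $\hbar$-complete (Theorem~\ref{T1}), the chain $({}_n\shn)_n$ stabilizes and satisfies $\bigcap_n\hbar^n\cdot{}_n\shn=0$, whence $\varprojlim_n({}_n\shn\otimes_{\sha_n}\shm_n)=0$. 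A $c\hbar c$ refinement of Theorem~\ref{T123} (permitting a $\varprojlim^1$ correction, which is trivially zero in sufficiently negative cohomological degrees since $\shn\otimes_\sha^L\sha_n$ has amplitude $[-1,0]$) then gives $H^{-i}(C)=0$ for $i>0$.

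The principal obstacle is Stage 2: the vanishing $\varprojlim_n({}_n\shn\otimes_{\sha_n}\shm_n)=0$ is the substantive step, and it is precisely where the $c\hbar c$ hypothesis on $\shm$ plays the role that ideal-separatedness plays in the classical Bourbaki criterion; the Noetherian hypothesis on $\sha^{op}/\hbar\sha^{op}$ enters here to ensure coherence of the auxiliary torsion submodules involved and thereby the Mittag-Leffler property needed to collapse the inverse limit.
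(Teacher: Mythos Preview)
The paper does not prove this statement; Theorem~\ref{T124} is quoted from \cite[Theorem~1.6.6]{KS2} without argument (and is invoked only once, in the proof of Corollary~\ref{closedemb}). So there is no proof in the paper to compare yours against.

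Your two-stage strategy is the natural one and is essentially how \cite{KS2} proceeds. Stage~1 is correct: the no-$\hbar$-torsion hypothesis on $\shm$ gives $\mathrm{Tor}_1^{\sha}(\sha_0,\shm)=0$, and together with flatness of $\shm_0$ over $\sha_0$ this is exactly the input to Bourbaki's local criterion, yielding flatness of each $\shm_n$ over $\sha_n$. In Stage~2 your key computation is also right: the transition maps in the system $\bigl(H^{-1}(gr_\hbar^n C)\bigr)_n\simeq\bigl({}_n\shn\otimes_{\sha_n}\shm_n\bigr)_n$ are induced by multiplication by $\hbar$ on the first factor, and since $({}_n\shn)_n$ locally stabilizes to a module annihilated by a fixed power of $\hbar$, the pro-system is pro-zero, so both $\varprojlim$ and $\varprojlim^1$ vanish.

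The only soft spot is your appeal to an unstated ``$c\hbar c$ refinement of Theorem~\ref{T123}''. What you actually need is the identification $C\simeq R\varprojlim_n gr_\hbar^n(C)$ for $C$ bounded and $c\hbar c$ (this is in \cite{KS2} and underlies Theorem~\ref{T123}; it requires no coherence hypothesis on $gr_\hbar C$). That identification yields the Milnor sequence
\[
0\to \varprojlim_n{}^{1}\, H^{i-1}(gr_\hbar^n C)\to H^i(C)\to \varprojlim_n H^i(gr_\hbar^n C)\to 0
\]
that you are implicitly invoking. Once this is made explicit, your argument is complete.
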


\begin{proposition}[\cite{KS2}, Corollary 1.5.9]\label{P121}
The functor $gr_{\hbar}$ is conservative in the category of $c \hbar c$ objects. In particular  it is conservative in $D^b_{\R-c}(\C^\hbar_X)\to D^b_{\R-c}(\C_X)$ and in $D^b_{coh}(\mathcal{A})\to D^b_{coh}(\mathcal{A}_0)$.
\end{proposition}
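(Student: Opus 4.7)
The plan is to reduce conservativity on the subcategory of $c\hbar c$ objects to the statement that $\shm=0$ whenever $\shm$ is $c\hbar c$ and $gr_\hbar(\shm)=0$: since $gr_\hbar$ is a triangulated functor and since the cone of a morphism between $c\hbar c$ objects is again $c\hbar c$ (remark preceding Lemma~\ref{chc}), a morphism $f$ is an isomorphism iff its cone vanishes, and this vanishing is then detected by $gr_\hbar$. Both ``in particular'' statements then follow at once: any object of $D^b_{coh}(\sha)$ is $c\hbar c$ as recalled just above, and the analogous property for $D^b_{\R-c}(\C^\hbar_X)$ is the corresponding result of~\cite{KS2}.

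For the core statement, I would start from the short exact sequence $0\to\sha\xrightarrow{\hbar}\sha\to\sha_0\to 0$, which is available since $\sha$ is $\hbar$-torsion free by condition~(i), and derive the distinguished triangle
\begin{equation*}
\shm\xrightarrow{\hbar}\shm\to gr_\hbar(\shm)\xrightarrow{+1}
\end{equation*}
in $D^b(\sha)$. Hence $gr_\hbar(\shm)=0$ is equivalent to $\hbar:\shm\to\shm$ being an isomorphism.

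The main step is then to deduce $\shm=0$ from the $c\hbar c$ hypothesis. I would write $\Z_X[\hbar,\hbar^{-1}]$ as a homotopy colimit of the telescope $\Z_X[\hbar]\xrightarrow{\hbar}\Z_X[\hbar]\xrightarrow{\hbar}\cdots$; concretely, one has the short exact sequence
\begin{equation*}
0\to\bigoplus_{n\geq 0}\Z_X[\hbar]\to\bigoplus_{n\geq 0}\Z_X[\hbar]\to\Z_X[\hbar,\hbar^{-1}]\to 0,
\end{equation*}
with first map $e_n\mapsto e_n-\hbar\, e_{n+1}$. Applying $R\shh om_{\Z_X[\hbar]}(-,\shm)$ converts this into an identification
\begin{equation*}
R\shh om_{\Z_X[\hbar]}(\Z_X[\hbar,\hbar^{-1}],\shm)\simeq R\varprojlim\nolimits_{n}\shm,
\end{equation*}
with transition maps $\hbar:\shm\to\shm$. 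Under the hypothesis that $\hbar$ acts as an isomorphism on $\shm$, this is a projective system of isomorphisms, trivially Mittag-Leffler, so $R\varprojlim$ collapses to $\shm$ itself. Combined with the vanishing $R\shh om_{\Z_X[\hbar]}(\Z_X[\hbar,\hbar^{-1}],\shm)=0$ furnished by the $c\hbar c$ condition, this forces $\shm=0$, as desired. The only delicate point I anticipate is the clean derived-categorical identification of $R\shh om_{\Z_X[\hbar]}(\Z_X[\hbar,\hbar^{-1}],\shm)$ with $R\varprojlim_n\shm$ together with the Mittag-Leffler collapse; both become straightforward once one works explicitly with the telescope resolution displayed above.
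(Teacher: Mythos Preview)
Your argument is essentially the standard one and is correct. Note, however, that the paper does not supply a proof of this proposition at all: it is quoted verbatim as Corollary~1.5.9 of~\cite{KS2}, so there is no ``paper's own proof'' to compare against. Your reduction via the mapping cone to the statement ``$c\hbar c$ and $gr_\hbar(\shm)=0$ imply $\shm=0$'' is the right move, and the telescope resolution of $\Z_X[\hbar,\hbar^{-1}]$ does identify $R\shh om_{\Z_X[\hbar]}(\Z_X[\hbar,\hbar^{-1}],\shm)$ with the homotopy inverse limit of $\cdots\xrightarrow{\hbar}\shm\xrightarrow{\hbar}\shm$. One small sharpening: rather than invoking Mittag--Leffler for a pro-system of complexes, it is cleaner to note that once $\hbar$ is invertible on $\shm$ the evaluation-at-$1$ map $R\shh om_{\Z_X[\hbar]}(\Z_X[\hbar,\hbar^{-1}],\shm)\to\shm$ is already an isomorphism (one checks this directly on the two-term complex $\prod_n\shm\to\prod_n\shm$, solving $x_n-\hbar x_{n+1}=m_n$ recursively using $\hbar^{-1}$). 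For the two ``in particular'' clauses, the coherent case is justified in the paper just above the statement; the $\R$-constructible case requires knowing that objects of $D^b_{\R-c}(\C^\hbar_X)$ are $c\hbar c$, which the paper does not state separately but is part of the content of the cited result in~\cite{KS2}.
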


\begin{proposition}[\cite{KS2}, Corollary 1.5.7]\label{P122}
Assume that $\shm\in\text{Mod}(\mathcal{A})$ is $\hbar$-complete and $\hbar$-torsion free. Assume that there exists a basis $\mathcal{B}$ of open (respectively of compact)  subsets $\Omega$ such that $H^i(\Omega; \shm)=0$ for $i>0$. Then $\shm$ is $c\hbar c$.
\end{proposition}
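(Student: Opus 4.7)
The plan is to compute $R\shh\text{om}_{\Z_X[\hbar]}(\Z_X[\hbar,\hbar^{-1}], \shm)$ explicitly, starting from the telescope free resolution
$$0 \to \bigoplus_{n\geq 0} \Z_X[\hbar]\, e_n \xrightarrow{d} \bigoplus_{n\geq 0} \Z_X[\hbar]\, e_n \to \Z_X[\hbar,\hbar^{-1}] \to 0,$$
with $e_n\mapsto \hbar^{-n}$ and $d(e_n)=e_n-\hbar e_{n+1}$; this is also the resolution underlying the projective dimension bound invoked in the proof of Lemma~\ref{chc}. Applying $\shh\text{om}_{\Z_X[\hbar]}(-,\shm)$ converts it into the two-term complex
$$\prod_{n\geq 0}\shm \xrightarrow{d^*} \prod_{n\geq 0}\shm, \qquad d^*\bigl((s_n)_n\bigr)=\bigl(s_n-\hbar s_{n+1}\bigr)_n,$$
concentrated in degrees $0$ and $1$, whose cohomology is exactly $R\shh\text{om}_{\Z_X[\hbar]}(\Z_X[\hbar,\hbar^{-1}], \shm)$. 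The task thus reduces to proving $\ker d^*=0$ and $\mathrm{coker}\,d^*=0$ as sheaves. As a preliminary observation, for $\Omega\in\mathcal{B}$ the short exact sequence $0\to\shm\xrightarrow{\hbar^k}\shm\to\shm/\hbar^k\shm\to 0$ (exact by $\hbar$-torsion freeness) combined with $H^j(\Omega,\shm)=0$ for $j>0$ yields $(\shm/\hbar^k\shm)(\Omega)=\shm(\Omega)/\hbar^k\shm(\Omega)$, and the sheaf isomorphism $\shm\isoto \varprojlim_k\shm/\hbar^k\shm$ then specializes, on taking sections over $\Omega$, to the identification $\shm(\Omega)\simeq\varprojlim_k \shm(\Omega)/\hbar^k\shm(\Omega)$. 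In particular $\bigcap_k \hbar^k\shm(\Omega)=0$.

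For $\ker d^* = 0$: any local section $(s_n)$ of $\ker d^*$ satisfies $s_0=\hbar^k s_k\in\hbar^k\shm$ for every $k$, so $s_0|_\Omega\in\bigcap_k\hbar^k\shm(\Omega)=0$ by the preliminary observation. $\hbar$-torsion freeness then forces $s_n|_\Omega=0$ for every $n$, and since $\mathcal{B}$ is a basis this gives $(s_n)=0$.

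For $\mathrm{coker}\,d^*=0$, given $(t_n)\in\prod_n\shm(\Omega)$ with $\Omega\in\mathcal{B}$, I construct a preimage through the formal series $s_n:=\sum_{k\geq 0}\hbar^k t_{n+k}$. Rigorously, the partial sums $u_N:=\sum_{k=0}^{N-1}\hbar^k t_{n+k}\in\shm(\Omega)$ satisfy $u_{N+1}-u_N\in\hbar^N\shm(\Omega)$, so they define a compatible sequence in $\varprojlim_N\shm(\Omega)/\hbar^N\shm(\Omega)$ whose unique lift, via the preliminary observation, is an element $s_n\in\shm(\Omega)$ with $s_n\equiv u_N\pmod{\hbar^N\shm(\Omega)}$ for every $N$. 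A direct congruence calculation gives $s_n-\hbar s_{n+1}\equiv t_n\pmod{\hbar^N\shm(\Omega)}$ for every $N$, and $\bigcap_N\hbar^N\shm(\Omega)=0$ upgrades this to the equality $s_n-\hbar s_{n+1}=t_n$; hence $d^*$ is locally surjective on sections over the basis and $\mathrm{coker}\,d^*$ vanishes as a sheaf. The compact-basis variant is handled identically by working with $\shm(K)=\varinjlim_{U\supset K}\shm(U)$. The main obstacle is precisely the construction of this formal series as a genuine section of $\shm$ over $\Omega$: this is where all three hypotheses enter together, $\hbar$-torsion freeness and higher-cohomology vanishing to secure the identification $(\shm/\hbar^k\shm)(\Omega)=\shm(\Omega)/\hbar^k\shm(\Omega)$, and $\hbar$-completeness to guarantee that the compatible sequence of truncations actually converges inside $\shm(\Omega)$.
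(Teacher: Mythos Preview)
The paper does not prove this proposition; it is quoted from \cite{KS2} without argument, so there is no in-paper proof to compare against.

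Your argument for the open-basis case is correct. One point deserves a word of justification: the assertion that the two-term complex $\prod_n\shm\to\prod_n\shm$ computes $R\shh\text{om}_{\Z_X[\hbar]}(\Z_X[\hbar,\hbar^{-1}],\shm)$ is not automatic, because an infinite direct sum $\bigoplus_n\Z_X[\hbar]$ is not a projective object of $\text{Mod}(\Z_X[\hbar])$ and infinite products of sheaves are not exact. It does hold under the hypotheses, since for $\Omega\in\mathcal{B}$ one has $\text{Ext}^j\bigl(\bigoplus_n\Z_\Omega[\hbar],\shm|_\Omega\bigr)=\prod_n H^j(\Omega;\shm)=0$ for $j>0$, whence $\mathcal{E}xt^j\bigl(\bigoplus_n\Z_X[\hbar],\shm\bigr)=0$ for $j>0$ and the telescope resolution is $\shh\text{om}(-,\shm)$-acyclic. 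With this remark the open case is complete.

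The compact-basis case, however, is not ``handled identically''. Your construction of the preimage $(s_n)$ rests on the $\hbar$-adic completeness of the module of sections, and for compact $K$ one has $\shm(K)=\varinjlim_{U\supset K}\shm(U)$; asserting that this colimit is again $\hbar$-complete amounts to interchanging a filtered colimit with the inverse limit $\varprojlim_k(\cdot)/\hbar^k(\cdot)$, which is not formal. Even granting $s_n\in\shm(K)$, each $s_n$ is represented on some open $V_n\supset K$, and the countable family $(V_n)$ need not admit a common open refinement, so $(s_n)_n$ need not define a section of the sheaf $\prod_n\shm$ over any open neighborhood of a given point. This gap can be closed---for instance by showing that $R\Gamma(K;-)$ commutes with the relevant inverse limit via a Mittag-Leffler argument in the spirit of Lemma~\ref{L:Dav}(2)---but it requires a genuine additional step beyond what you have written.
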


\begin{proposition}[\cite{KS2}, Proposition 1.5.10]\label{P1}
If $\shm\in\text{D}^b(\mathcal{A})$ is $c \hbar c$, then
$R\mathcal{H}{om}_{\mathcal{A}}(\shn ,\shm)$  is $c \hbar c$, for any $\shn\in
D(\mathcal{A})$.
\end{proposition}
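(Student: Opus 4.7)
The plan is to prove the statement by establishing
\[
R\shh\text{om}_{\Z_X[\hbar]}(\Z_X[\hbar,\hbar^{-1}], R\shh\text{om}_{\sha}(\shn, \shm)) \simeq 0,
\]
which is precisely the $c\hbar c$ condition for $R\shh\text{om}_{\sha}(\shn,\shm)$. Because $\hbar$ lies in the center of $\sha$, the canonical map $\Z_X[\hbar]\to\sha$ factors through the center, so $R\shh\text{om}_{\sha}(\shn,\shm)$ acquires a natural $\Z_X[\hbar]$-module structure and the statement makes sense.

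The heart of the argument is an interchange isomorphism
\[
R\shh\text{om}_{\Z_X[\hbar]}(\Z_X[\hbar,\hbar^{-1}], R\shh\text{om}_{\sha}(\shn, \shm)) \simeq R\shh\text{om}_{\sha}(\shn, R\shh\text{om}_{\Z_X[\hbar]}(\Z_X[\hbar,\hbar^{-1}], \shm)).
\]
To obtain it, I would pick a two-term resolution $P^\bullet\to\Z_X[\hbar,\hbar^{-1}]$ by free $\Z_X[\hbar]$-modules, available because $\Z[\hbar,\hbar^{-1}]$ has projective dimension at most one over $\Z[\hbar]$ (as was used in the proof of Lemma \ref{chc}), together with a K-injective resolution $\shm\to J^\bullet$ over $\sha$. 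Both sides of the claimed isomorphism then arise as the total complex of the double complex $\text{Hom}_{\Z_X[\hbar]}^{\bullet,\bullet}(P^\bullet,\text{Hom}_{\sha}^{\bullet}(\shn,J^\bullet))$, the two presentations being identified termwise by the elementary natural isomorphism
\[
\text{Hom}_{\Z_X[\hbar]}(P,\text{Hom}_{\sha}(\shn,J)) \simeq \text{Hom}_{\sha}(\shn,\text{Hom}_{\Z_X[\hbar]}(P,J))
\]
valid for a free $\Z_X[\hbar]$-module $P$. Along the way one uses that the forgetful functor $D(\sha)\to D(\Z_X[\hbar])$ preserves quasi-isomorphisms, so that $J^\bullet$ remains a resolution of $\shm$ in $D(\Z_X[\hbar])$ and the inner term $\text{Hom}_{\Z_X[\hbar]}(P^\bullet,J^\bullet)$ on the right truly computes $R\shh\text{om}_{\Z_X[\hbar]}(\Z_X[\hbar,\hbar^{-1}],\shm)$. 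With the interchange in hand, the hypothesis that $\shm$ is $c\hbar c$ collapses the inner term on the right to zero, so both sides vanish and the proposition follows.

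The main obstacle is handling the interchange cleanly in the derived setting. The resolution of $\Z_X[\hbar,\hbar^{-1}]$ necessarily involves an infinite direct sum, which means one must work with infinite products of the $J^k$ when dualising; one should then check that such products remain injective as $\sha$-modules (so that termwise application of $\text{Hom}_{\sha}(\shn,-)$ still computes the derived functor). The choice of a K-injective resolution of $\shm$ over $\sha$, rather than a mere bounded-below one, sidesteps any subtlety coming from the potentially unbounded nature of the intermediate complexes, and the centrality of $\hbar$ is precisely what ensures that the two ring actions commute so that the termwise Hom swap is natural and compatible with differentials.
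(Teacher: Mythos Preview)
The paper does not give its own proof of this proposition; it is quoted as a preliminary result from \cite{KS2}, Proposition~1.5.10. Your argument is correct and is essentially the one given there: the key interchange
\[
R\shh\text{om}_{\Z_X[\hbar]}\bigl(\Z_X[\hbar,\hbar^{-1}],\,R\shh\text{om}_{\sha}(\shn,\shm)\bigr)\;\simeq\;R\shh\text{om}_{\sha}\bigl(\shn,\,R\shh\text{om}_{\Z_X[\hbar]}(\Z_X[\hbar,\hbar^{-1}],\shm)\bigr)
\]
is obtained exactly as you describe, using the bounded free resolution $P^\bullet$ of $\Z_X[\hbar,\hbar^{-1}]$ and a K-injective resolution $J^\bullet$ of $\shm$ over $\sha$. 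The one technical point you flag is the right one and is easily settled: since K-injectivity is tested via the global Hom complex (with values in abelian groups, where products are exact), a product $\prod_I J^\bullet$ of copies of a K-injective complex is again K-injective, and a two-term extension of K-injective complexes remains K-injective; hence $\shh\text{om}_{\Z_X[\hbar]}(P^\bullet,J^\bullet)$ is K-injective over $\sha$ and $\shh\text{om}_\sha(\shn,-)$ applied to it computes the derived functor on the nose.
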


\begin{proposition}[\cite{KS2}, Proposition 1.5.12]\label{P125}
Let $f:X\to Y$ be a morphism of complex manifolds, and suppose that $\shm\in D^b(\mathcal{A})$ is $c\hbar c$.
Then, $R f_\ast \shm$ is also $c\hbar c$.
\end{proposition}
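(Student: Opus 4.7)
The plan is to transfer the vanishing that defines cohomological $\hbar$-completeness from $X$ to $Y$ via the adjunction between $f^{-1}$ and $Rf_\ast$. The key preliminary observation is that $\Z_Y[\hbar]$ and $\Z_Y[\hbar,\hbar^{-1}]$ are constant sheaves of $\Z[\hbar]$-algebras, so $f^{-1}\Z_Y[\hbar]\simeq\Z_X[\hbar]$ as sheaves of rings and $f^{-1}\Z_Y[\hbar,\hbar^{-1}]\simeq\Z_X[\hbar,\hbar^{-1}]$ as left modules over it.

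The crux is then to establish a natural isomorphism in $D(\Z_Y[\hbar])$ of the form
\begin{equation*}
R\shh\text{om}_{\Z_Y[\hbar]}\bigl(\Z_Y[\hbar,\hbar^{-1}],\,Rf_\ast\shm\bigr)\simeq Rf_\ast\, R\shh\text{om}_{\Z_X[\hbar]}\bigl(\Z_X[\hbar,\hbar^{-1}],\,\shm\bigr).
\end{equation*}
This is the standard compatibility of $Rf_\ast$ with $R\shh\text{om}$ in the inverse image variable. To justify it, I would replace $\Z_Y[\hbar,\hbar^{-1}]$ by the bounded free resolution of length $\leq 1$ already used in the proof of Lemma~\ref{chc} (which exists because its projective dimension over $\Z[\hbar]$ is $\leq 1$), and $\shm$ by a $K$-injective resolution over $\Z_X[\hbar]$. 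Termwise the identity reduces to the ordinary adjunction
$f_\ast\shh\text{om}_{f^{-1}\Z_Y[\hbar]}(f^{-1}\mathcal{P},-)\simeq \shh\text{om}_{\Z_Y[\hbar]}(\mathcal{P},f_\ast-)$
for a free $\Z_Y[\hbar]$-module $\mathcal{P}$, which is immediate from the $(f^{-1},f_\ast)$-adjunction on underlying sheaves.

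Since $\shm$ is $c\hbar c$ on $X$, the right-hand side of the displayed isomorphism vanishes, hence so does the left-hand side, which is precisely the $c\hbar c$ property of $Rf_\ast\shm$ regarded as a $\Z_Y[\hbar]$-module. The only delicate step is the justification of the $R\shh\text{om}$--$Rf_\ast$ exchange; but this is routine once one notes that $Rf_\ast$ preserves $K$-injectivity and that $\Z_Y[\hbar,\hbar^{-1}]$ admits a finite free resolution over $\Z_Y[\hbar]$. No additional assumption on $\shm$ beyond what is in the statement is needed, and in particular the argument works for any $\shm\in D^b(\sha)$ since we use the underlying $\Z_X[\hbar]$-module structure only.
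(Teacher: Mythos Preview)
The paper does not give its own proof of this proposition; it is simply recalled from \cite{KS2} without argument. Your proof is correct and is essentially the standard one found in \cite{KS2}: the derived adjunction isomorphism
\[
R\shh\text{om}_{\Z_Y[\hbar]}(\Z_Y[\hbar,\hbar^{-1}],\,Rf_\ast\shm)\;\simeq\;Rf_\ast\,R\shh\text{om}_{\Z_X[\hbar]}(f^{-1}\Z_Y[\hbar,\hbar^{-1}],\,\shm)
\]
combined with $f^{-1}\Z_Y[\hbar,\hbar^{-1}]\simeq\Z_X[\hbar,\hbar^{-1}]$ immediately transfers the vanishing from $X$ to $Y$. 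One small wording issue: rather than ``$Rf_\ast$ preserves $K$-injectivity'' you should say that $f_\ast$ sends $K$-injective complexes of $\Z_X[\hbar]$-modules to $K$-injective complexes of $\Z_Y[\hbar]$-modules (this follows from $f^{-1}$ being exact and left adjoint to $f_\ast$), which is what actually underlies the derived adjunction formula you invoke.
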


Let now $f:Y\to X$ be a morphism of complex manifolds and let us consider the canonical morphisms:
\begin{center}
$f_\pi:X\times_Y T^*Y\rightarrow T^*Y$ and $f_d:X\times_Y
T^*Y\rightarrow T^*X$.
\end{center}

Recall that $f$ is said to be non-characteristic  for an object $F\in D^b(\mathbb{K}_X)$  if $$f_\pi^{-1}(SS(F))\cap \ker f_d\subset Y\times_ X T_X^*X,$$ where $SS(F)$ denotes the microsupport of $F$. We refer to \cite{KS1} for a detailed study of the notion of microsupport.

We shall also need in addition the result below:
\begin{proposition}\label{P:500}
Let $f: Y\to X$ be a morphism of complex manifolds.
\begin{itemize}
\item[(i)] Assume that a $c \hbar c$ object $\shm\in\text{D}^b(\Z_X[\hbar])$ is non characteristic for $f$. Then $f^{-1}\shm $ is $c \hbar c$;
 \item[(ii)] For every $\shm\in\text{D}^b( \Z_X[\hbar])$, one has $gr_\hbar (f^{-1}\shm)\simeq f^{-1}gr_\hbar (\shm)$.
\end{itemize}
\end{proposition}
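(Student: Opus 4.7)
Plan.

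Part (ii) is a straightforward base-change computation. By definition $gr_\hbar(\shm)=\shm\otimes^L_{\Z_X[\hbar]}\Z_X$, with $\Z_X\simeq\Z_X[\hbar]/\hbar\Z_X[\hbar]$. Since $f^{-1}$ is exact, commutes with derived tensor products, and satisfies $f^{-1}\Z_X=\Z_Y$ and $f^{-1}\Z_X[\hbar]=\Z_Y[\hbar]$, I would simply compute
\begin{equation*}
f^{-1}gr_\hbar(\shm)\;\simeq\; f^{-1}\shm\otimes^L_{f^{-1}\Z_X[\hbar]}f^{-1}\Z_X\;\simeq\; f^{-1}\shm\otimes^L_{\Z_Y[\hbar]}\Z_Y\;=\;gr_\hbar(f^{-1}\shm).
\end{equation*}
No microsupport hypothesis enters here; the statement is formal from base change.

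Part (i) is the substantive half, where non-characteristicity is used. The plan is to produce a natural isomorphism
\begin{equation*}
\alpha\colon f^{-1}R\shh om_{\Z_X[\hbar]}(\Z_X[\hbar,\hbar^{-1}],\shm)\isoto R\shh om_{\Z_Y[\hbar]}(\Z_Y[\hbar,\hbar^{-1}],f^{-1}\shm),
\end{equation*}
using $f^{-1}\Z_X[\hbar,\hbar^{-1}]=\Z_Y[\hbar,\hbar^{-1}]$ (which is immediate since $f^{-1}$ preserves constant sheaves). The conclusion then follows at once: the source is $f^{-1}(0)=0$ by the $c\hbar c$ hypothesis on $\shm$, so the target vanishes, which says precisely that $f^{-1}\shm$ is $c\hbar c$. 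To construct $\alpha$ I would start from the canonical base-change morphism for $R\shh om$ and prove it is an isomorphism by invoking Kashiwara--Schapira's non-characteristic inverse image formula for $R\shh om$ (Prop.~5.4.13/5.4.14 of \cite{KS1}), transported from $\mathbb{K}$-modules to $\Z[\hbar]$-modules. Concretely, one picks a two-term free resolution of $\Z_X[\hbar,\hbar^{-1}]$ over $\Z_X[\hbar]$ (the projective dimension is at most one, as exploited in the proof of Lemma \ref{chc}); the statement then reduces to the trivial identity $R\shh om_{\Z_X[\hbar]}(\Z_X[\hbar],\shm)\simeq\shm$ together with compatibility of $f^{-1}$ with the countable direct sums appearing in the resolution.

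Main obstacle. The chief technical difficulty is that $\Z_X[\hbar,\hbar^{-1}]$ is not constructible in the usual $\R$-constructible sense (its stalk is not finitely generated over $\Z[\hbar]$), so the Kashiwara--Schapira isomorphism cannot be invoked as a black box. One must pass through the free resolution and control the behavior of $f^{-1}$ on the countable products $\prod_n f^{-1}\shm$ that arise from applying $R\shh om$ to $\bigoplus_n\Z_X[\hbar]$; since $f^{-1}$ does not commute with arbitrary products of sheaves, this is exactly where the non-characteristic hypothesis on $\shm$ plays its role, via the microlocal machinery of \cite{KS1}. Part (ii), by contrast, demands no such input.
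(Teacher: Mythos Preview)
Your treatment of part (ii) is correct and agrees with the paper, which simply records ``(ii) is clear.''

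For part (i), however, the paper takes a different and much cleaner route that sidesteps the obstacle you yourself flagged. Rather than attempting to commute $f^{-1}$ with $R\shh om_{\Z_X[\hbar]}(\Z_X[\hbar,\hbar^{-1}],-)$ directly, the paper works with $f^!$: the isomorphism
\[
R\shh om_{\Z_Y[\hbar]}(\Z_Y[\hbar,\hbar^{-1}],\,f^{!}\shm)\;\simeq\;f^{!}R\shh om_{\Z_X[\hbar]}(\Z_X[\hbar,\hbar^{-1}],\,\shm)
\]
holds \emph{unconditionally} (it is the standard adjunction identity $R\shh om(f^{-1}F,f^{!}G)\simeq f^{!}R\shh om(F,G)$, with $F=\Z_X[\hbar,\hbar^{-1}]$), and since $f^{!}$ is a right adjoint there is no infinite-product issue at all. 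From the $c\hbar c$ hypothesis one gets that $f^{!}\shm$ is $c\hbar c$. The non-characteristic assumption is then invoked exactly once, via \cite[Prop.~5.4.13(ii)]{KS1}, which gives $f^{!}\shm\simeq f^{-1}\shm\otimes\omega_{Y/X}$; since $\omega_{Y/X}$ is invertible this transfers the $c\hbar c$ property to $f^{-1}\shm$.

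Your proposal, by contrast, leaves the key step unresolved. You correctly observe that applying $R\shh om(\bigoplus_n\Z_X[\hbar],-)$ produces countable products $\prod_n\shm$, and that $f^{-1}$ does not commute with such products in general; but you then assert that ``this is exactly where the non-characteristic hypothesis plays its role, via the microlocal machinery of \cite{KS1}'' without naming any result that would do this. I do not see a direct statement in \cite{KS1} that makes $f^{-1}$ commute with countable products of a fixed non-characteristic object. The natural way to salvage your argument would be to observe that $\prod_n\shm$ is again non-characteristic (same microsupport) and then invoke $f^{-1}\simeq f^{!}\otimes\omega_{Y/X}^{-1}$ to reduce to the fact that $f^{!}$ commutes with products --- but at that point you are essentially reproducing the paper's argument in a more roundabout way. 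As written, the proposal for (i) has a gap at precisely the step you highlighted as the main obstacle.
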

\begin{proof}

(i) By\cite [Prop. 5.4.13 (ii)]{KS1}, the result follows from the isomorphism $$R\mathcal{H}{om}_{\mathbb{Z}_Y[\hbar]}(\mathbb{Z}_Y[\hbar, \hbar^{-1}], f^!\shm)\simeq f^!R\mathcal{H}{om}_{\mathbb{Z}_X[\hbar]}(\mathbb{Z}_X[\hbar, \hbar^{-1}], \shm). $$

(ii) is clear.
\end{proof}

For $\shm \in \text{Mod}(\mathcal{A})$ one denotes by $\shm_{\hbar-tor}$ the submodule of $\shm$
consisting of sections locally annihilated by some power of $\hbar$ and
by $\shm_{\hbar-tf}$ the quotient $\shm/\shm_{\hbar-tor}$.
Thus the following sequence:
\begin{equation}\label{E:111}
0\to\shm_{\hbar-tor}\to\shm\to\shm_{\hbar-tf}\to 0.
\end{equation}
is exact.

$\shm\in\text{Mod}(\mathcal{A})$ is said to be an $\hbar$-torsion module if $\shm_{\hbar-tor}\simeq\shm$
and $\shm$ is $\hbar$-torsion free if and only if $\shm\simeq\shm_{\hbar-tf}$.
In particular, for each $n\geq 0$, $\shm_n$ is an $\hbar$-torsion module since $\hbar^{n+1}\shm_n=0$.

Note that $\shm_{\hbar-tor}$ is also the increasing union of the $_n\shm$'s.
If $\shm$ is  coherent, the  family $\left\{_n\shm\right\}_n$ is locally stationary, so
locally there exists $N\geq 1$ such that $\hbar^N\shm_{\hbar-tor}=0$ and  both $\shm_{\hbar-tor}$ and $\shm_{\hbar-tf}$ are coherent $\mathcal{A}$-modules.

In particular, an $\hbar$-torsion $\mathcal{A}$-module
 is coherent as an $\mathcal{A}$-module if and only if, locally,  it is  coherent as an $\mathcal{A}_n$-module for $n$ big enough.

If $\shm$ is a coherent $\mathcal{A}$-module, then  each $\shm_n$ is coherent
as an $\mathcal{A}$-module, thus as an $\mathcal{A}_n$-module.

\begin{lemma}\label{L1}
Let $0\to\shm^\prime\to\shm\to\shm^{\prime\prime}\to 0$
be an exact sequence in $\text{Mod}(\mathcal{A})$ and
suppose that $\shm^{\prime\prime}$ is $\hbar$-torsion free.
Then, for each $n\geq 0$, the associated sequence of $\mathcal{A}_n$-modules:
\begin{eqnarray}\label{eq8}
0\to\shm^\prime_n\to\shm_n\to\shm^{\prime\prime}_n\to 0.
\end{eqnarray}
is exact.
\end{lemma}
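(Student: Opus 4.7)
The plan is to apply the snake lemma to the commutative diagram obtained by multiplying the given short exact sequence by $\hbar^{n+1}$. Concretely, consider the diagram
$$\begin{matrix}
0 & \to & \shm' & \to & \shm & \to & \shm'' & \to & 0 \\
  &     & \hbar^{n+1}\downarrow &  & \hbar^{n+1}\downarrow &  & \hbar^{n+1}\downarrow & & \\
0 & \to & \shm' & \to & \shm & \to & \shm'' & \to & 0,
\end{matrix}$$
whose rows are the given exact sequence and whose vertical maps are multiplication by $\hbar^{n+1}$. The rows are exact, and the diagram commutes because $\hbar$ lies in the center of $\mathcal{A}$ and the horizontal maps are $\mathcal{A}$-linear.

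The snake lemma then produces a six-term exact sequence
$$0 \to {}_n\shm' \to {}_n\shm \to {}_n\shm'' \xrightarrow{\delta} \shm'_n \to \shm_n \to \shm''_n \to 0,$$
in which the kernels ${}_n(-)$ and cokernels $(-)_n$ are exactly those appearing in the statement. This already gives surjectivity of $\shm_n \to \shm''_n$ and exactness at $\shm_n$ for free. What remains is to force the connecting morphism $\delta$ to vanish so that $\shm'_n \to \shm_n$ becomes injective.

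For this we invoke the hypothesis that $\shm''$ is $\hbar$-torsion free: by definition this says $\hbar\colon\shm''\to\shm''$ is injective, hence so is $\hbar^{n+1}$, and therefore ${}_n\shm''=\ker(\hbar^{n+1}\colon\shm''\to\shm'')=0$. Consequently $\delta=0$ and the snake sequence collapses to the desired short exact sequence $0\to\shm'_n\to\shm_n\to\shm''_n\to 0$. There is no genuine obstacle here beyond bookkeeping; the only conceptual point is recognizing that the $\hbar$-torsion freeness of $\shm''$ is precisely what kills the obstruction ${}_n\shm''$ to left-exactness of the functor $(-)_n=(-)/\hbar^{n+1}(-)$.
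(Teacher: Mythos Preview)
Your proof is correct and essentially identical to the paper's: the paper applies the derived functor $gr_\hbar^n$ to the short exact sequence to obtain the same six-term long exact sequence $0\to{}_n\shm'\to{}_n\shm\to{}_n\shm''\to\shm'_n\to\shm_n\to\shm''_n\to 0$, and then uses ${}_n\shm''=0$ from the $\hbar$-torsion freeness of $\shm''$. Your snake-lemma derivation of that sequence is just an explicit unwinding of the same argument.
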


\begin{proof}
For each $n\geq 0$, applying $gr_\hbar^n$ to $0\to\shm^\prime\to\shm\to\shm^{\prime\prime}\to 0$, we deduce the long exact sequence $$0\to _n\shm'\to_n\shm\to_n\shm''\to\shm'_n\to\shm_n\to\shm''_n\to 0.$$ By assumption $_n\shm''=0$ and the result follows.
\end{proof}

\begin{corollary}\label{C1}
Let $\shm$ be an $\mathcal{A}$-module.
Then, for each $n\geq 0$, the following sequence  is exact:\begin{equation}\label{eqL7}
0\to{\shm_{\hbar-tor}}_n\to \shm_n \to {\shm_{\hbar-tf}}_n\to 0.
\end{equation}
\end{corollary}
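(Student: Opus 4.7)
The plan is to apply Lemma~\ref{L1} directly to the canonical short exact sequence~(\ref{E:111}), namely
\[
0\to\shm_{\hbar-tor}\to\shm\to\shm_{\hbar-tf}\to 0.
\]
The hypothesis of Lemma~\ref{L1} requires the rightmost term to be $\hbar$-torsion free, and this is exactly how $\shm_{\hbar-tf}=\shm/\shm_{\hbar-tor}$ is defined: any section of $\shm/\shm_{\hbar-tor}$ annihilated by some power of $\hbar$ lifts to a section of $\shm$ annihilated by a (possibly larger) power of $\hbar$, hence lies in $\shm_{\hbar-tor}$ and projects to zero. So the hypothesis is automatic and no additional work is needed on that point.

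Given this, I would write the proof in one line: applying the already-proved Lemma~\ref{L1} with $\shm^\prime=\shm_{\hbar-tor}$ and $\shm^{\prime\prime}=\shm_{\hbar-tf}$ yields, for each $n\geq 0$, the exactness of
\[
0\to{\shm_{\hbar-tor}}_n\to \shm_n \to {\shm_{\hbar-tf}}_n\to 0,
\]
which is precisely (\ref{eqL7}).

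There is no serious obstacle here; the statement is really a corollary in the strict sense. The only thing worth noting is that the argument does \emph{not} require $\shm$ to be coherent, so the local stationarity of $\{{}_n\shm\}_n$ and the consequent local nilpotence of $\hbar$ on $\shm_{\hbar-tor}$ play no role; the result holds for arbitrary $\mathcal{A}$-modules. The entire content of the corollary is therefore: identify $\shm_{\hbar-tf}$ as $\hbar$-torsion free and invoke Lemma~\ref{L1}.
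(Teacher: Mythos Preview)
Your proposal is correct and matches the paper's approach exactly: the corollary is stated immediately after Lemma~\ref{L1} with no separate proof, precisely because it follows by applying that lemma to the canonical exact sequence~(\ref{E:111}) with $\shm''=\shm_{\hbar-tf}$ $\hbar$-torsion free. Your additional remark that coherence of $\shm$ is not needed is accurate and consistent with the paper's statement, which only assumes $\shm\in\text{Mod}(\mathcal{A})$.
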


Let $\shm\in \text{Mod}(\mathcal{A})$, let $n'\geq n-k$ and denote by  $\overline{\hbar^k}:\shm_{n'}\to\shm_n$ the morphism defined by the multiplication by $\hbar^k$.
Observe that the action of  $\hbar^k$ in $\shm_n$ coincides with the composition of the chain of morphisms
$$\shm_n\xrightarrow{\overline{\hbar^k}} \shm_{n+k}\xrightarrow{\rho_{n,n+k}} \shm_n.$$

 \begin{lemma}\label{L:19}
For each $n\geq k\geq 1$ and each $n'\geq n-k$ one has an exact sequence:
\begin{equation}\label{eq6}
\shm_{n'}\xrightarrow{\overline{\hbar^k}}\shm_{n}\xrightarrow{\rho_{k-1,n}}\shm_{k-1}\to 0.
\end{equation}

 \end{lemma}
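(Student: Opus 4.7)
The plan is to verify the three standard assertions for exactness of the three-term sequence: surjectivity on the right, vanishing of the composition, and the kernel-image equality in the middle. All three reduce to direct manipulations of elements modulo powers of $\hbar$, using the two index inequalities in exactly one place each.

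First I would note that $\rho_{k-1,n}$ is surjective: it is just the canonical projection $\shm/\hbar^{n+1}\shm\twoheadrightarrow\shm/\hbar^k\shm$, which makes sense and is an epimorphism because $n\geq k-1$ (equivalently $\hbar^{n+1}\shm\subseteq\hbar^k\shm$). Next, for the composition, pick a local section $x$ of $\shm$ and compute
$$\rho_{k-1,n}\bigl(\overline{\hbar^k}([x]_{n'})\bigr)=\rho_{k-1,n}([\hbar^k x]_n)=[\hbar^k x]_{k-1}=0,$$
since $\hbar^k x\in\hbar^k\shm$. (Here the hypothesis $n'\geq n-k$ is precisely what makes $\overline{\hbar^k}\colon\shm_{n'}\to\shm_n$ well-defined: if $x\in\hbar^{n'+1}\shm$ then $\hbar^k x\in\hbar^{n'+k+1}\shm\subseteq\hbar^{n+1}\shm$.)

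For the kernel-image inclusion, take $[y]_n\in\shm_n$ with $\rho_{k-1,n}([y]_n)=0$. This means $y\in\hbar^k\shm+\hbar^{n+1}\shm$; but $n\geq k$ implies $\hbar^{n+1}\shm\subseteq\hbar^k\shm$, so $y\in\hbar^k\shm$. Write $y=\hbar^k x$ for some local section $x$ of $\shm$; then $[y]_n=[\hbar^k x]_n=\overline{\hbar^k}([x]_{n'})$, proving $\ker\rho_{k-1,n}\subseteq\operatorname{Im}\overline{\hbar^k}$. Combined with the reverse inclusion from the previous paragraph, this completes exactness at $\shm_n$.

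There is no genuine obstacle; the only thing to keep track of is the role of each hypothesis, namely that $n\geq k$ absorbs $\hbar^{n+1}\shm$ into $\hbar^k\shm$, while $n'\geq n-k$ is what permits $\overline{\hbar^k}$ to descend from $\shm$ to a map $\shm_{n'}\to\shm_n$. One could also organize the argument more conceptually by viewing the sequence as the image under the quotient functor $(-)_n$ of the tautological short exact sequence $\hbar^k\shm\hookrightarrow\shm\twoheadrightarrow\shm_{k-1}$, but the direct element-chase above seems cleanest.
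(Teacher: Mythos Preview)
Your proof is correct and follows the same approach as the paper's own proof, which is the single line ``Clearly $\ker(\rho_{k-1,n})=\hbar^k\shm/\hbar^{n+1}\shm=\overline{\hbar^k}(\shm_{n'})$.'' You have simply unpacked this identity into an element chase and made explicit where each index hypothesis is used.
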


\begin{proof}
Clearly  $\ker(\rho_{k-1,n})=\hbar^k\shm/\hbar^{n+1}\shm=\overline{\hbar^k}(\shm_{n'})$.
\end{proof}

\begin{lemma}\label{L:19b}
Let $\shm$ be an $\hbar$-complete $\sha$-module.
Then $\shm$ is $\hbar$-torsion free if and only if for every $n\geq 0$ the sequence below is exact:
\begin{equation}\label{eq110}
0\to\shm_{0}\xrightarrow{\overline{\hbar^n}}\shm_n\xrightarrow{\hbar}\shm_{n}.
\end{equation}
\end{lemma}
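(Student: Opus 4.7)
The plan is to prove the two implications separately, exploiting the description of kernels and images of $\overline{\hbar^k}$ already developed in Lemma~\ref{L:19}, together with the fact that $\hbar$-completeness means $\shm\to\varprojlim_n\shm_n$ is an isomorphism, hence in particular $\bigcap_n\hbar^{n+1}\shm=0$.

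For the forward direction, assume $\shm$ is $\hbar$-torsion free. For injectivity of $\overline{\hbar^n}$, suppose a representative $x\in\shm$ of a class in $\shm_0$ satisfies $\hbar^n x\in\hbar^{n+1}\shm$; writing $\hbar^n x=\hbar^{n+1}y$ gives $\hbar^n(x-\hbar y)=0$, so by torsion freeness $x\in\hbar\shm$ and the class in $\shm_0$ vanishes. For exactness in the middle, if $\hbar[x]_n=0$ in $\shm_n$ then $\hbar x=\hbar^{n+1}y$ in $\shm$, so $\hbar(x-\hbar^n y)=0$, hence $x=\hbar^n y$ by torsion freeness, which exhibits $[x]_n$ as $\overline{\hbar^n}([y]_0)$. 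The reverse inclusion (image of $\overline{\hbar^n}$ lies in the kernel of multiplication by $\hbar$) is automatic since $\hbar\cdot\hbar^n x=\hbar^{n+1}x\equiv 0\pmod{\hbar^{n+1}\shm}$.

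For the converse, assume the sequence is exact for every $n\geq 0$, and let $x\in\shm$ with $\hbar x=0$. For each $n$, the class $[x]_n\in\shm_n$ lies in the kernel of multiplication by $\hbar$, so by exactness $[x]_n=\overline{\hbar^n}([y]_0)$ for some $y\in\shm$; in other words $x\in\hbar^n\shm+\hbar^{n+1}\shm=\hbar^n\shm$. Since this holds for every $n\geq 1$, and $\hbar^n\shm=\ker(\shm\to\shm_{n-1})$, the element $x$ maps to zero in each $\shm_{n-1}$. The $\hbar$-completeness hypothesis forces $\shm\hookrightarrow\varprojlim_n\shm_n$ to be injective, so $x=0$, proving that $\shm$ has no $\hbar$-torsion.

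The only subtle point is the converse: one has to recognise that the exactness hypothesis only gives $x\in\hbar^n\shm$ (not that $[x]_n$ itself vanishes), and then use $\hbar$-completeness to upgrade this to $x=0$. Neither direction requires anything beyond elementary manipulations together with the definitions recalled just before the lemma, so no significant obstacle is expected.
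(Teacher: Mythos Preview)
Your proof is correct and follows essentially the same approach as the paper. The paper's forward direction is dispatched in one line (``clearly exact'') where you spell out both injectivity and exactness in the middle; for the converse, the paper works directly with a compatible family $(v_n)_n$ in the projective-limit presentation and shows each $v_n$ vanishes by pulling back from a higher level $n'\geq n+1$, which is exactly your observation that $x\in\hbar^n\shm$ for all $n$ forces $x=0$ via injectivity of $\shm\to\varprojlim_n\shm_n$.
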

\begin{proof}
If $\shm$ $\hbar$-torsion free, (\ref{eq110}) is clearly exact  since, for $m, m'\in\shm$, the equality $\hbar^n m=\hbar^{n+1}m'$ entails $m=\hbar m'$.

Conversely, assume that for every $n\geq 0$ we have the exact sequence~\eqref{eq110}.
Thus, given $(v_n)_n\in \shm$ such that $\hbar v_n=0, \, \forall n$,  it follows that $v_n=\overline{{\hbar}^n} u_{0n}$ for some (unique) $u_{0n}\in \shm_0$ and we may choose  $u_n\in\shm_n$ such that  $v_n=h^nu_n$, $\forall n$.
On the other hand $v_n=\rho_{nn'}(v_{n'}), \forall n'\geq n$, hence $v_n=\rho_{nn'}( {\hbar}^{n'}u_{n'})=\hbar^{n'}\rho_{nn'}(u_{n'})=0$
since we may take $n'\geq n+1$.
\end{proof}

Given  a  full substack $\mathscr{C}: U\mapsto \mathscr{C}(U)$ of $\mathscr{M}od(\sha)$ of abelian subcategories, we shall consider the following condition defining a full Serre substack $\mathscr{S}$ of  $\mathscr{C}$:
 \begin{condition}\label{A}
For each $U$, $\shm$ belongs to $\mathscr{S}(U)$ if and only if, for each $x\in U$, there exists a neighborhood $V\subset U$ of $x$ such that, for any submodule $\shn$ of $\shm$ belonging to $\mathscr{C}(V)$
   (and hence for any quotient $\mathcal{N}$ of $\shm$ belonging to $\mathscr{C}(V)$),
 if $K\in\mathcal{B}$ is contained in $V$, then
 \begin{equation}\label{E:T23}
 H^j(K;\shn)=0,\,\, \text{for any}\, j>0.
 \end{equation}
\end{condition}

In particular, if $\mathscr{C}=\mathscr{M}od_{coh}(\sha)$, then $\mathscr{S}=\mathscr{C}$.

\begin{lemma}\label{L:Dav}
Let $\shm\in\text{Mod}(\sha)$  and suppose that $\shm_0$ belongs to $\mathscr{S}(X)$. Let $(V_i)_i$ be an open covering of $X$ where Condition \ref{A} is satisfied by $\shm_0$. Then, if $K\in\mathcal{B}$ is contained in $V_i$ one has:
\begin{enumerate}
\item  $H^j(K; \shm_n)=0, \forall j>0, n\geq 0$;

    \item $H^j(K;\underset{n}{\varprojlim} \shm_n)=0, \forall j>0$.  In particular, if $\shm$ is $\hbar$-complete one also has  $H^j(K; \shm)=0, \forall j>0$.
\end{enumerate}
\end{lemma}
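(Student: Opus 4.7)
The plan is to establish (1) by induction on $n$, and then deduce (2) from (1) through the Mittag-Leffler exact sequence, combined with the commutation of cohomology and infinite products on the compact set $K$.

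For (1), I would first note that multiplication by $\hbar^n$ descends to a surjection
\[
\shm_0 = \shm/\hbar\shm \twoheadrightarrow \hbar^n\shm/\hbar^{n+1}\shm,
\]
well-defined because $\hbar^n\cdot\hbar\shm\subset\hbar^{n+1}\shm$. Since $\shm_0\in\mathscr{S}(X)$ and $\mathscr{S}$ is a Serre substack of $\mathscr{C}$, the graded piece $\hbar^n\shm/\hbar^{n+1}\shm$, restricted to $V_i$, lies in $\mathscr{S}(V_i)\subset\mathscr{C}(V_i)$. Hence Condition~\ref{A} applied to $\shm_0$ (with $\hbar^n\shm/\hbar^{n+1}\shm$ as one of its quotients) yields $H^j(K;\hbar^n\shm/\hbar^{n+1}\shm)=0$ for every $j>0$ and every $K\in\mathcal{B}$ contained in $V_i$. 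The case $n=0$ of the lemma follows from Condition~\ref{A} applied to $\shm_0$ itself. For $n\geq 1$, I would then feed the short exact sequence
\[
0\to\hbar^n\shm/\hbar^{n+1}\shm\to\shm_n\to\shm_{n-1}\to 0
\]
into the long exact cohomology sequence on $K$, combining the vanishing above with the inductive hypothesis on $\shm_{n-1}$.

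For (2), the transitions $\shm_n\to\shm_{n-1}$ are surjective, so I would apply the standard Mittag-Leffler exact sequence of sheaves
\[
0\to\varprojlim_n\shm_n\to\prod_n\shm_n\xrightarrow{d}\prod_n\shm_n\to 0.
\]
Taking $R\Gamma(K;\cdot)$ and using that a product of term-wise flasque resolutions is again a flasque resolution, one gets $H^j(K;\prod_n\shm_n)\simeq\prod_n H^j(K;\shm_n)$, which vanishes for $j>0$ by part~(1). The long exact sequence then gives $H^j(K;\varprojlim_n\shm_n)=0$ for $j\geq 2$ and reduces the case $j=1$ to the vanishing of $\varprojlim^1\Gamma(K;\shm_n)$. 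That last vanishing holds because $\Gamma(K;\shm_n)\to\Gamma(K;\shm_{n-1})$ is surjective, the obstruction $H^1(K;\hbar^n\shm/\hbar^{n+1}\shm)$ being zero by the argument above; so the sectionwise system is Mittag-Leffler. Finally, when $\shm$ is $\hbar$-complete we have $\shm\simeq\varprojlim_n\shm_n$, so the conclusion for $\shm$ is immediate.

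The main technical obstacle I expect is the identity $H^j(K;\prod_n\shm_n)\simeq\prod_n H^j(K;\shm_n)$, which is not formal and must be justified by explicitly invoking that a product of flasque sheaves is flasque and that products are exact in abelian groups. Everything else is a routine combination of long exact sequences and the Mittag-Leffler principle, driven by the vanishing from~(1).
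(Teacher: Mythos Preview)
Your argument for part~(1) is correct and matches the paper's: both use the short exact sequence $0\to\hbar^n\shm/\hbar^{n+1}\shm\to\shm_n\to\shm_{n-1}\to 0$, observe that the kernel is a quotient of $\shm_0$ (hence Condition~\ref{A} applies to it), and proceed by induction.

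Part~(2), however, has a genuine gap. Both of your key technical inputs --- the exactness of the sheaf sequence
\[
0\to\varprojlim_n\shm_n\to\prod_n\shm_n\xrightarrow{d}\prod_n\shm_n\to 0
\]
and the identification $H^j(K;\prod_n\shm_n)\simeq\prod_n H^j(K;\shm_n)$ --- rely on infinite products being exact in the category of sheaves. This fails in general: $\mathscr{M}od(\Z_X)$ does not satisfy AB4*. Concretely, for the first sequence, surjectivity of $d$ would require that, given local sections $(y_n)$, one can solve $x_n-\rho(x_{n+1})=y_n$ on a \emph{common} neighborhood; but lifting each $y_n$ through the sheaf surjection $\shm_{n+1}\to\shm_n$ may force the neighborhood to shrink indefinitely with $n$. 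For the second claim, your justification (``product of flasque is flasque, products exact in abelian groups'') shows that $\prod_n I_n^j$ is flasque and that $H^j\bigl(\Gamma(K,\prod_n I_n^\bullet)\bigr)=\prod_n H^j(K;\shm_n)$, but it does \emph{not} show that $\prod_n I_n^\bullet$ is a resolution of $\prod_n\shm_n$: exactness of each $I_n^\bullet$ does not pass to the product of complexes of sheaves, again because products of sheaf epimorphisms need not be epimorphisms.

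The paper circumvents this by a different maneuver in the compact case: it replaces $K$ by a compact neighborhood $\tilde K\in\mathcal{B}$, factors the restriction map through $H^j\bigl(\tilde K;\varprojlim_n(\shm_n|_{\tilde K})\bigr)$ (a projective limit now taken on the \emph{compact} space $\tilde K$), and invokes \cite[Exercise~II.12.b)]{KS1}, which supplies precisely the vanishing of cohomology of such a limit on a compact space under the acyclicity from part~(1). That external input is what your argument is missing; the Mittag-Leffler heuristic alone does not suffice at the sheaf level.
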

\begin{proof}
(1): Let us consider, for each $n\in\mathbb{N}$, the exact sequence:
$$h^n\shm /h^{n+1} \shm\to \shm_n\xrightarrow{\rho_{0,n}} \shm_{n-1}\to 0.$$
Since $h^n\shm /h^{n+1} \shm$ is the image of the  morphism $\overline{\hbar^n}:\shm_0\to \shm_n$, then it is also a quotient of $\shm_0$. Thus, starting with $\shm_0$, the result  follows by induction on $n$.

(2): By (1), when $\mathcal{B}$ is a basis of open sets  the statement is clear.
When $\mathcal{B}$ is a basis of compact sets, we may consider a fundamental system of compact neighborhoods $\tilde{K}\in \mathcal{B}$ of $K$ in $V_i$. For any $j$, we have $H^j(K, \underset{n}{\varprojlim}\shm_n)\simeq\underset{\tilde{K}}{\varinjlim}\,H^j(\tilde{K}, \underset{n}{\varprojlim}\shm_n).$

Since the map
$H^j(\tilde{K}, \underset{n}{\varprojlim}\shm_n)\to H^j(K, \varprojlim_n\shm_n)$ factors by
$$H^j(\tilde{K}, \underset{n}{\varprojlim}\shm_n)\to H^j(\tilde{K}, \underset{n}{\varprojlim} (\shm_n|_{\tilde{K}})\to H^j(K, \underset{n}{\varprojlim}\shm_n)$$ it remains to observe that  $H^j(\tilde{K}, \underset{n}{\varprojlim}\shm_n|_{\tilde{K}})=0, \text{for}\, j>0,$ as a consequence of $(1)$ and of \cite[Exercise II.12.b)]{KS1}.
\end{proof}

\section{The category $\text{Mod}_{\mathcal{S}}(\mathcal{A})$.}\label{extension-section}

In this section we prepare the notions needed  for our main result (cf. Theorem~\ref{T:1} below). Since we shall deal with  subcategories of sheaves  whose objects  are described by local properties, the convenient language is that of stacks.   Moreover, since on each open subset $U\subset X$ we deal with categories of sheaves  which are  abelian subcategories of modules over some sheaf of rings defined on $X$, and the restriction morphisms are the usual restriction of sheaves to open subsets, our stacks  are in fact sheaves of categories. A fortiori we deal with $\mathbb{K}$-linear stacks. For the background on stacks we refer to \cite{KS3}.

Let $\mathcal{A}$ be an algebra of formal deformation on a complex manifold $X$ and
let there be given and fixed in the sequel a  $\mathbb{K}[[\hbar]]$-linear full Serre substack $\mathcal{S}: U\mapsto \mathcal{S}(U)$ of $\mathscr{M}od_{coh}(\sha)$.
 By convenience, for each ${n\in\N_0}$, we shall   denote by $\mathcal{S}_n$ the  substack of $\mathscr{M}od_{coh}(\mathcal{A}_n)$:
 $$U\mapsto \mathcal{S}_n(U):=\mathcal{S}(U)\cap \text{Mod}(\mathcal{A}_n|_U).$$

Hence, for each open subset $U\subset X$ and each $n\in\N_0$,  $\mathcal{S}_n(U)$ is a full Serre subcategory of $\text{Mod}_{coh}(\mathcal{A}_n|_U)$.

\begin{convention}\label{Conv2} In view of our applications, if there is no ambiguity,  given  an open subset $U\subset X$ and $\shm\in\text{Mod}_{coh}(\mathcal{A}|_U)$,  we shall often use the notation $\shm\in \mathcal{S}$ (resp. $\shm\in \mathcal{S}_n$) to mean that $\shm\in \mathcal{S}(U)$ (resp. $\shm\in \mathcal{S}_n(U)$). Furthermore, we denote by $D^b_{\mathcal{S}}(\sha)$ the full triangulated subcategory of $D^b(\sha)$ consisting of objects with cohomology in $\mathcal{S}$.
\end{convention}
According to the above convention:
\begin{definition}\label{D:12}
We denote by $\text{Mod}_{\mathcal{S}}(\mathcal{A})$ the full subcategory of $\text{Mod}_{coh}(\mathcal{A})$ consisting of $\mathcal{A}$-modules $\shm$ such that:

For each $n\geq 0$, the complex $gr_{\hbar}^n(\shm)$   belongs to $D^b_{\mathcal{S}}(\sha)$, that is,  both $_n\shm$ and $\shm_n$ are objects of $\mathcal{S}_n$.

\end{definition}

Since each  $\shm\in \text{Mod}_{\mathcal{S}}(\sha)$ is coherent, the sequence $(_n\shm)_n$ is locally stationary, in other words $\shm_{\hbar-tor}$ is locally annihilated by a fixed power $\hbar^N$.

\begin{proposition}\label{P:8}
\begin{enumerate}
\item{$\mathcal{S}(X)$ is a subcategory of $\text{Mod}_{\mathcal{S}}(\sha)$.}
\item{Let $\shm$ be an $\hbar$-torsion $\mathcal{A}$-module such that
$\shm\in \text{Mod}_{\mathcal{S}}(\mathcal{A})$. Then
 $\shm\in \mathcal{S}(X).$}
\end{enumerate}
\end{proposition}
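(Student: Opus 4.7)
The plan is to exploit two facts: that $\mathcal{S}$ is a full Serre substack (so $\mathcal{S}(U)$ is closed under subobjects and quotients inside $\text{Mod}_{coh}(\sha|_U)$), and that $\mathcal{S}$ is a substack (so membership in $\mathcal{S}$ is a local property).

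For (1), let $\shm\in\mathcal{S}(X)$. Then $\shm$ is in particular coherent. For each $n\geq 0$, the module $_n\shm=\ker(\hbar^{n+1}\colon\shm\to\shm)$ is a subobject of $\shm$ in $\text{Mod}_{coh}(\sha)$, and $\shm_n=\shm/\hbar^{n+1}\shm$ is a quotient of $\shm$ in $\text{Mod}_{coh}(\sha)$. Since $\mathcal{S}(X)$ is a Serre subcategory, both $_n\shm$ and $\shm_n$ lie in $\mathcal{S}(X)$. Moreover both are annihilated by $\hbar^{n+1}$, hence factor through $\mathcal{A}_n$ and therefore belong to $\mathcal{S}_n(X)=\mathcal{S}(X)\cap\text{Mod}(\mathcal{A}_n)$. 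This gives $\shm\in\text{Mod}_{\mathcal{S}}(\mathcal{A})$ by Definition \ref{D:12}.

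For (2), let $\shm$ be $\hbar$-torsion and in $\text{Mod}_{\mathcal{S}}(\mathcal{A})$; then $\shm$ is coherent. As pointed out after Definition \ref{D:12}, $\shm_{\hbar-tor}$ is locally annihilated by a fixed power of $\hbar$, so there is an open covering $\{U_i\}$ of $X$ and integers $N_i$ such that $\hbar^{N_i}(\shm|_{U_i})=0$, i.e.\ $\shm|_{U_i}=\,_{N_i-1}(\shm|_{U_i})$. By hypothesis, $_{N_i-1}\shm\in\mathcal{S}_{N_i-1}(X)\subset\mathcal{S}(X)$, and restriction to $U_i$ gives $\shm|_{U_i}\in\mathcal{S}(U_i)$. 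Since $\mathcal{S}$ is a substack (not just a prestack), the property of belonging to $\mathcal{S}$ descends from an open cover to the whole space, so $\shm\in\mathcal{S}(X)$.

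Neither step presents a real obstacle: the content is essentially bookkeeping with the Serre and stack axioms together with the local stationarity of the $\hbar$-torsion filtration coming from coherence. The only point worth emphasizing in writing is that in (2) one really does need $\mathcal{S}$ to be a stack (so that the local membership $\shm|_{U_i}\in\mathcal{S}(U_i)$ glues) and not merely a prestack; the hypothesis on $\mathcal{S}$ was stated precisely for this reason.
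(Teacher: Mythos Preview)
Your proof is correct and follows essentially the same route as the paper: part~(1) uses the Serre property to conclude that $_n\shm$ and $\shm_n$ lie in $\mathcal{S}(X)$ (the paper writes out the two short exact sequences explicitly), and part~(2) uses local stationarity of the $\hbar$-torsion together with the stack axiom to glue. The only cosmetic difference is that in part~(2) you identify $\shm|_{U_i}$ with the kernel ${}_{N_i-1}\shm|_{U_i}$, whereas the paper identifies it with the quotient $\shm_{N_U}|_U$; both work for the same reason.
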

\begin{proof}

$(1)$: Let $\shm\in\mathcal{S}(X)$.  For $n\in\N_0$ we have the exact sequences:
$$0\to\hbar^{n+1} \shm\to\shm\to\shm_n\to 0$$ and $$0\to _{n}\shm\to\shm\to\hbar^{n+1}\shm\to 0,$$
thus $_n\shm$ and $\shm_n$ belong to $\mathcal{S}(X)$.

$(2)$: We have $\shm\simeq\shm_{\hbar-tor}$ hence we can cover $X$ by open subsets $U$ and choose  positive integers $N_U$ such that $\hbar^{N_U+1}\shm|_U=0$. Thus $\shm|_U\simeq \shm_{N_U}|_U\in\mathcal{S}_{N_U}(U)\subset \mathcal{S}(U)$ so $\shm\in\mathcal{S}(X)$ since $\mathcal{S}$ is a stack.\end{proof}

\begin{proposition}\label{G9}
Let $\shm$ be a coherent  $\mathcal{A}$-module.
Then the following properties are equivalent:
\begin{enumerate}
\item {$\shm$ is an object of the category $\text{Mod}_\mathcal{S}(\mathcal{A})$;}
\item {$\shm_0\in\mathcal{S}_0$;}
\item {$\shm_n\in\mathcal{S}_n$, for each $n\geq 0$.}
\end{enumerate}
\end{proposition}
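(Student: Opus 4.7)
The plan is as follows. The implications $(1) \Rightarrow (3) \Rightarrow (2)$ are immediate: the first is built into Definition~\ref{D:12}, and the second is the specialization $n=0$. So the content of the statement lies entirely in $(2) \Rightarrow (1)$, where I would fix a coherent $\shm$ with $\shm_0 \in \mathcal{S}_0$ and prove separately that $\shm_n \in \mathcal{S}_n$ and that ${}_n\shm \in \mathcal{S}_n$ for every $n\geq 0$.

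For $\shm_n$, I would exploit the finite $\hbar$-adic filtration
$$\shm_n \supseteq \hbar\shm_n \supseteq \cdots \supseteq \hbar^{n+1}\shm_n = 0.$$
By Lemma~\ref{L:19} the morphism $\overline{\hbar^k}: \shm_0 \to \shm_n$ has image $\hbar^k\shm_n$, so composing with the projection onto $\hbar^k\shm_n/\hbar^{k+1}\shm_n$ produces a surjection $\shm_0 \twoheadrightarrow \hbar^k\shm_n/\hbar^{k+1}\shm_n$. Hence every graded piece of the filtration is a quotient of $\shm_0 \in \mathcal{S}$ and therefore lies in $\mathcal{S}$ by the Serre property. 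Closure under extensions applied $(n+1)$ times yields $\shm_n \in \mathcal{S}$; since $\shm_n$ is coherent as an $\mathcal{A}_n$-module, in fact $\shm_n \in \mathcal{S}_n$.

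For ${}_n\shm$ the argument is less direct, since ${}_n\shm$ is neither naturally a subobject nor a quotient of $\shm_0$. I would route through the $\hbar$-torsion decomposition~(\ref{E:111})
$$0 \to \shm_{\hbar-tor} \to \shm \to \shm_{\hbar-tf} \to 0$$
and apply Lemma~\ref{L1} (valid since $\shm_{\hbar-tf}$ is $\hbar$-torsion free) to obtain
$$0 \to (\shm_{\hbar-tor})_n \to \shm_n \to (\shm_{\hbar-tf})_n \to 0.$$
The Serre property applied to the subobject $(\shm_{\hbar-tor})_n$ of $\shm_n \in \mathcal{S}$ gives $(\shm_{\hbar-tor})_n \in \mathcal{S}$. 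Since $\shm$ is coherent, locally there exists $N$ with $\shm_{\hbar-tor} = {}_N\shm$ annihilated by a power of $\hbar$; for $n \geq N$ this forces $(\shm_{\hbar-tor})_n = \shm_{\hbar-tor}$, so $\shm_{\hbar-tor}$ lies in $\mathcal{S}$ locally, and hence globally by the substack gluing property. Finally, ${}_n\shm \hookrightarrow \shm_{\hbar-tor}$ for every $n$, and Serre closure under subobjects yields ${}_n\shm \in \mathcal{S}$, so ${}_n\shm \in \mathcal{S}_n$ by $\mathcal{A}_n$-coherence.

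The main obstacle is precisely this second step: ${}_n\shm$ has no natural short exact sequence tying it to $\shm_0$, so one is forced to combine the local finiteness of the $\hbar$-torsion of a coherent $\mathcal{A}$-module with the stack-theoretic descent of $\mathcal{S}$. Everything else reduces to bookkeeping with the Serre-subcategory axioms (closure under subobjects, quotients, and extensions).
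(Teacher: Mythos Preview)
Your proof is correct and follows essentially the same route as the paper: the paper proves the cycle $(1)\Rightarrow(2)\Rightarrow(3)\Rightarrow(1)$, using Lemma~\ref{L:19} and induction for $(2)\Rightarrow(3)$, and for $(3)\Rightarrow(1)$ passing through Corollary~\ref{C1} to get $(\shm_{\hbar\text{-tor}})_n\in\mathcal{S}$, localizing so that $\shm_{\hbar\text{-tor}}\simeq(\shm_{\hbar\text{-tor}})_N$ for some $N$, and then concluding ${}_n\shm={}_n\shm_{\hbar\text{-tor}}\in\mathcal{S}$ --- exactly your argument. One small imprecision: the map $\overline{\hbar^k}:\shm_0\to\shm_n$ you invoke is only well defined when $k\geq n$ (Lemma~\ref{L:19} requires the source to be $\shm_{n'}$ with $n'\geq n-k$); what you actually need is the elementary surjection $\shm_0\twoheadrightarrow\hbar^k\shm/\hbar^{k+1}\shm$ induced by multiplication by $\hbar^k$ on $\shm$, which does not require Lemma~\ref{L:19} at all.
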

\begin{proof}

$(1\Rightarrow 2)$: By definition.

$(2\Rightarrow 3)$: By Lemma \ref{L:19} we have an  exact sequence $$\shm_{n-1}\xrightarrow{\overline{\hbar}}\shm_n\xrightarrow{\rho_{0,n}}\shm_0\to 0.$$
 Since $\shm_0\in\mathcal{S}_0$   we can proceed by induction to
conclude that $\shm_n\in\mathcal{S}_n$ for every $n\geq 0$.

 $(3\Rightarrow 1)$: The statement being of local nature we may assume the existence of $N\geq 0$ such that ${\shm_{\hbar-tor}}_N\simeq\shm_{\hbar-tor}$.

 Assume  that $\shm_n$ belongs to $\mathcal{S}$ for any $n\geq 0$ and let us prove that $_n\shm$  belongs  to $\mathcal{S}$.

 Note that  $_n\shm\simeq _n\shm_{\hbar-tor}$
 and that, by Corollary~\ref{C1},  for each $n\geq 0$, ${\shm_{\hbar-tor}}_n \in \mathcal{S}$.  Taking $N$ big enough as above implies that $\shm_{\hbar-tor}$ belongs to $\mathcal{S}$, so, by Proposition~\ref{P:8}, $\shm_{\hbar-tor}\in
 \text{Mod}_{\mathcal{S}}(\sha)$.  Therefore $_n\shm\in \mathcal{S}$.\end{proof}

\begin{proposition}\label{L:400}
$\text{Mod}_{\mathcal{S}}(\mathcal{A})$ is a Serre subcategory of $\text{Mod}_{coh}(\mathcal{A})$.
\end{proposition}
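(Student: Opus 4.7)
The plan is to reduce the entire Serre property to the single condition ``$\shm_0 \in \mathcal{S}_0$'' via Proposition~\ref{G9}, and then run all arguments in the abelian category of coherent $\mathcal{A}_0$-modules, where the Serre property is essentially free of charge. First I would observe that the subcategory $\mathcal{S}_0(X)\subset\text{Mod}_{coh}(\mathcal{A}_0)$ inherits the Serre property from $\mathcal{S}(X)$: any short exact sequence of $\mathcal{A}_0$-modules is a short exact sequence of $\sha$-modules annihilated by $\hbar$, so closure under sub/quot/extension and the ``two-out-of-three'' property transfer verbatim from $\mathcal{S}$ to $\mathcal{S}_0$.

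Next, to a short exact sequence $0\to\shm'\to\shm\to\shm''\to 0$ in $\text{Mod}_{coh}(\sha)$ I would apply the functor $gr_\hbar^0$, using the two-term free resolution $0\to\sha\xrightarrow{\hbar}\sha\to\sha_0\to 0$ to identify $L_1 gr_\hbar^0(\shn)={}_0\shn$ and $L_igr_\hbar^0=0$ for $i\geq 2$. This yields the six-term exact sequence
\begin{equation*}
0\to {}_0\shm'\to {}_0\shm\to {}_0\shm''\to\shm'_0\to\shm_0\to\shm''_0\to 0.
\end{equation*}
I would then verify thickness by cases using Proposition~\ref{G9}: if $\shm',\shm$ lie in $\text{Mod}_\mathcal{S}(\sha)$, then $\shm''_0$ is a quotient of $\shm_0\in\mathcal{S}_0$; if $\shm,\shm''$ lie in $\text{Mod}_\mathcal{S}(\sha)$, then $\shm'_0$ fits in a short exact sequence whose two ends are a quotient of ${}_0\shm''\in\mathcal{S}_0$ and a subobject of $\shm_0\in\mathcal{S}_0$ (here one uses that $\shm''\in\text{Mod}_\mathcal{S}(\sha)$ gives ${}_0\shm''\in\mathcal{S}_0$ directly by Definition~\ref{D:12}); if $\shm',\shm''$ lie in $\text{Mod}_\mathcal{S}(\sha)$, then $\shm_0$ is an extension of $\shm''_0$ by a quotient of $\shm'_0$. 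In each case the Serre closure of $\mathcal{S}_0$ puts the missing $(\,\cdot\,)_0$ in $\mathcal{S}_0$, and Proposition~\ref{G9} lifts this back to $\text{Mod}_\mathcal{S}(\sha)$.

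For closure under subobjects and quotients in $\text{Mod}_{coh}(\sha)$, I would proceed as follows. Given $\shm\in\text{Mod}_\mathcal{S}(\sha)$ and a coherent subobject $\shn\subset\shm$, the quotient $(\shm/\shn)_0$ is a quotient of $\shm_0\in\mathcal{S}_0$, hence lies in $\mathcal{S}_0$ and so $\shm/\shn\in\text{Mod}_\mathcal{S}(\sha)$ by Proposition~\ref{G9}; the thick property just established, applied to $0\to\shn\to\shm\to\shm/\shn\to 0$, then yields $\shn\in\text{Mod}_\mathcal{S}(\sha)$. Coherence of $\shn$ and $\shm/\shn$ is ensured because $\sha$ is Noetherian under Assumptions~\ref{AssumptionA}/\ref{AssumptionB}.

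The main (and really only) point to be careful about is the correct identification of the six-term sequence associated with $gr_\hbar^0$; once this is in place, every other step is a direct application of the Serre property of $\mathcal{S}_0$ together with Proposition~\ref{G9}. No local-to-global or $\hbar$-adic completion argument is needed at this stage, which is why the proof is short.
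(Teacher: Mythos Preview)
Your proof is correct and takes essentially the same approach as the paper: both derive the six-term exact sequence
\[
0\to {}_0\shm'\to {}_0\shm\to {}_0\shm''\to\shm'_0\to\shm_0\to\shm''_0\to 0
\]
from $gr_\hbar^0$ and then exploit the Serre property of $\mathcal{S}_0$ together with Proposition~\ref{G9}. The only cosmetic difference is that the paper packages the thickness (two-out-of-three) part into a single distinguished-triangle argument in $D^b_{\mathcal{S}}(\sha)$, whereas you unwind the three cases by hand; conversely, for closure under subobjects the paper argues directly with the six-term sequence while you first get quotients and then bootstrap via thickness. Neither variant buys anything substantial over the other.
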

\begin{proof}
Consider an exact sequence in $\text{Mod}_{coh}(\mathcal{A})$  $$0\to \shm_1\underset{f}{\to}\shm_2\underset{g}{\to}\shm_3\to 0.$$

 One has a distinguished triangle $$gr_\hbar^n(\shm_1)\to gr_\hbar^n(\shm_2)\to gr_\hbar^n(\shm_3)\xrightarrow{+1},$$ so $gr_\hbar^n(\shm_i)\in D^b_{\mathcal{S}}(\mathcal{A})$ if it is so for $gr_\hbar^n(\shm_j)$ and $gr_\hbar^n(\shm_k)$, with $i\neq j, k$, for every $n\in\N_0$.

Therefore it remains to prove that if $\shm_2$ is an object of $\text{Mod}_\mathcal{S}(\mathcal{A})$ then  $\shm_1$ and $\shm_3$ belong to $\text{Mod}_\mathcal{S}(\mathcal{A})$.
 To prove this we consider   the long exact sequence $$0\to_n\shm_1\to_n\shm_2\to_n\shm_3\to\shm_{1,n}\to\shm_{2, n}\to\shm_{3, n}\to 0.$$ The assumption on $\mathcal{S}$ entails that $_n\shm_1, \shm_{3, n}\in\mathcal{S}_n$. By Proposition~\ref{G9}, we also have $_n\shm_3\in\mathcal{S}_n$ and the proof follows.
 \end{proof}

Hence, in view of \eqref{E:111}, an $\mathcal{A}$-module $\shm$ is an object of $\text{Mod}_{\mathcal{S}}(\mathcal{A})$ if and only if $\shm_{\hbar-tor}$ and $\shm_{\hbar-tf}$ are objects of $\text{Mod}_{\mathcal{S}}(\mathcal{A})$.

\section{Extension of  functors.}

Let now $X$, $\sha$ and $\mathcal{S}$ be as in  Section \ref {extension-section}. In the sequel we shall assume that  $\mathcal{S}$ satisfies the following:

\begin{assumption}\label{S}
For each open subset $U\subset X$, $\mathcal{S}(U)=\bigcup_n\mathcal{S}_n(U).$
\end{assumption}
Let $Y$ denote another complex analytic manifold and $\mathcal{A}'$ an algebra of formal deformation on $Y$. For simplicity we still denote by $\hbar$ the fixed section in the center of $\sha'$, thus both $\sha$ and $\sha'$ are $\mathbb{K}[[\hbar]]$-algebras.
Let us denote by $\mathcal{B}'$ the corresponding basis of neighborhoods in $Y$.
For a  substack $\mathcal{S}'$  of  $\mathscr{M}od(\sha')$,  for each $n\in\N_0$, according to the notations of  Section \ref {extension-section}, we shall denote  by $\mathcal{S}'_n$ the substack:
$$\mathcal{S}'_n: V\mapsto \mathcal{S}'_n(V)=\mathcal{S}'(V)\cap\text{Mod}(\mathcal{A}'_n|_V).$$
From now on we consider fixed a full abelian substack $\mathscr{C}'$ of $\mathscr{M}od(\sha')$  as well as a   full Serre substack  $\mathcal{S}'$ of $\mathscr{C}'$.

Firstly assume that we are given a $\mathbb{K}[[\hbar]]$-linear functor $F:\text{Mod}(\sha)\to \text{Mod}(\sha')$. Then, naturally, one  defines a new functor $F^{\hbar}$, formal extension of $F$, by setting:

\begin{definition}\label{G1}
 $F^\hbar$ is the functor  from $\text{Mod}(\mathcal{A})$ to $\text{Mod}(\mathcal{A}')$ :

\begin{enumerate}
 \item{ for $\shm\in \text{Mod}(\mathcal{A}),$
 $$F^\hbar(\shm)=\underset{{n\geq 0}}\varprojlim F(\shm_n),$$}
\item{given a morphism $f:\shm\to\shn$ in $ \text{Mod}(\mathcal{A})$,
  $$F^\hbar(f):F^\hbar(\shm)\to F^{\hbar}(\shn)$$ \noindent is the morphism associated to
the morphisms
\begin{equation*}
 \quad F(\shm_n)\xrightarrow{F(f_n)}F(\shn_n)
\end{equation*}
\noindent where $\shm_n\xrightarrow{f_n}\shn_n$ is induced by $f$.}
\end{enumerate}
\end{definition}

Our goal now is to discuss the properties of $F^{\hbar}$ when $F$ is a functor from $\mathcal{S}$  to $\mathcal{S}'$ (in a sense to be clarified) and  regard its restriction to $\text{Mod}_{\mathcal{S}}(\sha)$. For that we need to state additional assumptions:

\assumption\label{A:v} Henceforward we assume that $\mathcal{S}'$ plays the role of $\mathscr{S}$ in Condition \ref{A} with respect to $\mathscr{C}'$ and $\mathcal{B}'$.

\assumption\label{E:fi}
We fix a functor $\phi$ from the category $Op(X)$ of open subsets of $X$ to the category $Op(Y)$  satisfying the following conditions:

\begin{itemize}
\item{$\Phi(X)=Y$};

\item{For any open subset $\Omega\subset X$ and any open covering $(U_i)_i$ of $\Omega$, $(\Phi(U_i))_i$ is an open covering of $\Phi(\Omega)$.}
\end{itemize}

Let us denote by $\Phi^\ast\mathcal{S}'$ the prestack  defined by assigning
to each open subset $U\subset X$ the subcategory $\Phi^\ast\mathcal{S}'(U):=\mathcal{S}'(\Phi(U))$ of $\text{Mod}(\sha'|_{\Phi(U)})$, the restriction morphism associated to $U\supset V$ being the sheaf restriction from $\Phi(U)$ to $\Phi(V)$.

Let now $F$ be a $\mathbb{K}[[\hbar]]$-linear functor of prestacks: $F:\mathcal{S}\to\Phi^\ast\mathcal{S}'$.
In particular, to each $U\in Op(X)$, $F$ assigns a  $\mathbb{K}[[\hbar]]$-linear functor $F(U):\mathcal{S}(U)\to \mathcal{S}'(\Phi(U))$ compatible with the restriction morphisms in $Op(X)$.

 Whenever there is no ambiguity,   we shall write $F$ instead of $F(X)$.
We shall keep this simplified notation up to the end of this section whenever there is no risk of confusion.

  According to  the preceding conventions, given $\shm\in \mathcal{S}$,  if $\hbar^{n+1}\shm=0$ then $\hbar^{n+1}F(\shm)=0$ hence  $F|_{\mathcal{S}_n}$ takes values in $\mathcal{S}'_n$.

  Let now $U\in Op(X)$ and let us consider $\shm \in\text{Mod}_{\mathcal{S}(U)}(\mathcal{A}|_U)$.

For $n>k\geq 0$ denote by $F(U)(\rho_{k,n})$ the image of the epimorphism $\rho_{k,n}:\shm_n\to\shm_k$ by $F(U)$.

   We obtain  a projective system of $\mathcal{A}'|_{\Phi (U)}$-modules $(F(U)(\shm_n),F(U)(\rho_{k,n}))$ and, by Definition \ref{G1}, an extended functor $F(U)^{\hbar}:\text{Mod}_{\mathcal{S}(U)}(\mathcal{A}|_U) \to \text{Mod}(\mathcal{A}'|_{\Phi(U)})$ given by $F(U)^{\hbar}(\shm):=\underset{{n\geq 0}}\varprojlim F(U)(\shm_n)$.
Recall also that the functor $\varprojlim$ on the category of projective systems of $\text{Mod}(\sha')$ commutes with restriction to open subsets, hence, if we start with $\shm\in\text{Mod}(\sha')$,  for each open subset $U\subset X$, $$F^{\hbar}(\shm)|_{\Phi(U)}\simeq (F|_U)^{\hbar}(\shm|_U).$$

\begin{proposition}\label{G20}
Let $\shm$ be an $\hbar$-torsion $\mathcal{A}$-module in $\text{ Mod}_{\mathcal{S}}(\mathcal{A})$.
Then, we have $F^\hbar(\shm)\simeq F(\shm)$ in $\text{Mod}(\mathcal{A}')$.
\end{proposition}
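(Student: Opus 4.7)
The plan is to construct a canonical morphism $\psi \colon F(\shm) \to F^{\hbar}(\shm)$ and verify it is an isomorphism by a local argument. Since $\shm$ is $\hbar$-torsion and lies in $\text{Mod}_{\mathcal{S}}(\sha)$, by Proposition~\ref{P:8}(2) it belongs to $\mathcal{S}(X)$, so $F(\shm)$ is defined. The canonical epimorphisms $\pi_n \colon \shm \to \shm_n$ satisfy $\rho_{k,n} \circ \pi_n = \pi_k$, hence the family $(F(\pi_n))_n$ is compatible with the transition morphisms $F(\rho_{k,n})$; the universal property of the projective limit then yields the desired morphism
\[
\psi \colon F(\shm) \to \varprojlim_n F(\shm_n) = F^{\hbar}(\shm).
\]

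To prove $\psi$ is an isomorphism, I would argue locally. Because $\shm$ is coherent and $\hbar$-torsion, the chain $(\,_n\shm)_n$ is locally stationary (see the remarks preceding Lemma~\ref{L1}), so $X$ can be covered by open subsets $U$ for which there exists $N_U \geq 0$ with $\hbar^{N_U+1}\shm|_U = 0$. On such a $U$, for every $n \geq N_U$ the projection $\pi_n|_U$ is an isomorphism $\shm|_U \isoto \shm_n|_U$, and all transition maps $\rho_{k,n}|_U$ with $k \geq N_U$ are isomorphisms as well. Applying the functor $F(U)$, the projective system $(F(U)(\shm_n|_U))_n$ becomes eventually constant, and its projective limit is canonically identified with $F(U)(\shm|_U)$. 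Since $F$ is a functor of prestacks, the latter coincides with $F(\shm)|_{\Phi(U)}$.

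Combining this with the commutativity $F^{\hbar}(\shm)|_{\Phi(U)} \simeq (F|_U)^{\hbar}(\shm|_U)$ recalled before the proposition, the restriction $\psi|_{\Phi(U)}$ is identified with the stabilization isomorphism above, hence is an isomorphism. Since $(\Phi(U))_U$ covers $\Phi(X) = Y$ by Assumption~\ref{E:fi}, $\psi$ is globally an isomorphism. The only delicate point I expect is verifying that the local stabilization isomorphism truly coincides with the restriction of the globally defined $\psi$; this reduces to naturality of the universal property of $\varprojlim$ together with the compatibility of $F$ with the restriction morphisms of the prestacks $\mathcal{S}$ and $\Phi^{\ast}\mathcal{S}'$.
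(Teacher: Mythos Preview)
Your proof is correct and follows essentially the same route as the paper's: invoke Proposition~\ref{P:8}(2) to make sense of $F(\shm)$, build the canonical map into the projective limit, cover $X$ by opens $U$ on which $\hbar^{N_U+1}\shm|_U=0$ so that the system $(F(U)(\shm_n|_U))_n$ stabilizes, and use that the $\Phi(U)$ cover $Y$ to conclude. You are slightly more explicit than the paper about the universal property and the compatibility of the local stabilization with the global morphism, but the argument is the same.
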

\begin{proof}
In accordance with Proposition \ref{P:8}, $\shm\in\mathcal{S}$, hence we have a natural morphism $F(\shm)\to F^\hbar(\shm)$. We shall see that this morphism is locally an isomorphism.
We can cover $X$ by open subsets  $U\subset X$ and consider a family of positive integers $N_U$   such that  $\hbar^{N_U+1} \shm|_U=0$.  By the assumption, $\{\Phi(U)\}$ is an open covering of $Y$. Since, for each $n\geq N_U$, $\shm_n|_U\simeq\shm_{N_U}|_U\simeq\shm|_U$, we obtain: $$F^\hbar(\shm)|_{\Phi(U)}:=\varprojlim_{n\geq 0} F(\shm_n)|_{\Phi(U)}
\simeq F(U)(\shm_{N_U}|_U)\simeq F(U)(\shm|_U)\simeq F(\shm)|_{\Phi(U)},$$ which ends the proof.   \end{proof}

As a consequence,  by the assumption on $\mathcal{S}$ we conclude:

$$F^{\hbar}|_{{\mathcal{S}}}\simeq F.$$

\begin{remark}
 The existence of $\Phi$ is the main tool to prove Proposition \ref{G20}, which is a key property in the sequel. $\Phi$ would also be used if, with our machinery in hand, we went on constructing the stack $\mathscr{M}od_{\mathcal{S}}(\mathcal{A})$ defined by $U\mapsto \text{Mod}_{\mathcal{S}}(\mathcal{A})(U)$,
the category $\text{Mod}_{\mathcal{S}}(\mathcal{A})(U)$ being defined in $U$ in a similar way to Definition~\ref{D:12}.
Indeed we might define $F^\hbar$ not only as a morphism of categories but as a functor of prestacks $\mathscr{M}{od}_{\mathcal{S}}(\mathcal{A})\to \Phi^{\ast}\mathscr{M}od(\mathcal{A}')$.
However, in view of the applications, it is enough to work with $F^\hbar$ defined as a morphism of categories, cf. Definition~\ref{G1}.
\end{remark}

\subsection{The case of right exact functors}
In the sequel we will assume that  $F(X)$ is right exact.

\begin{lemma}
Let $\shm\in\text{ Mod}_{\mathcal{S}}(\mathcal{A})$. For each $y\in Y$, if  $K\in\mathcal{B}'$ is contained in a  neighborhood $V$ of $y$ satisfying Condition \ref{A} with respect to $F(\shm_0)$, then one has
 $H^j(K;F(\shm_n))=0$, for any $j>0$ and  $n\in\mathbb{N}_0$.
\end{lemma}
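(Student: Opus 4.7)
The plan is a straightforward induction on $n$, using the right exactness of $F$ to import the cohomology vanishing from $F(\shm_0)$ (which satisfies Condition~\ref{A} on $V$ by hypothesis) up the tower $F(\shm_n)$.

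For the base case $n=0$: by Proposition~\ref{G9}, $\shm \in \text{Mod}_{\mathcal{S}}(\sha)$ implies $\shm_0 \in \mathcal{S}$, hence $F(\shm_0) \in \mathcal{S}'$. Taking $\shn = F(\shm_0)$ itself as a submodule of $F(\shm_0)$ in $\mathscr{C}'(V)$, Condition~\ref{A} gives $H^j(K;F(\shm_0))=0$ for $j>0$.

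For the inductive step, the key move is to apply $F$ to the exact sequence of Lemma~\ref{L:19} (with $k=1$, $n'=n-1$),
$$\hbar^{n}\shm/\hbar^{n+1}\shm \longrightarrow \shm_n \xrightarrow{\rho_{n-1,n}} \shm_{n-1} \longrightarrow 0,$$
and use right exactness of $F$ to produce the short exact sequence
$$0 \longrightarrow Q \longrightarrow F(\shm_n) \longrightarrow F(\shm_{n-1}) \longrightarrow 0,$$
where $Q$ is the image in $F(\shm_n)$ of $F(\hbar^{n}\shm/\hbar^{n+1}\shm)$. Now $\hbar^{n}\shm/\hbar^{n+1}\shm$ is a quotient of $\shm_0$ via $\overline{\hbar^{n}}\colon\shm_0 \twoheadrightarrow \hbar^n\shm/\hbar^{n+1}\shm$, so right exactness of $F$ forces $F(\hbar^{n}\shm/\hbar^{n+1}\shm)$, and hence $Q$, to be a quotient of $F(\shm_0)$. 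Because $\mathcal{S}'$ is a Serre substack of $\mathscr{C}'$ and $F(\shm_0) \in \mathcal{S}'$, we have $Q \in \mathcal{S}'(V) \subset \mathscr{C}'(V)$.

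Condition~\ref{A} applied to $F(\shm_0)$ on $V$ then yields $H^j(K;Q)=0$ for $j>0$ and any $K\in\mathcal{B}'$ with $K\subset V$. Combining this with the induction hypothesis $H^j(K;F(\shm_{n-1}))=0$ in the long exact cohomology sequence
$$\cdots \to H^j(K;Q) \to H^j(K;F(\shm_n)) \to H^j(K;F(\shm_{n-1})) \to \cdots$$
squeezes $H^j(K;F(\shm_n))$ between two zeros for every $j>0$, completing the induction. The only nontrivial point is the identification of $Q$ as a quotient of $F(\shm_0)$ lying in $\mathscr{C}'(V)$, which is exactly what makes the hypothesis on $V$ available through Condition~\ref{A}.
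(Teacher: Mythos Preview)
Your proof is correct and follows the same inductive approach as the paper's (which simply invokes the argument of Lemma~\ref{L:Dav}(1) after applying $F$ to the same exact sequence). One small cleanup: the parameters you cite from Lemma~\ref{L:19} should be $k=n$, $n'=0$ (not $k=1$, $n'=n-1$) to obtain $\shm_0\xrightarrow{\overline{\hbar^n}}\shm_n\xrightarrow{\rho_{n-1,n}}\shm_{n-1}\to 0$, whose first map has image $\hbar^n\shm/\hbar^{n+1}\shm$; the rest of your argument is unaffected.
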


\begin{proof}
In accordance with the right exactness of $F(X)$, for each $n\in\mathbb{N}$, the sequence:  $$F(h^n\shm /h^{n+1} \shm)\to F(\shm_n)\xrightarrow{F(\rho_{n-1,n})} F(\shm_{n-1})\to 0$$ is exact and $F(h^n\shm /h^{n+1} \shm)$ is a quotient of $F(\shm_0)$.

The proof then proceeds by induction  as in Lemma~\ref{L:Dav}(1).
\end{proof}

Let  $\shm\in { Mod}_{\mathcal{S}}(\mathcal{A})$ and let us now denote by $\varrho_n:F^\hbar(\shm)\to F(\shm_{n})$ the canonical projection.

\begin{lemma}\label{L:T1}
 For each $n\geq 1$, the sequence
\begin{equation}\label{exacta3}
F^\hbar(\shm)\xrightarrow{\hbar^{n+1}} F^\hbar(\shm)\xrightarrow{\varrho_n} F(\shm_{n})\to 0.
\end{equation}
is exact.
\end{lemma}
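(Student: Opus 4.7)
The plan is to apply Lemma \ref{L:19} to derive an exact sequence at each truncation level $m$, push it through the right exact functor $F$, and pass to the projective limit via Mittag-Leffler arguments rooted in the cohomology vanishing of Condition \ref{A}.

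First, for each $m \geq n+1$, Lemma \ref{L:19} with $k = n+1$ and $N' = m$ gives the exact sequence $\shm_m \xrightarrow{\overline{\hbar^{n+1}}} \shm_m \xrightarrow{\rho_{n,m}} \shm_n \to 0$. Applying the right exact, $\mathbb{K}[[\hbar]]$-linear functor $F$ yields the exact sequence of sheaves
$$F(\shm_m) \xrightarrow{\hbar^{n+1}} F(\shm_m) \xrightarrow{F(\rho_{n,m})} F(\shm_n) \to 0,$$
compatible as $m$ varies. In particular, $\hbar^{n+1}F(\shm_m) = \ker F(\rho_{n,m})$ is a quotient of $F(\shm_{m-n-1})$ (by a second application of Lemma \ref{L:19} with $N' = m-n-1$), and $\ker F(\rho_{m-1,m}) = \hbar^m F(\shm_m)$ is a quotient of $F(\shm_0)$.

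Second, I would work locally over compact sets $K \in \mathcal{B}'$ contained in a neighborhood of a point $y \in Y$ where Condition \ref{A} is satisfied by $F(\shm_0)$. By the preceding lemma one has $H^j(K; F(\shm_m)) = 0$ for $j > 0$ and all $m$; the quotient structures just noted together with Condition \ref{A} extend the same vanishing to $\hbar^{n+1}F(\shm_m)$ and $\hbar^m F(\shm_m)$. Hence $\Gamma(K,\cdot)$ is exact on the short exact sequences $0 \to \hbar^{n+1} F(\shm_m) \to F(\shm_m) \to F(\shm_n) \to 0$, the transitions $\Gamma(K, F(\shm_m)) \to \Gamma(K, F(\shm_{m-1}))$ are surjective, and a direct lifting argument then shows that the induced transitions on $\Gamma(K, \hbar^{n+1}F(\shm_m))$ are also surjective. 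Both of these projective systems of sections therefore satisfy the Mittag-Leffler condition trivially.

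Passing to the inverse limit, which preserves exactness on such Mittag-Leffler systems, and using $\Gamma(K, F^\hbar(\shm)) = \varprojlim_m \Gamma(K, F(\shm_m))$, I obtain the right exact sequence
$$\Gamma(K, F^\hbar(\shm)) \xrightarrow{\hbar^{n+1}} \Gamma(K, F^\hbar(\shm)) \xrightarrow{\varrho_n} \Gamma(K, F(\shm_n)) \to 0.$$
Since the $K$'s form a basis for the topology around every point of $Y$, the sequence \eqref{exacta3} is then exact as a sequence of sheaves. The main obstacle is the exactness at the middle term, which amounts to showing that $\ker\varrho_n$ coincides with the sheaf-theoretic image of $\hbar^{n+1}$ and not merely with $\varprojlim_m \hbar^{n+1} F(\shm_m)$; this reduces to controlling the kernel system $L_m = \ker(\hbar^{n+1}\colon F(\shm_m)\to F(\shm_m))$, whose transitions are generally not surjective but whose images nonetheless stabilize, as one verifies by unraveling the iterated filtration $0 \to \hbar^{m-n}F(\shm_m) \to L_m \to L_{m-n-1} \to 0$ obtained from repeated use of Lemma \ref{L:19}, the top pieces vanishing at lower levels while only the finitely many base pieces $F(\shm_k)$ for $k\leq n$ survive.
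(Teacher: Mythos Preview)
Your overall strategy is the paper's: apply Lemma~\ref{L:19} with $k=n+1$ at each level $N\geq n$, push through the right exact $F$, take sections over small $\Omega\in\mathcal{B}'$ using the vanishing lemma, and pass to the limit via Mittag--Leffler. The paper stops at ``the projective systems satisfy Mittag--Leffler, so applying $\varprojlim_N$ we obtain an exact sequence'', without isolating the middle-term issue you raise; you are right that this step deserves justification, since in general a right exact sequence of projective systems with surjective transitions need \emph{not} have right exact limit (the obstruction is $\varprojlim^1$ of the kernel system $L_m=\ker(\hbar^{n+1}\colon \Gamma(\Omega,F(\shm_m))\to\Gamma(\Omega,F(\shm_m)))$).

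However, your proposed filtration $0\to\hbar^{m-n}F(\shm_m)\to L_m\to L_{m-n-1}\to 0$ is not exact on the right: an element $y\in L_{m-n-1}$ lifts to some $\tilde y\in F(\shm_m)$ with $\hbar^{n+1}\tilde y\in\hbar^{m-n}F(\shm_m)$, but there is no reason this can be improved to $\hbar^{n+1}\tilde y=0$. The correct way to see that $(L_m)$ is Mittag--Leffler is to show that the image of $L_{N'}$ in $L_N$ is independent of $N'$ once $N'\geq N+n+1$. Indeed, given $y\in L_{N+n+1}$ with image $z\in L_N$, and any $N'>N+n+1$, lift $y$ to $\tilde y\in \Gamma(\Omega,F(\shm_{N'}))$ via the surjective transitions; then $\hbar^{n+1}\tilde y$ lies in $\ker\bigl(\Gamma(\Omega,F(\shm_{N'}))\to\Gamma(\Omega,F(\shm_{N+n+1}))\bigr)=\hbar^{N+n+2}\Gamma(\Omega,F(\shm_{N'}))$, say $\hbar^{n+1}\tilde y=\hbar^{N+n+2}u$. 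Setting $\tilde z:=\tilde y-\hbar^{N+1}u$ one has $\hbar^{n+1}\tilde z=0$ and $\rho_N(\tilde z)=z$ (since $\hbar^{N+1}$ kills $\Gamma(\Omega,F(\shm_N))$). Thus the images stabilize after $n+1$ steps, $\varprojlim^1 L=0$, and the exactness at the middle term follows.
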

\begin{proof}

Using Lemma \ref{L:19} and considering  sufficiently small  $\Omega$ in a basis $\mathcal{B}'$ in the conditions of Assumption~\ref{AssumptionA} or Assumption~\ref{AssumptionB}, it follows that, for any $N\geq n$, the  sequence
\begin{equation}\label{exacta1}
\Gamma(\Omega;F(\shm_{N}))\xrightarrow{F(\hbar^{n+1})}\Gamma(\Omega; F(\shm_N))\xrightarrow{F(\rho_{n,N})}\Gamma(\Omega;F(\shm_{n}))\to 0
\end{equation}
is exact.  In this way we obtain an exact sequence of projective systems satisfying Mittag-Leffler's condition, so, applying the functor $\underset{N}{\varprojlim}$ we obtain an exact sequence:
\begin{equation}\label{exacta2}
\varprojlim_N\Gamma(\Omega;F (\shm_{N}))\to\varprojlim_N\Gamma(\Omega; F(\shm_N)) \to\Gamma(\Omega;F(\shm_{n}))\to 0.
\end{equation}
 If $\mathcal{B}'$ is a basis of open sets,  this  immediately entails the exactness of (\ref{exacta3}). If $\mathcal{B}'$ is a basis of compact sets, we prove the exactness in the stalks.

 Let $y\in Y$ and let us consider a fundamental system of open neighborhoods $\{\Omega_l\}_{l\in\N}$ of $y$ and a fundamental system of compact neighborhoods $\{K_l\}_{l\in\N}$  of $y$, with $K_l\in\mathcal{B}'$ and
\begin{equation}\label{E:300}
K_{l+1}\subset \Omega_l\subset \text{Int}(K_l).
\end{equation}

Applying  $\underset{l}{\varinjlim}$ to the sequence obtained by replacing in (\ref{exacta2}) $\Omega$ by $K_l$, we obtain an exact sequence: $$F^\hbar(\shm)_y\xrightarrow{\hbar^{n+1}} F^\hbar(\shm)_y\xrightarrow{\varrho_n} F(\shm_{n})_y\to 0,$$ as desired.
\end{proof}
As a consequence,
\begin{corollary}\label{C:T2}
Let $\shm\in { Mod}_{\mathcal{S}}(\mathcal{A})$.
Then $F^\hbar(\shm)$ is an $\hbar$-complete $\mathcal{A}'$-module.
\end{corollary}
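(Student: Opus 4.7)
The plan is to derive $\hbar$-completeness directly from Lemma~\ref{L:T1} together with the definition of $F^\hbar$.

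First I would rewrite the exact sequence (\ref{exacta3}). The image of multiplication by $\hbar^{n+1}$ on $F^\hbar(\shm)$ equals $\hbar^{n+1}F^\hbar(\shm)$, so Lemma~\ref{L:T1} yields a canonical isomorphism
\[
\overline{\varrho_n}\colon F^\hbar(\shm)/\hbar^{n+1}F^\hbar(\shm)\isoto F(\shm_n)
\]
induced by $\varrho_n$. The key observation is that these isomorphisms are compatible with the transition maps on both sides: on the left one has the natural quotient maps between the $F^\hbar(\shm)/\hbar^{k+1}F^\hbar(\shm)$, while on the right one has $F(\rho_{k,n})$; their compatibility is immediate from $\rho_{k,n}\circ\varrho_n=\varrho_k$, which holds by the very definition of $F^\hbar(\shm)=\varprojlim_n F(\shm_n)$.

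Passing to the projective limit over $n$, the family $\{\overline{\varrho_n}\}_n$ produces an isomorphism
\[
\varprojlim_{n\geq 0} F^\hbar(\shm)/\hbar^{n+1}F^\hbar(\shm)\isoto\varprojlim_{n\geq 0}F(\shm_n)=F^\hbar(\shm).
\]
A direct check shows that the inverse of this isomorphism is precisely the canonical map $F^\hbar(\shm)\to\varprojlim_{n} F^\hbar(\shm)/\hbar^{n+1}F^\hbar(\shm)$; indeed, composing the canonical map with $\overline{\varrho_n}$ for each $n$ gives back the defining projection $\varrho_n$ of $F^\hbar(\shm)$. Hence the canonical map is an isomorphism, i.e.\ $F^\hbar(\shm)$ is $\hbar$-complete.

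I do not expect any real obstacle here, since all the work has been carried out in Lemma~\ref{L:T1}; the only point requiring a moment's attention is the functoriality/compatibility of $\overline{\varrho_n}$ with the transition morphisms, which is a formal consequence of the projective-limit definition of $F^\hbar(\shm)$.
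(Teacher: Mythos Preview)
Your proof is correct and follows exactly the approach intended in the paper, which simply records the corollary ``as a consequence'' of Lemma~\ref{L:T1} without spelling out the details. You have filled in precisely the expected argument: the exact sequence~(\ref{exacta3}) gives $F^\hbar(\shm)_n\simeq F(\shm_n)$ compatibly with the transition maps, and passing to the limit recovers $F^\hbar(\shm)$ via its own definition.
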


As  a consequence of Corollary \ref{C:T2} together with Lemma \ref{L:Dav} we conclude:

\begin{proposition}\label{G18}
Let $\shm\in { Mod}_{\mathcal{S}}(\mathcal{A})$.
Then
\ $F^\hbar(\shm)$ satisfies the vanishing condition (\ref{E:T23}) on Condition \ref{A} for sufficiently small $K\in\mathcal{B}'$.
\end{proposition}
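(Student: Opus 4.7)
The plan is to apply Lemma \ref{L:Dav} to $F^\hbar(\shm)$, regarded as an $\sha'$-module, with $\mathcal{S}'$ and $\mathcal{B}'$ playing the roles of $\mathcal{S}$ and $\mathcal{B}$. This substitution is legitimate by Assumption \ref{A:v}, which asserts precisely that $\mathcal{S}'$ satisfies Condition \ref{A} on $Y$ with respect to $\mathscr{C}'$ and $\mathcal{B}'$. To apply the second part of Lemma \ref{L:Dav} I need to verify two hypotheses on $F^\hbar(\shm)$: that it is $\hbar$-complete, and that its quotient $(F^\hbar(\shm))_0$ belongs to $\mathcal{S}'(Y)$.

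The $\hbar$-completeness is precisely the content of Corollary \ref{C:T2}. For the second hypothesis I would specialise Lemma \ref{L:T1} to $n=0$, obtaining the exact sequence
\[
F^\hbar(\shm) \xrightarrow{\hbar} F^\hbar(\shm) \xrightarrow{\varrho_0} F(\shm_0) \to 0,
\]
which identifies $(F^\hbar(\shm))_0$ with $F(\shm_0)$. Since $\shm \in \text{Mod}_{\mathcal{S}}(\sha)$, Proposition \ref{G9} guarantees $\shm_0 \in \mathcal{S}(X)$; as $F = F(X)$ is a $\mathbb{K}[[\hbar]]$-linear functor from $\mathcal{S}(X)$ into $\Phi^\ast\mathcal{S}'(X) = \mathcal{S}'(Y)$ (using $\Phi(X) = Y$), it follows that $F(\shm_0) \in \mathcal{S}'(Y)$, and hence $(F^\hbar(\shm))_0 \in \mathcal{S}'(Y)$.

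Both hypotheses having been verified, Lemma \ref{L:Dav}(2) will yield $H^j(K; F^\hbar(\shm)) = 0$ for every $j > 0$ and every $K \in \mathcal{B}'$ contained in an element of an open covering of $Y$ on which Condition \ref{A} is fulfilled by $F(\shm_0)$, which is exactly what ``sufficiently small $K \in \mathcal{B}'$'' means in the statement. The only mildly delicate point is the bookkeeping needed to transfer Condition \ref{A} from the $(\sha, \mathcal{S}, \mathcal{B})$ side to the $(\sha', \mathcal{S}', \mathcal{B}')$ side and to note that it is now $F(\shm_0)$, rather than $\shm_0$, that governs the choice of covering; but since this transfer is guaranteed by Assumption \ref{A:v}, no genuine obstacle arises, and no input beyond Corollary \ref{C:T2}, Lemma \ref{L:T1}, and Lemma \ref{L:Dav} is needed.
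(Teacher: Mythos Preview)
Your proof is correct and follows the same approach as the paper, which simply cites Corollary~\ref{C:T2} together with Lemma~\ref{L:Dav}. You have made explicit the one point the paper leaves implicit, namely that $(F^\hbar(\shm))_0 \simeq F(\shm_0) \in \mathcal{S}'(Y)$, which is needed to invoke Lemma~\ref{L:Dav}. One small caveat: Lemma~\ref{L:T1} is stated only for $n \geq 1$, so strictly speaking you cannot ``specialise'' it to $n=0$; however, the proof of that lemma goes through verbatim for $n=0$ (the underlying Lemma~\ref{L:19} with $k=1$ requires only $N \geq 1$ in the projective system, and the case $N=0$ is trivial), so this is a cosmetic issue rather than a gap.
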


\begin{theorem}\label{G163}
The functor $F^{\hbar}$ is right exact.
\end{theorem}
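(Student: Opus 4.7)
The plan is to mirror Lemma~\ref{L:T1} by reducing exactness in $\text{Mod}(\mathcal{A}')$ to exactness of sections over a sufficiently small $K\in\mathcal{B}'$, and then exchanging $\varprojlim_n$ with the exact sequence via the Mittag-Leffler lemma. Starting from an exact sequence $\shm'\to\shm\to\shm''\to 0$ in $\text{Mod}_{\mathcal{S}}(\mathcal{A})$, for each $n\geq 0$ the right exact functors $\cdot/\hbar^{n+1}$ and $F$ successively yield an exact sequence
$$F(\shm'_n)\to F(\shm_n)\to F(\shm''_n)\to 0$$
in $\mathcal{S}'_n(Y)$ (using Proposition~\ref{G9}); moreover, the three induced projective systems have surjective transitions, since the $\rho_{k,n}$ are surjective and $F$ is right exact.

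I would next split the sequence via its image and kernel and pass to sections. Fix $y\in Y$ and $K\in\mathcal{B}'$ around $y$ small enough that Condition~\ref{A} applies to each of $F(\shm'_0)$, $F(\shm_0)$ and $F(\shm''_0)$. Let $\mathcal{I}_n$ denote the image of $F(\shm'_n)\to F(\shm_n)$ and $\mathcal{K}_n$ the kernel of the same map; being submodules of objects in $\mathcal{S}'(Y)$ they belong to $\mathcal{S}'(Y)$ themselves, as $\mathcal{S}'$ is a Serre substack of $\mathscr{C}'$. Together with the lemma immediately preceding Lemma~\ref{L:T1}, Condition~\ref{A} yields $H^j(K;\cdot)=0$ for $j>0$ on each of these five sheaves and all $n\geq 0$, so $\Gamma(K;\cdot)$ is exact on the two induced short exact sequences
$$0\to\Gamma(K;\mathcal{K}_n)\to\Gamma(K;F(\shm'_n))\to\Gamma(K;\mathcal{I}_n)\to 0,$$
$$0\to\Gamma(K;\mathcal{I}_n)\to\Gamma(K;F(\shm_n))\to\Gamma(K;F(\shm''_n))\to 0.$$
By the same $H^1$-vanishing, the four systems $\Gamma(K;F(\shm'_n))$, $\Gamma(K;F(\shm_n))$, $\Gamma(K;F(\shm''_n))$ and $\Gamma(K;\mathcal{I}_n)$ inherit surjective transitions from their underlying sheaves. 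A snake lemma applied to the map of the first short exact sequence between levels $n+1$ and $n$ then forces $\{\Gamma(K;\mathcal{K}_n)\}$ to have surjective transitions as well.

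Thus the leftmost term in each of the two short exact sequences is Mittag-Leffler, so $\varprojlim_n$ preserves both, and gluing them produces exactness of
$$\varprojlim_n\Gamma(K;F(\shm'_n))\to\varprojlim_n\Gamma(K;F(\shm_n))\to\varprojlim_n\Gamma(K;F(\shm''_n))\to 0.$$
Since $\Gamma(K;\cdot)$ commutes with $\varprojlim$, this is precisely the exactness of $F^\hbar(\shm')\to F^\hbar(\shm)\to F^\hbar(\shm'')\to 0$ after taking sections over $K$. When $\mathcal{B}'$ is a basis of open sets this suffices for sheaf-level exactness; when $\mathcal{B}'$ is a basis of compact sets, stalkwise exactness at each $y\in Y$ follows by passing to the filtered colimit over a cofinal system of compact neighborhoods $K_l$ as in Lemma~\ref{L:T1} and~\eqref{E:300}. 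The principal obstacle is the Mittag-Leffler verification for the kernel system $\{\Gamma(K;\mathcal{K}_n)\}$, which has no direct description in terms of $\shm$; this is exactly what the snake lemma resolves, leveraging the right exactness of $F$ and the surjectivity of the transitions $\rho_{k,n}$.
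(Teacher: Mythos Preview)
Your proof is correct and follows essentially the same approach as the paper: reduce to sections over small $K\in\mathcal{B}'$, use the cohomology vanishing from Condition~\ref{A} and the lemma preceding Lemma~\ref{L:T1}, apply Mittag-Leffler, and then sheafify or pass to stalks exactly as in Lemma~\ref{L:T1}. The paper's own proof is terser---it simply asserts that ``each term satisfies Mittag-Leffler's condition'' and refers back to Lemma~\ref{L:T1}---whereas you explicitly split the three-term sequence into two short exact sequences and verify the ML condition for the kernel system via the snake lemma; this extra care is legitimate and fills a detail the paper leaves implicit, but does not constitute a different route.
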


\begin{proof}
Let  $\shm'\to\shm\to\shm''\to 0$ be an exact sequence in $\text{Mod}_{\mathcal{S}}(\mathcal{A})$.  It gives  an exact sequence of projective families with elements in $\mathcal{S}'$:
\begin{equation}\label{E:227}
F(\shm'_n)\to F(\shm_n)\to F(\shm''_n)\to 0, \,\text{for}\, n\geq 0
\end{equation}

Thus, for every sufficiently small set $\Omega$ in a basis $\mathcal{B}'$  in the conditions of Assumption \ref{AssumptionA} or Assumption~\ref{AssumptionB},
we get  a projective system of exact sequences
\begin{eqnarray}\label{eq5}
\Gamma(\Omega; F(\shm'_n)) \to\Gamma(\Omega; F(\shm_n)) \to \Gamma(\Omega; F(\shm''_n))\to 0,
\end{eqnarray}
where each term satisfies Mittag-Leffler's condition. The proof  then proceeds by the same argument as in Lemma \ref{L:T1}.
\end{proof}

\begin{corollary}\label{C2}
For $\shm\in \text{ Mod}_{\mathcal{S}}(\mathcal{A})$ the sequence below is exact:
\begin{eqnarray}\label{eq10}
F^\hbar(\shm_{\hbar-tor})\to F^\hbar(\shm)\to F^\hbar(\shm_{\hbar-tf})\to 0.
\end{eqnarray}
\end{corollary}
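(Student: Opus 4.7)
The plan is to derive this as an immediate consequence of the right exactness of $F^\hbar$ already established in Theorem \ref{G163}, applied to the canonical short exact sequence \eqref{E:111}. The only preparatory point to verify is that both $\shm_{\hbar-tor}$ and $\shm_{\hbar-tf}$ lie in the category $\text{Mod}_{\mathcal{S}}(\mathcal{A})$ to which $F^\hbar$ is being applied; once this is in hand, Theorem \ref{G163} finishes the argument.

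First I would recall the short exact sequence
$$0 \to \shm_{\hbar-tor} \to \shm \to \shm_{\hbar-tf} \to 0$$
from \eqref{E:111}, which holds in $\text{Mod}(\mathcal{A})$ for any $\mathcal{A}$-module $\shm$. Since $\shm$ is coherent and the family $(_n\shm)_n$ is locally stationary, both $\shm_{\hbar-tor}$ and $\shm_{\hbar-tf}$ are coherent $\mathcal{A}$-modules (this was recorded in the discussion preceding Lemma \ref{L1}).

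Next I would invoke Proposition \ref{L:400}, which states that $\text{Mod}_{\mathcal{S}}(\mathcal{A})$ is a Serre subcategory of $\text{Mod}_{coh}(\mathcal{A})$. Since $\shm$ is assumed to be an object of $\text{Mod}_{\mathcal{S}}(\mathcal{A})$, the Serre property (closure under subobjects and quotients) immediately yields $\shm_{\hbar-tor}, \shm_{\hbar-tf} \in \text{Mod}_{\mathcal{S}}(\mathcal{A})$. This was in fact already observed in the remark immediately following the proof of Proposition \ref{L:400}.

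Finally, applying the right exact functor $F^\hbar$ from Theorem \ref{G163} to the short exact sequence above produces the desired right exact sequence \eqref{eq10}. No subtle step is involved here; the only thing to be careful about is that the hypotheses of Theorem \ref{G163} apply to each of the three terms, which is precisely guaranteed by the Serre property just invoked.
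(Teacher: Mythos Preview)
Your proposal is correct and matches the paper's intended argument: the corollary is stated without proof immediately after Theorem~\ref{G163}, and the remark following Proposition~\ref{L:400} already records that $\shm_{\hbar-tor}$ and $\shm_{\hbar-tf}$ lie in $\text{Mod}_{\mathcal{S}}(\mathcal{A})$, so applying the right exactness of $F^\hbar$ to the sequence~\eqref{E:111} is exactly the intended one-line deduction.
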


\begin{proposition}\label{T21} Let us assume that $\mathcal{S}'$ is a subcategory of $\text{Mod}_{coh}(\sha')$. Then,
for every $\shm\in\text{Mod}_{\mathcal{S}}(\mathcal{A})$,
$F^\hbar(\shm)$ belongs to $\text{Mod}_{\mathcal{S}'}(\mathcal{A}').$
\end{proposition}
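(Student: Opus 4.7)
My plan is to verify, for $\mathcal{N}:=F^\hbar(\shm)$, the two criteria provided by Proposition~\ref{G9}, read in the primed setting: an $\mathcal{A}'$-module lies in $\text{Mod}_{\mathcal{S}'}(\mathcal{A}')$ as soon as it is coherent over $\mathcal{A}'$ and its quotient modulo $\hbar$ lies in $\mathcal{S}'_0$. The second criterion is the easier one: from Lemma~\ref{L:T1} (specialized to $n=0$) we have $\mathcal{N}_0\simeq F(\shm_0)$; since $\shm_0\in\mathcal{S}_0$ and $F$ is $\mathbb{K}[[\hbar]]$-linear, the element $\hbar$ acts as $0$ on $F(\shm_0)$, hence $F(\shm_0)\in\mathcal{S}'_0$, and in particular $\mathcal{N}_0\in\mathcal{S}'_0$.

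For the coherence of $\mathcal{N}$ over $\mathcal{A}'$, I would invoke the Kashiwara--Schapira criterion, namely Theorem~\ref{T1}. The $\hbar$-completeness of $\mathcal{N}$ is Corollary~\ref{C:T2}, so the only remaining point is that each graded piece $\hbar^n\mathcal{N}/\hbar^{n+1}\mathcal{N}$ is coherent over $\mathcal{A}'_0$. For $n\geq 1$ I would apply Lemma~\ref{L:19} directly to $\mathcal{N}$, with $k=n$ and $n'=0$: the right-exactness of that sequence identifies the piece in question with the image of $\overline{\hbar^n}:\mathcal{N}_0\to\mathcal{N}_n$, and so with a quotient of $\mathcal{N}_0\simeq F(\shm_0)$. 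Since $F(\shm_0)$ belongs to the Serre subcategory $\mathcal{S}'_0$ of $\text{Mod}_{coh}(\mathcal{A}'_0)$, the quotient is itself in $\mathcal{S}'_0$ and in particular is a coherent $\mathcal{A}'_0$-module. (The case $n=0$ is $\mathcal{N}_0\simeq F(\shm_0)$ itself.)

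With both items verified, Theorem~\ref{T1} yields coherence of $\mathcal{N}$ over $\mathcal{A}'$, and then Proposition~\ref{G9} (transposed to the pair $(\mathcal{A}',\mathcal{S}')$) concludes $\mathcal{N}\in\text{Mod}_{\mathcal{S}'}(\mathcal{A}')$. I do not anticipate a serious obstacle, since the statement is essentially an assembly of previously established tools; the only point that warrants a line of justification is that Proposition~\ref{G9} remains valid verbatim after swapping $(\mathcal{A},\mathcal{S})$ for $(\mathcal{A}',\mathcal{S}')$, which is clear because its proof uses only the Serre property of $\mathcal{S}$ and the formal-deformation axioms for $\mathcal{A}$, both of which are available on the primed side under the current hypothesis $\mathcal{S}'\subset\text{Mod}_{coh}(\mathcal{A}')$.
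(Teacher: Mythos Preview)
Your proof is correct and follows essentially the same route as the paper: reduce to coherence of $F^\hbar(\shm)$ via Proposition~\ref{G9} and Lemma~\ref{L:T1}, then verify the hypotheses of Theorem~\ref{T1} using Corollary~\ref{C:T2} for $\hbar$-completeness and the Serre property of $\mathcal{S}'$ for the graded pieces. The only cosmetic difference is that the paper writes $\hbar^n F^\hbar(\shm)/\hbar^{n+1}F^\hbar(\shm)\simeq \hbar^n F(\shm_n)$ and views it as a coherent submodule of $F(\shm_n)\in\mathcal{S}'$, whereas you view the same object as a quotient of $F(\shm_0)\in\mathcal{S}'_0$ via Lemma~\ref{L:19}; both invocations of the Serre property are equally valid. (A tiny remark: Lemma~\ref{L:T1} is stated for $n\geq 1$, but its proof works verbatim for $n=0$, and the paper itself uses it that way.)
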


\begin{proof}
In view of Proposition~\ref{G9}  and Lemma \ref{L:T1}
it is enough to prove that $F^\hbar(\shm)$ is $\mathcal{A}'$-coherent, which in turn is reduced to prove  that $$\hbar^n F^\hbar(\shm)/\hbar^{n+1}F^\hbar(\shm)$$ is a coherent $\mathcal{A}_0'$-module by Theorem \ref{T1} together with Corollary \ref{C:T2}.

Since $\hbar^n F^\hbar(\shm)/\hbar^{n+1}F^\hbar(\shm)=\hbar^n F^\hbar(\shm)_n\simeq \hbar^n F(\shm_n)$ the result follows.
\end{proof}

\begin{proposition}\label{P:chc}
Consider the case where each $\mathcal{S}_n$ coincides with the stack $\mathcal{M}\text{od}_{coh}(\sha_n)$.
Assume in addition that $F^\hbar(\sha)$ is $\hbar$-torsion free. Then:
\begin{enumerate}
\item $F^\hbar(\sha)$ is $c\hbar c$.
\item For any $\shm\in\text{Mod}_{coh}(\sha)$, $F^\hbar(\shm)$ is $c\hbar c$.
\end{enumerate}
\end{proposition}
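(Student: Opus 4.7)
The plan for (1) is to apply Proposition~\ref{P122} to $F^\hbar(\sha)$. Under the hypothesis $\mathcal{S}_n=\mathscr{M}od_{coh}(\sha_n)$, the module $\sha$ belongs to $\text{Mod}_{\mathcal{S}}(\sha)$ by Proposition~\ref{G9} (since $\sha_0\in\mathcal{S}_0$), so $F^\hbar(\sha)$ is defined and the results of Section~3 apply to it. The three hypotheses of Proposition~\ref{P122} are then met as follows: Corollary~\ref{C:T2} gives that $F^\hbar(\sha)$ is $\hbar$-complete; $\hbar$-torsion freeness is the stated assumption; and Proposition~\ref{G18} provides the required vanishing $H^j(K;F^\hbar(\sha))=0$ for $j>0$, with $K$ ranging in a basis of $\mathcal{B}'$. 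Hence $F^\hbar(\sha)$ is $c\hbar c$.

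For (2), I would argue locally on $X$. A coherent $\sha$-module $\shm$ admits, on a neighborhood $U$ of any given point, a finite free presentation
\[
\sha^a|_U \longrightarrow \sha^b|_U \longrightarrow \shm|_U \longrightarrow 0.
\]
Since $F^\hbar$ is $\mathbb{K}[[\hbar]]$-linear and hence commutes with finite direct sums (both $F$ and $\varprojlim$ being additive, one has $F^\hbar(\sha^b)=\varprojlim_n F(\sha_n^b)=F^\hbar(\sha)^b$), and since $F^\hbar$ is right exact by Theorem~\ref{G163}, applying it yields a right exact sequence
\[
F^\hbar(\sha)^a|_{\Phi(U)} \longrightarrow F^\hbar(\sha)^b|_{\Phi(U)} \longrightarrow F^\hbar(\shm)|_{\Phi(U)} \longrightarrow 0
\]
on $\Phi(U)\subset Y$. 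By part (1), $F^\hbar(\sha)$ is $c\hbar c$, and so are its finite direct sums; hence, by Lemma~\ref{chc}, the cokernel $F^\hbar(\shm)|_{\Phi(U)}$ is $c\hbar c$. Since $c\hbar c$-ness is the vanishing of the sheaf $R\shh\text{om}_{\Z_X[\hbar]}(\Z_X[\hbar,\hbar^{-1}],-)$, it is a local property, and by Assumption~\ref{E:fi} the opens $\Phi(U)$ cover $\Phi(X)=Y$ as $U$ ranges over an open cover of $X$ by presentation domains. Thus $F^\hbar(\shm)$ is $c\hbar c$ on all of $Y$.

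I do not anticipate a serious obstacle: once the $\hbar$-adic properties of $F^\hbar(\sha)$ are collected, part (1) is a direct application of the criterion of Proposition~\ref{P122}, and part (2) is the standard passage from the free module $\sha$ to an arbitrary coherent module via a local presentation, with the cokernel handled by Lemma~\ref{chc}. The only point worth verifying carefully is that $F^\hbar$ commutes with finite direct sums, which is immediate from its definition $F^\hbar(\shm)=\varprojlim_n F(\shm_n)$ together with the additivity of $F$ and $\varprojlim$.
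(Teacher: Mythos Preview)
Your proof is correct and follows essentially the same approach as the paper: part (1) is obtained by combining Corollary~\ref{C:T2}, Proposition~\ref{G18}, and the $\hbar$-torsion freeness hypothesis to invoke Proposition~\ref{P122}, and part (2) is deduced from a local free presentation of $\shm$ together with the right exactness of $F^\hbar$ and Lemma~\ref{chc}. Your additional care in justifying that $F^\hbar$ commutes with finite direct sums and in handling the local-to-global passage via the covering $\{\Phi(U)\}$ is welcome but not a departure from the paper's argument.
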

\begin{proof}
Let us start by noticing that, by the assumption, $\text{Mod}_\mathcal{S}(\sha)$ coincides with $\text{Mod}_{coh}(\sha)$.

(1)
The statement follows by Proposition~\ref{P122}, together with Propositions~\ref{G18} and~\ref{C:T2}.

(2) Let us consider a local presentation $$\sha^{N}\to\sha^{L}\to \shm\to 0,$$ for some $N, L\in\N$.
We get an exact sequence $$F^\hbar(\sha)^N\to F^\hbar(\sha)^L\to F^\hbar(\shm)\to 0,$$
and the result follows by Lemma~\ref{chc}.
\end{proof}

\subsection{The case of exact functors}
\textit{Throughout this subsection we shall  assume that $F(U)$ is exact for any open subset $U\subset X$}.

In this case, applying Lemmas~\ref{L:19b} and~\ref{L:T1}, we get a family of exact sequences
$$0\to F^{\hbar}(\shm)_0\overset{\overline{\hbar^n}}{\to} F^{\hbar}(\shm)_n\overset{h}{\to}F^{\hbar}(\shm)_n,\, \forall n.$$
Thus,  again by Lemma~\ref{L:19b}, we conclude:

\begin{corollary}\label{L:301b}
Given $\shm\in\text{Mod}_{\mathcal{S}}(\mathcal{A})$, if $\shm$ is $\hbar$-torsion free then so is $F^{\hbar}(\shm)$.
\end{corollary}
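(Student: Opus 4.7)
The plan is to reduce the $\hbar$-torsion freeness of $F^\hbar(\shm)$ to the criterion provided by Lemma~\ref{L:19b}, which is applicable because $F^\hbar(\shm)$ is already known to be $\hbar$-complete by Corollary~\ref{C:T2}. Thus it suffices to produce, for every $n\geq 0$, an exact sequence
\begin{equation*}
0\to F^\hbar(\shm)_0 \xrightarrow{\overline{\hbar^n}} F^\hbar(\shm)_n \xrightarrow{\hbar} F^\hbar(\shm)_n.
\end{equation*}

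The starting point is that $\shm$, being coherent, is automatically $\hbar$-complete (by Theorem~\ref{T1}). Since $\shm$ is $\hbar$-torsion free by hypothesis, Lemma~\ref{L:19b} applied to $\shm$ yields, for each $n\geq 0$, an exact sequence
\begin{equation*}
0\to \shm_0 \xrightarrow{\overline{\hbar^n}} \shm_n \xrightarrow{\hbar} \shm_n
\end{equation*}
of $\mathcal{A}$-modules. Because $F$ is assumed exact on every open subset, applying $F$ (locally, and then using that $F^\hbar$ is compatible with restriction to open subsets) transforms this into an exact sequence
\begin{equation*}
0\to F(\shm_0) \xrightarrow{F(\overline{\hbar^n})} F(\shm_n) \xrightarrow{F(\hbar)} F(\shm_n).
\end{equation*}

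Next, Lemma~\ref{L:T1} gives canonical isomorphisms $F^\hbar(\shm)_k \simeq F(\shm_k)$ for every $k$, induced by the projection $\varrho_k$. Since $F$ is $\mathbb{K}[[\hbar]]$-linear, the morphism $F(\hbar)$ on $F(\shm_n)$ corresponds under $\varrho_n$ to multiplication by $\hbar$ on $F^\hbar(\shm)_n$, and similarly $F(\overline{\hbar^n})$ corresponds to $\overline{\hbar^n}:F^\hbar(\shm)_0\to F^\hbar(\shm)_n$ by functoriality of $F^\hbar$. Substituting yields the desired exact sequences. Since $F^\hbar(\shm)$ is $\hbar$-complete by Corollary~\ref{C:T2}, Lemma~\ref{L:19b} in its converse direction concludes that $F^\hbar(\shm)$ is $\hbar$-torsion free.

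The only subtle point is ensuring that the identifications furnished by Lemma~\ref{L:T1} are genuinely compatible with the $\hbar$-action and with the comparison map $\overline{\hbar^n}$; but this is automatic from the $\mathbb{K}[[\hbar]]$-linearity of $F$ and the naturality of the projections $\varrho_n$, so no real obstacle arises.
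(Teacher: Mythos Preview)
Your proof is correct and follows essentially the same approach as the paper: apply Lemma~\ref{L:19b} to the $\hbar$-complete, $\hbar$-torsion free module $\shm$ to obtain the exact sequences $0\to\shm_0\to\shm_n\to\shm_n$, push them through the exact functor $F$, identify $F(\shm_k)\simeq F^\hbar(\shm)_k$ via Lemma~\ref{L:T1}, and then invoke the converse direction of Lemma~\ref{L:19b} using the $\hbar$-completeness of $F^\hbar(\shm)$ from Corollary~\ref{C:T2}. The paper simply compresses these steps into one line, while you spell out the compatibility checks; the parenthetical about applying $F$ ``locally'' is unnecessary since $F(X)$ itself is exact under the standing hypothesis of this subsection.
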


\begin{theorem}\label{T:300}
 $F^\hbar$ is also an exact functor.
\end{theorem}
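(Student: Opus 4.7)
Since $F^\hbar$ is already right exact by Theorem~\ref{G163}, the plan is to show that it preserves monomorphisms. Given a short exact sequence $0\to\shm'\to\shm\to\shm''\to 0$ in $\text{Mod}_{\mathcal{S}}(\sha)$, I would identify the kernel of $F^\hbar(\shm')\to F^\hbar(\shm)$ as a projective limit and show that it vanishes locally on~$Y$.

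First, for each $n\geq 0$, tensoring the sequence with $\sha_n$ using the free resolution $0\to\sha\xrightarrow{\hbar^{n+1}}\sha\to\sha_n\to 0$ yields the six-term exact sequence
\begin{equation*}
{}_n\shm\to{}_n\shm''\xrightarrow{\partial_n}\shm'_n\to\shm_n\to\shm''_n\to 0.
\end{equation*}
Set $J_n:=\text{image}(\partial_n)=\ker(\shm'_n\to\shm_n)$, so that $J_n$ is a quotient of ${}_n\shm''$, and in particular $J_n\in\mathcal{S}$ by the Serre property. Applying the exact functor $F$ yields exact sequences $0\to F(J_n)\to F(\shm'_n)\to F(\shm_n)$, and then the left-exact functor $\varprojlim_n$ gives
\begin{equation*}
0\to\varprojlim_n F(J_n)\to F^\hbar(\shm')\to F^\hbar(\shm),
\end{equation*}
so it suffices to prove $\varprojlim_n F(J_n)=0$ as a sheaf on~$Y$.

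For the local vanishing, since $\shm''$ is coherent, $\shm''_{\hbar-tor}$ is locally annihilated by some fixed power of~$\hbar$; cover $X$ by opens $V$ with integers $N_V$ such that $\hbar^{N_V}\shm''_{\hbar-tor}|_V=0$. Naturality of the connecting homomorphism (applied to the chain map $(\hbar^j,\text{id})$ between the two-term resolutions of $\sha_{n+j}$ and $\sha_n$) gives the commutative square $\rho_{n,n+j}\circ\partial_{n+j}=\partial_n\circ(\hbar^j\cdot)$ on ${}_{n+j}\shm''$. On~$V$, for $j\geq N_V$ one has $\hbar^j\cdot=0$ on ${}_{n+j}\shm''|_V\subset\shm''_{\hbar-tor}|_V$, hence the transition $J_{n+j}|_V\to J_n|_V$ is the zero morphism. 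Since $F(V)$ is $\mathbb{K}[[\hbar]]$-linear, it sends the zero morphism to the zero morphism, so $F(J_{n+j})|_{\Phi(V)}\to F(J_n)|_{\Phi(V)}$ is also zero for $j\geq N_V$. A projective system with eventually zero transitions has trivial limit, so $\varprojlim_n F(J_n)|_{\Phi(V)}=0$. Since $\{\Phi(V)\}$ covers~$Y$ by Assumption~\ref{E:fi}, the limit sheaf vanishes globally.

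The main subtlety will be in step three: establishing the naturality $\rho_{n,n+j}\circ\partial_{n+j}=\partial_n\circ(\hbar^j\cdot)$ and then correctly transferring the local nilpotency of~$\hbar$ on $\shm''_{\hbar-tor}$ through~$F$. Once the connecting-map compatibility is in hand, the eventual vanishing of transitions is automatic from coherence of $\shm''$, and combined with the already-known right exactness this completes the exactness of~$F^\hbar$.
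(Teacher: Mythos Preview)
Your argument is correct and takes a genuinely different route from the paper. The paper does not compute the kernel directly; instead it develops auxiliary results showing that $F^\hbar(\shm)$ is always cohomologically $\hbar$-complete (Lemma~\ref{LC}) and that $gr_\hbar^n$ intertwines $F^\hbar$ with $F$ (Corollaries~\ref{C171} and~\ref{C173}), and then concludes exactness by applying $gr_\hbar$ to the sequence and invoking conservativity of $gr_\hbar$ on $c\hbar c$ objects (Proposition~\ref{P121}). Your approach is more elementary: you never touch the $c\hbar c$ formalism, and the only ingredients beyond right exactness are the left exactness of $\varprojlim$, the exactness of each $F(U)$, and the local boundedness of $\hbar$-torsion on coherent modules. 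The connecting-map identity $\rho_{n,n+j}\circ\partial_{n+j}=\partial_n\circ(\hbar^j\cdot)$ is indeed the crux and follows exactly as you sketch from the explicit description $\partial_n(x)=[\hbar^{n+1}\tilde x]$; once that is in place the eventual vanishing of the transitions $F(J_{n+j})|_{\Phi(V)}\to F(J_n)|_{\Phi(V)}$ is immediate. What the paper's longer route buys is the intermediate statements themselves (notably $_nF^\hbar(\shm)\simeq F(_n\shm)$ and the $c\hbar c$ property), which are reused later for uniqueness (Proposition~\ref{P:unicidade2}) and in the applications; your argument gives exactness cleanly but does not produce those byproducts.
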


To prove this we shall need the following results:
\begin{lemma}\label{L:301}
The  sequence of $\mathcal{A}'$-modules
\begin{equation}\label{eq2}
0\to F^\hbar(\shm_{\hbar-tor})\to F^\hbar(\shm)\to F^\hbar(\shm_{\hbar-tf})\to 0
\end{equation}
is exact.
\end{lemma}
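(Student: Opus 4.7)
The plan is to reduce the claim to the right-exact version already available in Corollary~\ref{C2}, so that only injectivity of $F^\hbar(\shm_{\hbar-tor})\to F^\hbar(\shm)$ remains to be established, and to obtain that injectivity from left-exactness of $\varprojlim$ applied to a suitable short exact sequence of projective systems.

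First, I would start from the tautological short exact sequence \eqref{E:111}
$$0\to\shm_{\hbar-tor}\to\shm\to\shm_{\hbar-tf}\to 0,$$
and observe that $\shm_{\hbar-tf}$ is by construction $\hbar$-torsion free. Consequently Lemma~\ref{L1} applies, yielding, for every $n\geq 0$, an exact sequence of $\sha_n$-modules
$$0\to(\shm_{\hbar-tor})_n\to\shm_n\to(\shm_{\hbar-tf})_n\to 0.$$
Since all three terms lie in $\text{Mod}_{\mathcal{S}}(\sha)$ (by Proposition~\ref{L:400}, as $\mathcal{S}$ is a Serre subcategory, and Proposition~\ref{G9}), these sequences take place in $\mathcal{S}_n$. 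Applying $F$, which is assumed exact in this subsection, I obtain short exact sequences of $\sha'$-modules in $\mathcal{S}'_n$:
$$0\to F((\shm_{\hbar-tor})_n)\to F(\shm_n)\to F((\shm_{\hbar-tf})_n)\to 0,$$
which together form a short exact sequence of projective systems.

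Next, I would take $\varprojlim_n$ of this short exact sequence of projective systems. Since $\varprojlim$ is left exact on sheaves (it is defined as the sheafification of the presheaf limit, and both the presheaf limit and sheafification preserve left exactness), I immediately get an exact sequence
$$0\to F^\hbar(\shm_{\hbar-tor})\to F^\hbar(\shm)\to F^\hbar(\shm_{\hbar-tf}).$$
Combining this with the right-exact sequence \eqref{eq10} of Corollary~\ref{C2} gives the full exactness of \eqref{eq2}.

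I do not expect any serious obstacle here: the only subtle point is justifying that $\varprojlim$ is left exact in the sheaf setting, which is standard. Alternatively, if one prefers a hands-on argument avoiding abstract left-exactness, one can follow the pattern of Lemma~\ref{L:T1}: take sections over $\Omega$ in a suitable basis $\mathcal{B}'$ where Condition~\ref{A} gives $H^j(\Omega;F((\shm_{\hbar-tor})_n))=0$ for $j>0$ (by Proposition~\ref{G18} applied to $\shm_{\hbar-tor}$), note that the projective system $(\Gamma(\Omega;F((\shm_{\hbar-tor})_n)))_n$ satisfies Mittag-Leffler (in fact it is locally eventually constant, since $\shm_{\hbar-tor}$ is locally annihilated by a power of $\hbar$), and deduce exactness of the $\Gamma(\Omega;\cdot)$-sequence after applying $\varprojlim_n$; passing to stalks concludes as in Lemma~\ref{L:T1}.
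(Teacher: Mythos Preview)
Your proposal is correct. Your main argument---deducing injectivity from the left exactness of $\varprojlim$ on sheaves and then invoking Corollary~\ref{C2} for the rest---is a mild streamlining of what the paper does; the paper instead establishes the whole short exact sequence in one pass via the Mittag--Leffler/sections argument you sketch as your ``alternative'' (and this alternative is essentially verbatim the paper's proof). Both routes rest on the same exact sequences \eqref{E:224} obtained from Corollary~\ref{C1} and the exactness of $F$; your factoring has the virtue of isolating the easy half (left exactness of a right adjoint) from the harder half already handled in Theorem~\ref{G163}, while the paper's single-pass argument keeps everything self-contained and uniform with the surrounding proofs.
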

\begin{proof}
For each $n\geq 0$, applying the exactness of $F$ to ~\eqref{eqL7}, we obtain  an exact sequence of projective systems with elements in $\mathcal{S}'$:
\begin{equation}\label{E:224}
0\to F((\shm_{\hbar-tor})_n)\to F(\shm_n)\to F((\shm_{\hbar-tf})_n)\to 0.
\end{equation}

Thus, for every sufficiently small set $\Omega$ in a basis $\mathcal{B}'$  in the conditions of Assumption \ref{AssumptionA} or Assumption~\ref{AssumptionB},
we get  a projective system of exact sequences
\begin{equation}\label{E:50}
0\to \Gamma(\Omega; F((\shm_{\hbar-tor})_n)) \to\Gamma(\Omega; F(\shm_n)) \to \Gamma(\Omega;F((\shm_{\hbar-tf})_n))\to 0,
\end{equation}
and the result follows by a similar argument to that used in the proof of Lemma \ref{L:T1}.
\end{proof}

\begin{corollary}\label{C171}
For every $\shm\in{ Mod}_{\mathcal{S}}(\mathcal{A})$ and $n\geq 0$ one has $$_nF^\hbar(\shm)\simeq F(_n\shm).$$
\end{corollary}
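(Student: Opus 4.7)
The strategy is to factor the multiplication-by-$\hbar^{n+1}$ endomorphism of $\shm$ through two short exact sequences in $\text{Mod}_{\mathcal{S}}(\mathcal{A})$, apply the now-available exactness of $F^\hbar$, and then identify the relevant terms using the $\hbar$-torsion versus coherent dichotomy.

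More precisely, I would start from the two short exact sequences
\begin{equation*}
0\to {_n}\shm\to\shm\xrightarrow{\hbar^{n+1}}\hbar^{n+1}\shm\to 0,\qquad 0\to\hbar^{n+1}\shm\to\shm\to\shm_n\to 0,
\end{equation*}
which live in $\text{Mod}_{\mathcal{S}}(\mathcal{A})$ because this category is a Serre subcategory of $\text{Mod}_{coh}(\mathcal{A})$ by Proposition \ref{L:400}. Applying the exact functor $F^\hbar$ (Theorem \ref{T:300}) to both sequences yields
\begin{equation*}
0\to F^\hbar({_n}\shm)\to F^\hbar(\shm)\xrightarrow{\alpha} F^\hbar(\hbar^{n+1}\shm)\to 0,\qquad 0\to F^\hbar(\hbar^{n+1}\shm)\xrightarrow{\beta} F^\hbar(\shm)\to F^\hbar(\shm_n)\to 0.
\end{equation*}
Because $F^\hbar$ is functorial, the composite $\beta\circ\alpha\colon F^\hbar(\shm)\to F^\hbar(\shm)$ is induced by $\hbar^{n+1}\colon\shm\to\shm$ and therefore equals multiplication by $\hbar^{n+1}$ on $F^\hbar(\shm)$. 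Since $\beta$ is injective, $\ker(\beta\circ\alpha)=\ker\alpha=F^\hbar({_n}\shm)$, hence
\begin{equation*}
{_n}F^\hbar(\shm)=\ker\bigl(\hbar^{n+1}\colon F^\hbar(\shm)\to F^\hbar(\shm)\bigr)\simeq F^\hbar({_n}\shm).
\end{equation*}

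To conclude I would invoke Proposition \ref{G20}: since $_n\shm$ is $\hbar$-torsion (annihilated by $\hbar^{n+1}$) and belongs to $\text{Mod}_{\mathcal{S}}(\mathcal{A})$ by Definition \ref{D:12}, Proposition \ref{P:8}(1) places it in $\mathcal{S}$, and hence $F^\hbar({_n}\shm)\simeq F({_n}\shm)$. Combining the two isomorphisms produces the desired $_nF^\hbar(\shm)\simeq F({_n}\shm)$.

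The only delicate point is checking that the two middle terms $\hbar^{n+1}\shm$ and the induced morphisms really produce the factorization of multiplication by $\hbar^{n+1}$ after applying $F^\hbar$; this is where the functoriality of the construction in Definition \ref{G1} is used, together with the Serre property of $\text{Mod}_{\mathcal{S}}(\mathcal{A})$ to guarantee that all intermediate objects remain in the domain where $F^\hbar$ is exact. Everything else is essentially formal once Theorem \ref{T:300} and Proposition \ref{G20} are in hand.
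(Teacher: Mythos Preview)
Your argument is logically valid in isolation, but it is circular in the context of the paper: you invoke Theorem~\ref{T:300} (exactness of $F^\hbar$) to prove Corollary~\ref{C171}, yet in the paper Corollary~\ref{C171} is one of the ingredients used to establish Theorem~\ref{T:300}. Indeed, the paper announces Theorem~\ref{T:300}, then proves Lemma~\ref{L:301}, Corollary~\ref{C171}, Lemma~\ref{LC} and Corollary~\ref{C173} as intermediate steps, and only then completes the proof of Theorem~\ref{T:300} using Corollary~\ref{C173} (which itself relies on~\ref{C171}). So at the moment Corollary~\ref{C171} is being proved, the exactness of $F^\hbar$ on arbitrary short exact sequences in $\text{Mod}_{\mathcal{S}}(\mathcal{A})$ is not yet available, and you cannot apply $F^\hbar$ to the sequences $0\to{_n}\shm\to\shm\to\hbar^{n+1}\shm\to0$ and $0\to\hbar^{n+1}\shm\to\shm\to\shm_n\to0$ and conclude exactness.

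The paper's proof avoids this by working only with tools already in hand: Lemma~\ref{L:301} (exactness of the $\hbar$-torsion/$\hbar$-torsion-free sequence after $F^\hbar$, proved directly via Mittag--Leffler), Corollary~\ref{L:301b} ($F^\hbar$ preserves $\hbar$-torsion-freeness), Proposition~\ref{G20} ($F^\hbar\simeq F$ on $\hbar$-torsion objects), and the exactness of $F$ itself (not $F^\hbar$). These give the chain
\[
{_n}F^\hbar(\shm)\simeq{_n}F^\hbar(\shm_{\hbar\text{-tor}})\simeq{_n}F(\shm_{\hbar\text{-tor}})\simeq F({_n}\shm_{\hbar\text{-tor}})\simeq F({_n}\shm),
\]
where the first isomorphism uses that $F^\hbar(\shm_{\hbar\text{-tf}})$ has no $\hbar$-torsion, the second uses Proposition~\ref{G20}, and the third uses only that $F$ (not $F^\hbar$) is exact on $\mathcal{S}$. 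Your factorization idea would work perfectly well \emph{after} Theorem~\ref{T:300} is established, but as a step toward it you must find a non-circular route.
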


\begin{proof}
Fix $n\geq 0$.  By Proposition \ref{G20},
$F^\hbar(\shm_{\hbar-tor})\simeq F(\shm_{\hbar-tor})$ in $\text{Mod}(\mathcal{A}')$.

Then,  Lemma \ref{L:301} and Corollary \ref{L:301b} together with the exactness of $F$ imply the chain of isomorphisms:
$$_nF^{\hbar}(\shm)\simeq _nF^{\hbar}(\shm_{\hbar-tor})\simeq _nF(\shm_{\hbar-tor})\simeq
F(_n\shm_{\hbar-tor})\simeq F(_n\shm).$$
\end{proof}

\begin{lemma}\label{LC}
For any $\shm\in\text{Mod}_{\mathcal{S}}(\mathcal{A})$,  $F^\hbar(\shm)$ is $c\hbar c$.
\end{lemma}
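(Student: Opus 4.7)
The plan is to decompose $\shm$ via the canonical short exact sequence $0\to\shm_{\hbar-tor}\to\shm\to\shm_{\hbar-tf}\to 0$ and verify that $F^\hbar$ sends both pieces to $c\hbar c$ modules. Applying $F^\hbar$ to this sequence gives an exact sequence of $\sha'$-modules by Lemma~\ref{L:301}, so once both extremes are shown to be $c\hbar c$ the conclusion follows from Lemma~\ref{chc} (thickness of $\mathcal{C}$).

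For the torsion-free quotient $\shm_{\hbar-tf}$: since $\text{Mod}_\mathcal{S}(\sha)$ is a Serre subcategory (Proposition~\ref{L:400}) and $\shm_{\hbar-tf}$ is coherent, it belongs to $\text{Mod}_\mathcal{S}(\sha)$ and is $\hbar$-torsion free. Hence $F^\hbar(\shm_{\hbar-tf})$ is $\hbar$-torsion free by Corollary~\ref{L:301b}, is $\hbar$-complete by Corollary~\ref{C:T2}, and satisfies the vanishing condition~(\ref{E:T23}) on a suitable basis by Proposition~\ref{G18}. Proposition~\ref{P122} then gives immediately that $F^\hbar(\shm_{\hbar-tf})$ is $c\hbar c$.

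For the torsion part $\shm_{\hbar-tor}$: coherence of $\shm$ together with the fact that $(_n\shm)_n$ is locally stationary implies that $\shm_{\hbar-tor}$ is locally annihilated by a fixed power $\hbar^N$. By Proposition~\ref{G20}, $F^\hbar(\shm_{\hbar-tor})\simeq F(\shm_{\hbar-tor})$, and by $\mathbb{K}[[\hbar]]$-linearity of $F$ this sheaf is again locally annihilated by $\hbar^N$. It therefore suffices to observe that any $\Z_X[\hbar]$-module $\mathcal{N}$ locally killed by a power of $\hbar$ is automatically $c\hbar c$: writing $\Z_X[\hbar,\hbar^{-1}]\simeq\varinjlim_n\hbar^{-n}\Z_X[\hbar]$ and passing to $R\shh\text{om}$, one gets
\[
R\shh\text{om}_{\Z_X[\hbar]}(\Z_X[\hbar,\hbar^{-1}],\mathcal{N})\simeq R\varprojlim_n\mathcal{N},
\]
with transition maps given by multiplication by $\hbar$; since $\hbar^N$ acts as zero locally, the tower is pro-zero and both $\varprojlim$ and $R^1\varprojlim$ vanish.

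The main obstacle is the last point, namely producing a clean argument that $\hbar$-torsion modules are $c\hbar c$; the rest of the proof is essentially assembly of results already established in the preceding subsections. Once this is in place, combining the two cases through Lemma~\ref{L:301} and the thickness statement of Lemma~\ref{chc} yields that $F^\hbar(\shm)$ is $c\hbar c$ for every $\shm\in\text{Mod}_\mathcal{S}(\sha)$.
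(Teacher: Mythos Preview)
Your proof is correct and follows essentially the same route as the paper: split via the exact sequence $0\to F^\hbar(\shm_{\hbar-tor})\to F^\hbar(\shm)\to F^\hbar(\shm_{\hbar-tf})\to 0$ from Lemma~\ref{L:301}, handle the torsion-free part via Corollaries~\ref{C:T2}, \ref{L:301b} and Propositions~\ref{G18}, \ref{P122}, and handle the torsion part by showing any module locally killed by a power of $\hbar$ is $c\hbar c$. The only difference is in this last step: where you pass through $R\varprojlim$ of the tower $(\mathcal{N},\hbar)$, the paper argues more directly that, since $\hbar^{N}$ is an isomorphism on $\sha^{\prime\,\text{loc}}$ while $\hbar^{N}$ is locally zero on $F^\hbar(\shm_{\hbar-tor})$, the complex $R\shh\text{om}_{\sha'}(\sha^{\prime\,\text{loc}},F^\hbar(\shm_{\hbar-tor}))$ must vanish locally, hence globally.
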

\begin{proof}
By virtue of Corollaries \ref{C:T2} and \ref{L:301b}  and Propositions \ref{G18} and \ref{P122} the assertion holds for
 $\shm$  $\hbar$-torsion free. To treat the general case, we observe that the statement is of local nature on $Y$. We can cover $Y$ by open subsets of the form $\Phi(U)$ and consider integers $N_U$ such that   $\hbar^{N_U}F^{\hbar}(\shm_{\hbar-tor})|_{\Phi(U)}\simeq \hbar^{N_U}F(\shm_{\hbar-tor}|_U)=0$. Since $\sha^{'loc}\simeq \hbar^{N_U}\sha^{'loc}$,  it follows that in $\Phi(U)$ $$R\shh\text{om}_{\sha'}(\sha^{'loc}, F^{\hbar}(\shm_{\hbar-tor}))=0$$ hence  $F^{\hbar}(\shm_{\hbar-tor})$ is $c\hbar c$ and so is $F^{\hbar}(\shm)$ by Lemma~\ref{L:301}.
\end{proof}

As a consequence of Corollary~\ref{C171} together with Lemma~\ref{L:T1} we get:

\begin{corollary}\label{C173}
 For every $\shm\in\text{Mod}_{\mathcal{S}}(\mathcal{A})$ and $n\geq 0$, we have a family of isomorphisms in $\text{Mod}(\mathcal{A}'_n):$
\begin{equation}\label{E:12}
H^j(gr_\hbar^n(F^\hbar(\shm)))\simeq F(H^jgr_\hbar^n(\shm)), \, \forall j\in\Z.\end{equation}
\end{corollary}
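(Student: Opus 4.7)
The plan is to observe that $gr_\hbar^n$ has very restricted cohomology, so the claimed isomorphism reduces to verification in two degrees. Since the short resolution
$$0\to\mathcal{A}\xrightarrow{\hbar^{n+1}}\mathcal{A}\to\mathcal{A}_n\to 0$$
is a flat resolution of $\mathcal{A}_n$ as a right $\mathcal{A}$-module, for any $\shn\in\text{Mod}(\mathcal{A})$ the complex $gr_\hbar^n(\shn)$ is represented by $[\shn\xrightarrow{\hbar^{n+1}}\shn]$ sitting in degrees $-1$ and $0$. Consequently $H^0(gr_\hbar^n(\shn))\simeq\shn_n$, $H^{-1}(gr_\hbar^n(\shn))\simeq{}_n\shn$, and $H^j(gr_\hbar^n(\shn))=0$ for $j\notin\{-1,0\}$. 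The same of course holds for $\shn=F^\hbar(\shm)$, so for $j\notin\{-1,0\}$ both sides of the asserted isomorphism are automatically zero, and only the two nontrivial degrees need attention.

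For $j=0$, Lemma~\ref{L:T1} provides the exact sequence
$$F^\hbar(\shm)\xrightarrow{\hbar^{n+1}}F^\hbar(\shm)\xrightarrow{\varrho_n}F(\shm_n)\to 0,$$
from which passing to the cokernel immediately yields
$$H^0(gr_\hbar^n(F^\hbar(\shm)))=F^\hbar(\shm)_n\simeq F(\shm_n)=F(H^0(gr_\hbar^n(\shm))),$$
and this identification is $\mathcal{A}'_n$-linear since $\varrho_n$ is.

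For $j=-1$, Corollary~\ref{C171} directly supplies an $\mathcal{A}'_n$-linear isomorphism ${}_n F^\hbar(\shm)\simeq F({}_n\shm)$, which, read through the interpretation of $H^{-1}(gr_\hbar^n(-))$ above, is exactly $H^{-1}(gr_\hbar^n(F^\hbar(\shm)))\simeq F(H^{-1}(gr_\hbar^n(\shm)))$. No serious obstacle is anticipated: the corollary is essentially a repackaging of Lemma~\ref{L:T1} and Corollary~\ref{C171} in the language of the derived functor $gr_\hbar^n$, which is the reason the author records it as a corollary rather than as an independent result.
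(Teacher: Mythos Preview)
Your proof is correct and follows exactly the approach the paper intends: the paper simply states the corollary ``as a consequence of Corollary~\ref{C171} together with Lemma~\ref{L:T1}'', and you have spelled out precisely how those two results handle the degrees $j=-1$ and $j=0$ respectively, with the remaining degrees vanishing trivially.
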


\textit{End of the proof of Theorem \ref{T:300}}

Given an exact sequence in $\text{Mod}_{\mathcal{S}}(\mathcal{A})$
\begin{equation}\label{E:304}
0\to\shm'\to\shm\to\shm''\to0
\end{equation}
we deduce that $$0\to F^{\hbar}(\shm')\to F^{\hbar}(\shm)\to F^{\hbar}(\shm'')\to0$$ is exact thanks to Lemma \ref{LC} and Corollary \ref{C173} by applying $gr_{\hbar}$ to (\ref{E:304}).
This achieves the proof of Theorem \ref{T:300}.

\hspace{2mm}

\subsection{Unicity of extensions}

Let us now discuss the unicity of the extensions of the functors treated above.

Consider  Serre substacks  $\mathcal{S}$ and $\mathcal{S}'$ and a functor $\Phi:Op(X)\to Op(Y)$, as above.
Let $G :\mathcal{S}\to\Phi^{\ast}\mathcal{S}'$ be a $\mathbb{K}[[\hbar]]$-linear functor.
Let $\mathcal{G}$  be a functor  from $\text{Mod}_{\mathcal{S}}(\mathcal{A})$ to $\text{Mod} (\mathcal{A}')$ such that  $\mathcal{G}|_{\mathcal{S}}$ takes values in $\mathcal{S}'$.

\begin{definition}\label{D:T}
We shall say that \textit{$\mathcal{G}$ extends $G(X)$} if $\mathcal{G}|_{\mathcal{S}}$ and $G$ are isomorphic functors.\end{definition}
In particular, if $\mathcal{G}$ extends $G(X)$, the natural morphisms
 $$\mathcal{G}(\shm)\to \mathcal{G}(\shm_n)\simeq G(\shm_n)$$ define a morphism of functors:$$\mathcal{G}(\cdot)\to G^{\hbar}(\cdot).$$

\begin{proposition}\label{P:unicidade}
Consider the case where each $\mathcal{S}_n$ coincides with the stack $\mathcal{M}\text{od}_{coh}(\sha_n)$.

Assume that $G(X)$ is right exact.
Then, up to isomorphism, $G^{\hbar}$ is the unique right exact functor
$\mathcal{G}:\text{Mod}_{coh}(\sha)\to\text{Mod}(\sha')$
that extends $G(X)$ and verifies $\mathcal{G}(\sha)= \underset{n}{\varprojlim} G(\sha_n)$.
\end{proposition}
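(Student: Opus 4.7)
\textbf{Proof plan for Proposition \ref{P:unicidade}.}

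The plan is to construct an explicit natural transformation $\eta:\mathcal{G}\to G^{\hbar}$ and then verify it is an isomorphism, first on $\sha$, then on finite free modules, and finally on all coherent modules using local finite presentations together with right exactness of both functors. First, note that under the hypothesis $\mathcal{S}_n=\mathscr{M}od_{coh}(\sha_n)$, for any coherent $\sha$-module $\shm$ the submodules $_n\shm$ and quotients $\shm_n$ are automatically coherent $\sha_n$-modules, so $\text{Mod}_{\mathcal{S}}(\sha)=\text{Mod}_{coh}(\sha)$, and $G^{\hbar}$ is indeed defined on the whole of $\text{Mod}_{coh}(\sha)$. As remarked after Definition \ref{D:T}, for each $\shm\in\text{Mod}_{coh}(\sha)$ the morphisms
$$\mathcal{G}(\shm)\to \mathcal{G}(\shm_n)\simeq G(\shm_n)$$
are compatible with the transition maps $G(\rho_{k,n})$, hence by the universal property of $\varprojlim$ they assemble into a natural morphism $\eta_{\shm}:\mathcal{G}(\shm)\to\varprojlim_n G(\shm_n)=G^{\hbar}(\shm)$.

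Next, the hypothesis $\mathcal{G}(\sha)=\varprojlim_n G(\sha_n)$ is precisely the statement that $\eta_{\sha}$ is an isomorphism (this is how the canonical identification of $\mathcal{G}(\sha)$ with the projective limit is obtained, through the tower of maps $\mathcal{G}(\sha)\to\mathcal{G}(\sha_n)\simeq G(\sha_n)$). Since $\mathcal{G}$ is right exact between categories of modules it is in particular additive, and the same is true of $G^{\hbar}$; naturality of $\eta$ therefore yields that $\eta_{\sha^L}$ is an isomorphism for every $L\in\N$.

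Finally, pick any $\shm\in\text{Mod}_{coh}(\sha)$. The statement to be proved is local on $X$, so we may assume a finite free presentation $\sha^{N_1}\to\sha^{N_0}\to\shm\to 0$ exists on a given open set. Applying the right exact functors $\mathcal{G}$ and $G^{\hbar}$ (the right exactness of the latter is Theorem \ref{G163}) and using naturality of $\eta$, we obtain a commutative diagram
$$\begin{matrix}
\mathcal{G}(\sha)^{N_1} & \to & \mathcal{G}(\sha)^{N_0} & \to & \mathcal{G}(\shm) & \to & 0 \\
\downarrow\eta_{\sha^{N_1}} & & \downarrow\eta_{\sha^{N_0}} & & \downarrow\eta_{\shm} & & \\
G^{\hbar}(\sha)^{N_1} & \to & G^{\hbar}(\sha)^{N_0} & \to & G^{\hbar}(\shm) & \to & 0
\end{matrix}$$
with exact rows and with the first two vertical arrows isomorphisms by the previous step. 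A standard diagram chase (the right exact version of the five lemma) shows that $\eta_{\shm}$ is an isomorphism locally, and hence globally since $\eta$ is a morphism of sheaves.

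The only subtle point in this argument is reading the hypothesis $\mathcal{G}(\sha)=\varprojlim_n G(\sha_n)$ as the assertion that $\eta_{\sha}$ (built from the universal projections $\sha\to\sha_n$) is an isomorphism; once this is granted the rest is a routine propagation from $\sha$ to coherent modules via finite free presentations and right exactness, and no use of $c\hbar c$ properties or Mittag-Leffler arguments is needed.
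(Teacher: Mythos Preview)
Your proof is correct and follows essentially the same approach as the paper: reduce to the case of $\sha$ via a local finite free presentation, apply both right exact functors, and invoke the Five Lemma using the hypothesis $\mathcal{G}(\sha)=G^{\hbar}(\sha)$. You are slightly more careful than the paper in explicitly constructing the natural transformation $\eta$ (via the tower $\mathcal{G}(\shm)\to\mathcal{G}(\shm_n)\simeq G(\shm_n)$) and in flagging the reading of the hypothesis as ``$\eta_{\sha}$ is an isomorphism,'' but the argument is otherwise identical.
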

\begin{proof}
Recall that  $\text{Mod}_\mathcal{S}(\sha)$ coincides with $\text{Mod}_{coh}(\sha)$.
First of all,  it is clear that $G^\hbar$ satisfies the statement.

Suppose that $\mathcal{G}$ is another right exact functor that extends $G(X)$.
Taking a local presentation of $\shm\in\text{Mod}_{coh}(\sha)$, say,
$$\sha^{N}\to\sha^{L}\to \shm\to 0,$$ and applying $\mathcal{G}$ and $G^\hbar$,
one gets the diagram below with exact rows:
$$\begin{matrix}\mathcal{G}(\sha)^N&\to&\mathcal{G}(\sha)^L&\to& \mathcal{G}(\shm)&\to& 0&\to& 0\\
\downarrow&&\downarrow&&\downarrow&&\downarrow&&\downarrow\\
(G^\hbar(\sha))^N&\to&(G^\hbar(\sha))^L&\to& G^\hbar(\shm)&\to& 0&\to& 0.
\end{matrix}$$
The statement then follows by the Five Lemma in view of the hypothesis $\mathcal{G}(\sha)=G^\hbar(\sha)$.
\end{proof}

\begin{proposition}\label{P:unicidade2}
Consider the case where $G(U):\mathcal{S}(U)\to \mathcal{S}'(U)$ is an exact functor for any $U\in Op(X)$.
Then, up to isomorphism, $G^{\hbar}$ is the  unique (exact) functor $\mathcal{G}$ that extends $G(X)$, takes values in the category of $c\hbar c$ objects and verifies $_n\mathcal{G}(\shm)\simeq\mathcal{G}(_n\shm)$ and $\mathcal{G}(\shm)_n\simeq \mathcal{G}(\shm_n)$ (the last isomorphisms being associated to the canonical morphisms). %
\end{proposition}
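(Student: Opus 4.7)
The plan is to build a canonical natural isomorphism $\phi:\mathcal{G}\isoto G^{\hbar}$ on $\text{Mod}_{\mathcal{S}}(\sha)$. First I would verify that $G^{\hbar}$ itself satisfies the listed properties: it extends $G(X)$ by Proposition~\ref{G20}, it is exact by Theorem~\ref{T:300}, it takes values in $c\hbar c$-objects by Lemma~\ref{LC}, and the commutations ${}_nG^{\hbar}(\shm)\simeq G^{\hbar}({}_n\shm)$ and $G^{\hbar}(\shm)_n\simeq G^{\hbar}(\shm_n)$ follow from Corollary~\ref{C171} and Lemma~\ref{L:T1}, respectively, combined with Proposition~\ref{G20}.

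For uniqueness, given any $\mathcal{G}$ satisfying the hypotheses and any $\shm\in\text{Mod}_{\mathcal{S}}(\sha)$, I would construct $\phi_{\shm}$ via the universal property of the projective limit. For each $n$ one forms the composition
$$
\mathcal{G}(\shm)\longrightarrow \mathcal{G}(\shm)_n\isoto\mathcal{G}(\shm_n)\isoto G(\shm_n),
$$
in which the middle isomorphism is the assumed one (associated to the canonical projection $\shm\to\shm_n$) and the last is the extension isomorphism applied to $\shm_n\in\mathcal{S}$. Naturality of the canonical morphisms makes this family compatible with the transition maps $\rho_{k,n}$, so one obtains $\phi_\shm:\mathcal{G}(\shm)\to\varprojlim_n G(\shm_n)=G^{\hbar}(\shm)$, natural in $\shm$.

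To prove $\phi_\shm$ is an isomorphism I would invoke Proposition~\ref{P121}: both source and target are $c\hbar c$, so it suffices to show that $gr_\hbar(\phi_\shm)$ is a quasi-iso. Using the resolution $0\to\sha\xrightarrow{\hbar}\sha\to\sha_0\to 0$, $gr_\hbar$ is the two-term complex $[\,\cdot\xrightarrow{\hbar}\cdot\,]$ placed in degrees $-1,0$, whose cohomologies are ${}_0(\cdot)$ and $(\cdot)_0$. On $H^0$ the induced map is identified, via $\mathcal{G}(\shm)_0\simeq\mathcal{G}(\shm_0)\simeq G(\shm_0)$ and $G^{\hbar}(\shm)_0\simeq G(\shm_0)$ (Lemma~\ref{L:T1}), with the identity, by the very construction of $\phi_\shm$. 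On $H^{-1}$ one notes that ${}_0\shm\in\mathcal{S}$ (Definition~\ref{D:12}); combining ${}_0\mathcal{G}(\shm)\simeq\mathcal{G}({}_0\shm)$ (assumption) and ${}_0G^{\hbar}(\shm)\simeq G({}_0\shm)$ (Corollary~\ref{C171} with Proposition~\ref{G20}) together with naturality of $\phi$ in the inclusion ${}_0\shm\hookrightarrow\shm$, the $H^{-1}$-component reduces to $\phi_{{}_0\shm}:\mathcal{G}({}_0\shm)\isoto G({}_0\shm)$, which is an isomorphism because it coincides with the extension isomorphism on an object of $\mathcal{S}$.

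The principal obstacle I anticipate is the bookkeeping in this last diagram chase: one must verify that the isomorphisms ${}_n\mathcal{G}(\shm)\simeq\mathcal{G}({}_n\shm)$ and $\mathcal{G}(\shm)_n\simeq\mathcal{G}(\shm_n)$ in the hypotheses really do commute with $\phi_\shm$ when computing $gr_\hbar(\phi_\shm)$. The requirement that these isomorphisms be the \emph{canonical} ones is precisely what forces this compatibility and thus permits Proposition~\ref{P121} to deliver the conclusion.
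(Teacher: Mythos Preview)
Your proposal is correct and follows essentially the same approach as the paper. The paper's own proof is extremely terse: after noting that $G^\hbar$ satisfies the statement, it simply says that for any other such $\mathcal{G}$, ``applying $gr$ to the morphism $\mathcal{G}\to G^\hbar$, one concludes the isomorphism $\mathcal{G}\simeq G^\hbar$'' --- the morphism $\mathcal{G}\to G^\hbar$ having already been introduced just after Definition~\ref{D:T} exactly as you construct it. Your version spells out the verification that $gr_\hbar(\phi_\shm)$ is a quasi-isomorphism via the $H^{-1}$ and $H^0$ components, which is the content the paper leaves implicit; the underlying strategy (canonical comparison map, then conservativity of $gr_\hbar$ on $c\hbar c$ objects) is identical.
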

\begin{proof}
Clearly, $G^\hbar$ satisfies the statement.

On the other hand, consider a
right exact functor $\mathcal{G}$
which extends $G$, goes to the category of $c\hbar c$ $\sha'$-modules and commutes with $_n(\cdot)$ and $(\cdot)_n$.
Then, applying $gr$ to the morphism $\mathcal{G}\to G^\hbar$, one concludes the isomorphism $\mathcal{G}\simeq G^\hbar$.
\end{proof}

We can now sum up the above discussion and state the main result of this section:

\begin{theorem}\label{T:1} Let $X$ and $Y$ be complex manifolds, let $\mathcal{A}$ (resp. $\mathcal{A}'$) be an algebra of formal deformation on $X$ (resp. on $Y$), let $\mathcal{S}$ (resp. $\mathcal{S}'$) be a  full Serre substack of $\mathscr{M}od_{coh}(\mathcal{A})$  (resp.  a full Serre substack of a full   substack $\mathscr{C}'$ of abelian categories of $\mathscr{M}od(\mathcal{A}'))$  and let be given a  functor $\Phi:Op(X)\to Op(Y)$ in the conditions of \ref{E:fi}. Assume that $\mathcal{S}$ satisfies assumption~\ref{S} and that $\mathcal{S}'$ satisfies  assumption~\ref{A:v} with respect to $\mathscr{C}'$.
Let $F:\mathcal{S}\to\Phi^\ast\mathcal{S}'$ be a $\mathbb{K}[[\hbar]]$-linear functor and assume that for each open subset $U$, $F(U)$ is right exact. Then:
\begin{enumerate}
\item { $F^{\hbar}:\text{Mod}_{\mathcal{S}}(\mathcal{A})\to \text{Mod}(\mathcal{A}')$ is a canonical right exact $\mathbb{K}[[\hbar]]$-linear  extension of $F$;}
\item{when $\mathscr{C}'=\mathscr{M}od_{coh}(\mathcal{A}')$, then $F^\hbar$ takes values in $\text{Mod}_{\mathcal{S}'}(\mathcal{A}')$;}

\item{if, for each open subset $U\subset X$,   $F(U)$ is exact,  then so is $F^{\hbar}$, and up to isomorphism, it is the unique  extension of $F$   that takes values in the category of $c\hbar c$ objects and commutes with $_n(\cdot)$ and $(\cdot)_n$.}
\end{enumerate}
\end{theorem}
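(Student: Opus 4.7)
The plan is to assemble the three claims from the pieces established throughout Section~3, checking that the hypotheses fit together. The functor $F^\hbar$ is defined by Definition~\ref{G1}, and I first need to see that it indeed restricts to a functor $\text{Mod}_{\mathcal{S}}(\mathcal{A})\to\text{Mod}(\mathcal{A}')$: for $\shm\in\text{Mod}_{\mathcal{S}}(\mathcal{A})$, Proposition~\ref{G9} gives $\shm_n\in\mathcal{S}_n$ for every $n$, so $F(\shm_n)\in\mathcal{S}'_n$ is well defined and the transition morphisms $F(\rho_{k,n})$ form a projective system whose limit I take. The $\mathbb{K}[[\hbar]]$-linearity is inherited from $F$ and from compatibility of $\varprojlim$ with $\mathbb{K}[[\hbar]]$-module structures.

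For (1), the right exactness of $F^\hbar$ is Theorem~\ref{G163}. Its proof proceeds from the right-exact sequence $F(\shm'_n)\to F(\shm_n)\to F(\shm''_n)\to 0$ on sections over sufficiently small $\Omega$ in $\mathcal{B}'$; Condition~\ref{A} via Assumption~\ref{A:v} (together with Lemma~\ref{L:Dav}) gives the vanishing $H^j(K;F(\shm''_n))=0$ for $j>0$, so that $(\Gamma(\Omega;F(\shm''_n)))_n$ satisfies Mittag-Leffler and $\varprojlim$ remains exact; one then passes to stalks using a fundamental system of compact neighborhoods, exactly as in Lemma~\ref{L:T1}. The extension property $F^\hbar|_{\mathcal{S}}\simeq F$ is Proposition~\ref{G20}, which exploits Assumption~\ref{S} to find, locally, an integer $N_U$ with $\shm|_U\simeq\shm_{N_U}|_U$, so that the projective limit stabilizes. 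Canonicity in the data $(\mathcal{S},\mathcal{S}',\Phi,F)$ is visible from the construction.

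For (2), when $\mathscr{C}'=\mathscr{M}od_{coh}(\mathcal{A}')$, this is precisely Proposition~\ref{T21}: coherence of $F^\hbar(\shm)$ is established via Theorem~\ref{T1}, using $\hbar$-completeness from Corollary~\ref{C:T2} and coherence of $\hbar^n F^\hbar(\shm)/\hbar^{n+1}F^\hbar(\shm)\simeq \hbar^n F(\shm_n)$ extracted from Lemma~\ref{L:T1}. Combined with claim~(1) this lands $F^\hbar$ in $\text{Mod}_{\mathcal{S}'}(\mathcal{A}')$.

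For (3), the exactness of $F^\hbar$ under exactness of each $F(U)$ is Theorem~\ref{T:300}; the key ingredients are Lemma~\ref{L:301} (reduction to the $\hbar$-torsion-free case via the Mittag-Leffler argument applied to~\eqref{eqL7}), Corollary~\ref{C171} ($F^\hbar$ commutes with $_n(\cdot)$), Lemma~\ref{LC} (the $c\hbar c$ property), and Corollary~\ref{C173} (commutation with $gr_\hbar^n$), which together allow Proposition~\ref{P121} to reduce exactness of $0\to F^\hbar(\shm')\to F^\hbar(\shm)\to F^\hbar(\shm'')\to0$ to exactness of its $gr_\hbar$. Uniqueness is Proposition~\ref{P:unicidade2}: any candidate $\mathcal{G}$ with the stated properties receives a canonical comparison morphism $\mathcal{G}\to F^\hbar$ via $\mathcal{G}(\shm)\to\mathcal{G}(\shm_n)\simeq F(\shm_n)$, and applying $gr_\hbar$ yields an isomorphism which conservativity on $c\hbar c$ objects promotes to an isomorphism of functors. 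The main obstacle in the whole argument is the careful Mittag-Leffler bookkeeping in Theorem~\ref{G163}, where one must pass from right-exact sequences of sheaves to right-exact sequences of projective limits on sections and then on stalks; once this is in place, the refinements (2) and (3) follow by clean bookkeeping with $\hbar$-torsion, $\hbar$-completeness, and the conservativity of $gr_\hbar$.
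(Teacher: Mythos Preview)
Your proposal is correct and follows essentially the same route as the paper: Theorem~\ref{T:1} is explicitly presented there as a summary of the preceding results in Section~3, and you have correctly identified the pieces (Proposition~\ref{G20} and Theorem~\ref{G163} for (1), Proposition~\ref{T21} for (2), Theorem~\ref{T:300} and Proposition~\ref{P:unicidade2} for (3)). Two minor imprecisions worth noting: the vanishing $H^j(K;F(\shm_n))=0$ used in Theorem~\ref{G163} is the unlabeled lemma immediately preceding Lemma~\ref{L:T1} rather than Lemma~\ref{L:Dav} (though the induction is analogous), and the Mittag--Leffler condition for each term in~\eqref{eq5} comes directly from the surjectivity of $F(\rho_{k,n})$ (right exactness of $F$ applied to the epimorphisms $\rho_{k,n}$), while the cohomology vanishing is what preserves exactness of the sequence on sections over $\Omega$.
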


\remark\label{P:nova} $F^{\hbar}$ is canonical in the following sense: keeping the preceding notations,
if we  are given  a functor $H: \mathcal{S}\to\tilde{\mathcal{S}}$, a functor  $\tilde{H}: S'\to\tilde{S}'$, and a  morphism $\theta$ of functors  $\Phi, \Psi:Op(X)\to Op(Y)$, we derive  a functor of prestacks $\tilde{H}^*:\Phi^{\star}\mathcal{S}'\to \Psi^{\star}\tilde{\mathcal{S}'}$, together with an extension $H^{\hbar}:\text{Mod}_{\mathcal{S}}(\sha)\to\text{Mod}_{\tilde{\mathcal{S}}}(\sha')$.
If moreover $F: \mathcal{S}\to\Phi^*{S}'$, $\tilde{F}:\tilde{\mathcal{S}}\to\Psi^*\tilde{\mathcal{S}'}$ are given,   one may  define a morphism $F\to\tilde{F}$ as  being a morphism of functors $\tilde{H}^*\circ F\to \tilde{F}\circ H$ (cf. diagram below):

$$\begin{matrix}\mathcal{S}& \xrightarrow{F} & \Phi^{\star}\mathcal{S}'\\ H\downarrow & & \downarrow \tilde{H}^*\\ \tilde{ \mathcal{S}}&\xrightarrow{\tilde {F}} &\Psi^{\star}\tilde{\mathcal{S}'} \end{matrix}$$
In this situation, we get a morphism of functors $F^{\hbar}\to\tilde{F}^{\hbar}\circ H^{\hbar}$ (cf. diagram below):
$$\begin{matrix}\mathcal{S}& \hookrightarrow &\text{Mod}_{\mathcal{S}}(\sha)&\xrightarrow{F^\hbar}&\text{Mod}(\sha') \\ H\downarrow & & \downarrow H^{\hbar}& &||\\ \tilde{ \mathcal{S}}&\hookrightarrow &\text{Mod}_{\tilde{\mathcal{S}}}(\sha)&\xrightarrow{\tilde{F}^\hbar}&\text{Mod}(\sha') \end{matrix}$$



\section{Application to $\shd_X[[\hbar]]$-modules.}\label{S2}

Let $X$ be  a complex manifold.
Let $\sho_X$ be the sheaf of holomorphic functions on $X$ and let
$\shd_X$ be the sheaf over $X$ of linear holomorphic differential operators of finite order.

 As the title suggests, in this section we apply the results of Section \ref{extension-section} and of Section 3  for $\mathcal{A}=\shd_X[[\hbar]]$.  We shall extend  functors  defined    on full  Serre subcategories of $\text{Mod}_{coh}(\shd_X)$ whose objects  are characterized by local properties.  As we shall see, these full subcategories being the data of full Serre substacks,  the functors  we are interested in define linear functors  to which apply the results in Section 3.
 So we skip the constant reference to substacks, as stated in Convention \ref{Conv2}, referring to the categories most of the time.

Recall that one  denotes $\shd_X[[\hbar]]$ by $\shd_X^{\hbar}$ as well as $\sho_X[[\hbar]]$ by $\sho_X^{\hbar}$.  Recall also  that $\shd_X^{\hbar}$ satisfies (i), (ii) and (iv) of Assumption~\ref{AssumptionB} taking for $\mathcal{B}$ the family of Stein compact subsets of $X$, for $\mathcal{A}_0$ the $\C$-algebra $\shd_X$ and considering the prestack of good $\shd_X$-modules in the sense of~\cite{Ka2}.

The formal extension functor is defined by
\begin{eqnarray*}
(\cdot)^{\hbar}:\text{Mod}(\shd_X)&\to&\text{Mod}(\shd_X^{\hbar}),  \\
\shm&\mapsto& \shm^\hbar=\varprojlim_{n\geq 0}(\shd_X^{\hbar}/\hbar^{n+1}\shd_X^{\hbar}\otimes_{\shd_X}\shm).
\end{eqnarray*}

In particular $\shm^{\hbar}$ is $\hbar$-complete for any $\shm\in\text{Mod}(\shd_X)$.

An exhaustive study of $\shd_X^\hbar$-modules has been done in~\cite{DGS}  whose notations we  maintain here.

\begin{remark}\label{R:St}
$\shd^{\hbar}_X$ induces,  for each $n\geq 0$, a (left and right) structure of free $\shd_X$-module  of finite rank ($n+1$) on  the algebra $\shd^{\hbar}_{X,n} :=\shd_X^{\hbar}/\hbar^{n+1}\shd_X^{\hbar}$ which becomes a $(\shd_X^{\hbar},\shd_X)$-bimodule (resp. a $(\shd_X,\shd_X^{\hbar})$-bimodule).
\end{remark}

Following \cite{KS2},
an object $\shm\in D^b_{coh}(\shd_X^\hbar)$ is said to be holonomic
(resp. regular holonomic)
if $gr_\hbar(\shm)$ is an object of $D^b_{hol}(\shd_X)$
(resp. of $D^b_{hol}(\shd_X)$).
The full subcategory of $D^b_{coh}(\shd_X^\hbar)$ of holonomic (resp. regular holonomic)
objects is denoted by $D^b_{hol}(\shd_X^\hbar)$ (resp. $D^b_{rh}(\shd_X^\hbar)$).

Denote by $\Omega_X$ the sheaf of holomorphic forms of maximal degree on $X$
and set $\Omega_X^{\otimes -1}:=\shh\text{om}_{\sho_X}(\Omega_X,\sho_X)$ as usual.

We shall need the following functors:
\begin{eqnarray*}
&& D^{\prime}_{\C^\hbar}:D^b(\C^{\hbar}_X) \to D^b(\C^{\hbar}_X),
\quad F \mapsto R\shh\text{om}_{\C_X^{\hbar}}(F, \C^{\hbar}_X),\\
&& D^{\prime}_{\shd^\hbar}:D^b(\shd^{\hbar}_X) \to D^b(\shd^{\hbar}_X),
\quad \shm \mapsto R\shh\text{om}_{\shd_X^{\hbar}}(\shm, \shd^{\hbar}_X),\\
&&Sol_{\hbar}:D^b_{coh}(\shd_X^\hbar) \to D^b(\C^{\hbar}_X),
\quad \shm \mapsto R\shh\text{om}_{\shd_X^\hbar}(\shm,\sho_X^\hbar),\\
&& DR_{\hbar}:D^b_{coh}(\shd_X^\hbar)\to D^b(\C^{\hbar}_X),
\quad \shm\mapsto R\shh\text{om}_{\shd_X^{\hbar}}(\sho_X^{\hbar},\shm),\\
&& \mathbb{D}_{\hbar}:D^b(\shd_X^\hbar)  \to D^b(\shd_X^\hbar),
\quad \shm\mapsto R\shh\text{om}_{\shd_X^\hbar}(\shm,\shd_X^\hbar\otimes_{\sho_X}\Omega_X^{\otimes-1})[d_X].
\end{eqnarray*}

Note that both $D^{\prime}_{\shd^\hbar}$ and $\mathbb{D}_{\hbar}$ preserve coherence.

When the base ring is fixed and there is no risk of confusion we shall denote each  of the functors $D^{\prime}_{\C^\hbar}$
and  $D^{\prime}_{\shd^\hbar}$ simply by $D^{\prime}_\hbar$.

As shown in Theorem~3.15 of~\cite{DGS}  the following diagram is commutative:
\begin{eqnarray}\label{eq11}
&&\xymatrix{
D^b_{hol}(\shd_X^\hbar)\ar[d]^-{Sol_{\hbar}} \ar[r]^-{DR_\hbar} & D^b_{\C-c}(\C^\hbar_X) \ar[dl]^-{D^\prime_{\hbar}}\\
D^b_{\C-c}(\C^\hbar_X).
}\end{eqnarray}

\begin{remark}\label{stack}
After \cite {DGS}, and according with our previous notations, the category   $\text{Mod}_{coh}(\shd^{\hbar}_X)$ equals the category $\text{Mod}_{\mathcal{S}}(\shd_X^{\hbar})$, where the full Serre substack $\mathcal{S}$  is $$U\mapsto\mathcal{S}(U)=\cup_n \mathscr{M}od_{coh}({\shd_X^{\hbar}}_{,n})(U).$$ Similarly, the category $\text{Mod}_{rh}(\shd^{\hbar}_X)$ is defined by the Serre substack $$U\mapsto\mathcal{S}(U)=\cup_n\mathscr{M}od_{rh}({\shd_X^{\hbar}}_{,n})(U).$$
\end{remark}

\subsection{Inverse image.}
Let $f: Y\to X$ be a morphism of complex manifolds.   One defines a right exact functor $\underline{f}^{\ast}: \bigcup_n \text{Mod}(\shd^{\hbar}_{X,n})\to\bigcup _n\text{Mod}(\shd_{Y,n}^{\hbar})$ setting $$\underline{f}^{\ast}(\shm)=\sho_Y\otimes_{f^{-1}\sho_X}f^{-1}\shm.$$ We refer, among others, to \cite{LS} for a quite general study of this functor for $n=0$.

Let $\mathscr{C}'$ be the abelian full subcategory of pseudocoherent $\shd_Y^{\hbar}$-modules and let $\mathcal{S}'\subset \mathscr{C}'$  be the full subcategory of pseudocoherent $\shd_Y^{\hbar}$-modules satisfying Assumption ~\ref{A} with respect to the basis $\mathcal{B}'$ of Stein compact subsets of $Y$.

\begin{lemma}\label{L:111}
Let $\mathcal{S}$ be equal to $\bigcup_n \text{Mod}_{coh}({\shd_{X,n}^{\hbar}})$. Then,  for any morphism $f:Y\to X$ and any $\shm\in\mathcal{S}$, $\underline{f}^{\ast}(\shm)\in\mathcal{S}'$.
\end{lemma}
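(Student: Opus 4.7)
The plan is to reduce the statement to classical $\shd$-module facts combined with Cartan's Theorem B for Stein compact subsets. Since $\mathcal{S}=\bigcup_n\text{Mod}_{coh}(\shd^\hbar_{X,n})$, given $\shm\in\mathcal{S}$ there is an integer $n\geq 0$ such that $\hbar^{n+1}\shm=0$ and $\shm$ is coherent over $\shd^\hbar_{X,n}$. By Remark~\ref{R:St}, $\shd^\hbar_{X,n}$ is free of rank $n+1$ over $\shd_X$, so $\shm$ is coherent also as a $\shd_X$-module. Therefore $\underline{f}^\ast\shm=\sho_Y\otimes_{f^{-1}\sho_X}f^{-1}\shm$ is the usual $\shd$-module theoretic inverse image of a coherent $\shd_X$-module, endowed with an induced $\hbar$-action that still satisfies $\hbar^{n+1}=0$. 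Thus $\underline{f}^\ast\shm$ is naturally an object of $\text{Mod}(\shd^\hbar_{Y,n})\subset\text{Mod}(\shd^\hbar_Y)$.

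Next I would verify pseudocoherence. Locally on $X$, $\shm$ admits a finite $\shd_X$-presentation
$$\shd_X^p\longrightarrow\shd_X^q\longrightarrow\shm\longrightarrow 0,$$
and applying the right exact functor $\underline{f}^\ast$ yields a presentation
$$\shd_{Y\to X}^{\,p}\longrightarrow\shd_{Y\to X}^{\,q}\longrightarrow\underline{f}^\ast\shm\longrightarrow 0,$$
with $\shd_{Y\to X}=\sho_Y\otimes_{f^{-1}\sho_X}f^{-1}\shd_X$ the classical transfer bimodule. Since each $\shd_{Y\to X}$ is locally $f^{-1}\shd_X$-free, the above presentation realizes $\underline{f}^\ast\shm$ as the desired pseudocoherent $\shd^\hbar_Y$-module. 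Combined with the $\shd^\hbar_{Y,n}$-structure obtained above, this places $\underline{f}^\ast\shm$ in the stack $\mathscr{C}'$.

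For Condition~\ref{A}, let $K$ be a Stein compact subset of $Y$ and let $\shn$ be a sub- or quotient module of $\underline{f}^\ast\shm$ in $\mathscr{C}'(V)$ with $K\subset V$. The strategy is to check $H^j(K;\shn)=0$ for $j>0$ by using that $\shn$ has, locally, a finite presentation in terms of sheaves of the form $\shd_{Y\to X}$, which as sheaves of $\sho_Y$-modules are inductive limits of coherent $\sho_Y$-modules (in fact direct sums, via the order filtration on $\shd_X$). Cartan's Theorem B applied to these coherent pieces on the Stein compact $K$, together with the exactness of filtered inductive limits for cohomology on compact sets, then yields the vanishing.

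The main obstacle will be the last step: handling arbitrary pseudocoherent sub/quotient modules $\shn$ rather than just $\underline{f}^\ast\shm$ itself. For $\underline{f}^\ast\shm$ the vanishing is straightforward from the presentation and Theorem~B, but transferring the vanishing to arbitrary pseudocoherent $\shd^\hbar_Y$-sub/quotients requires that the ambient presentation is compatible with taking sub/quotient sheaves on Stein compacts, which one verifies by exploiting the Stein exhaustion property in the definition of $\mathcal{B}'$ (condition~(iv)(a)) and Mittag-Leffler on the projective system of cohomologies over shrinking Stein compact neighbourhoods of $K$.
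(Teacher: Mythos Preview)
Your reduction to the classical $\shd$-module inverse image is correct, and the observation that $\underline{f}^\ast\shm$ carries the $\hbar^{n+1}=0$ action is fine. However there are two genuine gaps.

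First, the sentence ``the above presentation realizes $\underline{f}^\ast\shm$ as the desired pseudocoherent $\shd^\hbar_Y$-module'' is not a proof of pseudocoherence. Pseudocoherence asks that every locally finitely generated $\shd_Y$-submodule be coherent; merely exhibiting $\underline{f}^\ast\shm$ as a quotient of $\shd_{Y\to X}^{\,q}$ does not give this. The paper invokes \cite{LS} for this fact, and you should do the same (or reproduce their argument).

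Second --- and this is the substantive gap --- your treatment of Condition~\ref{A} for arbitrary pseudocoherent sub/quotients $\shn$ does not work as written. You propose that $\shn$ ``has, locally, a finite presentation in terms of sheaves of the form $\shd_{Y\to X}$'', but an arbitrary pseudocoherent submodule of $\underline{f}^\ast\shm$ has no such presentation a priori: it need not even be finitely generated over $\shd_Y$. The Mittag--Leffler/Stein--exhaustion device you invoke produces nothing here, because you have no projective system of presentations to feed into it. The paper's argument is different and actually addresses this: by \cite{LS}, $\underline{f}^\ast\shm$ is locally an inductive limit of \emph{good} $\shd_Y$-submodules, and the paper then proves that this property (being locally an inductive limit of good submodules) is stable under taking pseudocoherent submodules and quotients. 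Since good modules have vanishing higher cohomology on Stein compacts (condition~(iv)(e)) and inductive limits commute with $H^j(K;\cdot)$ for $K$ compact, the vanishing for every pseudocoherent sub/quotient follows. You should replace your $\sho_Y$-coherent/Theorem~B approach with this ``locally an inductive limit of good submodules'' property and its stability; that is the missing idea.
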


\begin{proof}
Observe that, for given $n\geq 0$, and $\shm\in\mathcal{S}_n$, considering the $f^{-1}(\shd_X)$-module structure on $f^{-1}(\shm)$ referred to in Remark \ref{R:St}, we get
$$\underline{f}^{\ast}(\shm)\simeq \shd_{Y\to X}\otimes_{f^{-1}\shd_{X}} f^{-1}\shm.$$
 Recall that any coherent $\shd_Y$-module is locally good, and any pseudocoherent $\shd_Y$-submodule of a good $\shd_Y$-module is itself good.

By \cite{LS}, it is known that the inverse image of a coherent $\shd_X$-module $\shm$ is a pseudocoherent $\shd_Y$-module which satisfies the following property:
\begin{enumerate}
\item{In a suitable neighborhood of each $y\in Y$, it is an inductive limit of good $\shd_Y$-submodules.}
\end{enumerate}

Since inductive limits commute with cohomology on  compact sets, it follows that $\underline{f}^{\ast}(\shm)$ satisfies (\ref{E:T23}).
Note also that condition $(1)$ is closed for quotients and hence for submodules in the abelian category of pseudocoherent modules.
Indeed, given $\tilde{\shm}$ a pseudocoherent module satisfying $(1)$ and given a pseudocoherent submodule $\tilde{\shn}$ of  $\tilde{\shm}$, the quotient $$\tilde{\shm}/\tilde{\shn}$$ is pseudocoherent. If in an open set $\Omega$ we have $\tilde{\shm}|_{\Omega}\simeq \underset{\alpha}{\varinjlim} \shm_{\alpha}$, for given good submodules $\shm_{\alpha}$ of $\tilde{\shm}$, since their images in $\tilde{\shm}/\tilde{\shn}$ are locally finitely generated, hence coherent, hence good, it follows that each $\tilde{\shn}\cap\shm_{\alpha}$ is good. By the exactness of inductive limits we get that $\tilde{\shn}|_{\Omega}\simeq \underset{\alpha}{\varinjlim} \tilde{\shn}\cap\shm_{\alpha}$.
This ends the proof.
\end{proof}

In what follows we shall denote by $\mathcal{S}$ the full Serre substack $$U\mapsto \mathcal{S}(U)=\bigcup_n \mathscr{M}od_{coh}(\shd_{X,n})(U)$$ of $\mathscr{M}od_{coh}(\shd_X^{\hbar})$.

Let us denote by $\Phi:Op(X)\to Op(Y)$ the functor given by $\Phi(U)=f^{-1}(U)$ together with the inclusions $U\supset V\mapsto \Phi(U)\supset\Phi (V)$. Clearly $\Phi$ satisfies \ref{E:fi}.

In view of Remark \ref{stack} and Convention \ref{Conv2}, by Theorem \ref{T:1}
 we are in the conditions to define  a right exact functor extending $\underline{f}^{\ast}$:
$$\underline{f}^{\ast,\hbar}:\text{Mod}_{coh}(\shd^{\hbar}_X)\to \text{Mod}(\shd^{\hbar}_Y),$$
given by $$\underline{f}^{\ast,\hbar}(\shm)=\underset{n\geq 0}{\varprojlim} \left(\sho_Y\underset{f^{-1}(\sho_X)}{\otimes}f^{-1}\shm_n\right).$$
and we have:
\begin{equation}\label{E1}
\underline{f}^{\ast,\hbar}(\sho_X^\hbar)\simeq\sho_Y^\hbar.
\end{equation}
Indeed, one has
\begin{eqnarray*}
\underline{f}^{\ast,\hbar}(\sho_X^\hbar)&=&\varprojlim_{n\geq 0}\left(\sho_Y\underset{f^{-1}(\sho_X)}{\otimes}f^{-1}(\sho_X^\hbar/\hbar^{n+1}\sho_X^\hbar)\right) \\
&\simeq& \varprojlim_{n\geq 0} \left(\sho_Y\underset{f^{-1}(\sho_X)}{\otimes}f^{-1}(\sho_X\underset{\C_X}{\otimes}(\C_X^\hbar/\hbar^{n+1}\C_X^\hbar))\right)\\
&\simeq& \varprojlim_{n\geq 0}\left(\sho_Y\underset{\C_Y}{\otimes}(\C_Y^\hbar/\hbar^{n+1}\C_Y^\hbar)\right)\simeq\sho_Y^\hbar.
\end{eqnarray*}

Let us consider the $(\shd_Y^\hbar, f^{-1}(\shd_X^\hbar))$-bimodule $$\mathcal{K}:=\underline{f}^{\ast,\hbar}(\shd_X^\hbar)=\varprojlim_{n\geq 0} \left(\sho_Y\underset{f^{-1}(\sho_X)}{\otimes}f^{-1}(\shd_X^\hbar/\hbar^{n+1}\shd_X^\hbar)\right).$$
Since for each $n$, $f^{-1}(\shd_X^\hbar/\hbar^{n+1}\shd_X^\hbar)$ is isomorphic to $f^{-1}(\shd_X)\otimes_{\C_Y} \C_Y^\hbar/\hbar^{n+1}\C_Y^\hbar$ we conclude:
\begin{lemma}\label{L:403}
As a $(\shd_Y^\hbar, f^{-1}(\shd_X^\hbar))$-bimodule $\mathcal{K}$ is  isomorphic to the formal extension $(\shd_{Y\to X})^{\hbar}$ of the transfer module $\shd_{Y\to X}$. In particular it is $\hbar$-complete.
\end{lemma}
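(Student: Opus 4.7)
The plan is to exhibit both $\mathcal{K}$ and $(\shd_{Y\to X})^{\hbar}$ as the same explicit projective limit, and then to track the bimodule structures through the identifications. The key input is already recorded in the paragraph preceding the lemma: the $(\shd_X,\shd_X)$-bimodule isomorphism $\shd_X^\hbar/\hbar^{n+1}\shd_X^\hbar \simeq \shd_X\otimes_{\C_X}(\C_X^\hbar/\hbar^{n+1}\C_X^\hbar)$. Pulling back along $f$ and using $f^{-1}\C_X = \C_Y$, I obtain a bimodule isomorphism
\[
f^{-1}(\shd_X^\hbar/\hbar^{n+1}\shd_X^\hbar) \simeq f^{-1}\shd_X \otimes_{\C_Y} (\C_Y^\hbar/\hbar^{n+1}\C_Y^\hbar),
\]
natural in $n$.

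Substituting into the definition of $\mathcal{K}$ and using associativity of the tensor product (which is legitimate because the factor $\C_Y^\hbar/\hbar^{n+1}\C_Y^\hbar$ is central) gives
\[
\mathcal{K} \simeq \varprojlim_n \bigl(\sho_Y \otimes_{f^{-1}\sho_X} f^{-1}\shd_X\bigr) \otimes_{\C_Y} (\C_Y^\hbar/\hbar^{n+1}\C_Y^\hbar) \simeq \varprojlim_n \shd_{Y\to X} \otimes_{\C_Y} (\C_Y^\hbar/\hbar^{n+1}\C_Y^\hbar).
\]
On the other hand, applying the analogous bimodule identification $\shd_Y^\hbar/\hbar^{n+1}\shd_Y^\hbar \simeq \shd_Y \otimes_{\C_Y} (\C_Y^\hbar/\hbar^{n+1}\C_Y^\hbar)$ to the defining formula for $(\shd_{Y\to X})^\hbar$ yields
\[
(\shd_{Y\to X})^{\hbar} = \varprojlim_n (\shd_Y^\hbar/\hbar^{n+1}\shd_Y^\hbar) \otimes_{\shd_Y} \shd_{Y\to X} \simeq \varprojlim_n \shd_{Y\to X} \otimes_{\C_Y} (\C_Y^\hbar/\hbar^{n+1}\C_Y^\hbar),
\]
so the two projective systems literally coincide (with the same transition morphisms, induced by $\C_Y^\hbar/\hbar^{n+2}\C_Y^\hbar \twoheadrightarrow \C_Y^\hbar/\hbar^{n+1}\C_Y^\hbar$).

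The one point that needs checking is that this chain of isomorphisms respects the bimodule structure. At each finite stage $n$, the left $\shd_{Y,n}^\hbar$-action on $(\shd_Y^\hbar/\hbar^{n+1}\shd_Y^\hbar) \otimes_{\shd_Y} \shd_{Y\to X}$ and on $\sho_Y \otimes_{f^{-1}\sho_X} f^{-1}(\shd_X^\hbar/\hbar^{n+1}\shd_X^\hbar)$ both reduce, via the displayed isomorphisms, to the action of $\shd_Y$ on the transfer module $\shd_{Y\to X}$ combined with multiplication on the $\C_Y^\hbar/\hbar^{n+1}\C_Y^\hbar$-factor; symmetrically the right $f^{-1}\shd_{X,n}^\hbar$-action reduces to the right $f^{-1}\shd_X$-action on $\shd_{Y\to X}$ combined with the same $\C_Y^\hbar/\hbar^{n+1}\C_Y^\hbar$-action. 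Since the transition maps are bimodule morphisms, passing to $\varprojlim_n$ produces the full $(\shd_Y^\hbar, f^{-1}\shd_X^\hbar)$-bimodule structure on both sides and makes the isomorphism $\mathcal{K} \simeq (\shd_{Y\to X})^\hbar$ bilinear. Finally, $\hbar$-completeness is automatic: any module of the form $\varprojlim_n \shn/\hbar^{n+1}\shn$ is $\hbar$-complete, as is already built into the formal-extension construction.

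The only real obstacle is the bookkeeping of bimodule structures through all the natural identifications; the algebraic core is just repeated use of the single observation that modding out by $\hbar^{n+1}$ commutes (via $(\cdot)\otimes_{\C_X}\C_X^\hbar/\hbar^{n+1}\C_X^\hbar$) with the constructions involved.
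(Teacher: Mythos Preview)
Your proof is correct and follows exactly the approach the paper takes: the paper records the key identification $f^{-1}(\shd_X^\hbar/\hbar^{n+1}\shd_X^\hbar)\simeq f^{-1}(\shd_X)\otimes_{\C_Y}\C_Y^\hbar/\hbar^{n+1}\C_Y^\hbar$ immediately before the lemma and treats the conclusion as immediate, while you have simply written out the remaining steps (the parallel computation of $(\shd_{Y\to X})^\hbar$ and the bimodule bookkeeping) in full.
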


\begin{proposition}\label{KK}
Let $f:Y\to X$ be a morphism. Then:
\begin{enumerate}[{\rm (i)}]

\item $\mathcal{K}$ is  $c \hbar c$.

\item For each $\shm\in\text{Mod}_{coh}(\shd_X^{\hbar})$,  $\underline{f}^{\ast,\hbar}(\shm)$ is $c \hbar c$.

\item For $\shm\in \text{Mod}_{coh}(\shd_X^{\hbar})$, one has an isomorphism in $\text{Mod}(\shd_Y^{\hbar})$:
\begin{equation}\label{K}
\mathcal{K}\underset{f^{-1}(\shd^{\hbar}_X)}{\otimes}f^{-1}\shm\simeq \underline{f}^{\ast,\hbar}(\shm).
\end{equation}
\item For each $\shm\in\text{D}^b_{coh}(\shd_X^{\hbar})$, $\mathcal{K}\overset{L}{\underset{f^{-1}(\shd^{\hbar}_X)}{\otimes}}f^{-1}\shm$ is  $c \hbar c$.

\end{enumerate}

\end{proposition}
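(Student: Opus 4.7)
My plan is to reduce (i) and (ii) to Proposition~\ref{P:chc}, (iii) to the uniqueness statement Proposition~\ref{P:unicidade}, and (iv) to a local finite free resolution combined with the triangulated nature of $c\hbar c$ objects.

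For (i), Remark~\ref{stack} ensures that the substack $\mathcal{S}$ satisfies the hypothesis $\mathcal{S}_n=\mathscr{M}od_{coh}(\shd_{X,n}^{\hbar})$ of Proposition~\ref{P:chc}. By Lemma~\ref{L:403}, the bimodule $\mathcal{K}$ is isomorphic to the formal extension $(\shd_{Y\to X})^{\hbar}\simeq\shd_{Y\to X}\otimes_{\C_Y}\C_Y[[\hbar]]$; since $\hbar$ acts only on the second factor, $\mathcal{K}$ is $\hbar$-torsion free. Proposition~\ref{P:chc}(1) then delivers (i), and Proposition~\ref{P:chc}(2) applied to an arbitrary coherent $\shd_X^{\hbar}$-module yields (ii).

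For (iii), I set $\mathcal{G}(\shm):=\mathcal{K}\otimes_{f^{-1}\shd_X^{\hbar}}f^{-1}\shm$. This functor is visibly right exact and $\mathbb{K}[[\hbar]]$-linear, and by construction satisfies $\mathcal{G}(\shd_X^{\hbar})=\mathcal{K}=\varprojlim_n\underline{f}^{\ast}(\shd_{X,n}^{\hbar})$. For $\shm\in\mathcal{S}(X)$ one can locally find $N$ with $\hbar^{N+1}\shm=0$; the tensor product then collapses to $(\mathcal{K}/\hbar^{N+1}\mathcal{K})\otimes_{f^{-1}\shd_{X,N}^{\hbar}}f^{-1}\shm\simeq\underline{f}^{\ast}(\shm)$, exhibiting $\mathcal{G}$ as an extension of $\underline{f}^{\ast}$ in the sense of Definition~\ref{D:T}. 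Proposition~\ref{P:unicidade} then yields the required isomorphism $\mathcal{G}\simeq\underline{f}^{\ast,\hbar}$, which is exactly (\ref{K}).

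For (iv), I argue locally on $Y$: the $c\hbar c$ condition is detected by the sheaf-Hom $R\shh om$ and is therefore a local property. Over a sufficiently small open in $X$ the ring $\shd_X^{\hbar}$ has finite global dimension (inherited from $\shd_X$ with one extra unit coming from the central formal parameter $\hbar$), so any $\shm\in D^b_{coh}(\shd_X^{\hbar})$ admits a bounded resolution $\mathcal{L}^\bullet\to\shm$ by finite free $\shd_X^{\hbar}$-modules. Applying $\mathcal{K}\otimes_{f^{-1}\shd_X^{\hbar}}f^{-1}(\cdot)$ to $\mathcal{L}^\bullet$ produces a bounded complex whose terms are finite direct sums of copies of $\mathcal{K}$, each $c\hbar c$ by (i). Since $c\hbar c$ objects form a triangulated subcategory of $D^b(\Z_Y[\hbar])$ (the Remark following the definition of $c\hbar c$), a straightforward induction on the length of $\mathcal{L}^\bullet$ via truncation triangles shows that $\mathcal{K}\overset{L}{\otimes}_{f^{-1}\shd_X^{\hbar}}f^{-1}\shm$ is $c\hbar c$. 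The only delicate point I foresee is ensuring the existence of the bounded finite free resolution; once that is in hand, the rest is bookkeeping with the structural results already established.
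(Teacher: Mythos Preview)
Your proof is correct and follows essentially the same route as the paper's: parts (i) and (ii) via Proposition~\ref{P:chc}, part (iii) via the uniqueness criterion Proposition~\ref{P:unicidade}, and part (iv) via a local finite free resolution combined with the fact that $c\hbar c$ objects form a triangulated subcategory. The only cosmetic difference is that for (iv) the paper cites Lemma~\ref{chc} where you invoke the triangulated remark directly; your reference is in fact the more accurate one, and your flagging of the bounded finite free resolution as the one nontrivial input is appropriate (the paper simply asserts its existence).
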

\begin{proof}
(i) Follows by Proposition~\ref{P:chc}(1) since $\mathcal{K}\simeq\shd_{Y \to X}^\hbar$ is $\hbar$-torsion free.

(ii) Follows by Proposition~\ref{P:chc}(2).

To prove (iii), note that $\shm\mapsto\mathcal{K}\underset{f^{-1}(\shd_X^\hbar)}{\otimes}f^{-1}(\shm)$ is a right exact functor
that extends $\underline{f}^\ast$ in the sense of Definition~\ref{D:T}. Hence, the result follows by Proposition~\ref{P:unicidade}.

(iv) Let now be given $\shm\in \text{D}^b_{coh}(\shd_X^{\hbar})$. Given a local free resolution  $$\shd_X^{\hbar,\bullet}\overset{QIS}{\to}\shm$$ it yields a quasi-isomorphism $$\mathcal{K}^{\bullet}\overset{QIS}{\to}\mathcal{K}\overset{L}{\underset{f^{-1}(\shd^{\hbar}_X)}{\otimes}}f^{-1}\shm$$ in $D^b(\shd_Y^{\hbar})$. To conclude the statement it is enough to apply  Lemma \ref{chc}.
\end{proof}

\begin{remark}\label{Der}As a consequence of (iii)  of Proposition \ref{KK}, we give a meaning to
  $L\underline{f}^{\ast,\hbar}$ as follows:

 For $\shm\in D^b_{coh}(\shd_X^{\hbar})$ we set: $$L\underline{f}^{\ast,\hbar}(\shm):=\mathcal{K}\overset{L}{{\underset{f^{-1}(\shd^{\hbar}_X)}{\otimes}}}f^{-1}\shm.$$

More precisely, the left hand side of (\ref{K})  defines a left derivable right exact functor  $I_f$ on $\text {Mod}(\shd_X^{\hbar})$ which is equivalent to  $\underline{f}^{\ast,\hbar}$ on $\text {Mod}_{coh}(\shd_X^{\hbar})$. Since  any $\shm\in\text {Mod}_{coh}(\shd_X^{\hbar})$ admits locally a free, hence $I_f$-projective, resolution, we may denote without ambiguity the derived functor  $\mathcal{K}\overset{L}{{\underset{f^{-1}(\shd^{\hbar}_X)}{\otimes}}}f^{-1}(\cdot)$ by  $L\underline{f}^{\ast,\hbar}(\cdot)$.
\end{remark}

\begin{proposition}\label{P:hol}
Let $\shm\in\text{D}^b_{hol}(\shd_X^\hbar)$.
Then $L\underline{f}^{\ast,\hbar}(\shm)\in \text{D}^b_{hol}(\shd_Y^\hbar)$. The same statement holds if we replace the assumption of holonomicity by that of regular holonomicity.
\end{proposition}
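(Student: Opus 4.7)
The plan is to reduce the formal statement to the classical one via the functor $gr_\hbar$, using the characterization of holonomicity for $\shd_X^\hbar$-modules and the coherence criterion provided by Theorem~\ref{T123}.

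The first step is to verify that $L\underline{f}^{\ast,\hbar}(\shm)$ is coherent. By Proposition~\ref{KK}(iv), it is $c\hbar c$, so by Theorem~\ref{T123} it suffices to show that $gr_\hbar(L\underline{f}^{\ast,\hbar}(\shm))\in D^b_{coh}(\shd_Y)$. This reduces everything to establishing the base-change isomorphism
\begin{equation*}
gr_\hbar\bigl(L\underline{f}^{\ast,\hbar}(\shm)\bigr)\simeq L\underline{f}^{\ast}\bigl(gr_\hbar(\shm)\bigr)\quad\text{in }D^b(\shd_Y).
\end{equation*}
The second step is to prove this isomorphism. By Remark~\ref{Der}, $L\underline{f}^{\ast,\hbar}(\shm)=\mathcal{K}\overset{L}{\otimes}_{f^{-1}\shd_X^\hbar}f^{-1}\shm$. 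Using $gr_\hbar(\cdot)=(\cdot)\overset{L}{\otimes}_{\shd_Y^\hbar}\shd_Y$ and associativity of the derived tensor product (together with the fact that $f^{-1}$ commutes with derived tensor), one rewrites
\begin{equation*}
\mathcal{K}\overset{L}{\otimes}_{f^{-1}\shd_X^\hbar}f^{-1}\shm\overset{L}{\otimes}_{\shd_Y^\hbar}\shd_Y
\simeq\bigl(\mathcal{K}\overset{L}{\otimes}_{\shd_Y^\hbar}\shd_Y\bigr)\overset{L}{\otimes}_{f^{-1}\shd_X}f^{-1}\bigl(\shd_X\overset{L}{\otimes}_{\shd_X^\hbar}\shm\bigr).
\end{equation*}
By Lemma~\ref{L:403} the identification $\mathcal{K}\simeq(\shd_{Y\to X})^\hbar$ yields $\mathcal{K}\overset{L}{\otimes}_{\shd_Y^\hbar}\shd_Y\simeq\shd_{Y\to X}$ (this is the heart of the argument, where one must use that $\mathcal{K}$ is $\hbar$-torsion free and $\hbar$-complete so that killing $\hbar$ recovers the classical transfer module), giving the desired identification with $L\underline{f}^{\ast}(gr_\hbar(\shm))$.

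The third step invokes the classical theorems of Kashiwara: the derived inverse image $L\underline{f}^{\ast}$ preserves $D^b_{hol}(\shd_X)$ and $D^b_{rh}(\shd_X)$. Combined with Step~2, this gives $gr_\hbar(L\underline{f}^{\ast,\hbar}(\shm))\in D^b_{hol}(\shd_Y)$ (resp.\ $D^b_{rh}(\shd_Y)$), which is in particular in $D^b_{coh}(\shd_Y)$; this completes the coherence check of Step~1. Then, directly from the definition of holonomicity (resp.\ regular holonomicity) for $\shd_Y^\hbar$-modules recalled just before the inverse image subsection, $L\underline{f}^{\ast,\hbar}(\shm)\in D^b_{hol}(\shd_Y^\hbar)$ (resp.\ $D^b_{rh}(\shd_Y^\hbar)$).

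The main obstacle I anticipate is the careful bookkeeping in Step~2: one needs the associativity isomorphism for the three-fold derived tensor product over the three rings $\shd_Y^\hbar$, $f^{-1}\shd_X^\hbar$, $f^{-1}\shd_X$, together with a clean identification $\mathcal{K}\overset{L}{\otimes}_{\shd_Y^\hbar}\shd_Y\simeq\shd_{Y\to X}$. The latter should follow from Lemma~\ref{L:403} because $\mathcal{K}$ is $\hbar$-torsion free, so $\mathcal{K}\overset{L}{\otimes}_{\shd_Y^\hbar}\shd_Y\simeq \mathcal{K}/\hbar\mathcal{K}\simeq \shd_{Y\to X}$; no higher Tor appears. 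All other steps are formal once this identification is in hand.
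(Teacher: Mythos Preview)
Your proposal is correct and follows essentially the same route as the paper's proof. The only differences are cosmetic: where you spell out the identification $gr_\hbar(L\underline{f}^{\ast,\hbar}(\shm))\simeq L\underline{f}^{\ast}(gr_\hbar(\shm))$ via associativity of derived tensor products and the $\hbar$-torsion-freeness of $\mathcal{K}$, the paper simply invokes \cite[Prop.~1.4.3]{KS2} for the commutation of $gr_\hbar$ with tensor product and $f^{-1}$; and where you cite Theorem~\ref{T123} to pass from $c\hbar c$ plus $gr_\hbar$-coherent to coherent, the paper cites Proposition~\ref{P121}---your citation is arguably the more precise one for this step.
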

\begin{proof}
Since $gr_\hbar(L\underline{f}^{\ast,\hbar}(\shm))\simeq  \shd_{Y\to X}\overset{L}{{\underset{f^{-1}(\shd_X)}{\otimes}}}f^{-1}gr_\hbar(\shm)$ (indeed as shown in Proposition 1.4.3 of \cite{KS2}, $gr_\hbar$ commutes with tensor product and also with $f^{-1}$) the result follows from the analogous property for holonomic $\shd$-modules due  to Kashiwara  (\cite{Ka0}) together with Proposition \ref{P121}.
\end{proof}

\subsection{The non characteristic inverse image}

Recall that, in the sense of \cite{KS1},  $f$ is said to be non-characteristic  for  $\shm\in \text{Mod}_{coh}(\shd_X)$  if $$f_\pi^{-1}(Char(\shm))\cap \ker f_d\subset Y\times_ X T_X^*X,$$ where $Char(\shm)$ denotes the characteristic variety of $\shm$.

Let us now denote by $NC(f)$ the Serre substack  of $\mathscr{M}od_{coh}(\shd_X)$  which, to each open subset $U\subset X$,  assigns $NC(f)(U)$, the full Serre subcategory whose objects $\shm\in \text{Mod}_{coh}(\shd_X|_U)$ are such that $f|_{f^{-1}(U)}$ is non-characteristic for $\shm$.

We can restrict $\underline{f}^{\ast}$ to  $NC(f)$ as a $\mathbb{\C}^{\hbar}$-linear functor of stacks. Then, for each open subset $U\subset X$,  $\underline{f}^{\ast}(U)$ is exact (\cite{KS1}, Proposition 11.2.12),  and takes values in $\text{Mod}_{coh}(\shd_Y|_{f^{-1}(U)})$.

Therefore, by Theorem \ref{T:1}, the restriction of the extension functor $\underline{f}^{\ast,\hbar}$ to $\text{Mod}_{NC(f)}(\shd^{\hbar}_X)$ is an exact functor $$\underline{f}^{\ast,\hbar}: \text{Mod}_{NC(f)}(\shd^{\hbar}_X)\to \text{Mod}_{coh}(\shd^{\hbar}_Y).$$
We shall denote by $D^b_{NC(f)}(\shd_X^{\hbar})$ the subcategory of $D^b_{coh}(\shd_X^{\hbar})$ whose objects $\shm$ are such that $gr_{\hbar}(\shm)$ is non-characteristic  for $f$.

In particular, for any $f$, $\sho_X^\hbar\in \text{Mod}_{NC(f)}(\shd^{\hbar}_X)$.

Recall that for any coherent $\shd_X$-module one has a well defined morphism in $D^b(\shd_Y)$:
\begin{equation}\label{E:f}
f^{-1}(R\shh \text{om}_{\shd_X}(\shm, \sho_X))\to R\shh \text{om}_{\shd_Y}(L\underline{f}^{\ast}(\shm), \sho_Y),
\end{equation}
which is an isomorphism when $\shm$ is non-characteristic for $f$ (Cauchy-Kowalewskaia-Kashiwara's Theorem).

This result may be generalized to the formal setting as follows:

\begin{theorem}[Cauchy-Kowalewskaia-Kashiwara]\label{CKKT}
Assume that $\shm$ belongs to $D^b_{NC(f)}(\shd^{\hbar}_X)$. Then one has a natural isomorphism in
$\text{D}^b(\C_Y^{\hbar})$:
\begin{equation}\label{E:f3}
f^{-1}R\shh \text{om}_{\shd_X^{\hbar}}(\shm, \sho_X^{\hbar})\simeq  R\shh \text{om}_{\shd_Y^{\hbar}}(L\underline{f}^{\ast,\hbar}(\shm), \sho_Y^{\hbar}).
\end{equation}
\end{theorem}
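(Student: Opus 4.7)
The plan is to reduce the formal statement to the classical Cauchy-Kowalewskaia-Kashiwara theorem for $\shd_X$-modules via the conservativity of $gr_\hbar$ on cohomologically $\hbar$-complete objects (Proposition \ref{P121}). First, the canonical morphism (\ref{E:f3}) is constructed exactly as in the classical case: by the adjunction between $L\underline{f}^{\ast,\hbar}$ and $Rf_\ast$ (or, more directly, by composition with the canonical morphism $f^{-1}\sho_X^\hbar\to \sho_Y^\hbar$ coming from (\ref{E1})), applied after using that $\mathcal{K}=\underline{f}^{\ast,\hbar}(\shd_X^\hbar)$ represents $L\underline{f}^{\ast,\hbar}$ by Remark \ref{Der}.

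Next I would check that both sides of (\ref{E:f3}) are $c\hbar c$. For the right-hand side, $\sho_Y^\hbar$ is a coherent $\shd_Y^\hbar$-module, hence $c\hbar c$, and then Proposition \ref{P1} implies that $R\shh\text{om}_{\shd_Y^\hbar}(L\underline{f}^{\ast,\hbar}(\shm),\sho_Y^\hbar)$ is $c\hbar c$. For the left-hand side, Proposition \ref{P1} first ensures that $R\shh\text{om}_{\shd_X^\hbar}(\shm,\sho_X^\hbar)$ is $c\hbar c$; to apply Proposition \ref{P:500}(i) one must verify that $f$ is non-characteristic for this object in the microsupport sense. Since $\shm\in D^b_{NC(f)}(\shd_X^\hbar)$ means $gr_\hbar(\shm)$ is non-characteristic for $f$, and since the microsupport of the solution complex of a coherent $\shd_X$-module coincides with its characteristic variety (Kashiwara--Schapira), together with the identification $SS(R\shh\text{om}_{\shd_X^\hbar}(\shm,\sho_X^\hbar))=SS(gr_\hbar\,Sol_\hbar(\shm))$ provided by conservativity of $gr_\hbar$ on $c\hbar c$-objects, the non-characteristic condition transfers to the solution complex.

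By Proposition \ref{P121}, once both sides are $c\hbar c$, it suffices to show the morphism becomes an isomorphism after applying $gr_\hbar$. On the left, one uses Proposition \ref{P:500}(ii) to commute $gr_\hbar$ with $f^{-1}$, and then the identification $gr_\hbar R\shh\text{om}_{\shd_X^\hbar}(\shm,\sho_X^\hbar)\simeq R\shh\text{om}_{\shd_X}(gr_\hbar(\shm),\sho_X)$, which follows from $\sho_X^\hbar\stackrel{L}{\otimes}_{\shd_X^\hbar}\shd_X\simeq \sho_X$ (as $\sho_X^\hbar$ is $\hbar$-torsion free and $\hbar$-complete) and standard change-of-rings for $R\shh\text{om}$. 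On the right, the already-established formula $gr_\hbar\circ L\underline{f}^{\ast,\hbar}\simeq L\underline{f}^\ast\circ gr_\hbar$ (see the proof of Proposition \ref{P:hol}) together with the same change-of-rings gives $gr_\hbar\,R\shh\text{om}_{\shd_Y^\hbar}(L\underline{f}^{\ast,\hbar}(\shm),\sho_Y^\hbar)\simeq R\shh\text{om}_{\shd_Y}(L\underline{f}^\ast(gr_\hbar\shm),\sho_Y)$. Under these identifications the morphism obtained from (\ref{E:f3}) coincides with the classical morphism (\ref{E:f}) applied to $gr_\hbar(\shm)$, which is an isomorphism by the classical Cauchy-Kowalewskaia-Kashiwara theorem, since $gr_\hbar(\shm)$ is non-characteristic for $f$.

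The main obstacle will be the compatibility computations: checking that $gr_\hbar$ commutes with $R\shh\text{om}_{\shd_X^\hbar}(-,\sho_X^\hbar)$ on coherent modules, and verifying cohomological $\hbar$-completeness of the left-hand side through the microsupport analysis needed to invoke Proposition \ref{P:500}(i). The former reduces to the fact that $\sho_X^\hbar$ has a two-term Koszul-type resolution by $\hbar$ over $\shd_X^\hbar$ combined with $\hbar$-torsion freeness of $\sho_X^\hbar$; the latter is the technically more delicate point, and is where the non-characteristic hypothesis is essentially used at the level of $c\hbar c$-completeness, not merely to invoke the classical theorem.
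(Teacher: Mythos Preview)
Your proposal is correct and follows essentially the same route as the paper: construct the canonical morphism via the transfer module $\mathcal{K}$, verify that both sides are $c\hbar c$ using Propositions \ref{P1} and \ref{P:500}, and conclude by the conservativity of $gr_\hbar$ (Proposition \ref{P121}), which reduces everything to the classical Cauchy--Kowalewskaia--Kashiwara theorem applied to $gr_\hbar(\shm)$. The paper's proof is terser but invokes exactly the same ingredients; your elaboration of the $gr_\hbar$-compatibility on each side is the content hidden in the paper's final sentence ``the result then follows by Proposition \ref{KK} and Proposition \ref{P121}.''

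One small imprecision worth flagging: you justify the microsupport identity $SS(Sol_\hbar(\shm))=SS\bigl(gr_\hbar Sol_\hbar(\shm)\bigr)$ by ``conservativity of $gr_\hbar$ on $c\hbar c$-objects.'' Conservativity tells you that $gr_\hbar$ detects zero objects (hence equality of supports), but it does not by itself yield equality of \emph{microsupports}. What you actually need here is the microsupport estimate for $c\hbar c$ objects proved in \cite{KS2}, namely that $SS(F)\subset SS(gr_\hbar F)$ for $F$ cohomologically $\hbar$-complete; combined with the triangulated bound $SS(gr_\hbar F)\subset SS(F)$ coming from the distinguished triangle $F\xrightarrow{\hbar}F\to gr_\hbar F\xrightarrow{+1}$, this gives the equality. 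The paper glosses over this point as well, simply citing Proposition \ref{P:500} without spelling out the non-characteristic verification, so your instinct that this is ``the technically more delicate point'' is accurate.
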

\begin{proof}

 By Propositions \ref{P1} and \ref{P:500} we have a natural morphism between $c \hbar c$ objects $$f^{-1}R\shh \text{om}_{\shd_X^{\hbar}}(\shm, \sho_X^{\hbar})\to  R\shh \text{om}_{\shd_Y^{\hbar}}( \mathcal{K}\overset{L}{\underset{f^{-1}(\shd_X^\hbar)}{\otimes}}f^{-1}(\shm), \mathcal{K}\overset{L}{\underset{f^{-1}(\shd_X^\hbar)}{\otimes}}f^{-1}\sho_X^{\hbar})$$ (see Exercise II.24 of~\cite{KS1} for the construction of the morphism).

Besides, by (\ref{E1}) and (\ref{K}),  $\mathcal{K}\underset{f^{-1}(\shd_X^\hbar)}{\otimes}f^{-1}(\sho_X^{\hbar})\simeq\sho_Y^\hbar$.
The result then follows by Proposition \ref{KK} and Proposition \ref{P121}.
\end{proof}
We may also introduce
the so called \textit{extraordinary inverse image} associated to $f$, which we
denote by $L\underline{f}^{!,\hbar}$:
\begin{eqnarray*}
L\underline{f}^{!,\hbar}:D^b_{coh}(\shd_X^\hbar)\to D^b(\shd_Y^\hbar),
\quad \shm\mapsto \mathbb{D}_\hbar(L\underline{f}^{\ast,\hbar}(\mathbb{D}_\hbar(\shm))).
\end{eqnarray*}
We refer to~\cite{Me} for that notion in the  $\shd$-module case.

By Proposition~\ref{P:hol} and by duality
 we get:

\begin{corollary}\label{C:hol}
Let $\shm\in D^b_{hol}(\shd_X^\hbar)$. Then, $L\underline{f}^{!,\hbar}(\shm)\in D^b_{hol}(\shd_Y^\hbar)$.
\end{corollary}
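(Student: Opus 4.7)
The plan is to reduce the statement to Proposition~\ref{P:hol} by showing that the duality functor $\mathbb{D}_\hbar$ preserves holonomicity; once this is known, the three functors in the definition
\[
L\underline{f}^{!,\hbar}(\shm)=\mathbb{D}_\hbar\bigl(L\underline{f}^{\ast,\hbar}(\mathbb{D}_\hbar(\shm))\bigr)
\]
each preserve $D^b_{hol}$ and we are done. Since holonomicity in $D^b_{coh}(\shd_X^\hbar)$ is detected by $gr_\hbar$, the point is to verify that, for $\shm\in D^b_{coh}(\shd_X^\hbar)$, there is a natural isomorphism
\[
gr_\hbar\bigl(\mathbb{D}_\hbar(\shm)\bigr)\simeq \mathbb{D}(gr_\hbar(\shm))
\]
in $D^b(\shd_X)$, where $\mathbb{D}$ denotes the classical $\shd_X$-module duality. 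Together with Kashiwara's theorem that $\mathbb{D}$ preserves $D^b_{hol}(\shd_X)$, this immediately yields $\mathbb{D}_\hbar(\shm)\in D^b_{hol}(\shd_X^\hbar)$ whenever $\shm$ is holonomic.

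First I would establish the commutation $gr_\hbar\circ\mathbb{D}_\hbar\simeq \mathbb{D}\circ gr_\hbar$ locally. Since the question is local, and since any coherent $\shd_X^\hbar$-module admits locally a finite free resolution by modules of the form $(\shd_X^\hbar)^N$ (see Theorem~\ref{T1}), it suffices to check the isomorphism on the generator $\shd_X^\hbar$. There one has on the one hand
\[
\mathbb{D}_\hbar(\shd_X^\hbar)\simeq \shd_X^\hbar\otimes_{\sho_X}\Omega_X^{\otimes-1}[d_X],
\]
so applying $gr_\hbar$ and using that $gr_\hbar$ commutes with $\otimes_{\sho_X}$ (cf.\ Proposition~1.4.3 of \cite{KS2}) gives $\shd_X\otimes_{\sho_X}\Omega_X^{\otimes-1}[d_X]$; on the other hand $gr_\hbar(\shd_X^\hbar)\simeq \shd_X$, whose $\shd_X$-dual is exactly the same object. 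Passing from this base case to a general coherent $\shm$ through a local finite free resolution gives the desired natural isomorphism, using that both $\mathbb{D}_\hbar$ and $\mathbb{D}$ are contravariant right derived functors of $\shh\text{om}$ and that $gr_\hbar$ preserves the underlying exact sequences of coherent modules.

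Next, knowing that $\mathbb{D}_\hbar$ preserves $D^b_{hol}(\shd_X^\hbar)$, the chain is clear: for $\shm\in D^b_{hol}(\shd_X^\hbar)$, $\mathbb{D}_\hbar(\shm)\in D^b_{hol}(\shd_X^\hbar)$, then by Proposition~\ref{P:hol} $L\underline{f}^{\ast,\hbar}(\mathbb{D}_\hbar(\shm))\in D^b_{hol}(\shd_Y^\hbar)$, and applying $\mathbb{D}_\hbar$ once more on $Y$ keeps us in $D^b_{hol}(\shd_Y^\hbar)$.

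The main obstacle is the base-change isomorphism $gr_\hbar\circ\mathbb{D}_\hbar\simeq\mathbb{D}\circ gr_\hbar$: for a general $R\shh\text{om}$, $gr_\hbar$ does not commute with Hom, so one must exploit the local existence of finite free resolutions of coherent $\shd_X^\hbar$-modules (Theorem~\ref{T1}) to reduce the question to the free rank-one case. Conservativity of $gr_\hbar$ on $c\hbar c$ objects (Proposition~\ref{P121}), applicable since $\mathbb{D}_\hbar(\shm)$ is coherent and therefore $c\hbar c$, ensures that the verification at the $gr_\hbar$-level indeed detects holonomicity in the formal setting.
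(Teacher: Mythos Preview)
Your approach is essentially the same as the paper's, which simply states ``By Proposition~\ref{P:hol} and by duality'' without further justification. You have correctly identified that the only nontrivial point is that $\mathbb{D}_\hbar$ preserves $D^b_{hol}$, and your sketch of the commutation $gr_\hbar\circ\mathbb{D}_\hbar\simeq\mathbb{D}\circ gr_\hbar$ via local free resolutions is the standard way to see this; the paper presumably takes this fact as known from~\cite{DGS} (where the $\hbar$-duality is introduced and studied), so your argument is a legitimate elaboration rather than a different route. One minor quibble: Theorem~\ref{T1} is a coherence criterion, not a statement about local free resolutions; the existence of such resolutions follows instead from the Noetherianity and finite global dimension of $\shd_X^\hbar$ established in~\cite{KS2}.
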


\subsection{Direct image.}
In this section we discuss the possible application or adaptation of our results to the functor of direct image. We shall work with right $\shd^\hbar$-modules but all the results  are easily adapted to the case of left $\shd^\hbar$-modules.

We identify the abelian category of right $\shd^\hbar$-modules with the category $\text{Mod}({\shd_Y^\hbar}^{op})$.

Let $f:Y\to X$ be a morphism of complex manifolds. Let $\mathcal{K}$ denote the associated transfer module.

\subsubsection{The case of a closed embedding}

 Let us treat the case  where $f=i:Y\hookrightarrow X$, the embedding of a closed submanifold.
In this case $\shd_{Y\to X}$ is flat over $\shd_Y$  and we obtain an exact functor
$$\underline{i}_{\ast}:\text{Mod}_{coh}({\shd_Y}^{op})\to \text{Mod}_{coh}({\shd_X}^{op}), \quad \shm \mapsto \underline{i}_{\ast}(\shm):= i_\ast(\shm\otimes_{\shd_Y} \shd_{Y\to X}).$$
Here the full Serre substacks $\mathcal{S}$ and $\mathcal{S}'$ are respectively $\mathscr{M}od_{coh}({\shd_Y}^{op})$ and  $\mathscr{M}od_{coh}({\shd_X}^{op})$.
We can choose as a candidate for the functor $\Phi:Op(Y)\to Op(X)$ the data $U\mapsto \Phi(U):=X\setminus(Y\setminus U)$ which clearly satisfies \ref{E:fi} and we are in conditions to apply Theorem \ref{T:1} to
 extend $\underline{i}_{\ast}$ as an exact functor
$$\underline{i}_{\ast}^\hbar: \text{Mod}_{coh}({\shd_Y^\hbar}^{op})\to\text{Mod}_{coh}({\shd_X^\hbar}^{op}),$$  $$\underline{i}_{\ast}^{\hbar}(\shm):=\varprojlim_{n\geq 0} i_\ast(\shm_n\otimes_{\shd_Y}\shd_{Y\to X}).$$
\subsubsection{Discussion of the general case}

By Lemma \ref{L:403}  we have $\mathcal{K}\simeq (\shd_{Y\to X})^{\hbar}$, hence $$\mathcal{K}_n\simeq \shd_{Y,n}^\hbar\otimes_{\shd_Y}\shd_{Y\to X}.$$  So,  for $\shm\in\text{Mod}({\shd_Y^{\hbar }}^{op})$,
we get natural isomorphisms in $\text{Mod}(f^{-1}(\shd_X)^{\hbar}_n)$:
 \begin{equation}\label{E:T21}
 \shm_n\otimes_{\shd_Y^\hbar} \mathcal{K}\simeq \shm_n\otimes_{\shd_{Y}^\hbar}\mathcal{K}_n
 \simeq (\shm_n\otimes_{\shd_Y^{\hbar}}\shd_{Y,n}^{\hbar})\otimes_{\shd_Y}\shd_{Y\to X}
 \simeq \shm_n\otimes_{\shd_Y} \shd_{Y\to X}.
 \end{equation}

Since projective limits commute with direct images, (\ref{E:T21}) entails a morphism
\begin{equation}\label{eq:405}
f_\ast(\shm\otimes_{\shd_Y^\hbar} \mathcal{K})\to \varprojlim_{n\geq 0} f_\ast(\shm_n\otimes_{\shd_Y}\shd_{Y\to X}),
\end{equation}
which defines a $\C^{\hbar}$-linear transformation of functors of stacks. When $f$ is a closed embedding, as proved in Corollary \ref{closedemb} below, it is an isomorphism of functors. Indeed, we don't know if it is an isomorphism in general, as explained in Remark \ref{R} below. However we have the following partial results:
\begin{lemma}\label{hcomplete}
If $\shm\in\text{Mod}_{coh}({\shd_Y^\hbar}^{op})$ is such that $\shm\otimes_{\shd_Y^\hbar} \mathcal{K}$ is $\hbar$-complete then  (\ref{eq:405}) is an isomorphism.
\end{lemma}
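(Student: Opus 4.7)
The plan is to set $\shn := \shm\otimes_{\shd_Y^\hbar}\mathcal{K}$ and reduce the claim to two standard facts: $\hbar$-completeness gives $\shn\simeq\varprojlim_n \shn_n$, and $f_\ast$, being right adjoint to $f^{-1}$, commutes with arbitrary inverse limits.

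First I would identify the quotients $\shn_n:=\shn/\hbar^{n+1}\shn$. Applying the right exact functor $(-)\otimes_{\shd_Y^\hbar}\mathcal{K}$ to the exact sequence
$$\shm\xrightarrow{\hbar^{n+1}}\shm\to\shm_n\to 0$$
yields
$$\shn\xrightarrow{\hbar^{n+1}}\shn\to\shm_n\otimes_{\shd_Y^\hbar}\mathcal{K}\to 0,$$
so $\shn_n\simeq \shm_n\otimes_{\shd_Y^\hbar}\mathcal{K}$. Combined with the identification (\ref{E:T21}) already established by the authors, this gives a natural isomorphism $\shn_n\simeq \shm_n\otimes_{\shd_Y}\shd_{Y\to X}$ compatible with the transition morphisms $\rho_{k,n}$ used to form the target of (\ref{eq:405}).

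Next, by the hypothesis that $\shn$ is $\hbar$-complete, the canonical morphism $\shn\to\varprojlim_n \shn_n$ is an isomorphism. Applying $f_\ast$ and using that it commutes with $\varprojlim$, we get
$$f_\ast(\shn)\simeq f_\ast\bigl(\varprojlim_n \shn_n\bigr)\simeq \varprojlim_n f_\ast(\shn_n)\simeq \varprojlim_n f_\ast(\shm_n\otimes_{\shd_Y}\shd_{Y\to X}).$$

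The last step is a naturality check: one must verify that the composite of the displayed isomorphisms coincides with the morphism (\ref{eq:405}). This is automatic from the universal property of the projective limit, since both maps are determined by the same family $f_\ast(\shn)\to f_\ast(\shn_n)$ obtained from the projections $\shn\to\shn_n$ followed by the isomorphisms of the first step. There is no serious obstacle here; the hypothesis of $\hbar$-completeness is precisely tailored so that the only potentially delicate point, namely reconstructing $\shn$ from its quotients, is granted for free, and the remaining ingredients are formal.
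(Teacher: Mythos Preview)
Your proof is correct and follows essentially the same route as the paper: identify $(\shm\otimes_{\shd_Y^\hbar}\mathcal{K})_n$ with $\shm_n\otimes_{\shd_Y^\hbar}\mathcal{K}$ (the paper does this via $\C_{Y,n}^\hbar\otimes_{\C_Y^\hbar}(-)$, you via right exactness of $(-)\otimes_{\shd_Y^\hbar}\mathcal{K}$, which amounts to the same thing), invoke (\ref{E:T21}), use $\hbar$-completeness to recover $\shn$ as the inverse limit, and conclude by commuting $f_\ast$ with $\varprojlim$. Your version is slightly more explicit about the naturality check, but otherwise the arguments coincide.
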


\begin{proof}
If $\shm\otimes_{\shd_Y^\hbar} \mathcal{K}$ is $\hbar$-complete then
$$\shm\otimes_{\shd_Y^\hbar} \mathcal{K}\simeq \varprojlim_{n\geq 0}(\shm\otimes_{\shd_Y^\hbar} \mathcal{K})_n=\varprojlim_{n\geq 0}(\C_{Y,n}^\hbar\otimes_{\C_Y^\hbar}(\shm\otimes_{\shd_Y^\hbar} \mathcal{K}))\simeq$$ $$\simeq \varprojlim_{n\geq 0}(\shm_n\otimes_{\shd_Y^\hbar} \mathcal{K})\simeq\varprojlim_{n\geq 0}(\shm_n\otimes_{\shd_Y}\shd_{Y\to X}),$$ and since $f_*$ commutes with projective limits the result follows.
\end{proof}

\begin{lemma}\label{chc2}
 For any $\shm\in\text{Mod}_{coh}({\shd_Y^\hbar}^{op})$, $\shm\otimes_{\shd_Y^\hbar} \mathcal{K}$ is  $c\hbar c$. \end{lemma}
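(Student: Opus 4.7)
The plan is to mimic the argument of Proposition~\ref{P:chc}(2): reduce to the case of a free module by means of a local presentation, and then invoke the fact that cohomologically $\hbar$-complete modules form a thick abelian subcategory.

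More precisely, I would first observe that the property of being $c\hbar c$ is local on $Y$, since it is defined by the vanishing of $R\shh\text{om}_{\Z_Y[\hbar]}(\Z_Y[\hbar,\hbar^{-1}],\cdot)$. Hence we may work in a sufficiently small open subset $V\subset Y$ on which the coherent right $\shd_Y^\hbar$-module $\shm$ admits a finite free presentation
$$(\shd_Y^\hbar|_V)^N \longrightarrow (\shd_Y^\hbar|_V)^L \longrightarrow \shm|_V \longrightarrow 0.$$
Applying the right exact functor $\cdot\otimes_{\shd_Y^\hbar}\mathcal{K}$ yields an exact sequence
$$\mathcal{K}|_V^{\,N} \longrightarrow \mathcal{K}|_V^{\,L} \longrightarrow (\shm\otimes_{\shd_Y^\hbar}\mathcal{K})|_V \longrightarrow 0.$$

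Next, by Proposition~\ref{KK}(i), $\mathcal{K}$ is $c\hbar c$, and since finite direct sums commute with $R\shh\text{om}_{\Z_Y[\hbar]}(\Z_Y[\hbar,\hbar^{-1}],\cdot)$, both $\mathcal{K}^N$ and $\mathcal{K}^L$ are $c\hbar c$. Finally, Lemma~\ref{chc} asserts that $c\hbar c$ objects form a thick abelian subcategory of $\text{Mod}(\Z_Y[\hbar])$, stable in particular under cokernels. Therefore $(\shm\otimes_{\shd_Y^\hbar}\mathcal{K})|_V$ is $c\hbar c$, and gluing over the open cover of $Y$ gives the result.

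The argument is essentially immediate once one has Proposition~\ref{KK}(i) and Lemma~\ref{chc}; the only minor point to verify is that one has a finite free presentation of $\shm$ as a \emph{right} $\shd_Y^\hbar$-module, which is standard since $\shd_Y^\hbar$ satisfies Assumption~\ref{AssumptionB} and hence ${\shd_Y^\hbar}^{op}$ is coherent as well. Thus there is no real obstacle.
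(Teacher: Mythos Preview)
Your proof is correct and follows essentially the same approach as the paper's own proof: both use Proposition~\ref{KK}(i) to know $\mathcal{K}$ is $c\hbar c$, take a local free presentation of $\shm$, apply the right exact functor $\cdot\otimes_{\shd_Y^\hbar}\mathcal{K}$, and invoke Lemma~\ref{chc} to conclude that the cokernel is $c\hbar c$. Your write-up is slightly more explicit about locality, finite direct sums, and gluing, but the underlying argument is identical.
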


\begin{proof}
By Proposition \ref{KK}, $\mathcal{K}$ is $c\hbar c$. On the other hand we can choose  a local presentation of $\shm$ by locally free $\shd_Y^\hbar$-modules to which we apply the right exact functor $\cdot\otimes_{\shd_Y^{\hbar}} \mathcal{K}$. Hence   $\shm\otimes_{\shd_Y^\hbar} \mathcal{K}$ is locally the cokernel of a $\C_Y^{\hbar}$-linear morphism of $chc$-modules and the result follows by Lemma \ref{chc}.
\end{proof}

\begin{corollary}
Let $\shm\in\text{Mod}_{coh}({\shd_Y^\hbar}^{op})$. Then  (\ref{eq:405}) is an isomorphism in each one of the following cases:

\begin{itemize}
\item[(i)] $\shm$ is an $\hbar$-torsion module;

\item[(ii)] $\shm\otimes_{\shd_Y^\hbar} \mathcal{K}$ is $\hbar$-torsion free.
\end{itemize}
\end{corollary}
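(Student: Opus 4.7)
The plan is to reduce both cases to Lemma~\ref{hcomplete}, so that in each case it will suffice to verify that $\shm\otimes_{\shd_Y^\hbar}\mathcal{K}$ is $\hbar$-complete. The two cases then require quite different arguments.

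For case~(i), I would exploit the fact that a coherent $\hbar$-torsion $\shd_Y^\hbar$-module is, locally on $Y$, annihilated by a fixed power of $\hbar$. Thus on a small enough open $U\subset Y$ one has $\hbar^{N+1}\shm|_U=0$ for some $N$, whence $\hbar^{N+1}(\shm\otimes_{\shd_Y^\hbar}\mathcal{K})|_U=0$ as well, so the projective system $((\shm\otimes\mathcal{K})_n)$ is locally stationary and $\shm\otimes\mathcal{K}$ is $\hbar$-complete. Since $\hbar$-completeness is a local condition, this is enough to invoke Lemma~\ref{hcomplete}. Alternatively, one can check (\ref{eq:405}) directly: on the same $U$, the isomorphisms (\ref{E:T21}) combined with $\shm_n|_U\simeq\shm|_U$ for $n\geq N$ identify both sides with $f_\ast(\shm\otimes_{\shd_Y}\shd_{Y\to X})|_{\Phi(U)}$.

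For case~(ii), Lemma~\ref{chc2} already gives that $\shm\otimes_{\shd_Y^\hbar}\mathcal{K}$ is $c\hbar c$; combining this with the hypothesis that $\shm\otimes_{\shd_Y^\hbar}\mathcal{K}$ has no $\hbar$-torsion, the goal is to deduce $\hbar$-completeness. To this end, write $\shn:=\shm\otimes_{\shd_Y^\hbar}\mathcal{K}$ and apply $R\shh\text{om}_{\C_Y^\hbar}(-,\shn)$ to the short exact sequence
\begin{equation*}
0\to \C_Y^\hbar\to \C_Y^{\hbar,\text{loc}}\to \C_Y^{\hbar,\text{loc}}/\C_Y^\hbar\to 0.
\end{equation*}
The middle term vanishes by the $c\hbar c$ property, so the resulting distinguished triangle yields an isomorphism $\shn\simeq R\shh\text{om}_{\C_Y^\hbar}(\C_Y^{\hbar,\text{loc}}/\C_Y^\hbar,\shn)[1]$. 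Next, using the presentation $\C_Y^{\hbar,\text{loc}}/\C_Y^\hbar\simeq\varinjlim_n\C_Y^\hbar/\hbar^n\C_Y^\hbar$ and the fact that $\shn$ is $\hbar$-torsion free, the Koszul-type resolution $0\to\C_Y^\hbar\xrightarrow{\hbar^n}\C_Y^\hbar\to\C_Y^\hbar/\hbar^n\C_Y^\hbar\to 0$ shows that $R\shh\text{om}_{\C_Y^\hbar}(\C_Y^\hbar/\hbar^n,\shn)\simeq \shn_{n-1}[-1]$. Taking $R\varprojlim_n$ then gives $R\shh\text{om}_{\C_Y^\hbar}(\C_Y^{\hbar,\text{loc}}/\C_Y^\hbar,\shn)\simeq (R\varprojlim_n\shn_n)[-1]$. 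Comparing, one gets $\shn\simeq R\varprojlim_n\shn_n$, and in particular $\shn\simeq\varprojlim_n\shn_n$, which is the desired $\hbar$-completeness. Lemma~\ref{hcomplete} then yields the isomorphism (\ref{eq:405}).

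The main obstacle is the homological step in case~(ii), namely the general fact that a $c\hbar c$ $\C_Y^\hbar$-module without $\hbar$-torsion is automatically $\hbar$-complete, since this partial converse to Proposition~\ref{P122} is not stated explicitly in the preceding sections and requires the derived-limit computation above. Case~(i) is essentially bookkeeping once the local annihilation by a power of $\hbar$ is exploited.
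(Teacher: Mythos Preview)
Your proposal is correct and follows the same route as the paper: reduce both cases to Lemma~\ref{hcomplete} by showing $\shm\otimes_{\shd_Y^\hbar}\mathcal{K}$ is $\hbar$-complete, handling case~(i) via local annihilation by a power of $\hbar$ and case~(ii) via Lemma~\ref{chc2} plus the implication ``$c\hbar c$ and $\hbar$-torsion free $\Rightarrow$ $\hbar$-complete''. The only difference is that for this last implication the paper simply invokes \cite[Lemma~1.5.4]{KS2}, whereas you reprove it by the derived-limit computation with the sequence $0\to\C_Y^\hbar\to\C_Y^{\hbar,\text{loc}}\to\C_Y^{\hbar,\text{loc}}/\C_Y^\hbar\to 0$; your argument is exactly the content of that lemma, so nothing is lost or gained beyond self-containedness.
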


\begin{proof}
By Lemma  \ref{hcomplete} it is enough to prove that in both cases  $\shm\otimes_{\shd_Y^\hbar} \mathcal{K}$ is $\hbar$-complete.

(i) If $\shm$ is an $\hbar$-torsion module and since the result is local, we may assume that there exists some $N>0$ such that $\hbar^N\shm=0$. This implies that $\hbar^N(\shm\otimes_{\shd_Y^\hbar} \mathcal{K})\simeq 0$ and, in particular, that $\shm\otimes_{\shd_Y^\hbar} \mathcal{K}$ is $\hbar$-complete.

(ii) If $\shm\otimes_{\shd_Y^\hbar} \mathcal{K}$ is an $\hbar$-torsion free module,  by Lemma \ref{chc2} together with \cite{KS2}, Lemma 1.5.4, it is $\hbar$-complete.
\end{proof}
The following result is possibly well known but we find it useful to prove  here:

\begin{lemma}\label{L2}
Let $F\in\text{Mod}_{\R-c}(\C_X^\hbar)$. Then $F$ is $\hbar$-complete.
\end{lemma}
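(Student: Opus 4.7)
The plan is to verify the isomorphism $F \to \varprojlim_n F/\hbar^{n+1}F$ by evaluating sections on a basis of suitable subanalytic open sets. Since $\Gamma(U;\cdot)$ is a right adjoint, it commutes with inverse limits, so it suffices to exhibit a basis of opens $U$ on which the canonical map $\Gamma(U;F) \to \varprojlim_n \Gamma(U;F/\hbar^{n+1}F)$ is an isomorphism.

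First I would fix a subanalytic stratification $X=\bigsqcup_\alpha X_\alpha$ relative to which $F$ is locally constant on each stratum with finitely generated $\C^\hbar$-stalks. Since $\hbar^{n+1}\colon F\to F$ is a sheaf morphism, every quotient $F/\hbar^{n+1}F$ is $\R$-constructible with respect to the \emph{same} stratification. By the conic-neighborhood theorem for $\R$-constructible sheaves (\cite[Prop.~8.1.4]{KS1}), for every $x\in X$ there is a fundamental system of open subanalytic neighborhoods $U$ of $x$ such that $R\Gamma(U;G)\simeq G_x$ for every sheaf $G$ constructible with respect to that stratification. Since such $U$ depend only on the stratification, a single fundamental system serves $F$ and all of the $F/\hbar^{n+1}F$ at once; on such $U$ one has $\Gamma(U;F)=F_x$ and $\Gamma(U;F/\hbar^{n+1}F)=F_x/\hbar^{n+1}F_x$ for every $n$.

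Now $F_x$ is a finitely generated module over the complete Noetherian local ring $\C^\hbar=\C[[\hbar]]$ with maximal ideal $(\hbar)$, hence is $\hbar$-adically separated and complete, so $F_x\simeq\varprojlim_n F_x/\hbar^{n+1}F_x$. Combining these steps yields
$$
\Gamma(U;\varprojlim_n F/\hbar^{n+1}F)=\varprojlim_n\Gamma(U;F/\hbar^{n+1}F)=\varprojlim_n F_x/\hbar^{n+1}F_x=F_x=\Gamma(U;F),
$$
and since these $U$ form a basis of the topology of $X$ the canonical map $F\to\varprojlim_n F/\hbar^{n+1}F$ is an isomorphism of sheaves. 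The main obstacle is precisely the point highlighted above: obtaining a \emph{simultaneous} cofinal basis of good neighborhoods for the countably many quotients $F/\hbar^{n+1}F$. This is overcome by the stratification argument, since all these quotients are constructible with respect to the same stratification as $F$, and the Kashiwara--Schapira conic neighborhoods depend only on the stratification.
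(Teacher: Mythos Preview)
Your proof is correct and follows essentially the same strategy as the paper's: reduce to stalks via a basis of good opens that works simultaneously for $F$ and all quotients $F/\hbar^{n+1}F$ (because they share a common stratification), then use that finitely generated $\C[[\hbar]]$-modules are $\hbar$-adically complete. The only cosmetic difference is the reference from \cite{KS1}: the paper invokes the triangulation theorem (Prop.~8.2.5) and the open stars $U(\sigma)$ of a simplicial complex, whereas you cite Prop.~8.1.4 for subanalytic $\mu$-stratifications; both produce the required uniform basis of opens depending only on the stratification.
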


\begin{proof}
We shall prove that the natural morphism $F\to\underset{n\geq 0}{\varprojlim} F_n$ is an isomorphism.

By the triangulation theorem (Proposition 8.2.5 of \cite{KS1}) we may assume that $F$ is a constructible sheaf on the realization of a finite simplicial complex $(S, \Delta)$ (we refer to \cite{KS1} for the notation) and, for each $n$,
 $F_n$ being the cokernel of the morphism $\hbar^{n+1}:F\to F$, it is also constructible on $(S, \Delta)$. It follows that there exists a locally finite open covering $\{U(\sigma)\}_{\sigma \in\Delta}$ of $S$  such that, for each $\sigma\in\Delta$ and $x\in |\sigma|$,   $\Gamma(U(\sigma); F)\simeq F_x$ and $\Gamma(U(\sigma); F_n)\simeq (F_n)_x$, for every $n\in \N$.

As   a finitely generated  $\C^\hbar$-module,  $F_x$ is $\hbar$-complete and hence $$\Gamma(U(\sigma); F)\simeq F_x\simeq \varprojlim_{n\geq 0}(F_x)_n\simeq\varprojlim_{n\geq 0}(F_n)_x\simeq \varprojlim_{n\geq 0}\Gamma(U(\sigma); F_n)\simeq \Gamma(U(\sigma); \varprojlim_{n\geq 0} F_n),$$ and the result follows.
\end{proof}

\begin{corollary}\label{hol}
If $\shm\in\text{Mod}_{coh}({\shd_Y^\hbar}^{op})$ is holonomic, then $\shm\otimes_{\shd_Y^\hbar} \sho_Y^\hbar$ is $\hbar$-complete. In particular, when $f:Y\to\{pt\}$,  (\ref{eq:405}) is an isomorphism for every holonomic $\shd_Y^\hbar$-module $\shm$.
\end{corollary}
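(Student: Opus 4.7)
The ``in particular'' part follows immediately from Lemma~\ref{hcomplete} once one notices that for $f\colon Y\to\{\mathrm{pt}\}$ the transfer module $\mathcal{K}=(\shd_{Y\to X})^\hbar$ reduces to $\sho_Y^\hbar$; so the genuine task is to prove the $\hbar$-completeness of $\shm\otimes_{\shd_Y^\hbar}\sho_Y^\hbar$ whenever $\shm$ is holonomic. My plan is to realise this module as the zeroth cohomology of an object of $D^b_{\R\text{-}c}(\C_Y^\hbar)$ and then invoke Lemma~\ref{L2}.

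To that end, I would first pass to the derived tensor $\shm\otimes^L_{\shd_Y^\hbar}\sho_Y^\hbar$ and show it has $\R$-constructible cohomology over $\C_Y^\hbar$. Working locally, take a finite free right $\shd_Y^\hbar$-resolution of $\shm$; applying $\cdot\otimes_{\shd_Y^\hbar}\sho_Y^\hbar$ yields a bounded complex whose terms are finite direct sums of copies of $\sho_Y^\hbar$. As $\sho_Y^\hbar$ is coherent over $\shd_Y^\hbar$ it is $c\hbar c$, and by the triangulated version of Lemma~\ref{chc} the total complex is $c\hbar c$ as well. Next, using that $gr_\hbar$ commutes with derived tensor products (Proposition~1.4.3 of~\cite{KS2}), one obtains
\[
gr_\hbar\bigl(\shm\otimes^L_{\shd_Y^\hbar}\sho_Y^\hbar\bigr)\simeq gr_\hbar(\shm)\otimes^L_{\shd_Y}\sho_Y,
\]
and the right-hand side has $\C$-constructible cohomology by Kashiwara's classical constructibility theorem, applied to the holonomic right $\shd_Y$-module $gr_\hbar(\shm)$. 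Invoking the $\R$-constructible analogue of Theorem~\ref{T123} (a $c\hbar c$ object of $D^b(\C_Y^\hbar)$ whose $gr_\hbar$ is $\R$-constructible is itself $\R$-constructible, which is essentially the right-module counterpart of diagram~(\ref{eq11}) of~\cite{DGS} and rests on the conservativity provided by Proposition~\ref{P121}), I conclude that $\shm\otimes^L_{\shd_Y^\hbar}\sho_Y^\hbar$ belongs to $D^b_{\R\text{-}c}(\C_Y^\hbar)$.

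Taking $H^0$ of this complex then gives $\shm\otimes_{\shd_Y^\hbar}\sho_Y^\hbar\in\text{Mod}_{\R\text{-}c}(\C_Y^\hbar)$, which is $\hbar$-complete by Lemma~\ref{L2}; together with Lemma~\ref{hcomplete} this finishes the proof. The step I expect to be most delicate is precisely the invocation of the $\R$-constructible $c\hbar c$-criterion: one must be sure that the constructibility of $gr_\hbar$ really propagates to the whole $c\hbar c$ object in $D^b(\C_Y^\hbar)$. If that criterion is not directly available, an equivalent route is to convert $\shm$ into a left $\shd_Y^\hbar$-module by the standard side-changing equivalence $\shm\mapsto\shm\otimes_{\sho_Y^\hbar}(\Omega_Y^\hbar)^{\otimes-1}$ and apply the solution side of diagram~(\ref{eq11}) of~\cite{DGS} directly.
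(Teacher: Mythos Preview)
Your proposal is correct and follows the same overall arc as the paper: show that $\shm\otimes_{\shd_Y^\hbar}\sho_Y^\hbar$ is (the $H^0$ of) an $\R$-constructible object of $D^b(\C_Y^\hbar)$, then apply Lemma~\ref{L2}, and deduce the ``in particular'' from Lemma~\ref{hcomplete}. The difference lies only in how $\R$-constructibility is obtained.

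The paper bypasses your $c\hbar c$/$gr_\hbar$ detour by a one-line duality identification: since $\shm$ is coherent, $\shm\simeq D'_\hbar D'_\hbar\shm$, whence
\[
\shm\otimes_{\shd_Y^\hbar}\sho_Y^\hbar\;\simeq\;H^0\bigl(R\shh\mathrm{om}_{\shd_Y^\hbar}(D'_\hbar\shm,\sho_Y^\hbar)\bigr)\;=\;H^0\bigl(Sol_\hbar(D'_\hbar\shm)\bigr),
\]
and this is $\R$-constructible by~\cite[Th.~3.13]{DGS} because $D'_\hbar\shm$ is holonomic. This is exactly the ``equivalent route'' you sketch in your last paragraph (side-changing plus diagram~(\ref{eq11})), so your fallback is in fact the paper's main argument. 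What your primary route buys is a self-contained reduction to Kashiwara's classical constructibility theorem without quoting the full $Sol_\hbar$-constructibility from~\cite{DGS}; what the paper's route buys is brevity, since the needed $c\hbar c$/$gr_\hbar$ criterion for constructibility is already packaged inside~\cite[Th.~3.13]{DGS}. Your concern about the ``delicate step'' is therefore well placed but harmless: that criterion is precisely what underlies the cited theorem.
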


\begin{proof}
Since in this case $\mathcal{K}\simeq \sho_Y^\hbar$, then $$\shm\otimes_{\shd_Y^\hbar} \mathcal{K}\simeq H^0( R\shh \text{om}_{\shd_Y^\hbar}(D^\prime_\hbar \shm, \sho_Y^\hbar))$$ is  $\R$-constructible by~\cite[Th.3.13]{DGS} and the result follows by Lemma~\ref{L2}.
\end{proof}

We then infer that (\ref{eq:405}) is an isomorphism if $Y$ is a complex line,  $\shm\in\text{Mod}_{coh}({\shd_Y^{\hbar}}^{op})$ has a discrete support and $f$ is the constant map $f:Y\to \{pt\}$. Indeed, as proved in \cite{DGS}, the support of $\shm$ coincides with the support
of $\shm_0$, so, if $\text{supp}\,(\shm)$ is discrete, $\shm$ is holonomic and the statement follows by Corollary \ref{hol}.

\begin{remark}\label{R}
As a matter of fact we didn't find a counter-example for the  conjecture that if $\shm$ is $\shd_Y^{\hbar}$-coherent, then $\shm\otimes_{\shd_Y^\hbar} \mathcal{K}$ is always $\hbar$-complete. Of course, such a counter-example, to exist,  should firstly occur in the smooth case. This difficulty prevented us from applying succefully our results to extend  the functor of    proper direct image except for a closed embedding as  above.
\end{remark}

\begin{corollary}\label{closedemb}
When $f$ is a local immersion, the morphism (\ref{eq:405}) is an isomorphism for every  $\shm\in\text{Mod}_{coh}({\shd_Y^\hbar}^{op})$.
\end{corollary}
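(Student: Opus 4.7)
\emph{Plan.}
My plan is to use Lemma~\ref{hcomplete}: it suffices to show that $\shm\otimes_{\shd_Y^\hbar}\mathcal{K}$ is $\hbar$-complete. Being $\hbar$-complete is a local property on $Y$, and the formation of $\mathcal{K}$ commutes with restriction to open subsets of $Y$; so, since $f$ is a local immersion, I may assume from the outset that $f=i\colon Y\hookrightarrow X$ is a closed embedding, the case treated earlier in this subsection.

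For $f=i$, I would introduce the functor
\[
\mathcal{G}(\shm):=i_\ast(\shm\otimes_{\shd_Y^\hbar}\mathcal{K}),\qquad \mathcal{G}\colon\text{Mod}_{coh}({\shd_Y^\hbar}^{op})\longrightarrow\text{Mod}({\shd_X^\hbar}^{op}),
\]
which is right exact since the tensor product is right exact and $i_\ast$ is exact for a closed embedding. Using~(\ref{E:T21}) at $n=0$, for any $\shm$ with $\hbar\shm=0$ one has $\shm\otimes_{\shd_Y^\hbar}\mathcal{K}\simeq\shm\otimes_{\shd_Y}\shd_{Y\to X}$, so $\mathcal{G}$ extends $\underline{i}_\ast$. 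Moreover, since $i_\ast$ is a right adjoint (to $i^{-1}$) and therefore commutes with projective limits,
\[
\mathcal{G}(\shd_Y^\hbar)=i_\ast(\shd_{Y\to X}^\hbar)\simeq\varprojlim_n i_\ast(\shd_{Y\to X,n}^\hbar)=\underline{i}_\ast^\hbar(\shd_Y^\hbar).
\]
Proposition~\ref{P:unicidade} then yields a canonical isomorphism $\mathcal{G}\simeq\underline{i}_\ast^\hbar$.

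From this isomorphism and Theorem~\ref{T:1}(2), $i_\ast(\shm\otimes_{\shd_Y^\hbar}\mathcal{K})\simeq\underline{i}_\ast^\hbar(\shm)$ is $\shd_X^\hbar$-coherent and hence $\hbar$-complete by Theorem~\ref{T1}. Because $i_\ast$ for a closed embedding is exact, fully faithful, and commutes with $\varprojlim$, it reflects $\hbar$-completeness; therefore $\shm\otimes_{\shd_Y^\hbar}\mathcal{K}$ is itself $\hbar$-complete, and Lemma~\ref{hcomplete} finishes the proof. The main technical point will be the verification of $\mathcal{G}(\shd_Y^\hbar)=\underline{i}_\ast^\hbar(\shd_Y^\hbar)$ (the hypothesis of Proposition~\ref{P:unicidade}), which is precisely where the closed-embedding hypothesis is essential, via the commutation of $i_\ast$ with projective limits.
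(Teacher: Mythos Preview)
Your proof is correct, but it follows a genuinely different route from the paper's.

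The paper argues directly on $Y$: it observes that for a local immersion $\shd_{Y\to X}$ is a flat $\shd_Y$-module, invokes Theorem~\ref{T124} to deduce that $\mathcal{K}=\shd_{Y\to X}^{\hbar}$ is flat over $\shd_Y^{\hbar}$, and then uses the key fact that $(f^{-1}\shd_X)^{\hbar}$ is itself an algebra of formal deformation over which $\mathcal{K}$ is coherent. From a local presentation of $\shm$ one then sees that $\shm\otimes_{\shd_Y^{\hbar}}\mathcal{K}$ is coherent over $(f^{-1}\shd_X)^{\hbar}$, hence $\hbar$-complete by Theorem~\ref{T1}, and Lemma~\ref{hcomplete} concludes. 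Your argument instead pushes forward by $i_\ast$ and appeals to the abstract uniqueness of right exact extensions (Proposition~\ref{P:unicidade}) to identify $i_\ast(\shm\otimes_{\shd_Y^\hbar}\mathcal{K})$ with the already-constructed $\underline{i}_\ast^{\hbar}(\shm)$, which is $\shd_X^{\hbar}$-coherent by Theorem~\ref{T:1}(2); you then reflect $\hbar$-completeness back through $i_\ast$. One advantage of your route is that it avoids verifying that $(f^{-1}\shd_X)^{\hbar}$ satisfies Assumption~\ref{AssumptionA} or~\ref{AssumptionB} (left to the reader in the paper), replacing that check by the formal properties of $i_\ast$ for a closed embedding. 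The paper's route, on the other hand, is more intrinsic and does not pass through the ambient $\shd_X^{\hbar}$-coherence.

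Two small points of presentation: when checking that $\mathcal{G}$ extends $\underline{i}_\ast$ you should note that \eqref{E:T21} gives $\shm\otimes_{\shd_Y^{\hbar}}\mathcal{K}\simeq\shm\otimes_{\shd_Y}\shd_{Y\to X}$ for every $\shm\in\mathcal{S}=\bigcup_n\text{Mod}_{coh}({\shd^{\hbar}_{Y,n}}^{op})$, not only those with $\hbar\shm=0$; and the claim that $i_\ast$ reflects $\hbar$-completeness deserves one line (it follows since $i_\ast$ is exact, commutes with $\varprojlim$, and $i^{-1}i_\ast\simeq\mathrm{id}$).
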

\begin{proof}
By Theorem~\ref{T124}, $\mathcal{K}$ is flat over $\shd_Y^\hbar$. Moreover, as  can be checked by the reader, $(f^{-1}\shd_X)^{\hbar}$ is an algebra of formal deformation.

 Since $\mathcal{K}$ is coherent over  $(f^{-1}\shd_X)^{\hbar}$,   $\shm\otimes_{\shd_Y^{\hbar}}\mathcal{K}$ is coherent over $(f^{-1}\shd_X)^{\hbar}$ hence it is $\hbar$-complete.
Since $\hbar$-completeness is a local property the result follows.

\end{proof}

\subsubsection{An alternative extension}
The idea now is to use the transfer module $\mathcal{K}$ to mimic the $\shd$-module construction of direct images.
The $(\shd_Y^\hbar, f^{-1}(\shd_X^\hbar))$-bimodule structure on $\mathcal{K}$ allows us to define  functors $$R\underline{f}_{\ast}^\hbar,R\underline{f}_{!}^\hbar:D^b({\shd_Y^\hbar}^{op}) \to D^b({\shd_X^\hbar}^{op})$$ respectively of direct image and of proper direct image, by:
\begin{eqnarray*}
&&R\underline{f}_{\ast}^\hbar(\shm):= Rf_\ast (\shm\stackrel{L}{\otimes}_{\shd_Y^\hbar} \mathcal{K}),\\
&&R\underline{f}_{!}^\hbar(\shm):= Rf_! (\shm\stackrel{L}{\otimes}_{\shd_Y^\hbar} \mathcal{K}).
\end{eqnarray*}

We remark that Corollary \ref{closedemb} implies that these definitions coincide with $\underline{i^{\hbar}}_{\ast}$ for a closed embedding $i$.

\begin{lemma}\label{L:404}
Let $\shm\in D^b_{coh}({\shd_Y^\hbar}^{op})$. Then $R\underline{f}_{\ast}^\hbar(\shm)$ is $c\hbar c$.
\end{lemma}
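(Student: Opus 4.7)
The plan is to reduce to the two main tools already established: that $\mathcal{K}$ is $c\hbar c$ (Proposition~\ref{KK}(i)) and that $Rf_\ast$ preserves $c\hbar c$ objects (Proposition~\ref{P125}). The intermediate step is to show that
$$\shm \stackrel{L}{\otimes}_{\shd_Y^\hbar} \mathcal{K}$$
is $c\hbar c$ in $D^b(\C_Y^\hbar)$, after which applying $Rf_\ast$ finishes the argument.

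First, since $\shm \in D^b_{coh}({\shd_Y^\hbar}^{op})$, it locally admits a bounded free resolution $(\shd_Y^\hbar)^{\bullet} \overset{QIS}{\to} \shm$ by finite-rank free $\shd_Y^\hbar$-modules. Tensoring with $\mathcal{K}$ yields a quasi-isomorphism $\mathcal{K}^{\bullet} \overset{QIS}{\to} \shm \stackrel{L}{\otimes}_{\shd_Y^\hbar} \mathcal{K}$ in $D^b(\C_Y^\hbar)$, whose terms are finite direct sums of copies of $\mathcal{K}$. By Proposition~\ref{KK}(i), each such term is $c\hbar c$. Using the standard distinguished triangles (\textit{e.g.}\ the truncation triangles of this bounded complex) together with the remark that $c\hbar c$ objects form a full triangulated subcategory of $D^b(\C_Y^\hbar)$, an induction on the length of the complex shows that $\shm \stackrel{L}{\otimes}_{\shd_Y^\hbar} \mathcal{K}$ is $c\hbar c$ locally on $Y$.

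The property of being $c\hbar c$ is defined by the vanishing of $R\shh\text{om}_{\C_Y^\hbar}(\C_Y^\hbar[\hbar^{-1}], \cdot)$, which is a local condition; hence the local result globalizes and $\shm \stackrel{L}{\otimes}_{\shd_Y^\hbar} \mathcal{K}$ is $c\hbar c$ on all of $Y$. Finally, Proposition~\ref{P125} applied to the morphism $f:Y\to X$ yields that
$$R\underline{f}_{\ast}^\hbar(\shm) = Rf_\ast\bigl(\shm \stackrel{L}{\otimes}_{\shd_Y^\hbar} \mathcal{K}\bigr)$$
is $c\hbar c$, as desired.

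I do not foresee a serious obstacle here: the only subtlety is verifying that the local free resolutions can be glued so that their tensor products with $\mathcal{K}$ produce a well-defined object of $D^b(\C_Y^\hbar)$ whose $c\hbar c$-ness, being checked by a local vanishing condition, needs no gluing data beyond the object itself. Everything else is a direct citation of the preceding results.
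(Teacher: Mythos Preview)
Your proof is correct. It follows the same template as the paper's proof of Proposition~\ref{KK}(iv): take a local finite free resolution of the coherent object, tensor it with $\mathcal{K}$, and use that the triangulated subcategory of $c\hbar c$ objects is closed under cones, together with the local nature of the $c\hbar c$ condition. Then Proposition~\ref{P125} finishes.

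The paper, however, takes a different and slightly slicker route for this lemma. Instead of localizing and resolving, it rewrites the tensor product globally via duality:
\[
\shm\stackrel{L}{\otimes}_{\shd_Y^\hbar}\mathcal{K}
\;\simeq\;
R\shh\text{om}_{\shd_Y^\hbar}(D^{\prime}_\hbar\shm,\shd_Y^\hbar)\stackrel{L}{\otimes}_{\shd_Y^\hbar}\mathcal{K}
\;\simeq\;
R\shh\text{om}_{\shd_Y^\hbar}(D^{\prime}_\hbar\shm,\mathcal{K}),
\]
and then invokes Proposition~\ref{P1} (which says $R\shh\text{om}$ into a $c\hbar c$ object is $c\hbar c$) directly, with no need to localize or induct on the length of a resolution. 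The advantage of the paper's argument is that it is a one-line global statement and sidesteps the small bookkeeping about local resolutions and gluing that you flagged at the end; the advantage of yours is that it avoids bringing in the duality functor $D^{\prime}_\hbar$ and stays closer to the elementary resolution argument already used for Proposition~\ref{KK}(iv). Both are perfectly valid.
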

\begin{proof}
Recall that $\mathcal{K}$ is $c\hbar c$ by Prop.~\ref{KK}. Consider the canonical  isomorphisms  in $D^b(\C^\hbar_X)$:
$$\shm\stackrel{L}{\otimes}_{\shd_Y^\hbar}\mathcal{K}\simeq R\shh \text{om}_{\shd_Y^\hbar}(D^\prime_\hbar \shm, \shd_Y^\hbar)\stackrel{L}{\otimes}_{\shd_Y^\hbar}\mathcal{K}\simeq R\shh \text{om}_{\shd_Y^\hbar}(D^\prime_\hbar \shm, \mathcal{K}).$$
Hence, $\shm\stackrel{L}{\otimes}_{\shd_Y^\hbar}\mathcal{K}$ is also $c\hbar c$.
Finally, we conclude that $R\underline{f}_{\ast}^\hbar(\shm)$ is $c\hbar c$ by Proposition \ref{P125}.
\end{proof}

We are now able to extend to $\shd^\hbar$-modules the classical coherence criterion of direct images of $\shd$-modules:

\begin{theorem}\label{T:405}
Suppose that $\shm\in D^b_{good}({\shd_Y^\hbar}^{op})$ (resp. $\shm\in D^b_{hol}({\shd_Y^\hbar}^{op})$)
and that $f$ is proper on $supp(\shm)$.
Then,  $$R\underline{f}_{\ast}^\hbar(\shm)\in D^b_{good}({\shd_X^\hbar}^{op})$$
(resp. $R\underline{f}_{\ast}^\hbar(\shm)\in D^b_{hol}({\shd_X^\hbar}^{op}).)$
\end{theorem}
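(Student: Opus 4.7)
The plan is to apply Theorem~\ref{T123} to $R\underline{f}_{\ast}^\hbar(\shm)$: by Lemma~\ref{L:404} this object is already $c\hbar c$, so it will belong to $D^b_{coh}({\shd_X^\hbar}^{op})$ as soon as one shows that $gr_\hbar(R\underline{f}_{\ast}^\hbar(\shm))$ lies in $D^b_{coh}(\shd_X^{op})$. Once coherence over $\shd_X^\hbar$ is obtained, the goodness (respectively holonomicity) of $R\underline{f}_{\ast}^\hbar(\shm)$ is immediate from the very definition of these notions for $\shd_X^\hbar$-modules, which is framed in terms of the cohomology of $gr_\hbar$.

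The core step is therefore the identification
\[
gr_\hbar(R\underline{f}_{\ast}^\hbar(\shm)) \;\simeq\; Rf_\ast\bigl( gr_\hbar(\shm) \stackrel{L}{\otimes}_{\shd_Y} \shd_{Y\to X}\bigr),
\]
namely that $gr_\hbar$ applied to our extended direct image functor recovers the classical proper direct image $\int_f$ in $\shd$-module theory. I would establish this in two moves. First, by Lemma~\ref{L:403} we have $\mathcal{K}\simeq \shd_{Y\to X}^\hbar$, hence $gr_\hbar(\mathcal{K})\simeq \shd_{Y\to X}$; combined with the compatibility of $gr_\hbar$ with derived tensor products (Proposition~1.4.3 of \cite{KS2}) this gives
\[
gr_\hbar\bigl(\shm \stackrel{L}{\otimes}_{\shd_Y^\hbar} \mathcal{K}\bigr) \;\simeq\; gr_\hbar(\shm) \stackrel{L}{\otimes}_{\shd_Y} \shd_{Y\to X}.
\]
Second, using $gr_\hbar(N)\simeq N\stackrel{L}{\otimes}_{\Z_X[\hbar]}\Z_X$ together with the projection formula for $Rf_\ast$, which holds because $f$ is proper on the support of the object in question, we obtain $gr_\hbar(Rf_\ast N)\simeq Rf_\ast(gr_\hbar N)$, and the displayed formula follows.

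Once this formula is in place, I would invoke Kashiwara's classical theorems: the proper direct image of a good (respectively holonomic) $\shd_Y$-module lies in $D^b_{good}(\shd_X^{op})$ (respectively $D^b_{hol}(\shd_X^{op})$). Since $\text{supp}(gr_\hbar(\shm))\subset \text{supp}(\shm)$ and $gr_\hbar(\shm)$ is by assumption good (respectively holonomic), we conclude that $gr_\hbar(R\underline{f}_{\ast}^\hbar(\shm))$ belongs to $D^b_{good}(\shd_X^{op})$ (respectively $D^b_{hol}(\shd_X^{op})$). Combining this with Lemma~\ref{L:404} and Theorem~\ref{T123}, the object $R\underline{f}_{\ast}^\hbar(\shm)$ is coherent over $\shd_X^\hbar$, and the definition of good (respectively holonomic) $\shd_X^\hbar$-modules yields the theorem.

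The step I expect to demand most attention is the commutation $gr_\hbar\circ Rf_\ast\simeq Rf_\ast\circ gr_\hbar$. The projection formula is standard under global properness of $f$, whereas in our setting $f$ is only proper on $\text{supp}(\shm)$; however, since tensoring with $\mathcal{K}$ on $Y$ does not enlarge the support, one may localize to a neighborhood of $\text{supp}(\shm)$ to reduce to the proper case, so no genuine obstruction arises.
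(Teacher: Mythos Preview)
Your proposal is correct and follows exactly the strategy of the paper's proof: use Lemma~\ref{L:404} for $c\hbar c$, identify $gr_\hbar(R\underline{f}_{\ast}^\hbar(\shm))$ with the classical direct image of $gr_\hbar(\shm)$, invoke Kashiwara's coherence/holonomicity theorem for proper direct images, and conclude via Theorem~\ref{T123}. The paper's own proof compresses all of this into two lines, taking the commutation $gr_\hbar\circ R\underline{f}_{\ast}^\hbar\simeq R\underline{f}_{\ast}\circ gr_\hbar$ and the classical result for granted (``by assumption''); you have simply spelled out the details it leaves implicit, including the projection-formula justification for the commutation with $Rf_\ast$ under properness on the support.
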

\begin{proof}
Since by assumption  $gr_\hbar(R\underline{f}_{\ast}^\hbar(\shm))$ is an object of $D^b_{good}({\shd_X}^{op})$ (resp. $D^b_{hol}({\shd_X}^{op})$),
the conclusion follows by applying Theorem~\ref{T123} to the object $R\underline{f}_{\ast}^\hbar(\shm)\in D^b({\shd_X^\hbar}^{op})$.
\end{proof}

\subsection{Review on specialization, vanishing cycles and nearby-cycles.}
\subsubsection{Review on Sato's specialization, vanishing and nearby-cycles.}
We refer to Chapters IV and VIII of ~\cite{KS1} for a detailed study of the constructions  below.

Let $X$ be a complex analytic manifold 
and
$Y$ a submanifold of codimension $d$. Recall that $\mathbb{K}$ denotes a unital commutative Noetherian ring with finite global dimension.
Denote by $\widetilde{X}_Y$ the normal deformation of $Y$ in $X$.
This is a real analytic manifold endowed with two canonical maps $p:\widetilde{X}_Y\to X$
and $t:\widetilde{X}_Y\to\R$  such that $T_YX$ is identified to the real analytic  hypersurface of $\widetilde{X}_Y$ given by the equation $t=0$.

Denote by $s:T_YX\hookrightarrow \widetilde{X}_Y$ the canonical embedding.
Set $\Omega=t^{-1}(\R^+)\stackrel{j}{\hookrightarrow}\widetilde{X}_Y$ and
denote by $\widetilde{p}$ the restriction of $p$ to $\Omega$.


 Recall that the  Sato's specialization functor on $D^b(\mathbb{K}_X)$ is given by $F\mapsto \nu_Y^{\mathbb{K}}(F)\mathbin{:=} s^{-1} \R j_\ast \widetilde{p}^{-1} F$.
Recall that $\nu_Y^{\mathbb{K}}$ induces a functor: $D^b_{\C-c}(\mathbb{K}_X)\to D^b_{\C-c}(\mathbb{K}_{T_YX})$.\newline

Let us now assume that $Y$ is a complex closed smooth hypersurface of $X$ given as the zero locus of a holomorphic function $f:X\to \C$.

Let $\widetilde{\C}^*$ be the universal covering of $\C^*=\C\backslash\{0\}$ and let $p:\widetilde{\C}^*\to\C^*$ be the projection. Denote by $\widetilde{X}^*$ the fibered product $X\times_\C \widetilde{\C}^*$ and let $\widetilde{p}$ be the projection associated to $\text{id}\times_\C p$:
\begin{eqnarray*}
&&\xymatrix{
& \widetilde{X}^*\ar[r] \ar[d]^-{\widetilde{p}} & \widetilde{\C}^*\ar[d]^-p  \\
Y\ar[r]_-i & X\ar[r]_-f & \C .}
\end{eqnarray*}

Recall that the nearby-cycle functor $\psi_Y^{\mathbb{K}}: D^b(\mathbb{K}_X)\to D^b(\mathbb{K}_Y)$ is defined by: $$\psi_Y^{\mathbb{K}}(F):=i^{-1}R\widetilde{p}_*\widetilde{p}^{-1}(F),$$ and that the vanishing-cycle functor $\phi_Y^{\mathbb{K}}: D^b(\mathbb{K}_X)\to D^b(\mathbb{K}_Y)$  assigns to an object $F\in D^b(\mathbb{K}_X)$ the mapping cone of  $i^{-1}F\to \psi_Y^{\mathbb{K}}(F)$. The natural morphism $\psi_Y^{\mathbb{K}}(F)\to \phi_Y^{\mathbb{K}}(F)$ is called the canonical morphism and denoted by $\text{can}$.  Both $\psi_Y^{\mathbb{K}}(F)$ and $\phi_Y^{\mathbb{K}}(F)$ induce functors $D^b_{\C-c}(\mathbb{K}_X)\to D^b_{\C-c}(\mathbb{K}_Y)$ .


\begin{lemma}\label{L31}
\begin{enumerate}[{\rm (a)}]
\item If $g:X \to Z$ is a morphism of complex manifolds and $F\in D^b(\C^\hbar_X)$,
then in $D^b(\C_Z)$,
$\R g_\ast^{\C^\hbar} F\simeq\R g_\ast^\C F$ and $\R g_!^{\C^\hbar} F\simeq \R g_! ^\C F$.
\item For every $F\in D^b(\C^\hbar_X)$, we have $\nu_Y^{\C^\hbar}(F)\simeq\nu_Y^\C(F)$ and, if $Y$ is a smooth hypersurface of $X$, we also have $\psi^{\C^\hbar}_Y(F)\simeq \psi^{\C}_Y(F)$.

\end{enumerate}
\end{lemma}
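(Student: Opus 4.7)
The strategy is to exploit the fact that all the functors appearing in the statements, namely $\R g_{\ast}$, $\R g_{!}$, and the various $\cdot^{-1}$, depend only on the underlying structure of abelian sheaf of the argument, and hence commute with the forgetful functor from $\text{Mod}(\C^{\hbar}_X)$ to $\text{Mod}(\C_X)$. The key technical ingredient is that an injective object in the category of sheaves of $\C^{\hbar}_X$-modules is flabby (as any injective sheaf of modules over a sheaf of rings on a topological space).

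For part (a), I would proceed as follows. Choose an injective resolution $F \to I^{\bullet}$ in $\text{Mod}(\C^{\hbar}_X)$. Each $I^n$ is flabby, hence $\R g_{\ast}^{\C^{\hbar}}F$ is computed as the complex $g_{\ast}I^{\bullet}$. On the other hand, viewing $I^{\bullet}$ as a flabby resolution of $F$ in $\text{Mod}(\C_X)$ (flabbiness depends only on the underlying sheaf of abelian groups), the same complex $g_{\ast}I^{\bullet}$ computes $\R g_{\ast}^{\C}F$. This yields the first isomorphism. For $\R g_{!}$, the same argument works because $g_{!}$ is a subfunctor of $g_{\ast}$ (sections with proper support over $Z$), and flabby sheaves are also $g_{!}$-acyclic; the selection of proper-support sections is purely topological and does not involve the module structure.

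For part (b), the definition $\nu_Y^{\C^{\hbar}}(F) = s^{-1}\R j_{\ast}\widetilde{p}^{-1}F$ involves only the three functors $s^{-1}$, $\R j_{\ast}$, and $\widetilde{p}^{-1}$. The two inverse image functors trivially commute with the forgetful functor $\text{Mod}(\C^{\hbar}) \to \text{Mod}(\C)$ since they act at the level of stalks (or presheaves) and do not involve any ring action. For $\R j_{\ast}$, part (a) applied to the open immersion $j:\Omega \hookrightarrow \widetilde{X}_Y$ gives exactly the required compatibility. Composing these three isomorphisms yields $\nu_Y^{\C^{\hbar}}(F) \simeq \nu_Y^{\C}(F)$ in $D^b(\C_{T_YX})$. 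The case of $\psi_Y$ is entirely analogous, applying part (a) to $\widetilde{p}$ in the formula $\psi_Y(F) = i^{-1}\R\widetilde{p}_{\ast}\widetilde{p}^{-1}F$.

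There is no substantial obstacle: the statement is essentially a bookkeeping check that the six-functor formalism interacts trivially with the scalar extension $\C \hookrightarrow \C^{\hbar}$. The only point requiring care is ensuring that the resolutions chosen in the richer module category are acyclic also in the poorer one, which is handled once and for all by flabbiness.
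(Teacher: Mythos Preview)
Your proposal is correct and follows essentially the same approach as the paper: both arguments compute $\R g_\ast$ via a flabby resolution taken in $\text{Mod}(\C^\hbar_X)$ and observe that flabbiness is independent of the module structure, and both deduce (b) formally from (a). The only cosmetic difference is that for $\R g_!$ the paper switches to a c-soft resolution, whereas you reuse the flabby one; since flabby sheaves on a complex manifold are c-soft and hence $g_!$-acyclic, your variant is equally valid.
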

\begin{proof}
Let $I^\bullet$ be a flabby resolution of $F$ in $D^b(\C^\hbar_X)$.
Then each $I^j$ is also flabby in $\text{Mod}(\C_X)$.
Hence, both $\R g_\ast^{\C^\hbar} F$ and $\R g_\ast^{\C} F$ are quasi-isomorphic to $g_\ast(I^\bullet)$.
Similarly, using a c-soft resolution of $F$ instead, we get $\R g_!^{\C^\hbar} F\simeq \R g_! ^\C F$,
which proves (a). (b) follows as a consequence of (a).
\end{proof}

 Henceforth we keep the  notations $\nu_Y, \psi_Y, \phi_Y$ for the specialization or the nearby-cycle/vanishing-cycle functors on sheaves of $\C^\hbar$-modules.

\begin{remark}
Given $F\in D^b(\C^\hbar_X)$, since $gr_{\hbar}$ commutes with inverse and direct (proper direct) image, we conclude that $gr_{\hbar}(\nu_Y(F))\simeq \nu_Y(gr_{\hbar}(F))$, $gr_{\hbar}(\phi_Y(F))\simeq \phi_Y(gr_{\hbar}(F))$ and $gr_{\hbar}(\psi_Y(F))\simeq \psi_Y(gr_{\hbar}(F))$.
 \end{remark}

\subsubsection{Review on specialization, vanishing and nearby-cycle functors for $\shd$-modules.}

We start by recalling the (exact) functor of specialization of $\shd_X$-modules (along a submanifold) as developed in the  work of
M.~Kashiwara (\cite{Ka1}). For the basic material besides~\cite{Ka1}, we refer to~\cite{LM},~\cite{MM} and~\cite{MF}.

Let $Y\subset X$ be a submanifold of $X$  and denote by $I$ the defining ideal of $Y$
and by $\pi:T_Y X\to Y$ the projection of the normal bundle to $Y$.
One denotes by ${V_Y}^\bullet(\shd_X)$ (or by $V^\bullet$ for short once $Y$ is fixed)
Kashiwara-Malgrange $V$-filtration of $\shd_X$ with respect to $Y$:
\begin{equation*}
V^k(\shd_X)=\left\{P\in\shd_X: P(I^j)\subset I^{j+k}, \forall j, k\in\mathbb{Z} \ j,j+k\geq 0\right\}
\end{equation*}
 The graduate ring $gr_V(\shd_X)$
is isomorphic to $\pi_\ast\shd_{\left[T_YX\right]}$,
where $\shd_{\left[T_YX\right]}$ denotes the sheaf of homogeneous differential operators over $T_Y X$.

A coherent $\shd_X$-module $\shm$ always admits locally a good $V$-filtration.

Denote by $\theta$ the Euler field on $T_YX$.

Recall that a coherent $\shd_X$-module $\shm$ is specializable along
$Y$ if for every local good $V$-filtration $V^{\bullet}(\shm)$ on $\shm$
there is locally a non zero polynomial $b\in\C[s]$ such that
$$
b(\theta-k) V^k(\shm)\subset V^{k+1}(\shm),\quad \forall k\in\Z;
$$
 $b$ is called a Bernstein-Sato polynomial or a $b$-function
associated to the filtration $V^{\bullet}$.

In the sequel, when there is no risk of confusion,
we often write {\em specializable} instead of {\em specializable along $Y$,}
once the submanifold $Y$ is fixed.

Denote by $G$ a section of the canonical morphism $\C\to\C/\Z$ and fix on $\C$ the lexicographical order. Let $\shm$ be a specializable $\shd_X$-module and denote by $V_G(\shm)$ a good $V$-filtration of $\shm$ admitting locally $b$-function whose zeros are contained in $G$. Such condition defines a global filtration (Kashiwara's canonical V-filtration) on $\shm$ which  is uniquely defined.

The specialized of $\shm$ along $Y$ is the coherent $\shd_{T_Y X}$-module:
$$
\nu_Y(\shm)= \shd_{T_Y X}\otimes_{\shd_{\left[T_Y X\right]}} \pi^{-1} gr_{V_G}(\shm),
$$
and this definition doesn't depend on the choice of $G$.

\begin{remark}\label{R2}
Let us fix $G$ as above. Given an exact sequence of specializable $\shd_X$-modules: $$0\to\shm_1\to\shm\to\shm_2\to 0,$$ if $b_i(s)$ is a local Bernstein-Sato polynomial for the canonical $V$-filtration on $\shm_1$, $i=1,2$ then $b_1(s)\cdot b_2(s)$ is a  Bernstein-Sato polynomial for the canonical $V$-filtration on $\shm$ (see for example~\cite[Prop.4.2]{MM}).\end{remark}


Denote by $\text{Mod}_{sp}(\shd_X)$  the full Serre subcategory of $\text{Mod}_{coh}(\shd_X)$
of specializable $\shd_X$-modules along $Y$. The assignment $U\mapsto \text{Mod}_{sp}(\shd_X|_U)$ defines a full Serre substack of $\mathscr{M}od_{coh}(\shd_X)$.

The correspondence $\shm\mapsto\nu_Y(\shm)$ determines an exact functor from $\text{Mod}_{sp}(\shd_X)$
to $\text{Mod}_{coh}(\shd_{T_YX})$.

Let us suppose now that $Y$ is a complex closed smooth hypersurface of $X$ given by the zero locus of a holomorphic function $f:X\to \C$.
Recall that in this case, we can also associate to a specializable $\shd_X$-module $\shm$  the nearby-cycle module $$\psi_Y(\shm)\simeq gr_{V_G}^0(\shm)=\frac{V_G^0(\shm)}{V_G^{1}(\shm)},$$ and the vanishing-cycle module $$\phi_Y(\shm)\simeq gr_{V_G}^{-1}(\shm)=\frac{V_G^{-1}(\shm)}{V_G^{0}(\shm)}.$$ Thus $\psi_Y, \phi_Y: \text{Mod}_{sp}(\shd_X)\to\text{Mod}_{coh}(\shd_Y)$ are exact functors.

\subsection{Specialization, vanishing cycles and nearby-cycles for $\shd_X^\hbar$-modules.}

Let $Y$ be a submanifold of a complex manifold $X$.
 According to the preceding subsection, we fix  a section $G$ of the canonical morphism $\C\to\C/\Z$  to which  all canonical $V$-filtrations mentioned below will refer.

Given $\shm\in\text{Mod}_{coh}(\shd_{X,n}^{\hbar})$ we  say that $\shm$ is specializable along $Y$  and denote it by $\shm\in \text{Mod}_{sp}(\shd_{X,n}^{\hbar})$ if it is so  when endowed with the structure of $\shd_X$-module  explained in  Remark \ref{R:St}. We obtain a full Serre substack $\mathcal{S}$ of $\mathscr{M}od_{coh}(\shd_X^{\hbar})$
by assigning to each open subset $U\subset X$ the full Serre subcategory  $\mathcal{S}(U)=\cup_{n\geq 0} \text{Mod}_{sp}(\shd_{X,n}^{\hbar}|_U)$.

\begin{definition}\label{D5}
We say that a coherent $\shd_X^\hbar$-module $\shm$ is specializable along  $Y$ if $\shm\in \text{Mod}_{\mathcal{S}}(\shd_X^{\hbar})$.
\end{definition}

Equivalently, $gr_\hbar(\shm)$ is specializable in the $\shd_X$-modules sense, that is, both $_0\shm$ and $\shm_0$ are
specializable $\shd_X$-modules along  $Y$.

\begin{example}\label{E6}

Every coherent $\shd_X^\hbar$-module $\shm$ such that $supp(\shm)\subset Y$ is specializable along $Y$. Indeed we have
\begin{equation*}
supp(gr_\hbar(\shm))=supp(_0\shm)\cup supp(\shm_0)\subset supp(\shm)\subset Y.
\end{equation*}
Hence $_0\shm$ and $\shm_0$ are specializable along $Y$.
\end{example}

In the sequel, for short, we shall often say that $\shm$ is specializable omitting the
reference to the submanifold $Y$.

We denote by $\text{Mod}_{sp}(\shd_X^\hbar)$ the category $\text{Mod}_{\mathcal{S}}(\shd_X^\hbar).$

As a functor $\Phi:Op(X)\to Op(T_YX)$ satisfying assumption \ref{E:fi} we consider the data $U\mapsto \Phi(U)=\pi^{-1}(U\cap Y)$ where $\pi:T_YX\to Y$ denotes the projection.
According to  Theorem \ref{T:1} we are  in the conditions  to extend (uniquely up to an isomorphism) the exact functor
 $$\nu_Y: \text{Mod}_{sp}(\shd_X)\to \text{Mod}_{coh}(\shd_{T_YX})$$ as an exact functor $$\begin{matrix}\nu_Y^\hbar:&\text{Mod}_{sp}(\shd_X^\hbar)&\to&\text{Mod}_{\text{coh}}(\shd_{T_Y X}^\hbar)\\ & \shm&\mapsto &\nu_Y^\hbar(\shm):=\underset{{n\geq 0}}\varprojlim \nu_Y(\shm_n).\end{matrix}$$

\begin{definition}
Given $\shm\in\text{Mod}_{sp}(\shd_X^\hbar)$,   we shall say that $\nu_Y^\hbar(\shm)$ is the specialized of $\shm$ (along $Y$).
\end{definition}

Propositions \ref{P:8} and~\ref{G20} entail the following result:

\begin{corollary}\label{P8}
Let $\shm$ be an $\hbar$-torsion $\shd_X^\hbar$-module.
Then $\shm$ is specializable as a $\shd_X^\hbar$-module
if and only if $\shm$ is specializable in the $\shd_X$-modules sense.
Moreover, if $\shm$ is specializable then $\nu_Y^\hbar(\shm)\simeq \nu_Y(\shm)$ in $\text{Mod}_{coh}(\shd_{T_YX})$.
\end{corollary}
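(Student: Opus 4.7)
The statement is essentially a direct specialization of Propositions~\ref{P:8} and~\ref{G20} to the Serre substack $\mathcal{S}:U\mapsto \cup_{n\geq 0}\,\mathrm{Mod}_{sp}(\shd^{\hbar}_{X,n}|_U)$. The strategy is to first translate the two notions of specializability into membership statements in $\mathcal{S}(X)$ versus $\text{Mod}_{\mathcal{S}}(\shd_X^\hbar)$, and then to feed the $\hbar$-torsion hypothesis into the abstract propositions.

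For the ``only if'' direction, suppose $\shm$ is coherent and specializable as a $\shd_X^\hbar$-module, that is $\shm\in \text{Mod}_{\mathcal{S}}(\shd_X^\hbar)$. Since $\shm$ is $\hbar$-torsion, Proposition~\ref{P:8}(2) yields $\shm\in\mathcal{S}(X)$. Unwinding the definition of $\mathcal{S}$, this gives, locally on $X$, an integer $N_U$ with $\hbar^{N_U+1}\shm|_U=0$ and $\shm|_U$ specializable as a $\shd_{X,N_U}^\hbar$-module; by Remark~\ref{R:St} this is precisely specializability as a $\shd_X$-module on $U$. Since specializability is a local property, $\shm$ is specializable as a $\shd_X$-module.

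For the ``if'' direction, assume $\shm$ is $\hbar$-torsion and specializable as a $\shd_X$-module. Being $\hbar$-torsion and coherent (as $\shd_X$-module, say), the sequence $(_n\shm)_n$ is locally stationary, so one may cover $X$ by open sets $U$ on which $\hbar^{N_U+1}\shm|_U=0$ and $\shm|_U$ is coherent as a $\shd_{X,N_U}^\hbar$-module (hence also coherent as a $\shd_X^\hbar$-module, by the lemma of~\cite{KS2} recalled in Section~1). Since $\shm|_U$ is specializable as a $\shd_X$-module, it belongs to $\mathcal{S}_{N_U}(U)\subset \mathcal{S}(U)$; the stack property of $\mathcal{S}$ then gives $\shm\in\mathcal{S}(X)$, and Proposition~\ref{P:8}(1) yields $\shm\in \text{Mod}_{\mathcal{S}}(\shd_X^\hbar)$, i.e.\ $\shm$ is specializable as a $\shd_X^\hbar$-module.

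Finally, once $\shm$ has been identified as an $\hbar$-torsion object of $\text{Mod}_{\mathcal{S}}(\shd_X^\hbar)$, Proposition~\ref{G20} applied to the functor $F=\nu_Y\colon\mathcal{S}\to\Phi^\ast\mathcal{S}'$ (with $\Phi(U)=\pi^{-1}(U\cap Y)$) produces the canonical isomorphism $\nu_Y^{\hbar}(\shm)\simeq\nu_Y(\shm)$, which is an $\hbar$-torsion $\shd_{T_YX}^\hbar$-module and hence naturally viewed in $\text{Mod}_{coh}(\shd_{T_YX})$. The only mildly delicate point is the coherence bookkeeping between the $\shd_X$ and $\shd_X^\hbar$ structures for $\hbar$-torsion modules; everything else is a mechanical application of the abstract results of Section~3.
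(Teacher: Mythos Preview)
Your proposal is correct and follows exactly the paper's approach: the paper states this corollary as a direct consequence of Propositions~\ref{P:8} and~\ref{G20} without further argument, and you have simply unfolded those applications to the specialization substack. The only addition you make is the explicit coherence bookkeeping between the $\shd_X$- and $\shd_X^\hbar$-structures, which is harmless and already implicit in the paper's conventions.
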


By Proposition~\ref{G9} we have the following characterization:

\begin{corollary}\label{P9}
Let $\shm$ be a coherent $\shd_X^\hbar$-module.
Then the following properties are equivalent:
\begin{enumerate}
\item $\shm$ is a specializable $\shd_X^\hbar$-module;
\item $\shm_0$ is a specializable $\shd_X$-module;
\item $\shm_n$ is specializable as a $\shd_X$-module, for each $n\geq 0$.
\end{enumerate}
\end{corollary}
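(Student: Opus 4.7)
The plan is to apply Proposition~\ref{G9} verbatim, with $\mathcal{A}=\shd_X^\hbar$ and with $\mathcal{S}$ taken to be precisely the full Serre substack of $\mathscr{M}od_{coh}(\shd_X^\hbar)$ introduced before Definition~\ref{D5}, namely
$$\mathcal{S}(U)\;=\;\bigcup_{n\ge 0}\,\text{Mod}_{sp}(\shd_{X,n}^\hbar|_U).$$
With this choice, and using the convention of Remark~\ref{R:St} to regard each $\shd_{X,n}^\hbar$-module as a $\shd_X$-module, the associated substacks $\mathcal{S}_n$ of Section~\ref{extension-section} coincide by construction with $\text{Mod}_{sp}(\shd_{X,n}^\hbar)$. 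Hence Definition~\ref{D5} says exactly $\shm\in\text{Mod}_\mathcal{S}(\shd_X^\hbar)$, and the three conditions of the corollary translate literally into conditions (1)--(3) of Proposition~\ref{G9}.

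First I would verify that the hypotheses required to invoke Proposition~\ref{G9} are in place: namely that $\mathcal{S}$ is a $\C[[\hbar]]$-linear full Serre substack of $\mathscr{M}od_{coh}(\shd_X^\hbar)$. The substack property and $\C[[\hbar]]$-linearity are formal. The Serre property at each $U$ reduces, after unwinding the union over $n$, to two facts: that $\text{Mod}_{sp}(\shd_X|_U)$ is a Serre subcategory of $\text{Mod}_{coh}(\shd_X|_U)$ (classical, and already recorded as Example~\ref{example1}), and that the Serre property is preserved upon passing to the full subcategory of $\shd_X^\hbar$-modules annihilated by $\hbar^{n+1}$ via Remark~\ref{R:St}; behaviour of Bernstein--Sato polynomials under short exact sequences is exactly Remark~\ref{R2}.

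With this verification in hand, the three equivalences follow immediately:
\begin{itemize}
\item $(1)\Leftrightarrow$ Proposition~\ref{G9}(1), by Definition~\ref{D5};
\item $(2)\Leftrightarrow$ Proposition~\ref{G9}(2), using $\shd_{X,0}^\hbar\simeq\shd_X$ so that $\mathcal{S}_0=\text{Mod}_{sp}(\shd_X)$;
\item $(3)\Leftrightarrow$ Proposition~\ref{G9}(3), by definition of $\mathcal{S}_n$.
\end{itemize}
I do not expect a real obstacle here; the whole point of Section~\ref{extension-section} was to formulate Proposition~\ref{G9} in enough generality that such characterizations would reduce to pure bookkeeping. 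The only mildly delicate point is the Serre property check, which is settled by Remark~\ref{R2} together with the fact that the kernel/cokernel of the $\hbar^{n+1}$-action on a specializable $\shd_X^\hbar$-module remains specializable in the $\shd_X$-sense.
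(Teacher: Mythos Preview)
Your proposal is correct and is exactly the paper's approach: the corollary is stated immediately after the sentence ``By Proposition~\ref{G9} we have the following characterization,'' with no further argument, the Serre substack property of $\mathcal{S}$ having been asserted just before Definition~\ref{D5}. Your added verification of the Serre hypothesis is more than the paper provides, though your final sentence is slightly garbled (you are checking that $\mathcal{S}$ is Serre, not a property of already-specializable $\shd_X^\hbar$-modules); the substantive content is simply that $\text{Mod}_{sp}(\shd_X)$ is Serre, as in Example~\ref{example1}.
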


\begin{remark}\label{R101}
Let $\shm$ be  a specializable $\shd_X^{\hbar}$-module.  Regarding $gr_\hbar(\shm)$ as an object of $D^b(\shd_X)$, we have a specializable complex in the sense of \cite{LM}. Since  $$gr_{\hbar}\nu_Y^{\hbar}(\shm)\simeq \underset{n\geq 0}{\varprojlim}gr_{\hbar}\nu_Y(\shm_n)$$
and, for each $n$, by construction, $gr_{\hbar}\nu_Y(\shm_n)$ is isomorphic to $\nu_Ygr_{\hbar}(\shm_n)$, we get a morphism  $$gr_{\hbar}\nu_Y^{\hbar}(\shm)\to \nu_Ygr_{\hbar}(\shm).$$ Theorem \ref{T:1} asserts that this morphism is an isomorphism in $D^b(\shd_{T_YX})$.
\end{remark}

\begin{remark}\label{R11}
Since the ring $\shd_X^\hbar$ is not filtered neither by the order nor by $V$-filtrations,   the notion of Bernstein polynomial for a specializable $\shd_X^{\hbar}$-module does not make sense in general.  However, we have the following result:

\end{remark}

\begin{proposition}\label{L12}
Let $\shm$ be a specializable $\hbar$-torsion free$\shd_X^\hbar$-module.
Assume that $b(s)$ is a Bernstein polynomial for the canonical $V$-filtration  on $\shm_0$
as a specializable $\shd_X$-module.
Then, $b_n(s):=(b(s))^{n+1}$ is a Bernstein polynomial of $\shm_n$ for the canonical  $V$-filtration.
\end{proposition}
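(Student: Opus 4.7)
The plan is to combine the $\hbar$-torsion-free hypothesis with the multiplicativity of Bernstein polynomials under short exact sequences (Remark~\ref{R2}) and proceed by induction on $n$. The key is to produce, for each $n\geq 1$, a short exact sequence of $\shd_X$-modules expressing $\shm_n$ as an extension of $\shm_{n-1}$ by $\shm_0$.

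First, I would exhibit the short exact sequence
\begin{equation*}
0\to \shm_0\xrightarrow{\overline{\hbar^n}}\shm_n\xrightarrow{\rho_{n-1,n}}\shm_{n-1}\to 0.
\end{equation*}
Exactness at $\shm_n$ and at $\shm_{n-1}$ is Lemma~\ref{L:19} applied with $k=n$ and $n'=0$, since $\ker(\rho_{n-1,n})=\hbar^n\shm/\hbar^{n+1}\shm$ is precisely the image of $\overline{\hbar^n}:\shm_0\to\shm_n$. Injectivity on the left is where the $\hbar$-torsion-free hypothesis intervenes: if $[m]\in \shm_0$ satisfies $\hbar^n m=\hbar^{n+1}m'$, then $\hbar$-torsion freeness forces $m=\hbar m'$, so $[m]=0$ in $\shm_0$ (alternatively, Lemma~\ref{L:19b} applies since a coherent $\shd_X^\hbar$-module is automatically $\hbar$-complete by Theorem~\ref{T1}).

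Next I would run the induction on $n$. The base case $n=0$ is precisely the hypothesis that $b(s)$ is a Bernstein polynomial for the canonical $V$-filtration on $\shm_0$. Assuming that $(b(s))^n$ is a Bernstein polynomial for the canonical $V$-filtration on $\shm_{n-1}$ (note $\shm_{n-1}$ is specializable by Corollary~\ref{P9}), I apply Remark~\ref{R2} to the short exact sequence above: since $b(s)$ is a $b$-function for $\shm_0$ and $(b(s))^n$ is one for $\shm_{n-1}$, their product $b(s)\cdot(b(s))^n=(b(s))^{n+1}$ is a Bernstein polynomial for the canonical $V$-filtration on $\shm_n$, which is the desired conclusion.

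The main (essentially only) subtle point is establishing the injectivity of $\overline{\hbar^n}:\shm_0\to\shm_n$; once that short exact sequence is in hand the argument is purely formal, relying on the already recorded multiplicativity statement in Remark~\ref{R2}. Note that $\hbar$-torsion freeness is genuinely used here: without it the left-hand term of the exact sequence would have to be replaced by $\hbar^n\shm/\hbar^{n+1}\shm$, which is merely a quotient of $\shm_0$ and need not carry the same Bernstein polynomial.
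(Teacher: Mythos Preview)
Your proof is correct and follows essentially the same strategy as the paper: induct on $n$ using a short exact sequence relating $\shm_n$, $\shm_{n-1}$, and $\shm_0$, and invoke Remark~\ref{R2}. The only (inessential) difference is that the paper uses the sequence $0\to\shm_{n-1}\xrightarrow{\overline{\hbar}}\shm_n\xrightarrow{\rho_{0,n}}\shm_0\to 0$, i.e.\ with $\shm_{n-1}$ as the subobject and $\shm_0$ as the quotient, whereas you use the ``transposed'' sequence with $\shm_0$ as the subobject; both are exact under the $\hbar$-torsion-free hypothesis and yield the same conclusion since Remark~\ref{R2} is symmetric in the two factors.
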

\begin{proof}
The sequence $$0\to\shm_{0}\xrightarrow{\overline{\hbar}}\shm_1\xrightarrow{\rho_{0,1}} \shm_{0}\to 0,$$ together with Remark~\ref{R2} entails that, if $b(s)$ is a Bernstein polynomial for the canonical $V$-filtration on $\shm_0$, then $(b(s))^2$ is a Bernstein polynomial for the canonical $V$-filtration on $\shm_1$, and we proceed by induction  applying the same argument to the sequence $$0\to\shm_{n-1}\xrightarrow{\overline{\hbar}}\shm_n\xrightarrow{\rho_{0,n}} \shm_{0}\to 0.$$
\end{proof}

In the examples below we assume  $X=\mathbb{C}^m$, for some $m\in\mathbb{N}$, with coordinates $(t, x_1,..., x_{m-1})$, and  $Y=\{(t, x_1,..., x_{m-1})\in\mathbb{C}^m: t=0\}$.
\begin{example}
Let $\shm$ be a $\shd_X^{\hbar}$-module with one generator, let us say $\shm\simeq \shd_X^{\hbar}/\mathcal{J}$, for a coherent ideal $\mathcal{J}$. Then  we have a chain of isomorphisms of $\shd_X$-modules, $$\shm_n\simeq \frac{\shd_X^{\hbar}}{\hbar^{n+1}\shd_X^{\hbar}+\mathcal{J}}\simeq\frac{\oplus_{i=0,..., n}\shd_X \hbar^i}{\tilde{\mathcal{J}}_n},$$
where $\tilde{\mathcal{J}}_n$ is the submodule of $\oplus_{i=0,...,n}\shd_X \hbar^i$ given by
$$\tilde{\mathcal{J}}_n=\frac{\mathcal{J}}{\hbar^{n+1}\shd_X^{\hbar}\cap\mathcal{J}}.$$
Suppose that $\shm=\shd_X^\hbar/\shd_X^{\hbar}b(t\partial_t)$, where $b(s)$ is a polynomial in $\C^\hbar[s]$, $b(s)=\sum_{i=0}^ma_i(\hbar)s^i$,  for some $m\in\mathbb{N}$ and, for $i\geq 0$, $a_i(\hbar):=\sum_{j\geq 0} a_{ij}\hbar^j\in \C^\hbar$. Set $b_0(s)=\sum_{i=0}^m a_{i0}s^i$.

Since  $\shm_0\simeq \shd_X/\shd_X b_0(t\partial_t)$, $\shm$   is  specializable if and only if $b_0(s)$ is a non zero polynomial in $\C[s]$. We shall calculate  particular cases in the following examples:
\end{example}
\begin{example}\label{E13}
Let
$\shm=\shd_X^\hbar/\shd_X^{\hbar}(\hbar t\partial_t+1)$. Clearly $\shm_0=0$ hence $\shm_n=0$ for every $n$, which entails $\nu_Y^{\hbar}(\shm)=0$.
\end{example}
\begin{example}\label{E12}
Assume  that $\mathcal{J}=\shd_X^{\hbar}(t\partial_t-\hbar)$. Then $$\tilde{\mathcal{J}}_n\simeq \{P_0 t\partial_t+\sum_{i=1}^{n}(P_it\partial_t-P_{i-1}) \hbar^i: P_i\in\shd_X\}.$$

Therefore $\shm_n$ can be identified with the cokernel of the $\shd_X$-linear morphism from $\shd_X^{n+1}$ to itself given by the right multiplication by the matrix
$$A_n=\left[\begin{array}{cccccc} t\partial_t & -1  &0& ... &0 &0 \\ 0&t\partial_t&-1&...& 0&0\\ ...&...&...&...&...&...\\ 0& 0& 0&...  &  t\partial_t& -1 \\  0& 0 & 0&... &0 &t\partial_t \end{array}\right].$$   Denoting by $u_{1,n},...,u_{n+1,n}$, respectively, the classes of the elements of the canonical basis of $\shd_X^{n+1}$ in $\shm_n$,  we obtain a system of generators for $\shm_n$ satisfying $$(t\partial_t)u_{1,n}=0,\, (t\partial_t)u_{k,n}=u_{k-1,n},$$  for $k=2,...,n+1$. Classically one derives an isomorphism $$\shd_X/\shd_X(t\partial _t)^{n+1}\to \shm_n$$ defined by $$1 \,\text{mod} \,\shd_X(t\partial _t)^{n+1} \mapsto u_{n+1,n}.$$
Therefore, denoting by $(x,\tau)$  the associated coordinates in $T_YX$, we obtain $\nu_Y(\shm_n)\simeq
\shd_{T_YX}/\shd_{T_YX}(\tau\partial _{\tau})^{n+1}$.

Since $t\partial_t$ acts by multiplication by $\hbar$ in $\shm_n$,  the action of $\hbar$ in $\nu_Y(\shm_n)$ coincides with the multiplication by $\tau\partial_{\tau}$ hence, as a $\shd^{\hbar}_{T_YX}$-module,  $$\nu_Y(\shm_n)\simeq \frac{\shd^{\hbar}_{T_YX}}{\shd^{\hbar}_{T_YX}(\tau\partial_{\tau}-\hbar)+\hbar^{n+1}\shd^{\hbar}_{T_YX}}$$ and it follows that $$\nu_Y^\hbar(\shm)=\varprojlim_{n\geq 0} {\nu_Y}(\shm_n)\simeq \frac{\shd_{T_YX}^{\hbar}}{\shd_{T_YX}^{\hbar}(\tau\partial_{\tau} -\hbar)}.$$
\end{example}

Assume now that $Y$ is a complex closed smooth hypersurface of $X$ given by the zero locus of a holomorphic function $f:X\to \C$.
We can  extend the exact functors
$\psi_Y, \phi_Y:\text{Mod}_{\text{sp}}(\shd_X)\to\text{Mod}_{\text{coh}}(\shd_Y)$, respectively, as functors $$\begin{matrix}\psi_Y^\hbar:&\text{Mod}_{\text{sp}}(\shd_X^\hbar)&\to&\text{Mod}_{\text{coh}}(\shd_{Y}^\hbar)\\ & \shm&\mapsto &\psi_Y^\hbar(\shm):=\underset{{n\geq 0}}\varprojlim \psi_Y(\shm_n),\end{matrix}$$ $$\begin{matrix}\phi_Y^\hbar:&\text{Mod}_{\text{sp}}(\shd_X^\hbar)&\to&\text{Mod}_{\text{coh}}(\shd_{Y}^\hbar)\\ & \shm&\mapsto &\phi_Y^\hbar(\shm):=\underset{{n\geq 0}}\varprojlim \phi_Y(\shm_n).\end{matrix}$$

One can rewrite Propositions~\ref{G20} and ~\ref{G18} and Corollaries~\ref{C171} and~\ref{C173}  replacing the functor $F$ respectively by $\psi_Y$ and $\phi_Y$.

\begin{example}
Keeping the notations of examples above, we infer from the results of~\cite{MF} that, for each $n\geq 0$, $\psi_Y(\shm_n)$ is quasi-isomorphic to the complex $\nu_Y(\shm_n)\xrightarrow{\tau-1}\nu_Y(\shm_n),$ that is, $$\psi_Y(\shm_n)\simeq \frac{\shd_{T_YX}}{\shd_{T_YX}(\tau-1) +\shd_{T_YX}(\tau\partial_\tau)^{n+1}}\simeq \shd_Y^{n+1}.$$ Thus $$\psi_Y^\hbar(\shm)\simeq  \frac{\shd_{T_YX}^\hbar}{\shd_{T_YX}^\hbar(\tau-1) +\shd_{T_YX}^\hbar(\tau\partial_\tau-\hbar)},$$
in other words $\psi_Y^\hbar(\shm)$ is quasi-isomorphic to the complex $\nu_Y^\hbar(\shm)\xrightarrow{\tau-1}\nu_Y^\hbar(\shm)$.
\end{example}

\subsubsection{The (regular) holonomic case.}

Consider the Serre subcategory $\mathcal{S}$  of holonomic (respectively regular holonomic) $\shd_X$-modules.

Similarly to  \cite{DGS} for the case $n=0$, we see by  Proposition~\ref{G9}
that if $\shm$ is a holonomic (respectively, regular holonomic)  $\shd_X^\hbar$-module, then each $\shm_n$ is a holonomic (regular holonomic)  for the $\shd_X$-module structure of $\shd_{X,n}^{\hbar}$ given in Remark  \ref{R:St}.

Recall that every holonomic $\shd_X$-module is specializable along any submanifold $Y$, and the specialized module is also a holonomic module.
Similarly, we have:

\begin{corollary}\label{P23}
Any holonomic $\shd_X^\hbar$-module $\shm$ is specializable along any submanifold $Y$.
Moreover $\nu_Y^\hbar(\shm)$ is a holonomic $\shd_{T_Y X}^\hbar$-module. If $\shm$ is regular holonomic, so is $\nu_Y^\hbar(\shm)$.

When $Y$ is a smooth hypersurface, if $\shm$ is  holonomic (resp. regular holonomic),  $\psi_Y^\hbar(\shm)$ and $\phi_Y^{\hbar}(\shm)$ are  holonomic (resp. regular holonomic) as $\shd^{\hbar}_Y$-modules.
\end{corollary}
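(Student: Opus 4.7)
The plan is to reduce every assertion to the corresponding classical statement for $\shd$-modules via the functor $gr_\hbar$, which is precisely the strategy that the construction of $\nu_Y^\hbar$, $\psi_Y^\hbar$ and $\phi_Y^\hbar$ through Theorem~\ref{T:1} is designed to support.

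First I would handle specializability. If $\shm$ is a holonomic $\shd_X^\hbar$-module, then by definition $gr_\hbar(\shm)\in D^b_{hol}(\shd_X)$; in particular $\shm_0$ is a holonomic $\shd_X$-module. A classical theorem of Kashiwara (\cite{Ka1}) guarantees that every holonomic $\shd_X$-module is specializable along any submanifold $Y$, hence $\shm_0$ is specializable. By Corollary~\ref{P9}, this is equivalent to $\shm$ being a specializable $\shd_X^\hbar$-module, so $\nu_Y^\hbar(\shm)$ is defined and, by Proposition~\ref{T21}, coherent over $\shd_{T_YX}^\hbar$.

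Next I would establish the holonomicity of $\nu_Y^\hbar(\shm)$. By Remark~\ref{R101} there is a canonical isomorphism
\[
gr_\hbar\,\nu_Y^\hbar(\shm)\simeq \nu_Y\,gr_\hbar(\shm) \quad \text{in } D^b(\shd_{T_YX}).
\]
Since the specialization functor $\nu_Y$ preserves holonomicity, and by \cite{Ka1} also preserves regular holonomicity, the right-hand side belongs to $D^b_{hol}(\shd_{T_YX})$ (respectively to $D^b_{rh}(\shd_{T_YX})$) as soon as $gr_\hbar(\shm)$ does. Combined with the coherence of $\nu_Y^\hbar(\shm)$ already established, the very definition of (regular) holonomicity for $\shd^\hbar$-modules yields the conclusion.

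For $\psi_Y^\hbar$ and $\phi_Y^\hbar$ the argument is identical, once one invokes the analogs of Remark~\ref{R101} stated in the paper: $gr_\hbar\,\psi_Y^\hbar(\shm)\simeq \psi_Y\,gr_\hbar(\shm)$ and $gr_\hbar\,\phi_Y^\hbar(\shm)\simeq \phi_Y\,gr_\hbar(\shm)$, both of which are instances of Theorem~\ref{T:1}. The classical preservation of (regular) holonomicity by the nearby- and vanishing-cycles functors on $\shd_Y$-modules then gives the result. There is no genuine obstacle here: the substantive content sits in Kashiwara's theorems for $\shd$-modules and in the commutation of $gr_\hbar$ with the $\hbar$-extended functors, both of which are already available. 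The only point requiring a minor verification is that the analog of Remark~\ref{R101} for $\psi_Y^\hbar$ and $\phi_Y^\hbar$ applies verbatim; this is immediate from Theorem~\ref{T:1} since $\psi_Y$ and $\phi_Y$ are exact functors on $\text{Mod}_{sp}(\shd_X)$, just as $\nu_Y$ is.
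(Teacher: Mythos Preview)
Your argument is correct and is exactly the proof the paper intends: the corollary is stated without proof precisely because it follows immediately from Corollary~\ref{P9}, Proposition~\ref{T21}, Remark~\ref{R101} (and its analogs for $\psi_Y^\hbar$, $\phi_Y^\hbar$ via Corollary~\ref{C173}), together with the classical preservation of (regular) holonomicity by $\nu_Y$, $\psi_Y$, $\phi_Y$. There is nothing to add.
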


\subsubsection{Comparison Theorems.}

Let us recall that Kashiwara constructed in~\cite[Th.1]{Ka1}
for a regular holonomic $\shd_X$-module $\shm$ (or, more generally, for an object of $D^b_{rh}(\shd_X)$), canonical isomorphisms in $D^b(\C_{T_Y X})$
$$\begin{cases}Sol_{\shd_{T_Y X}}(\nu_Y(\shm))\isoto\nu_Y(Sol_{\shd_X}(\shm)), \\
DR_{\shd_{T_Y X}}(\nu_Y(\shm))\isofrom \nu_Y(DR_{\shd_X}(\shm)),
\end{cases}$$
and, when $Y$ is a smooth hypersurface of $X$, canonical isomorphisms in $D^b(\C_Y)$
$$\begin{cases}
Sol_{\shd_{Y}}(\psi_Y(\shm))\isoto\psi_Y(Sol_{\shd_X}(\shm)), \\
Sol_{\shd_{Y}}(\phi_Y(\shm))\isoto\phi_Y(Sol_{\shd_X}(\shm))
\end{cases}$$
and
$$\begin{cases}
DR_{\shd_{Y}}(\psi_Y(\shm))\isofrom \psi_Y(DR_{\shd_X}(\shm)), \\
DR_{\shd_{Y}}(\phi_Y(\shm))\isofrom \phi_Y(DR_{\shd_X}(\shm)).
\end{cases}$$

More precisely, setting:
\begin{eqnarray*}
&&\varphi_1=\nu_Y\circ DR:\text{Mod}_{rh}(\shd_X)\to D^b(\C_{T_Y X}),\\
&&\varphi_2=DR\circ\nu_Y:\text{Mod}_{rh}(\shd_X)\to D^b(\C_{T_Y X})
\end{eqnarray*}

Kashiwara's construction gives a natural transformation $\varphi_1 \stackrel{\Psi_K}{\to} \varphi_2$.
In particular, if the modules  are provided with a $\C^{\hbar}$ action, for any  $f\in\C^\hbar$
we get a commutative diagram in $D^b(\C_{T_YX})$:
\begin{eqnarray*}
&&\xymatrix{
\nu_Y(DR(\shm))\ar[r]^-{f}\ar[d]^-{\Psi_K(\shm)}&\nu_Y(DR(\shm))\ar[d]^-{\Psi_K(\shm)}\\
DR(\nu_Y(\shm))\ar[r]^-{f}&DR(\nu_Y(\shm))}
\end{eqnarray*}

As a consequence, $\Psi_K$ is $\C^\hbar$-linear.

We shall now generalize these isomorphisms to the $\hbar$-setting.

Denote by $\Omega_X^j$ the sheaf of holomorphic forms of degree $j$ on $X$.
Consider the De Rham complex of $X$:
\begin{eqnarray*}
0\to\Omega_X^0\stackrel{d}{\to}\Omega_X^1\to \cdots \to \Omega_X^{n-1}\stackrel{d}{\to}\Omega_X^n\to 0.
\end{eqnarray*}
Here $d$ denotes the usual exterior derivatives.

For a $\shd_X^{\hbar}$-module $\shm$ we have
$DR(\shm)\simeq \Omega_X^\bullet\otimes_{\sho_X}\shm^{\hbar}$
 in $D^b(\C_X^\hbar)$.
Indeed, each  $\Omega_X^j\otimes_{\sho_X}\shm$ has a natural structure of $\C_X^\hbar$-module
and the derivatives turn out to be $\C_X^\hbar$-linear.

By definition $\underset{{k\geq 0}}\varprojlim (\Omega_X^\bullet\otimes_{\sho_X}\shm_k)$ is  given by the complex:
\begin{eqnarray}\label{comparation1}
0\to \underset{{k\geq 0}}\varprojlim (\Omega_X^0\otimes_{\sho_X}\shm_k)\to
	\cdots \to \underset{{k\geq 0}}\varprojlim(\Omega_X^n\otimes_{\sho_X}\shm_k)\to 0.
\end{eqnarray}

\begin{lemma}\label{L30}
Let $\shm$ be a coherent $\shd_X^\hbar$-module.
Then  $\varprojlim_{k\geq 0}(\Omega_X^\bullet\otimes_{\sho_X}\shm_k)$
is isomorphic to $\Omega_X^\bullet\otimes_{\sho_X}\shm$ in $C^b(\C^\hbar_X)$.
\end{lemma}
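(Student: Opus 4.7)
The plan is to verify the isomorphism degree by degree and then check compatibility with the de Rham differentials. Since $\shm$ is coherent, Theorem~\ref{T1} gives that $\shm$ is $\hbar$-complete, so $\shm\simeq \varprojlim_{k\geq 0}\shm_k$ as $\sho_X^\hbar$-modules, and in particular as $\sho_X$-modules. We have to upgrade this to the claim that, for every $j\geq 0$,
\[
\Omega_X^j\otimes_{\sho_X}\shm\;\isoto\;\varprojlim_{k\geq 0}(\Omega_X^j\otimes_{\sho_X}\shm_k),
\]
and that the natural morphism of complexes induced by the compatible system $\shm\to\shm_k$ is a termwise isomorphism commuting with the differentials.

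First I would reduce to a local statement: the sheaf $\Omega_X^j$ is a locally free $\sho_X$-module of finite rank $r_j=\binom{d_X}{j}$, so on a sufficiently small open $U\subset X$ we have $\Omega_X^j|_U\simeq \sho_U^{r_j}$. Then
\[
\Omega_X^j\otimes_{\sho_X}\shm|_U\simeq \shm|_U^{\,r_j},\qquad
\Omega_X^j\otimes_{\sho_X}\shm_k|_U\simeq \shm_k|_U^{\,r_j},
\]
and since $\varprojlim$ commutes with finite direct sums,
\[
\varprojlim_{k\geq 0}(\Omega_X^j\otimes_{\sho_X}\shm_k)|_U\simeq (\varprojlim_{k\geq 0}\shm_k)|_U^{\,r_j}\simeq \shm|_U^{\,r_j},
\]
using the $\hbar$-completeness of $\shm$ recalled above. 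These local isomorphisms glue because they are, up to the choice of local trivializations of $\Omega_X^j$, the canonical ones induced by the projections $\shm\to\shm_k$.

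Finally I would check that the canonical termwise morphism
\[
\Omega_X^\bullet\otimes_{\sho_X}\shm\;\longrightarrow\;\varprojlim_{k\geq 0}(\Omega_X^\bullet\otimes_{\sho_X}\shm_k)
\]
commutes with the de Rham differentials. The differential on $\Omega_X^j\otimes_{\sho_X}\shm$ is defined, in local coordinates, by the formula $\omega\otimes m\mapsto d\omega\otimes m+(-1)^j\omega\wedge\nabla m$, where $\nabla$ comes from the $\shd_X$-action inherited from the $\shd_X^\hbar$-action; the analogous formula defines the differential on each $\Omega_X^j\otimes_{\sho_X}\shm_k$, and the projections $\shm\to\shm_k$ are $\shd_X^\hbar$-linear, hence in particular compatible with $\nabla$. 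So the system $(\Omega_X^\bullet\otimes_{\sho_X}\shm_k)_k$ is a projective system of complexes whose limit is, termwise, canonically isomorphic to $\Omega_X^\bullet\otimes_{\sho_X}\shm$, with matching differentials. The main point that required care was ensuring that the $\sho_X$-linear tensor product with the locally free sheaf $\Omega_X^j$ commutes with the relevant projective limit; this is where the finite local rank of $\Omega_X^j$ and the $\hbar$-completeness of the coherent module $\shm$ are used.
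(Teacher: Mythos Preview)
Your proof is correct and follows essentially the same approach as the paper: both use that $\shm$ is $\hbar$-complete (from coherence), that each $\Omega_X^j$ is locally free of finite rank over $\sho_X$, and that the projective limit commutes with finite direct sums, then observe compatibility with the de Rham differentials. Your version is simply more detailed, spelling out the local trivializations and the explicit formula for the differential, whereas the paper dispatches these points in a single sentence.
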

\begin{proof}
Recall that $\shm$ is $\hbar$-complete.
For each $j$, the natural morphism
$$\Omega_X^j\otimes_{\sho_X}\shm\to\underset{{k\geq 0}}\varprojlim (\Omega_X^j\otimes_{\sho_X}\shm_k)$$
is an isomorphism because $\Omega_X^j$ is locally finitely free over $\sho_X$ and the projective limit is additive.
Clearly these isomorphisms are compatible with the derivatives hence \eqref{comparation1} is
isomorphic to  $\Omega_X^\bullet\otimes_{\sho_X}\shm$.
\end{proof}

\begin{theorem}\label{T:22}
For $\shm$ a regular holonomic $\shd_X^\hbar$-module,
there are canonical isomorphisms in $D^b_{\C-c}(\C_{T_Y X}^\hbar)$:
\begin{enumerate}[{\rm (i)}]
\item $DR_\hbar(\nu_Y^\hbar(\shm))\isofrom \nu_Y(DR_\hbar(\shm))$;
\item $Sol_\hbar(\nu_Y^\hbar(\shm))\isoto \nu_Y(Sol_\hbar(\shm))$.
\end{enumerate}
\end{theorem}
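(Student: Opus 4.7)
The plan is to reduce both statements to Kashiwara's classical comparison theorem applied termwise to the system $(\shm_n)_n$, and then invoke conservativity of $gr_\hbar$ on $\R$-constructible $\C^\hbar$-sheaves (Proposition \ref{P121}). By Corollary \ref{P23}, each $\shm_n$ is regular holonomic as a $\shd_X$-module, so Kashiwara's construction yields natural isomorphisms
$$\Psi_K(\shm_n):\nu_Y(DR(\shm_n))\isoto DR(\nu_Y(\shm_n)),\qquad \Psi_K'(\shm_n):Sol(\nu_Y(\shm_n))\isoto\nu_Y(Sol(\shm_n))$$
in $D^b_{\C-c}(\C_{T_YX})$, functorial in $n$.

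For (i), I would build the morphism $\alpha:\nu_Y(DR_\hbar(\shm))\to DR_\hbar(\nu_Y^\hbar(\shm))$ as follows. The projections $\shm\to\shm_n$ induce a compatible family of morphisms $\nu_Y(DR_\hbar(\shm))\to\nu_Y(DR(\shm_n))\overset{\Psi_K(\shm_n)}{\isoto} DR(\nu_Y(\shm_n))$. By Lemma \ref{L30} combined with $\nu_Y^\hbar(\shm)=\varprojlim_n\nu_Y(\shm_n)$ and the local freeness of $\Omega_{T_YX}^\bullet$ over $\sho_{T_YX}$, one has
$$DR_\hbar(\nu_Y^\hbar(\shm))\simeq\Omega_{T_YX}^\bullet\otimes_{\sho_{T_YX}}\varprojlim_n\nu_Y(\shm_n)\simeq\varprojlim_n DR(\nu_Y(\shm_n)),$$
so the universal property of the projective limit produces $\alpha$.

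Next I would verify that both sides of $\alpha$ lie in $D^b_{\C-c}(\C_{T_YX}^\hbar)$: since $\nu_Y^\hbar(\shm)$ is regular holonomic by Corollary \ref{P23}, the object $DR_\hbar(\nu_Y^\hbar(\shm))$ is $\C$-constructible by diagram \eqref{eq11}; similarly $DR_\hbar(\shm)$ is $\C$-constructible, and specialization preserves $\C$-constructibility. By Proposition \ref{P121} it thus suffices to prove that $gr_\hbar(\alpha)$ is an isomorphism. Using the remark after Lemma \ref{L31} that $gr_\hbar$ commutes with $\nu_Y$, the identification $gr_\hbar(\nu_Y^\hbar(\shm))\simeq\nu_Y(\shm_0)$ from Remark \ref{R101}, and the immediate identity $gr_\hbar(DR_\hbar(\shn))\simeq DR(\shn_0)$ read off from Lemma \ref{L30}, one gets
$$gr_\hbar(\nu_Y(DR_\hbar(\shm)))\simeq\nu_Y(DR(\shm_0)),\qquad gr_\hbar(DR_\hbar(\nu_Y^\hbar(\shm)))\simeq DR(\nu_Y(\shm_0)),$$
and by the naturality of all constructions, $gr_\hbar(\alpha)$ is identified with Kashiwara's isomorphism $\Psi_K(\shm_0)$.

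Statement (ii) is handled analogously: construct $\beta:Sol_\hbar(\nu_Y^\hbar(\shm))\to\nu_Y(Sol_\hbar(\shm))$ from the $\Psi_K'(\shm_n)$, use \eqref{eq11} to see both sides are $\C$-constructible, and check that $gr_\hbar(\beta)\simeq\Psi_K'(\shm_0)$. The main obstacle in both parts is the bookkeeping for the compatibility $gr_\hbar(\alpha)\simeq\Psi_K(\shm_0)$: this is a naturality statement for $\Psi_K$ with respect to the transition maps $\shm_n\to\shm_{n-1}$ and the passage to the inverse limit, which is available since $\Psi_K$ and $\Psi_K'$ are morphisms of functors; the needed commutativity of $gr_\hbar$ with $DR$, $Sol$, $\nu_Y$ and $\nu_Y^\hbar$ is already recorded in the paper.
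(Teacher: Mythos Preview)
Your treatment of (i) is essentially the paper's argument: build the morphism through the projective system $(\shm_n)_n$, Kashiwara's $\Psi_K(\shm_n)$, and Lemma~\ref{L30}, then test with $gr_\hbar$ using constructibility. One small inaccuracy: Remark~\ref{R101} gives $gr_\hbar(\nu_Y^\hbar(\shm))\simeq\nu_Y(gr_\hbar(\shm))$, not $\nu_Y(\shm_0)$; since $\shm$ need not be $\hbar$-torsion free, $gr_\hbar(\shm)$ is a two-term complex in $D^b_{rh}(\shd_X)$, and you must invoke Kashiwara's comparison for this complex rather than for $\shm_0$ alone. The conclusion is unaffected.

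For (ii) there is a genuine gap. Your phrase ``handled analogously'' hides the fact that the construction of $\alpha$ rested on Lemma~\ref{L30}, i.e.\ on the explicit description $DR_\hbar(\shn)\simeq\Omega^\bullet\otimes_{\sho}\shn\simeq\varprojlim_n(\Omega^\bullet\otimes_{\sho}\shn_n)$. There is no such projective-limit description of $Sol_\hbar$ available here; worse, $Sol$ is contravariant, so the system $(Sol(\shm_n))_n$ is \emph{inductive}, and the universal property you used to produce $\alpha$ has no counterpart producing your $\beta$. The paper avoids this entirely by deducing (ii) from (i) via duality: using \eqref{eq11} one has $Sol_\hbar\simeq D'_\hbar\circ DR_\hbar$, and $D'_\hbar$ commutes with $\nu_Y$ on constructible objects (Proposition~8.4.13 of \cite{KS1}), so
\[
Sol_\hbar(\nu_Y^\hbar(\shm))\simeq D'_\hbar DR_\hbar(\nu_Y^\hbar(\shm))\isoto D'_\hbar\nu_Y(DR_\hbar(\shm))\simeq\nu_Y D'_\hbar(DR_\hbar(\shm))\simeq\nu_Y(Sol_\hbar(\shm)).
\]
This is the missing idea in your argument for (ii).
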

\begin{proof}

For each $k\geq 0$, we have a natural morphism in $D^b(\C^\hbar_{T_YX})$:
\begin{eqnarray*}
\nu_Y(DR_\hbar(\shm))\to\nu_Y(DR_\hbar(\shm_k)).
\end{eqnarray*}
Since $\shm_k$ is an $\hbar$-torsion regular holonomic $\shd_X^\hbar$-module,
we have $\nu_Y(DR_\hbar(\shm_k))\simeq\nu_Y(DR(\shm_k)) \simeq DR(\nu_Y(\shm_k))\simeq DR_{\hbar}(\nu^{\hbar}_Y(\shm_k))$
in $D^b(\C^\hbar_{T_YX})$.
In this way we get a canonical morphism in $D^b(\C^\hbar_{T_YX})$:
\begin{eqnarray}\label{morphism3}
\nu_Y(DR_\hbar(\shm))\to DR_\hbar(\nu_Y^\hbar(\shm_k))
\end{eqnarray}
which entail morphisms:
\begin{eqnarray*}
\nu_Y(DR_\hbar(\shm))\to\Omega_{T_YX}^\bullet\otimes_{\sho_{T_YX}} \nu_Y^\hbar(\shm_k).
\end{eqnarray*}
So we obtain a morphism in $C^b(\C^\hbar_{T_YX})$:
\begin{eqnarray*}
\nu_Y(DR_\hbar(\shm))\to\underset{{k\geq 0}}\varprojlim (\Omega_{T_Y X}^\bullet\otimes_{\sho_{T_YX}} \nu_Y^\hbar(\shm_k)).
\end{eqnarray*}

Finally
(i) follows from Lemma \ref{L30} as the composition of the sequence of morphisms below:
$$\nu_Y(DR_\hbar(\shm))\to\Omega_{T_YX}^\bullet\otimes_{\sho_{T_YX}}\nu_Y^{\hbar}(\shm)\xrightarrow[qis]{}DR_\hbar(\nu_Y^\hbar(\shm)).$$

Let us now prove that (i) is an isomorphism.
Note that
$\nu_Y(DR_\hbar(\shm))$ and $DR_\hbar(\nu_Y^\hbar(\shm))$ are both objects of $D^b_{\C-c}(\C^\hbar_{T_YX})$ hence they are $c\hbar c$.
Therefore, it is enough to prove that we obtain an isomorphism when we apply $gr_\hbar$ to (i).
We have on one hand:
$gr_\hbar(\nu_Y(DR_\hbar(\shm)))\simeq\nu_Y(DR(gr_\hbar(\shm))).$
Since $gr_\hbar(\shm)\in D^b_{rh}(\shd_X)$ we have $\nu_Y (DR(gr_{\hbar}\shm))\simeq DR\nu_Y(gr_{\hbar}\shm).$

On the other hand: $gr_\hbar DR_\hbar(\nu_Y^\hbar(\shm))\simeq DR gr_\hbar(\nu_Y^\hbar(\shm))\simeq DR\nu_Y(gr_{\hbar}\shm),$
 by Remark \ref{R101}.

To end the proof we remark that  $(ii)$ follows by the following chain of isomorphisms:
\begin{eqnarray*}
Sol_\hbar(\nu_Y^\hbar(\shm))&\simeq&D^\prime_\hbar(DR_\hbar(\nu_Y^\hbar(\shm)))\\
&\isoto& D^\prime_\hbar\nu_Y(DR_\hbar(\shm))\\
&\simeq& \nu_Y (D^\prime_\hbar(DR_\hbar(\shm)))\\
&\simeq&\nu_Y(Sol_h(\shm)),
\end{eqnarray*}
 where the first and fourth isomorphisms  follow from (\ref{eq11}),
the second follows by applying the contravariant functor $D^\prime$ to $(i)$ and the
 third follows by Proposition~8.4.13 of~\cite{KS1}.
\end{proof}

Similarly one proves:
\begin{corollary}\label{comparison cycles}
Let $Y$ be a smooth hypersurface of $X$ and $\shm$ a regular holonomic $\shd_X$-module $\shm$.
There are canonical isomorphisms in $D^b_{\C-c}(\C_{Y}^\hbar)$:
\begin{enumerate}[{\rm (i)}]
\item $\begin{cases}Sol_{\hbar_{Y}}(\psi_Y^\hbar(\shm))\isoto\psi_Y(Sol_{\hbar_X}(\shm)), \\ Sol_{\shd_\hbar}(\phi_Y^\hbar(\shm))\isoto\phi_Y(Sol_\hbar(\shm));\end{cases}$

\item $\begin{cases}DR_{\shd_\hbar}(\psi_Y^\hbar(\shm))\isofrom \psi_Y(DR_\hbar(\shm)), \\ DR_\hbar(\phi_Y^\hbar(\shm))\isofrom \phi_Y(DR_\hbar(\shm)).\end{cases}$
\end{enumerate}
\end{corollary}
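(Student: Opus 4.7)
The plan is to mimic the proof of Theorem~\ref{T:22} step by step, replacing the specialization functor $\nu_Y$ by $\psi_Y$ (respectively $\phi_Y$) throughout, since the formal extension $\psi_Y^\hbar$ (resp.\ $\phi_Y^\hbar$) was constructed exactly like $\nu_Y^\hbar$ via projective limits. First, for each $k\geq 0$, the $\shd_X^\hbar$-module $\shm_k$ is $\hbar$-torsion; by Corollary~\ref{P23} and Proposition~\ref{G20} combined with Kashiwara's classical comparison for $\psi_Y$ on a regular holonomic $\shd_X$-module, we obtain canonical isomorphisms
\begin{equation*}
\psi_Y(DR_\hbar(\shm_k)) \simeq \psi_Y(DR(\shm_k)) \simeq DR(\psi_Y(\shm_k)) \simeq DR_\hbar(\psi_Y^\hbar(\shm_k))
\end{equation*}
in $D^b(\C_Y^\hbar)$, and similarly for $\phi_Y$. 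The natural projections $\shm\to\shm_k$ assemble these into a morphism $\psi_Y(DR_\hbar(\shm)) \to DR_\hbar(\psi_Y^\hbar(\shm_k))$ in $D^b(\C_Y^\hbar)$, functorial in $k$.

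Next, I would invoke the analogue of Lemma~\ref{L30} applied to the coherent $\shd_Y^\hbar$-module $\psi_Y^\hbar(\shm)$ (which is coherent and $\hbar$-complete by Proposition~\ref{T21} and Corollary~\ref{C:T2}; its truncations $\psi_Y^\hbar(\shm)_k$ coincide with $\psi_Y(\shm_k)$ by Lemma~\ref{L:T1}). This identifies $\varprojlim_k(\Omega_Y^\bullet\otimes_{\sho_Y}\psi_Y(\shm_k))$ with $\Omega_Y^\bullet\otimes_{\sho_Y}\psi_Y^\hbar(\shm) \simeq DR_\hbar(\psi_Y^\hbar(\shm))$. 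Passing to the limit in $k$ in the morphism of the previous step yields a canonical morphism
\begin{equation*}
\psi_Y(DR_\hbar(\shm)) \to DR_\hbar(\psi_Y^\hbar(\shm))
\end{equation*}
in $D^b(\C_Y^\hbar)$, and analogously for $\phi_Y$.

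To show this morphism is an isomorphism I would pass to $gr_\hbar$. Both sides are objects of $D^b_{\C\text{-}c}(\C^\hbar_Y)$: the left-hand side because $\psi_Y$ preserves $\C$-constructibility (via Lemma~\ref{L31}) and $DR_\hbar(\shm)$ is $\C$-constructible; the right-hand side because $\psi_Y^\hbar(\shm)$ is regular holonomic by Corollary~\ref{P23}. Hence both sides are $c\hbar c$ and by Proposition~\ref{P121} it suffices to check the isomorphism after applying $gr_\hbar$. On the left we get $\psi_Y(DR(gr_\hbar\shm))$ since $gr_\hbar$ commutes with $\psi_Y$ and $DR$, and on the right, using the analogue of Remark~\ref{R101} for $\psi_Y^\hbar$ (i.e.\ $gr_\hbar\psi_Y^\hbar(\shm)\simeq \psi_Y(gr_\hbar\shm)$), we get $DR(\psi_Y(gr_\hbar\shm))$. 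These are identified by Kashiwara's original comparison theorem applied to the regular holonomic $\shd_X$-module $gr_\hbar\shm$. The same argument, verbatim with $\phi_Y$ in place of $\psi_Y$, proves the second $DR_\hbar$-isomorphism in (ii).

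Finally, the two isomorphisms in (i) are obtained from those in (ii) by applying the duality functor $D'_\hbar$, using the commutative diagram~\eqref{eq11} (which equates $Sol_\hbar$ with $D'_\hbar\circ DR_\hbar$ on holonomic objects) together with the fact that $D'_\hbar$ commutes with $\psi_Y$ and $\phi_Y$, the analogue of Proposition~8.4.13 of~\cite{KS1} quoted in the proof of Theorem~\ref{T:22}. The main obstacle I anticipate is verifying the analogue of Lemma~\ref{L30} and of Remark~\ref{R101} in the $\psi_Y$/$\phi_Y$ setting, i.e.\ the commutation of $gr_\hbar$ with $\psi_Y^\hbar$ and with the projective limit; but this follows from Corollary~\ref{C173} applied to the exact functors $\psi_Y$ and $\phi_Y$, as noted in the paragraph preceding the comparison theorem.
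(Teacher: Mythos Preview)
Your proposal is correct and follows exactly the approach the paper intends: the paper's own proof is simply the phrase ``Similarly one proves'' placed before the corollary, referring back to the proof of Theorem~\ref{T:22}, and you have faithfully unpacked that analogy step by step (construction of the morphism via the projective system $(\shm_k)_k$, Lemma~\ref{L30}, the $c\hbar c$/$gr_\hbar$ argument, and the passage from (ii) to (i) via $D'_\hbar$). The only point to be slightly careful with is the commutation of $D'_\hbar$ with $\psi_Y$ and $\phi_Y$ on constructible complexes, which is the standard self-duality of nearby/vanishing cycles rather than Proposition~8.4.13 of~\cite{KS1} itself; but this is indeed the expected analogue and is classical.
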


\subsection{Review on Fourier transforms and microlocalization.}

\subsubsection{Review of Fourier-Sato transform and microlocalization of sheaves.}

Denote by $\R^+$ the multiplicative group of positive numbers, and suppose  a real or complex manifold $E$
endowed with an action of $\R^+$ is given. One denotes by $\text{Mod}_{\R^+}(\mathbb{K}_E)$ the full subcategory of $\text{Mod}(\mathbb{K}_E)$ consisting
of sheaves $F$ such that for any orbit $b$ of $\R^+$ in $E$, $F|_b$ is a locally constant sheaf.
One also denotes  by $D^+_{\R^+}(\mathbb{K}_E)$ the full subcategory of $D^+(\mathbb{K}_E)$ consisting of objects $F$ such that for all $j\in\Z$, $H^j(F)\in\text{Mod}_{\R^+}(\mathbb{K}_E)$.
An object of $D^+_{\R^+}(\mathbb{K}_E)$ is called a conic object.

Let now $E\xrightarrow{\pi} Y$ denote a holomorphic vector bundle on a complex analytic manifold $Y$
and let $E^\prime\xrightarrow{\tilde{\pi}}Y$ denote its dual bundle.
We will be particularly concerned with the cases where $Y$ is a submanifold of a manifold $X$, $E=T_Y X$ is the tangent bundle to $Y$ on $X$ and
$E^\prime=T_Y^\ast X$ is the cotangent bundle to $Y$ on $X$.

Let $F$ be a sheaf of $\C$-vector spaces over $E$.
One says that $F$ is monodromic if it is locally constant along the orbits $\C^\ast\eta$ for each $\eta\in E\backslash Y$.
The category of monodromic sheaves is a full abelian subcategory of $\text{Mod}(\C_E)$.
An object $F\in D^b(\C_E)$ is  monodromic if the sheaves $H^i(F)$ are monodromic for every $i\in\Z$.
We denote by $D^b_{\text{mon}}(\C_E)$ the full subcategory of $D^b(\C_E)$ formed by monodromic objects.

Denote by $p_1$ and $p_2$ the canonical projections from $E\times_Y E^\prime$ to $E$ and $E^\prime$, respectively,
and set $P=\left\{(x,y)\in E\times_Y E^\prime: \left\langle x,y\right\rangle \geq 0\right\}$.
The Fourier-Sato transform is the functor $\shf^{\mathbb{K}}:D^+_{\R^+}(\mathbb{K}_E)\to D^+_{\R^+}(\mathbb{K}_{E^\prime})$ defined by
$\shf^{\mathbb{K}}(F)\mathbin{:=} \R {p_2\ast} \circ \R \Gamma_{ P} \circ p_1^{-1} (F).$

\begin{lemma}\label{L50}
Let $F\in D^+_{\R^+}(\C^\hbar_E)$.
Then, $\shf^{\C^\hbar}(F)\simeq \shf^\C(F)$ in $D^+_{\R^+}(\C_{E^\prime})$.
\end{lemma}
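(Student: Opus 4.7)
The plan is to transcribe the argument of Lemma~\ref{L31}(a) to the compound Fourier--Sato functor. By definition $\shf^{\mathbb{K}}(F) = \R p_{2\ast}\circ \R\Gamma_P\circ p_1^{-1}(F)$, so it suffices to check that each of the three constituent operations commutes with restriction of scalars from $\C^\hbar$-sheaves to $\C$-sheaves. The inverse image $p_1^{-1}$ is exact and acts at the level of underlying sheaves of abelian groups, so nothing needs to be shown for it; the content of the lemma lies in the analogous statement for the right-derived functors $\R\Gamma_P$ and $\R p_{2\ast}$.

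First I would fix an injective resolution $p_1^{-1}F \to J^\bullet$ in the category of sheaves of $\C^\hbar$-modules on $E\times_Y E'$. Each $J^j$ is then flabby as a sheaf of $\C^\hbar$-modules; but flabbyness only asks that the restriction maps $J^j(U)\to J^j(V)$ be surjective for $V\subset U$ open, a property of the underlying sheaf of abelian groups that is insensitive to the ring structure. Hence each $J^j$ is equally flabby as a sheaf of $\C$-modules, and $\Gamma_P(J^j)$ is then flabby (and in particular $p_{2\ast}$-acyclic) for both structures by standard arguments.

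Consequently the single complex $p_{2\ast}\Gamma_P(J^\bullet)$ represents simultaneously $\shf^{\C^\hbar}(F)$ in $D^+(\C^\hbar_{E'})$ and $\shf^\C(F)$ in $D^+(\C_{E'})$, yielding a canonical isomorphism $\shf^{\C^\hbar}(F)\isoto\shf^\C(F)$ in $D^+(\C_{E'})$. Conicity along the $\R^+$-orbits is preserved because $p_1$, $p_2$ and $P$ are all $\R^+$-equivariant, so the isomorphism actually takes place in $D^+_{\R^+}(\C_{E'})$ as claimed. I do not anticipate a real obstacle — the only care required is to select resolutions that remain simultaneously acyclic after forgetting the $\C^\hbar$-structure, which is ensured by flabbyness.
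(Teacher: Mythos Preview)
Your proof is correct and is essentially the same approach as the paper's: both rest on the observation that flabby (resp.\ c-soft) resolutions in $\text{Mod}(\C^\hbar)$ remain flabby (resp.\ c-soft) in $\text{Mod}(\C)$, so one complex computes the derived functor over either ring. The only cosmetic difference is that the paper first rewrites $R\Gamma_P\simeq Ri_! i^!$ (with $i:P\hookrightarrow E\times_Y E'$ the closed embedding) and then invokes Lemma~\ref{L31} for $Ri_!$ and $Rp_{2\ast}$, whereas you run the flabby-resolution argument directly on the composite $p_{2\ast}\circ\Gamma_P\circ p_1^{-1}$.
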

\begin{proof}
Let $i:P\to E\times_Y E^\prime$ be the embedding of $P$.
Then $\R \Gamma_{ P}\simeq \R i_! i^!$.
Since $i^!$ and $ p_1^{-1}$ are exact functors, the result follows by Lemma~\ref{L31}.
\end{proof}

In particular, if $F\in D^b_{\text{mon}}(\C_E)$ then $\shf^\C(F)\in D^b_{\text{mon}}(\C_{E^\prime})$.

In the case $E=T_Y X$, $E^\prime=T_Y^\ast X$ and
$\shf^{\mathbb{K}}:D^+_{\R^+}(\mathbb{K}_{T_YX})\to D^+_{\R^+}(\mathbb{K}_{T_Y^\ast X})$,
the composition $\mu_Y^{\mathbb{K}}\mathbin{:=}\shf^{\mathbb{K}}\circ\nu_Y^{\mathbb{K}}: D^b(\mathbb{K}_X)\to D^b_{\R+}(\mathbb{K}_{T_Y^\ast X})$
is called the geometrical microlocalization (see~\cite{KS1}).

By Lemmas~\ref{L31} and~\ref{L50}, one has:

\begin{lemma}
For $F\in D^b(\C^\hbar_X)$ the objects $\mu_Y^{\C^\hbar}(F)$ and $\mu_Y^\C(F)$ are isomorphic in $D^b(\C_{T_Y^\ast X})$.
\end{lemma}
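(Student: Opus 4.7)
The plan is to observe that $\mu_Y^{\mathbb{K}}$ is by definition the composition $\shf^{\mathbb{K}}\circ \nu_Y^{\mathbb{K}}$, so the statement follows by chaining together the two comparison results already established for each factor. First I would apply Lemma~\ref{L31}(b) to the object $F\in D^b(\C^\hbar_X)$ to obtain the canonical isomorphism
\begin{equation*}
\nu_Y^{\C^\hbar}(F)\simeq \nu_Y^{\C}(F)
\end{equation*}
in $D^b(\C_{T_Y X})$.

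Next I would note that the specialization $\nu_Y^{\C^\hbar}(F)$ is, by construction, a conic object of $D^b_{\R^+}(\C^\hbar_{T_YX})$ (this is a standard property of Sato's specialization; it follows from the fact that the normal deformation comes equipped with a natural $\R^+$-action along the fibers of $T_YX$ and $\widetilde p$, $s$, $j$ are compatible with it). Hence Lemma~\ref{L50} applies and gives
\begin{equation*}
\shf^{\C^\hbar}(\nu_Y^{\C^\hbar}(F))\simeq \shf^{\C}(\nu_Y^{\C^\hbar}(F))
\end{equation*}
in $D^+_{\R^+}(\C_{T_Y^\ast X})$.

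Finally, applying the (exact) functor $\shf^{\C}$ to the isomorphism of the first step and composing yields
\begin{equation*}
\mu_Y^{\C^\hbar}(F)=\shf^{\C^\hbar}(\nu_Y^{\C^\hbar}(F))\simeq \shf^{\C}(\nu_Y^{\C}(F))=\mu_Y^{\C}(F),
\end{equation*}
which is the desired isomorphism in $D^b(\C_{T_Y^\ast X})$. There is essentially no obstacle here: the proof is a purely formal concatenation of Lemmas~\ref{L31} and~\ref{L50}, the only point worth checking being the conicity of $\nu_Y^{\C^\hbar}(F)$ so that Lemma~\ref{L50} is applicable. This is why the authors grouped the two preceding lemmas immediately before stating this one.
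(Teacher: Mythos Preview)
Your proposal is correct and matches the paper's approach exactly: the paper simply states ``By Lemmas~\ref{L31} and~\ref{L50}'' and gives no further argument, so your write-up is precisely the expanded version of that one-line justification. The only extra detail you supply---checking that $\nu_Y^{\C^\hbar}(F)$ is conic so that Lemma~\ref{L50} applies---is a standard fact and is implicit in the paper's invocation of that lemma.
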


\subsubsection{Review on Fourier transform and microlocalization for $\shd$-modules.}

Denote by $\shd_{\left[E\right]}\subset \pi_\ast \shd_E$ the sheaf of differential operators polynomial in the fibers.
Let $\theta$ denote the Euler field on $E$.
A $\pi_\ast(\shd_E)$ or a $\shd_{\left[E\right]}$-left coherent module $\shn$ is
monodromic if $\shn$ is generated by local sections satisfying $b(\theta)u=0$
for some non-vanishing $b(\theta)\in\C[\theta]$.
We denote this category by $\text{Mod}_\text{mon}(\shd_{\left[E\right]})$,
a Serre subcategory of $\text{Mod}_{\text{coh}}(\shd_{\left[E\right]})$.

Consider the sheaf $\Omega_{E/Y}$ of relative differential forms to $\pi:E\to Y$.

One defines an exact functor $\shf$ (Fourier Transform, cf.~\cite{BMV}) from $\text{Mod}_\text{mon}(\shd_{\left[E\right]})$ to $\text{Mod}_\text{mon}(\shd_{\left[E^\prime\right]})$ setting for $\shn\in\text{Mod}_\text{mon}(\shd_{\left[E\right]})$:
\begin{eqnarray*}
\shf(\shn)\mathbin{:=}\Omega_{E/Y}\otimes_{\pi^{-1}\sho_Y}\shn.
\end{eqnarray*}

Recall  that for each $\shn\in\text{Mod}_\text{mon}(\shd_{[E]})$
one constructs canonical isomorphisms in $D^b(\C_{E'})$:
\begin{eqnarray}
&&\shf^\C(Sol(\shn))\simeq Sol(\shf(\shn))[\text{-codim} Y]; \label{Fourier1}\\
&&\shf^\C(DR(\shn))\simeq DR(\shf(\shn))[\text{-codim} Y]. \label{Fourier2}
\end{eqnarray}

Consider $E=T_Y X$, $E^\prime=T_Y^\ast X$ and denote by $\tau$ the projection $E'\to Y$.
Let $\shm\in\text{Mod}_{\text{sp}}(\shd_X)$.
Recall that the composition of $\shf$ with $\nu_Y$ gives an exact functor $\mu_Y$ from $\text{Mod}_{\text{sp}}(\shd_X)$ to
$\text{Mod}_{\text{mon}}(\shd_{[T_Y^\ast X]})$, the microlocalization along $Y$ (cf~\cite{MF} for details).

\subsection{Fourier transform and microlocalization for monodromic $\shd^{\hbar}$-modules.}
\hspace{1mm}

 Let $\mathcal{S}$ be the full Serre substack of $\mathscr{M}{od}_{coh}(\shd_{[E]}^{\hbar})$:  $$U\mapsto \mathcal{S}(U)=\cup_{n\geq 0} \text{Mod}_{mon}(\shd_{[E],n}^{\hbar}|_U)$$
where we consider the structure of $\shd_{[E]}$-locally free module on $\shd_{[E],n}^{\hbar}$.
We shall denote by $\text{Mod}_{mon}(\shd_{[E]}^{\hbar})$ the category $\text{Mod}_{\mathcal{S}}(\shd_{[E]}^{\hbar}).$

Similarly we denote by $\mathcal{S}'$ the full Serre substack $$V\mapsto\mathcal{S}'(V)=\cup_{n\geq 0} \text{Mod}_{mon}(\shd_{[E'],n}^{\hbar}|_V).$$

Consider the functor $\Phi: Op(E)\to Op(E')$ given by $U\mapsto \widetilde{\pi}^{-1}\pi(U)$.
Since $\shf(U):\mathcal{S}(U)\to \mathcal{S}(\Phi(U))$
is clearly an exact functor, we are in conditions to apply Theorem \ref{T:1} and extend it as an exact functor $$\shf^{\hbar}: \text{Mod}_{mon}(\shd_{[E]}^{\hbar})\to \text{Mod}_{mon}(\shd_{[E']}^{\hbar})$$ setting  $$\shf^{\hbar}(\shn):= \underset{{n\geq 0}}\varprojlim \shf(\shn_n)=\underset{{n\geq 0}}\varprojlim \Omega_{E/Y}\otimes_{\pi^{-1}\sho_Y}\shn_n.$$

\begin{definition}\label{Fourier}
We define $\shf^{\hbar}$ as the  Fourier transform for $\shd^{\hbar}_{\left[E\right]}$-monodromic modules.
\end{definition}

In view of Definition \ref{D:T} and  Theorem \ref{T:1} we have functorial isomorphisms:
$$\shf^{\hbar}(\shn)\simeq
 \Omega_{E|Y} \otimes_{\pi^{-1}\sho_Y} \shn$$
  for   $\shn\in\text{Mod}_{mon}(\shd_{[E]}^{\hbar})$.

One can restate Propositions~\ref{G20},~\ref{G18}, Theorem~\ref{T:300} and Corollaries~\ref{C171} and~\ref{C173} replacing the functor $F^{\hbar}$  by $\shf^{\hbar}$.

\begin{lemma}\label{L123}
\begin{enumerate}[{\rm (a)}]
\item
Let $\shn$ be a monodromic $\shd_{\left[E\right]}^\hbar$-module.
Then $\shf^\hbar(\shn)$ is a monodromic $\shd_{\left[E^\prime\right]}^\hbar$-module.
\item
Let $\shm$ be a specializable $\shd_X^\hbar$-module.
Then $\nu_Y^\hbar(\shm)$ is a monodromic $\shd_{T_YX}^\hbar$-module.
\end{enumerate}
\end{lemma}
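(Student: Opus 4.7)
The plan is to reduce both parts to their classical $\shd$-module counterparts via the key identification $F^{\hbar}(\shm)_0 \simeq F(\shm_0)$, which is available because both $\shf$ and $\nu_Y$ are exact functors on the appropriate Serre substacks.

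For (a): By hypothesis $\shn\in\text{Mod}_{mon}(\shd_{[E]}^\hbar)=\text{Mod}_{\mathcal{S}}(\shd_{[E]}^\hbar)$, where $\mathcal{S}$ is the monodromic substack. First I would invoke Corollary~\ref{C173} applied to the exact functor $F=\shf$, which gives, in $\text{Mod}(\shd_{[E'],n}^\hbar)$,
\begin{equation*}
\shf^\hbar(\shn)_n \;\simeq\; H^0(gr_\hbar^n\shf^\hbar(\shn)) \;\simeq\; \shf(H^0(gr_\hbar^n\shn)) \;\simeq\; \shf(\shn_n),
\end{equation*}
and in particular $\shf^\hbar(\shn)_0\simeq \shf(\shn_0)$. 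Next I would apply Proposition~\ref{G9}: since $\shn$ is monodromic, $\shn_0$ is a monodromic $\shd_{[E]}$-module, and the classical fact that $\shf$ preserves monodromicity (part of the definition of the exact functor $\shf:\text{Mod}_{mon}(\shd_{[E]})\to\text{Mod}_{mon}(\shd_{[E']})$) yields that $\shf^\hbar(\shn)_0\simeq\shf(\shn_0)$ is monodromic. The characterization of $\text{Mod}_{\mathcal{S}'}(\shd_{[E']}^\hbar)$ by the condition on the $0$-th graded piece (i.e. the analogue of Proposition~\ref{G9} applied to the monodromic substack $\mathcal{S}'$) then forces $\shf^\hbar(\shn)$ to be monodromic. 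Alternatively, this is already built into Theorem~\ref{T:1}(2) applied with $\mathscr{C}'=\mathscr{M}od_{coh}(\shd_{[E']}^\hbar)$ and $\mathcal{S}'$ the monodromic substack.

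For (b): The argument is entirely parallel. By Corollary~\ref{P9}, $\shm_0$ is a specializable $\shd_X$-module, and by Corollary~\ref{C173} applied to the exact functor $\nu_Y$, one has $\nu_Y^\hbar(\shm)_0\simeq \nu_Y(\shm_0)$ in $\text{Mod}(\shd_{T_YX})$. The classical construction of $\nu_Y$ (via $gr_{V_G}$) endows $\nu_Y(\shm_0)$ with a natural $\shd_{[T_YX]}$-submodule generated by the graded piece, which is monodromic precisely because the canonical $V$-filtration carries a $b$-function. Again the characterization via the $0$-th graded piece promotes monodromicity from $\nu_Y^\hbar(\shm)_0$ to $\nu_Y^\hbar(\shm)$.

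The only point requiring care, which I would expect to be the main (minor) obstacle, is the bookkeeping between $\shd_{[E]}^\hbar$-monodromicity (on which the Serre substack $\mathcal{S}$ was defined) and $\shd_{E}^\hbar$-module structures (which are what actually appear in applications); this is handled by restricting scalars through the inclusion $\shd_{[E]}^\hbar\subset\pi_*\shd_E^\hbar$ exactly as in the classical setting, so that the monodromicity of $\shf(\shn_0)$ and $\nu_Y(\shm_0)$ in the classical sense translates directly into membership in the Serre substacks used to define $\text{Mod}_{mon}(\shd_{[E']}^\hbar)$ and the analogous category over $T_YX$.
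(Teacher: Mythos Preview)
Your proposal is correct and follows essentially the same route as the paper. The paper's proof is terser: for (a) it simply cites the two formulas $\shf^\hbar(\shn)_0\simeq \shf(\shn_0)$ and $_0\shf^\hbar(\shn)\simeq \shf(_0\shn)$ (i.e.\ both halves of Corollary~\ref{C173}/\ref{C171}), and similarly for (b) with $\nu_Y$; you instead use only the $(\cdot)_0$ formula and then invoke Proposition~\ref{G9} to avoid checking the $_0(\cdot)$ side, which is a legitimate shortcut since coherence of $\shf^\hbar(\shn)$ and $\nu_Y^\hbar(\shm)$ is already known from Theorem~\ref{T:1}(2).
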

\begin{proof}
The first property results from the formulas $\shf^\hbar(\shn)_0\simeq \shf(\shn_0)$ and
$_0\shf^\hbar(\shn)\simeq \shf(_0\shn)$.

The second one is a consequence of the same property for $\shd_X$-modules and of the formulas

$$\nu_Y^\hbar(\shm)_0\simeq\nu_Y(\shm_0),\,\nu_Y^\hbar(\shm)\simeq \nu_Y({}_0\shm).$$
\end{proof}

\begin{definition}\label{D61}
The functor of microlocalization for $\shd_X^\hbar$-modules along a  submanifold $Y$ is given by:
\begin{eqnarray*}
\mu_Y^\hbar:\text{Mod}_{\text{sp}}(\shd_X^\hbar)&\to&\text{Mod}_\text{mon}(\shd_{[T_Y^\ast X]}^\hbar) \\
\shm&\mapsto&\mu_Y^\hbar(\shm)\mathbin{:=} \shf^\hbar( \nu_Y^\hbar(\shm)).
\end{eqnarray*}
\end{definition}
Since all the functors involved are exact and take values in subcategories of coherent modules, we conclude functorial isomorphisms:
\begin{eqnarray*}
\mu_Y^\hbar(\shm) \simeq
\underset{{n\geq 0}}\varprojlim\mu_Y(\shm_n),
\end{eqnarray*}
for $\shm\in\text{Mod}_{\text{sp}}(\shd_X^\hbar)$.

Smilarly we conclude:
\begin{theorem}\label{T60}
 We have natural  isomorphisms in $D^b(\C^\hbar_{T_Y^\ast X})$:
$$\begin{cases}
\shf(Sol_\hbar(\shn))\simeq Sol_\hbar(\shf^\hbar(\shn))[{-codim} Y]\\
\shf(DR_\hbar(\shn))\simeq DR_\hbar(\shf^\hbar(\shn))[{-codim} Y]
\end{cases}$$
for $\shn\in{ Mod}_{ mon}(\shd_{T_YX}^\hbar)$.
\end{theorem}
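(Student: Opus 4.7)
The plan is to adapt the strategy of the proof of Theorem~\ref{T:22}, treating first the De Rham version and deducing the $Sol$ version by duality.

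First, I would construct a canonical morphism
$$\shf(DR_\hbar(\shn)) \longrightarrow DR_\hbar(\shf^\hbar(\shn))[-\mathrm{codim}\,Y].$$
For each $n\geq 0$, since $\shn_n$ is an $\hbar$-torsion monodromic module, Proposition~\ref{G20} applied to $\shf^\hbar$ gives $\shf^\hbar(\shn_n)\simeq \shf(\shn_n)$, and the classical Fourier isomorphism~(\ref{Fourier2}) yields $\shf(DR_\hbar(\shn_n))\simeq DR_\hbar(\shf^\hbar(\shn_n))[-\mathrm{codim}\,Y]$. Composing with the canonical maps $DR_\hbar(\shn)\to DR_\hbar(\shn_n)$, applying $\shf$, passing to the projective limit in $n$, and then invoking an analogue of Lemma~\ref{L30} for the coherent (hence $\hbar$-complete, by Proposition~\ref{T21}) module $\shf^\hbar(\shn)$ produces the desired morphism.

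Next I would show that both sides of the proposed isomorphism are $c\hbar c$. The right-hand side is $c\hbar c$ by Proposition~\ref{P1}, since $\shf^\hbar(\shn)$ is coherent. For the left-hand side, monodromicity of $\shn$ ensures that $DR_\hbar(\shn)$ is a $\C$-constructible sheaf of $\C^\hbar$-modules, hence $c\hbar c$ (as in Lemma~\ref{L2}); Lemma~\ref{L50} then implies that $\shf(DR_\hbar(\shn))$ is $c\hbar c$ as well.

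By the conservativity of $gr_\hbar$ on $c\hbar c$ objects (Proposition~\ref{P121}), it then suffices to verify that $gr_\hbar$ sends the constructed morphism to an isomorphism. Using the commutation of $gr_\hbar$ with $\shf^\C$ (Lemma~\ref{L50}), with the De Rham functor (as in Lemma~\ref{L30}), and with $\shf^\hbar$ on monodromic modules (an analogue of Remark~\ref{R101} for $\shf^\hbar$), both sides reduce modulo $\hbar$ to the two terms of the classical Fourier isomorphism~(\ref{Fourier2}) applied to the monodromic $\shd_{[T_YX]}$-module $gr_\hbar(\shn)$. Finally, the $Sol_\hbar$ version is obtained by applying the duality functor $D'_\hbar$ to the $DR_\hbar$ isomorphism and using~(\ref{eq11}) together with the compatibility of Fourier-Sato with duality, exactly as in the last chain of isomorphisms in the proof of Theorem~\ref{T:22}. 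The main obstacle I anticipate is the commutation $gr_\hbar\circ\shf^\hbar\simeq\shf\circ gr_\hbar$ and the verification of the $c\hbar c$ property for $\shf(DR_\hbar(\shn))$, both of which demand careful handling of monodromicity and of the Fourier-Sato transform on $\C^\hbar$-constructible sheaves.
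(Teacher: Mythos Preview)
Your proposal is correct and follows exactly the approach the paper intends: the paper's ``proof'' of Theorem~\ref{T60} is simply the phrase ``Similarly we conclude'', referring back to the argument for Theorem~\ref{T:22}, and you have faithfully reproduced that scheme (construct the $DR$ morphism via the classical isomorphism~(\ref{Fourier2}) on each $\shn_n$, pass to the projective limit using Lemma~\ref{L30}, check both sides are $c\hbar c$, apply conservativity of $gr_\hbar$, and deduce the $Sol$ statement by duality).

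One small correction to your $c\hbar c$ argument: monodromicity of $\shn$ does \emph{not} in general force $DR_\hbar(\shn)$ to be $\C$-constructible (monodromic does not imply holonomic), so the appeal to Lemma~\ref{L2} is misplaced. The right justification is the one already used in the paper's framework: $\shn$ is coherent, hence $c\hbar c$, so $DR_\hbar(\shn)=R\shh\text{om}_{\shd^\hbar}(\sho^\hbar,\shn)$ is $c\hbar c$ by Proposition~\ref{P1}; then $\shf$ preserves $c\hbar c$ because $p_1^{-1}$ does (Proposition~\ref{P:500}, $p_1$ being a submersion), $R\Gamma_P=R\shh\text{om}(\C^\hbar_P,\,\cdot\,)$ does (Proposition~\ref{P1}), and $Rp_{2*}$ does (Proposition~\ref{P125}). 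With this fix, everything goes through.
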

\begin{corollary}\label{C61}
 We have natural isomorphisms in $D^b(\C^\hbar_{T_Y^\ast X})$:
$$\begin{cases}
Sol_\hbar(\mu_Y^\hbar(\shm))\simeq \mu_Y(Sol_\hbar(\shm))[{ codim} Y]\\
DR_\hbar(\mu_Y^\hbar(\shm))\simeq \mu_Y(DR_\hbar(\shm))[{ codim} Y]
\end{cases}$$
for $\shm\in\text{Mod}_{\text{rh}}(\shd_X^\hbar)$.
\end{corollary}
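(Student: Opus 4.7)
The plan is to obtain the desired isomorphisms purely by composition, chaining Theorem \ref{T60} (which compares Sato's Fourier transform with the formal Fourier transform $\shf^\hbar$ through $Sol_\hbar$ and $DR_\hbar$) with Theorem \ref{T:22} (which compares Sato's specialization $\nu_Y$ with $\nu_Y^\hbar$ through the same solution/De Rham functors), and then using the two factorizations $\mu_Y^\hbar=\shf^\hbar\circ\nu_Y^\hbar$ (Definition \ref{D61}) and $\mu_Y=\shf\circ\nu_Y$.

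First I would verify the hypotheses needed to invoke the two ingredient theorems. If $\shm\in\text{Mod}_{rh}(\shd_X^\hbar)$, then by Corollary \ref{P23} the module $\shm$ is specializable along $Y$ and $\nu_Y^\hbar(\shm)$ is itself regular holonomic over $\shd_{T_YX}^\hbar$. By Lemma \ref{L123}(b) it is in addition monodromic, so $\nu_Y^\hbar(\shm)\in\text{Mod}_{mon}(\shd_{[T_YX]}^\hbar)$ and Theorem \ref{T60} applies with $\shn=\nu_Y^\hbar(\shm)$. Also, being regular holonomic, $\shm$ is in the source of Theorem \ref{T:22}.

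Next I would carry out the composition. For the $Sol_\hbar$ statement, starting from the definition of $\mu_Y^\hbar$ and reading Theorem \ref{T60} as $Sol_\hbar(\shf^\hbar(\shn))\simeq \shf(Sol_\hbar(\shn))[\text{codim}\,Y]$, I get
\begin{align*}
Sol_\hbar(\mu_Y^\hbar(\shm)) &\simeq Sol_\hbar(\shf^\hbar(\nu_Y^\hbar(\shm)))\\
 &\simeq \shf(Sol_\hbar(\nu_Y^\hbar(\shm)))[\text{codim}\,Y]\\
 &\simeq \shf(\nu_Y(Sol_\hbar(\shm)))[\text{codim}\,Y]\\
 &= \mu_Y(Sol_\hbar(\shm))[\text{codim}\,Y],
\end{align*}
where the second isomorphism is Theorem \ref{T60}, the third is Theorem \ref{T:22}(ii), and the last equality is the definition $\mu_Y=\shf\circ\nu_Y$. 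The argument for $DR_\hbar$ is parallel, using the second isomorphism of Theorem \ref{T60} together with Theorem \ref{T:22}(i).

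There is essentially no obstacle: the heavy lifting was done when establishing Theorems \ref{T60} and \ref{T:22}, and the corollary is just a composition of natural transformations. The only bookkeeping point worth flagging is that the two ingredient isomorphisms are natural in $\shm$ (they are built from canonical morphisms $\Psi_K$ and from the canonical morphism of the Fourier--Sato transform), so their composite is a canonical isomorphism of functors on $\text{Mod}_{rh}(\shd_X^\hbar)$, and the shift $[\text{codim}\,Y]$ appears exactly once — coming from Theorem \ref{T60} — with no sign or parity issue to check.
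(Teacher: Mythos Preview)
Your argument is correct and is exactly the intended derivation: the paper places Corollary \ref{C61} immediately after Theorem \ref{T60} without an explicit proof, and the only way to obtain it is the composition you wrote, namely chaining Theorem \ref{T60} applied to $\shn=\nu_Y^\hbar(\shm)$ with Theorem \ref{T:22} and the definitions $\mu_Y^\hbar=\shf^\hbar\circ\nu_Y^\hbar$, $\mu_Y=\shf\circ\nu_Y$. Your verification of the hypotheses via Corollary \ref{P23} and Lemma \ref{L123}(b) is also the right bookkeeping.
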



\end{document}